\documentclass[leqno, a4paper]{amsart}
\usepackage{amsmath, amssymb, amsthm, a4wide,  mathrsfs, mathabx}
\usepackage[usenames]{color}
\usepackage{eepic,epic}

\pagestyle{plain}


\newtheorem{theorem}{Theorem}[section]

\newtheorem{corollary}[theorem]{Corollary}
\newtheorem{lemma}[theorem]{Lemma}
\newtheorem{proposition}[theorem]{Proposition}

\theoremstyle{definition}
\newtheorem{definition}[theorem]{Definition}

\theoremstyle{remark}
\newtheorem{remark}[theorem]{Remark}
\newtheorem{example}[theorem]{Example}

\newtheorem*{claim}{Claim}
\newtheorem*{remark*}{Remark}

\numberwithin{equation}{section}


\newcommand{\op}{\operatorname}

\newcommand{\brak}[1]{\langle #1 \rangle}

\newcommand{\bH}{\mathbb{H}}
\newcommand{\N}{\mathbb{N}}

\newcommand{\R}{\mathbb{R}}

\newcommand{\Z}{\mathbb{Z}}

\newcommand{\cG}{\mathcal{G}}

\newcommand{\cL}{\mathcal{L}}
\newcommand{\cU}{\mathcal{U}}
\newcommand{\cV}{\mathcal{V}}

\newcommand{\cX}{\mathcal{X}}

\newcommand{\fg}{\mathfrak{g}}

\newcommand{\rk}{\op{rk}}
\newcommand{\ran}{\op{ran}}
\newcommand{\dom}{\op{dom}}

\newcommand{\ad}{\op{ad}}

\newcommand{\Ow}{\op{O}_{w}}




\begin{document}

\title{Tangent Maps and Tangent Groupoid for Carnot Manifolds}

\subjclass[2000]{Primary}

\author{Woocheol Choi}
\address{Department of Mathematics Education, Incheon National University, Incheon, South Korea}
\email{choiwc@inu.ac.kr}

 \author{Rapha\"el Ponge}
\address{Department of Mathematical Sciences, Seoul National University, Seoul, South Korea}
 \email{ponge.snu@gmail.com}

 \thanks{WC was partially supported by POSCO TJ Park Foundation. RP\ was partially supported by Research Resettlement Fund and Foreign Faculty Research Fund of Seoul National University, and  Basic Research grants 2013R1A1A2008802 and 2016R1D1A1B01015971 of National Research Foundation of Korea.}

\begin{abstract}
This paper studies the infinitesimal structure of Carnot manifolds. By a Carnot manifold we mean a manifold together with a subbundle filtration of its tangent bundle which is compatible with the Lie bracket of vector fields. We introduce a notion of differential, called Carnot differential, for Carnot manifolds maps (i.e., maps that are compatible with the Carnot manifold structure). This differential is obtained as  a group map between the corresponding tangent groups. We prove that, at every point, a Carnot manifold map is osculated in a very precise way by its Carnot differential at the point. We also show that, in the case of maps between nilpotent graded groups, the Carnot differential is given by the Pansu derivative. Therefore, the Carnot differential is the natural generalization of the Pansu derivative to maps between general Carnot manifolds. Another main result is a construction of an analogue for Carnot manifolds of Connes' tangent groupoid. Given any Carnot manifold $(M,H)$ we get a smooth groupoid that encodes the smooth deformation of the pair $M\times M$ to the tangent group bundle $GM$. This shows that, at every point, the tangent group  is the tangent space in a true differential-geometric fashion. Moreover, the very fact that we have a groupoid accounts for the group structure of the tangent group. Incidentally, this answers a well-known question of Bella\"iche~\cite{Be:Tangent}. 
\end{abstract}

\maketitle

 \section{Introduction}
 The focus of this paper is on the infinitesimal structure of Carnot manifolds. By a Carnot manifold we mean a manifold $M$ together with a filtration of subbundles,
\begin{equation}
   H_{1}\subset H_{2}\subset \cdots \subset H_{r}=TM,
   \label{eq:Intro.Carnot-filtration}
\end{equation}which is compatible with the Lie bracket of vector fields. 
We refer to Section~\ref{sec:Carnot-Manifolds}, and the references therein, for various examples of Carnot manifolds. 
Many of those examples are equiregular Carnot-Carath\'eodory manifolds, in which case the filtration~(\ref{eq:Intro.Carnot-filtration}) arises from the iterated  Lie bracket of sections of $H_1$. 
However,  even for studying equiregular (and even non-regular) Carnot-Carath\'eodory structures we may be naturally led to consider non bracket-generated Carnot filtrations (see Section~\ref{sec:Carnot-Manifolds} on this point).
 
There is a general understanding that (graded) nilpotent Lie groups are models for Carnot manifolds. 
From an algebraic perspective, any filtration~(\ref{eq:Intro.Carnot-filtration}) gives rise to a graded vector bundle $\fg M:=\fg_1M\oplus \cdots \oplus \fg_r M$, where $\fg_wM=H_{w}/H_{w-1}$. As a vector bundle $\fg M$ is (locally) isomorphic to the tangent bundle $TM$. Moreover, as observed by Tanaka~\cite{Ta:JMKU70}, the Lie bracket of vector fields induces on each fiber $\fg M(a)$, $a\in M$, a Lie algebra bracket. This turns $\fg M(a)$ into a graded nilpotent Lie algebra. Equipping it with its Dynkin product we obtain a graded nilpotent group $GM(a)$, which is called the tangent group at the point $a\in M$.  
 
There is an alternative construction of a graded nilpotent group that is meant to approximate the Carnot manifold $(M,H)$ at the point $a$. This construction originated from the work of Folland-Stein~\cite{FS:CMPAM74}, Rothschild-Stein~\cite{RS:ActaMath76}, and others on hypoelliptic PDEs. In this approach  we obtain a graded nilpotent Lie group $G^{(a)}$ out of ``anisotropic asymptotic expansions" of vector fields in privileged coordinates at the point $a\in M$. The graded nilpotent Lie group $G^{(a)}$ equipped with its left-invariant Carnot manifold structure is often called the nilpotent approximation of $(M,H)$ at the point $a\in M$. Note also that this construction is very sensitive to the choice of the privileged coordinates. 
I 
In~\cite{CP:Carnot} the two approaches are somehow reconciled by singling out a special class of privileged coordinates, in which the nilpotent approximation is given by the tangent group $GM(a)$. These coordinates are called Carnot coordinates. They are an essential ingredient of this paper.  Examples of Carnot coordinates include Darboux coordinates on a contact manifold, and the canonical coordinates of the first kind of Goodman~\cite{Go:LNM76} and Rothschild-Stein~\cite{RS:ActaMath76} on general Carnot manifolds. The polynomial privileged coordinates of Bella\"iche~\cite{Be:Tangent} are not Carnot coordinates in general. However, they can be converted into Carnot coordinates by means of a unique homogeneous polynomial change of coordinates. We call the resulting coordinates $\varepsilon$-Carnot coordinates. An important property of these coordinates is the fact that they are osculated in very precise manner by the group law of the tangent group (see~\cite{CP:Carnot}).  

As graded nilpotent Lie groups provide us with the natural notion of tangent space for Carnot manifolds, it is natural to seek for a notion of tangent map for maps between Carnot manifolds in terms of group maps between the corresponding tangent groups. Pansu~\cite{Pa:AM89} introduced a notion of derivative (called Pansu derivative) for maps between graded nilpotent groups. The Pansu derivative is a group map  between the corresponding source and target groups. Pansu~\cite{Pa:AM89} also proved that its derivative exists almost everywhere for Lipschitz continuous maps and quasi-conformal maps between Carnot groups equipped with their left-invariant Carath\'eodory metrics. This result was extended to quasi-conformal maps between equiregular Carnot-Carath\'eodory manifolds by Margulis-Mostow~\cite{MM:GAFA95}  in terms of the corresponding Carath\'eodory metrics. In~\cite{MM:GAFA95} differentiability is defined in terms of the corresponding Carath\'eodory metrics. Moreover, the differential is defined as a group map between the corresponding tangent cones which are defined in terms of metric equivalences of rectifiable paths. 

In this paper we take a different point of view, since Carath\'eodory metrics are not available for general Carnot manifolds. We consider Carnot manifold maps between step~$r$ Carnot manifolds $(M,H)$ to $(M',H')$.  By this we mean a smooth map $\phi:M\rightarrow M'$ whose differential is compatible with the Carnot filtrations. This condition implies that, at every point $a\in M$,  the differential $\phi'(a)$ induces a graded linear map $\hat{\phi}'(a):\fg M(a)\rightarrow \fg M'(a')$. We show that this is a Lie algebra map, and hence we get a Lie group map $\hat{\phi}'(a):GM(a) \rightarrow GM'(\phi(a))$ (Proposition~\ref{prop:Carnot-map.group-map}). We call it the Carnot differential of $\phi$ at the point $a\in M$.  Carnot differentials satisfy the chain rule (Proposition~\ref{prop:Carnot.product-tangent-map}). Incidentally, the assignment $(M,H)\rightarrow GM$ is a functor from Carnot manifolds to graded nilpotent Lie groups (see Proposition~\ref{prop:Carnot-diff.functor}).   

Our main result on Carnot differentials asserts that, in Carnot coordinates, a Carnot manifold map is osculated at every point in a very precise manner by its Carnot differential (Theorem~\ref{thm:Carnot-prop.tangent-map-approx}). Note it is crucial to work in Carnot coordinates to obtain an approximation by a group map (see Remark~\ref{rmk:Tangent-map.need-Carnot-coord} on this point). One consequence of this result is an explicit and simple action of Carnot diffeomorphisms on Carnot coordinates (see Proposition~\ref{prop:Carnot-approx.action-Carnot-coordinates} for the precise statement).  Furthermore, this osculation result provides us with a direct link with the Pansu derivative. More precisely, in the case of Carnot manifold maps between graded nilpotent Lie groups, the Carnot differential is naturally identified with the Pansu derivative (Theorem~\ref{thm:Pansu.Carnot-Pansu}). Therefore, the Carnot differential is the natural generalization of the Pansu derivative for maps between general Carnot manifolds.

In~\cite{Co:NCG} Connes constructed the tangent groupoid of a manifold $M$ as the smooth groupoid that encodes the smooth deformation of $M\times M$ to the tangent bundle $TM$. Bella\"iche~\cite{Be:Tangent} conjectured the existence of an analogue of the tangent groupoid for (equiregular) Carnot-Carath\'eodory manifolds and this could explain why the tangent space should be a group.  We know that for (equiregular) Carnot-Carath\'eodory manifolds $GM(x_0)$ is a tangent space in the metric sense introduced by Gromov~\cite{GLP:Cedic-Nathan81} (see~\cite{Be:Tangent, Mi:JDG85}). However, in this point of view the tangent space is only determined up to homeomorphisms, and so this does not account for the group structure of $GM(x_0)$. Furthermore, the approach is not applicable to general Carnot manifolds, for which we don't have Carath\'eodory metrics. 

As a manifold, Connes' tangent  groupoid $\cG M$ is obtained by glueing together the manifolds $M\times M\times \R^*$ and $TM$. The differentiable structure near $TM$ is such that, for all $x_0 \in M$ and $t\in \R^*$, the submanifold $\{x_0\}\times M \times \{t\}$ is near $(x_0,x_0,t)$  a zoomed in version of a neighborhood of $x_0$ in $M$, where the zooming is performed by rescaling by $t^{-1}$ along directions out of $x_0$ in local coordinates. Thus, this construction recasts in the language of groupoids the well known idea that, as we zoom in more and more around $x_0$, the manifold $M$ looks more and more like the tangent space $TM(x_0)$. In addition, the very fact that we obtain a groupoid accounts for the additive group structure of $TM(x_0)$. We refer to Section~\ref{sec:Lie-groupoids} for more details on groupoids and a review of the tangent groupoid's construction. 

Thanks to our results on Carnot differentials and the properties of $\varepsilon$-Carnot coordinates, we show how to associate with any Carnot manifold $(M,H)$ a smooth groupoid $\cG_H M$ that encodes the smooth deformation of $M\times M$ to the group bundle $GM$ (Theorem~\ref{thm:Groupoid.main}). We also show that this construction is functorial (Corollary~\ref{cor:Groupoid.functoriality}). 
As a manifold, $\cG_H M$ is obtained by glueing the manifolds $M\times M\times \R^*$ and $GM$. As with Connes' tangent groupoid, the differentiable structure near $GM$ is such that, for all $x_0 \in M$ and $t\in \R^*$, the submanifold $\{x_0\}\times M \times \{t\}$ is near $(x_0,x_0,t)$ a zoomed in version of a neighborhood of $x_0$. 
However, in this case the zooming is performed by an anistropic rescaling in $\varepsilon$-coordinates at $x_0$, where each direction out of $x_0$ is weighted according to the minimal space of the filtration $(H_1(x_0), \ldots, H_r(x_0))$ it starts out.  
This shows that $GM (x_0)$ is tangent to $(M,H)$ at $x_0$ in a differential-geometric fashion. Moreover, as we obtain a groupoid this further accounts for the occurrence of the group structure of $GM(x_0)$. We thus obtain a positive answer to Bella\"iche's conjecture. 

The main motivation of Connes~\cite{Co:NCG} for the construction of the tangent groupoid of a manifold was a new $K$-theoretic proof of the full index theorem of Atiyah-Singer~\cite{AS:AM68}. This approach to the index theorem was extended to hypoelliptic operators on contact manifolds by van Erp~\cite{vE:AM10.Part1, vE:AM10.Part2} (see also~\cite{BvE:ActaM14}) by using an analogue for contact manifolds of Connes' tangent groupoid. Therefore, it is expected that the Carnot groupoid for Carnot manifolds should play an important role in the reformulation of the index theorem for hypoelliptic pseudodifferential operators on Carnot manifolds, at least from a $K$-theoretic perspective. 

In addition, there are closed ties between Lie groupoids and pseudodifferential calculi (see, e.g., \cite{DS:AdvM14, LV:AIM17, vEY:Crelle17}). Therefore, the tangent groupoid for Carnot manifolds is an important tool for understanding the hypoelliptic pseudodifferential calculus on Carnot manifolds (see~\cite{vEY:Crelle17} on this point) on this point. Furthermore, the properties of Carnot coordinates, $\varepsilon$-Carnot coordinates, and Carnot differentials that are involved in the construction of the tangent groupoid for Carnot manifolds are important ingredients in the construction of a full symbolic calculus for the hypoelliptic pseudodifferential calculus on Carnot manifolds in~\cite{CP:PsiDOs}. 

This paper is organized as follows. In Section~\ref{sec:Carnot-Manifolds}, we review the main definitions and examples regarding Carnot manifolds and their tangent group bundles. In Section~\ref{sec:anisotropic}, we review the results of~\cite{CP:Privileged} on anisotropic asymptotic expansions of maps and vector fields. In Section~\ref{sec:Carnot-coordinates}, we recall the main definitions and properties of the Carnot coordinates. In Section~\ref{sec:Carnot-Differential}, we define the Carnot differential of a Carnot manifold map and show this is a group map. 
 In Section~\ref{sec:Tangent-approximation}, we show that, in Carnot coordinates, a Carnot manifold map is osculated in a very precise manner by its Carnot differential. In Section~\ref{sec:Pansu-derivative}, we show that in the setting of graded nilpotent Lie groups the Carnot differential agrees with the Pansu derivative. In Section~\ref{sec:Lie-groupoids}, we review some definitions and properties of Lie groupoids with a special attention to Connes' tangent groupoid of a manifold. Finally, in Section~\ref{sec:tangent-groupoid} we construct the tangent groupoid of a Carnot manifold. 
 
 \begin{remark*}
 The first version of this article was posted as a preprint on the arXiv server in October 2015. Since then, several alternative approaches to the tangent groupoid of a Carnot manifold have come out (see~\cite{HS:DM18, Mo:arXiv18, vEY:BLMS17}).  
\end{remark*}

\subsection*{Acknowledgements}
The authors wish to thank Andrei Agrachev, Davide Barilari, Enrico Le Donne, and Fr\'ed\'eric Jean for useful discussions related to
the subject matter of this paper. They also thank anonymous referees whose insightful comments greatly help improving the presentation of the paper. In addition, they would like to thank Henri Poincar\'e Institute, McGill University, National University of Singapore,  and University of California at Berkeley for their hospitality during the 
preparation of this paper.  

\section{Carnot Manifolds}\label{sec:Carnot-Manifolds}
In this section, we briefly review the main facts about Carnot manifolds. The exposition follows~\cite{CP:Privileged} closely. We refer to~\cite{CP:Privileged}  and the references therein for a more complete account. 

\subsection{Carnot Groups and Graded nilpotent Lie groups} 
In what follows, by the Lie algebra of a Lie group we shall mean the tangent space at the unit element equipped with Lie bracket induced by the Lie bracket of left-invariant vector fields. 

\begin{definition} \label{def:Carnot.Carnot-algebra}
A \emph{step $r$ nilpotent graded Lie algebra} is the data of a real Lie algebra $(\fg,[\cdot, \cdot])$ and a grading $  \fg=\fg_{1}\oplus \fg_{2}\oplus \cdots \oplus \fg_{r}$, which is compatible with the Lie bracket, i.e., 
\begin{equation}
        [\fg_{w},\fg_{w'}]\subset \fg_{w+w'} \ 
        \text{for $w+w'\leq r$} \quad \text{and} \quad [\fg_{w},\fg_{w'}]=\{0\}\ 
        \text{for $w+w'> r$}.
         \label{eq:Carnot.grading-bracket}
\end{equation}
We further say that $\fg$ is a \emph{Carnot algebra} when $\fg_{w+1}=[\fg_{1},\fg_{w}]$ for $w=1,\ldots, r-1$. 
A \emph{graded nilpotent Lie group} (resp., \emph{Carnot group}) is a connected  and simply connected nilpotent real Lie group whose Lie algebra is a  graded nilpotent  Lie algebra (resp., Carnot algebra). 
\end{definition}

%
%
%
%
%


Let $G$ be a step $r$ graded nilpotent Lie group with unit $e$. Its Lie algebra $\fg=T_eG$ comes equipped with a grading $  \fg=\fg_{1}\oplus \fg_{2}\oplus \cdots \oplus \fg_{r}$, which is compatible with its Lie algebra bracket. This grading  gives rise to a family of anisotropic dilations $\xi \rightarrow t\cdot \xi$, $t \in \R\setminus 0$, which are automorphisms of $\fg$ given by
\begin{equation}
 t\cdot (\xi_1+\xi_2 +\cdots + \xi_r) = t\xi_1+ t^2\xi_2 +\cdots + t^r \xi_r, \qquad \xi_j \in \fg_j.  
 \label{eq:Carnot.dilations}
\end{equation}
In addition, $\fg$ is canonically isomorphic to the Lie algebra of left-invariant vector fields on $G$: to any $\xi \in \fg$ corresponds the unique left-invariant vector field $X_\xi$ on $G$ such that $X_\xi(e)=\xi$. The flow $\exp(tX_\xi)$ exists for all times $t\in \R$, 
and so we have a globally defined exponential map $\exp:\fg \rightarrow G$ which is a diffeomorphism given by
\begin{equation*}
\exp(\xi)=\exp(X_\xi), \qquad \text{where $\exp(X_\xi):=\exp(tX_\xi)(e)_{|t=1}$}. 
\end{equation*}

For $\xi \in \fg$, let $\ad_\xi: \fg\rightarrow \fg$ be the adjoint endomorphism of $\xi$, i.e., $\ad_\xi \eta =[\xi,\eta]$ for all $\eta \in \fg$. This is a nilpotent endomorphism.  
By  the Baker-Campbell-Hausdorff formula we have 
\begin{equation}
\exp(\xi) \exp(\eta)= \exp(\xi \cdot \eta) \qquad \text{for all $\xi,\eta \in \fg$},
\label{eq:Carnot.BCH-Formula}
\end{equation}where $\xi \cdot \eta$ is given by the Dynkin product, 
\begin{align}
 \xi \cdot \eta & = \sum_{n\geq 1} \frac{(-1)^{n+1}}{n} \sum_{\substack{\alpha, \beta \in \N_0^n\\ \alpha_j+\beta_j\geq 1}} 
 \frac{(|\alpha|+|\beta|)^{-1}}{\alpha!\beta!} (\ad_\xi)^{\alpha_1} (\ad_\eta)^{\beta_1} \cdots (\ad_\xi)^{\alpha_n} (\ad_\eta)^{\beta_n-1}\eta \nonumber \\
 & = \xi +\eta + \frac{1}{2}[\xi,\eta] + \frac{1}{12} \left( [\xi,[\xi,\eta]]+  [\eta,[\eta,\xi]]\right) - \frac{1}{24}[\eta,[\xi,[\xi,\eta]]] + \cdots .
 \label{eq:Carnot.Dynkin-product}
\end{align}
The above summations are finite, since all the iterated brackets of length~$\geq r+1$ are zero. Any Lie algebra automorphism of $\fg$ then lifts to a Lie group isomorphism of $G$. In particular, the dilations~(\ref{eq:Carnot.dilations}) give rise to Lie group isomorphisms $\delta_t:G\rightarrow G$, $t\in \R\setminus 0$.

Conversely, if $\fg$ is a graded nilpotent Lie algebra, then~(\ref{eq:Carnot.Dynkin-product}) defines a product on $\fg$. This turns $\fg$ into a Lie group with unit $0$. Under the identification $\fg\simeq T_0 \fg$, the corresponding Lie algebra is naturally identified with $\fg$, and so we obtain a graded nilpotent Lie group. Under this identification 
the exponential map becomes the identity  map. Moreover, inversion with respect to the group law~(\ref{eq:Carnot.Dynkin-product}) is given by
\begin{equation}
\xi^{-1}=-\xi \qquad \text{for all $\xi \in \fg$}. 
\label{eq:GM.inverse}
\end{equation}

\subsection{Carnot manifolds} 
In what follows, given distributions $H_{j}\subset TM$, $j=1,2$,  on a manifold $M$, we denote by $[H_{1},H_{2}]$ 
the distribution generated by the Lie brackets of their sections, i.e., 
\begin{equation*}
  [H_{1},H_{2}]=\bigsqcup_{x\in M}\biggl\{ [X_{1},X_{2}](x); \ X_{j}\in C^{\infty}(M,H_{j}), j=1,2\biggr\}.
\end{equation*}

\begin{definition}
    A \emph{Carnot manifold} is a pair $(M,H)$, where $M$ is a manifold and $H=(H_{1},\ldots,H_{r})$ is a finite filtration of 
    subbundles, 
    \begin{equation}
       H_{1}\subset H_{2}\subset \cdots \subset H_{r}=TM,  
        \label{eq:Carnot.Carnot-filtration}
    \end{equation}which is compatible with the Lie bracket of vector fields, i.e., 
    \begin{equation}
        [H_{w},H_{w'}]\subset H_{w+w'}\qquad 
        \text{for $w+w'\leq r$}.
        \label{eq:Carnot-mflds.bracket-condition}
    \end{equation}
    The number $r$ is called the \emph{step} of the Carnot manifold $(M,H)$. The sequence $(\rk H_{1},\ldots, \rk H_{r})$ is called its \emph{type}.  
\end{definition}

\begin{remark}
    Carnot manifolds are also called filtered manifolds (see, e.g.,~\cite{CS:AMS09, Mo:HMJ93}). 
\end{remark}

\begin{definition}\label{def:Carnot.Carnot-mfld-map}
    Let $(M,H)$ and $(M',H')$ be Carnot manifolds of step $r$, where $H=(H_{1},\ldots,H_{r})$ and 
    $H'=(H_{1}',\ldots,H_{r}')$. Then
    \begin{enumerate}
        \item  A \emph{Carnot manifold map} $\phi:M\rightarrow M'$ is smooth map such that, for $j=1,\ldots,r$, 
        \begin{equation}
            \phi'(x)X\in H_{j}'(\phi(x)) \qquad \text{for all $(x,X)\in H_{j}$}.
            \label{eq:Carnot.Carnot-map}
        \end{equation}
    
        \item  A \emph{Carnot diffeomorphism} $\phi:M\rightarrow M'$ is a diffeomorphism such that $\phi$ and $\phi^{-1}$ are both Carnot manifold maps. 
    \end{enumerate}
\end{definition}

\begin{remark}\label{rmk:Carnot-mfld.Carnot-diffeo}
   If $\phi:M\rightarrow M'$ is diffeomorphism, then $\phi$ is a Carnot diffeomorphism if and only if $\phi_{*}H_{j}=H_{j}'$ for $j=1,\ldots,r$. In particular, $(M,H)$ and $(M',H')$ must have same type. Conversely, if $(M,H)$ and $(M',H')$ same type, then a diffeomorphism $\phi:M\rightarrow M'$ is a Carnot diffeomorphism as soon as $\phi$ is a Carnot manifold map. 
\end{remark}

Let $(M^{n},H)$ be an $n$-dimensional Carnot manifold of step $r$, so that $H=(H_{1},\ldots,H_{r})$, 
where the subbundles $H_{j}$ satisfy~(\ref{eq:Carnot.Carnot-filtration}).  

\begin{definition}\label{def:Carnot.weight-sequence}
    The \emph{weight sequence} of a Carnot manifold $(M,H)$ is the sequence $w=(w_{1},\ldots,w_{n})$ defined by
    \begin{equation}
    w_{j}=\min\{w\in \{1,\ldots,r\}; j\leq \rk H_{w}\}.
    \label{eq:Carnot.weight}
\end{equation}
\end{definition}

\begin{remark}
    Two Carnot manifolds have same type if and only if they have same weight sequence. 
\end{remark}

Throughout this paper we will make use of the following type of tangent frames.  

\begin{definition}  
An \emph{$H$-frame} over an open $U\subset M$ is a tangent frame $(X_{1},\ldots,X_{n})$ over $U$ which is compatible with the 
filtration $(H_{1},\ldots.,H_{r})$ in the sense that, for $w=1,\ldots,r$, the vector fields $X_{j}$, $w_{j}= w$, are sections of $H_{w}$.
\end{definition}


\begin{remark}\label{rmk:Carnot-mfld.brackets-H-frame}
    Let $(X_{1},\ldots,X_{n})$ be an $H$-frame near a point $a\in M$. As explained in~\cite{CP:Privileged},  the condition~(\ref{eq:Carnot-mflds.bracket-condition}) implies that, near $x=a$, there are smooth functions $L_{ij}^{k}(x)$, $w_k\leq w_i+w_j$,  such that, for  $i,j=1,\ldots, n$, we have 
    \begin{equation}
        [X_{i},X_{j}](x) =\sum_{w_{k}\leq w_{i}+w_{j}}L_{ij}^{k}(x)X_{k}(x) \qquad \textup{near $x=a$}. 
        \label{eq:Carnot-mfld.brackets-H-frame}
    \end{equation}   
\end{remark}

\subsection{Examples of Carnot Manifolds}\label{sec:Examples}
\renewcommand{\thesubsubsection}{\Alph{subsubsection}}
We refer to~\cite{CP:Privileged} and the references therein for a detailed description of various examples of Carnot manifolds. Following is a summary of the main examples described in~\cite{CP:Privileged}. 

\subsubsection{Graded nilpotent Lie groups} 
Let $G$ be a step $r$ graded nilpotent Lie group with unit $e$. Then its Lie algebra $\fg=T_eG$ has a grading $ \fg=\fg_{1}\oplus \fg_{2}\oplus \cdots \oplus \fg_{r}$
satisfying~(\ref{eq:Carnot.grading-bracket}). For $w=1,\ldots, r$, let $E_w$ be the $G$-subbundle of $TG$ obtained by left-translation of $\fg_w$ over $G$. We then obtain a vector bundle grading $TG=E_1\oplus \cdots \oplus E_r$. This grading gives rise to the filtration $H_1\subset \cdots \subset H_r=TG$, where $H_w:=H_1 \oplus \cdots \oplus H_w$. It can be shown that $[H_w,H_{w'}]\subset H_{w+w'}$ whenever $w+w'\leq r$ (see, e.g., \cite[Section~3.2]{CP:Carnot}). Therefore, this filtration defines a left-invariant Carnot manifold structure on $G$ which is uniquely determined by the grading of $\fg$. 

\subsubsection{Heisenberg manifolds} They are step 2 Carnot manifolds where $H_1$ is a hyperplane bundle.  Examples include the Heisenberg group $\bH^{2n+1}$, Cauchy-Riemann manifolds of hypersurface type, contact manifolds, even contact manifolds~\cite{Mo:AMS02}, and the confoliations of Elyashberg-Thurston~\cite{ET:C}.
  
\subsubsection{Foliations} They are step 2 Carnot manifolds where $H_1$ is integrable, i.e., $[H_1,H_1]=H_1$. Examples include Kronecker foliation, Reeb foliations, and foliations associated with locally free actions of Lie groups on manifolds (see, e.g., \cite{MM:Cambridge03}). 
  
 \subsubsection{Polycontact manifolds~\emph{(}\cite{Mo:AMS02, vE:Polycontact}\emph{)}} They are step 2 Carnot manifolds that are opposite to foliations inasmuch as $H_1$ is required to be totally non-integrable. This means that, for all $x\in M$, every direction in $(TM/H_1)^*(x)\setminus 0$ is contact, i.e, it defines a symplectic structure on $H_1(x)$ (see~\cite{Mo:AMS02, vE:Polycontact}). 
Beside contact manifolds, the main examples are the groups of Kaplan~\cite{Ka:TAMS80} and M\'etivier~\cite{Me:Duke80}, the principal bundles equipped with the horizontal distribution defined by a fat connection in the sense of Weinstein~\cite{We:Adv80}, the quaternionic contact manifolds of Biquard~\cite{Bi:Roma99, Bi:Asterisque}, and the unit sphere $\mathbb{S}^{4n-1}$ in quaternionic space~\cite{vE:Polycontact}. 

\subsubsection{Pluri-contact manifolds \emph{(}\cite{ACGL:Contact17, ACGL:CR17}\emph{)}} These manifolds  were introduced in the recent preprint~\cite{ACGL:Contact17}. They generalize polycontact manifolds in the sense that they are step 2 Carnot manifold where, for every $x\in M$, it is only required to have at least one contact direction in  $(TM/H_1)^*(x)\setminus 0$. Beside polycontact manifolds, examples  of pluri-contact manifolds that are not polycontact include products of contact manifolds~\cite{ACGL:Contact17} and nondegenerate CR manifolds of non-hypersurface-type~\cite{ACGL:CR17}.

\subsubsection{Equiregular Carnot-Carath\'eodory manifolds} In this case,
\begin{equation*}
H_{j+1}=H_j+[H_1,H_j] \qquad \text{for $j=1,\ldots,r-1$}. 
\end{equation*}
Equiregular Carnot-Carath\'eodory manifolds (a.k.a.~ECC manifolds) naturally appear in sub-Riemannian geometry and control theory associated with 
non-holonomic systems of vector fields. In particular, they occur in a number of  applied and real-life settings 
(see, e.g.,~\cite{ABB:SRG, Be:Tangent, CC:SRG, Gr:CC, Je:ESAIM96, Je:Brief14, Mo:AMS02, Mo:HMJ93, Ri:Brief14}). In addition, any non-equiregular Carnot-Carath\'eodory structure can be desingularized into an equiregular Carnot-Carath\'eodory structure (see, e.g., \cite{Je:Brief14}).

Contact and polycontact manifolds are examples of step 2 ECC manifolds with $r=2$. An example of step 3 ECC manifolds is provided by Engel manifolds, which are  4-dimensional manifolds equipped with a plane-bundle $H\subset TM$ such that $H_2:=H+[H,H]$ has constant rank 3 and $H_2+[H,H_2]=TM$ (see~\cite{Mo:AMS02}). 
More generally, ECC structures naturally appear in the context of parabolic geometry~\cite{CS:AMS09, Mo:AMS02}. Nondegerate Cauchy-Riemann structures and contact quaternionic structures are parabolic geometric structures. Further examples of parabolic geometric structures include the following: 
\begin{itemize}
    \item Contact path geometric structures, including contact projective structures (\emph{cf}.~\cite{Fo:IUMJ05, 
    Fo:arXiv05}). 

    \item  Generic $\left(m,\frac{1}{2}m(m+1)\right)$-distributions, including generic $(3,6)$-distributions studied by 
    Bryant~\cite{Br:RIMS06} (see also~\cite{CS:AMS09}). 
    
    \item Generic $(2,3,5)$-distributions of Cartan~\cite{Ca:AENS10}. 
\end{itemize}

\subsubsection{The heat equation on ECC manifolds.} Let $(M,H)$ be an ECC manifold, so that the filtration 
$H=(H_{1},\ldots,H_{r})$ is generated by the iterative Lie brackets of sections of $H_{1}$. Given any spanning frame $\{X_{1},\ldots,X_{m}\}$ 
of $H_{1}$, the sub-Laplacian $\Delta=-(X_{1}^{2}+\cdots X_{m}^{2})$ and the heat 
operator $\Delta+\partial_{t}$ are hypoelliptic (see~\cite{Ho:ActaMath67}). Following Rothschild-Stein~\cite{RS:ActaMath76} (and other authors) to study the heat operator $\Delta+\partial_{t}$ we lift the filtration $(H_1,\ldots, H_r)$ of $TM$ to the the filtration $\tilde{H}=(\tilde{H}_{1},\ldots,\tilde{H}_{r})$ of $T(M\times \R)$, where $\tilde{H}_{1}=\pi_{1}^{*}H$ and $\tilde{H}_{j}=\pi_{1}^{*}H^{[j]}+\pi_{2}^{*}T\R$ for $j\geq 2$. Here $\pi_{1}$ (resp., $\pi_{2}$) is the projection of $M\times \R$ onto $M$ (resp., $\R$). This allows time-differentiation to have weight~$2$. We obtain a Carnot filtration, but we do not get an ECC structure since $\tilde{H}_{1}$ is not bracket-generating (its Lie brackets only span $\pi_1^*TM$).

\subsubsection{Group actions on contact manifolds~\emph{(}\cite{CP:Privileged}\emph{)}} 
Let $(M,H)$ be an orientable contact manifold. As mentioned in~\cite{CP:Privileged}, for studying the action on $M$ of the group of contactomorphisms of $(M,H)$ we are lead to pass to the full space of the metric contact bundle $P\stackrel{\pi}{\rightarrow}M$. Picking a (positive) contact form on $M$, we can realize $P$ as the $\R^*_+$-bundle $P:=P(H)\oplus \R_+^*\theta$, where $P(H)$ is the metric bundle of $H$ (so that the fiber $P(H)(x)$ at $x\in M$ consists of positive-definite quadratic forms on $H(x)$). Under this passage the Carnot flitration $(H,TM)$ of $TM$ lifts to the filtration $(V, V+\pi^*H, TP)$ of $TP$, where $V=\ker d\pi$ is the vertical bundle of the fibration $\pi:P\rightarrow M$. This provides us with Carnot filtration, but we don't get an ECC structure since $V$ is integrable. 

\begin{remark}
 The last two examples above show that, even if our main focus is on ECC manifolds, we may be naturally lead to 
    consider non-ECC Carnot structures. These examples are one of the main motivation for considering Carnot manifolds instead of
    sticking to the setup of ECC manifolds. 
\end{remark}

\subsection{The Tangent Group Bundle of a Carnot Manifold}\label{sec:tangent-group}
The constructions of the tangent Lie algebra bundle and tangent group bundle of a Carnot manifold go back to Tanaka~\cite{Ta:JMKU70} (see also~\cite{AM:ERA03, AM:JDCS05, CS:AMS09, GV:JGP88, MM:JAM00, Me:Preprint82, Mo:AMS02, Mo:HMJ93, VG:JSM92}).  We refer to~\cite{ABB:SRG, FJ:JAM03, MM:JAM00, Ro:INA} for alternative intrinsic constructions of the tangent Lie algebra bundle and tangent group bundle. The exposition here follows~\cite{CP:Privileged} closely. 

The Carnot filtration $H=(H_{1},\ldots,H_{r})$ has a natural grading defined as follows. For $w=1,\ldots,r$, set 
$\fg_{w}M=H_{w}/H_{w-1}$ (with the convention that $H_{0}=\{0\}$), and define
\begin{equation}\label{eq-grading}
    \fg M := \fg_{1}M\oplus \cdots \oplus \fg_{r}M.
\end{equation}
In what follows, we denote by $\times_M$ the fiber products of vector bundles over $M$. 

\begin{lemma}[\cite{Ta:JMKU70}]\label{lem-dep}
 The Lie bracket of vector fields induces smooth bilinear bundle maps, 
\begin{equation*}
    \cL_{w,w'}:\fg_{w}M\times_M \fg_{w'}M\longrightarrow \fg_{w+w'}M, \qquad w+w'\leq r. 
\end{equation*}More precisely, given any $a\in M$ and any 
 section $X$ (resp., $Y$) of $H_{w}$ (resp., $H_{w'}$) near $a$, if we let $\xi(a)$ (resp., $\eta(a)$)  be the class of $X(a)$ (resp., $Y(a)$) in $\fg_{w}M(a)$ 
 (resp., $\fg_{w'}M(a)$), then we have
\begin{equation}
 \cL_{w,w'}\left(\xi(a),\eta(a)\right)= \textup{class of  $[X,Y](a)$ in $\fg_{w+w'}M(a)$}. 
  \label{eq:Tangent.Lie-backet}
\end{equation}
 \end{lemma}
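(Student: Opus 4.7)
The plan is to show that the assignment $(X_1,X_2)\mapsto [X_1,X_2]$ descends, modulo sections of lower-weight subbundles, to a $C^\infty(M)$-bilinear map on sections of the graded pieces, and then to invoke tensoriality to get a bundle map with the desired pointwise formula.

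First I would check that the classical bracket does induce a well-defined map on sections of the graded bundles. Given sections $X_j\in C^\infty(M,H_{w_j})$, $j=1,2$, the bracket condition~(\ref{eq:Carnot-mflds.bracket-condition}) puts $[X_1,X_2]$ in $C^\infty(M,H_{w_1+w_2})$, so its image in $C^\infty(M,\mathfrak{g}_{w_1+w_2}M)$ is defined. Next I would verify that the result depends only on the classes $\xi_j(a)$ of $X_j(a)$ modulo $H_{w_j-1}(a)$. Concretely, if $Y_1\in C^\infty(M,H_{w_1-1})$, then $[Y_1,X_2]\in C^\infty(M,H_{w_1+w_2-1})$, again by~(\ref{eq:Carnot-mflds.bracket-condition}), so replacing $X_1$ by $X_1+Y_1$ does not change the class of $[X_1,X_2]$ in $\mathfrak{g}_{w_1+w_2}M$. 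Symmetrically in the second argument.

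The key step is the $C^\infty(M)$-bilinearity modulo $H_{w_1+w_2-1}$. The Leibniz rule gives
\begin{equation*}
  [fX_1,X_2] = f[X_1,X_2] - (X_2 f)X_1,
\end{equation*}
and since $w_2\geq 1$ we have $w_1\leq w_1+w_2-1$, so $X_1$ is a section of $H_{w_1}\subset H_{w_1+w_2-1}$. Hence $(X_2f)X_1$ lies in $C^\infty(M,H_{w_1+w_2-1})$ and vanishes mod $\mathfrak{g}_{w_1+w_2}M$; the same argument handles the second slot. Thus the induced map on sections of $\mathfrak{g}_{w_1}M\times \mathfrak{g}_{w_2}M$ is $C^\infty(M)$-bilinear.

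Once tensoriality is established, the standard fact that a $C^\infty(M)$-bilinear map on sections of vector bundles is given by a smooth bilinear bundle map produces $\mathcal{L}_{w_1,w_2}$ together with the claimed pointwise formula. For the existence of local sections realizing prescribed classes at a point $a\in M$, I would use an $H$-frame $(X_1,\ldots,X_n)$ near $a$: the sections $X_j$ with $w_j=w$ project to a local frame of $\mathfrak{g}_wM$ near $a$ by Remark~\ref{rmk:Carnot-mfld.local-frames}, so any $\xi_j(a)\in \mathfrak{g}_{w_j}M(a)$ can be written as the class of some $H_{w_j}$-section at $a$. The main obstacle, and the only delicate point, is the $C^\infty$-linearity modulo $H_{w_1+w_2-1}$; the rest is bookkeeping using the filtration hypothesis~(\ref{eq:Carnot-mflds.bracket-condition}).
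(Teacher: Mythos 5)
Your argument is correct and complete. The paper itself does not prove Lemma~\ref{lem-dep} (it is attributed to Tanaka), and the only ingredient it supplies is the smoothness, obtained in Remark~\ref{rmk:Tangent-group.frame-fgM} by expressing the bracket in an $H$-frame via the structure functions $L_{ij}^{k}(x)$ of~(\ref{eq:Carnot-mfld.brackets-H-frame}) and observing that these depend smoothly on $x$. Your route is the coordinate-free version of the same fact: independence of the representatives follows from $[H_{w_1-1},H_{w_2}]\subset H_{w_1+w_2-1}$ (which is legitimate since $(w_1-1)+w_2<r$, and is vacuous when $w_1=1$ because $H_0=\{0\}$), the Leibniz identity $[fX_1,X_2]=f[X_1,X_2]-(X_2f)X_1$ kills the error term modulo $H_{w_1+w_2-1}$ because $w_2\geq 1$, and tensoriality then yields a smooth bilinear bundle map whose value at $a$ depends only on the classes $\xi_j(a)$ — which is exactly the pointwise formula in the statement. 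The only difference in emphasis is that the paper's frame computation simultaneously produces the explicit expression~(\ref{eq:Tangent-Group.Levi-H-frame}) for $\cL_{w_1,w_2}$ in terms of the $L_{ij}^{k}$, which is reused later (e.g.\ for the structure constants of $\fg M(a)$), whereas your abstract tensoriality argument establishes existence and smoothness without exhibiting the formula; to recover it you would still evaluate on the graded frame as in the remark.
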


 \begin{remark}\label{rmk:Tangent-group.frame-fgM}
    Let $(X_{1},\ldots,X_{n})$ be an $H$-frame over an open set $U\subset M$. For $j=1,\ldots,n$ and $x\in U$, let  
     $\xi_{j}(x)$ be the class of $X_{j}(x)$ in $\fg_{w_{j}}M(x)$. Then, as mentioned in~\cite{CP:Privileged}, for $w=1,\ldots,r$, 
     the family $\{\xi_{j}; w_{j}=w\}$ is a smooth frame of $\fg_{w}M$ over $U$, and so $(\xi_{1},\ldots,\xi_{n})$ is a smooth
     frame of $\fg M$ over $U$ which is compatible with the grading~(\ref{eq-grading}). We shall call such a frame a \emph{graded frame}. 
     In particular, for every $x\in U$, we obtain a \emph{graded basis} of $\fg M(x)$. 
       In addition, 
     using~(\ref{eq:Carnot-mfld.brackets-H-frame}) and~(\ref{eq:Tangent.Lie-backet}) 
     we get
     \begin{equation}
         \cL_{w_{i},w_{j}}(\xi_{i}(x),\xi_{j}(x))=\sum_{w_{k}=w_{i}+w_{j}}L_{ij}^{k}(x)\xi_{k}(x) \qquad 
         \text{for $w_{i}+w_{j}\leq r$}, 
         \label{eq:Tangent-Group.Levi-H-frame}
     \end{equation}
    where the coefficients $L_{ij}^k(x)$, $w_i+w_j=w_k$, are defined by~(\ref{eq:Carnot-mfld.brackets-H-frame}). As these coefficients depend smoothly on $x$, this shows that the bilinear bundle maps $\cL_{w_i,w_j}$ are smooth. 
 \end{remark}

\begin{definition}
The bilinear bundle map $[\cdot,\cdot]:\fg M\times_M  \fg M\rightarrow \fg M$ is defined as follows. For $a\in M$ and $(\xi,\eta) \in \fg_wM(a)\times \fg_{w'}M(a)$ we set
    \begin{equation}\label{eq-brak-a}
        [\xi,\eta]=\left\{
        \begin{array}{ll}
            \cL_{w,w'}(a)(\xi,\eta)& \text{if $w+w'\leq r$},  \smallskip \\
           0 &  \text{if $w+w'> r.$ }
        \end{array}\right.
    \end{equation}
 We then extend $[\cdot,\cdot]$ to all $\fg M\times_M  \fg M$ by bilinearity.   
\end{definition}

Lemma~\ref{lem-dep} ensures us that $[\cdot, \cdot]$ is a smooth bilinear bundle map. Furthermore, on each fiber  $\fg M(a)$, $a\in M$, it defines 
a Lie algebra bracket such that
       \begin{align}
                   \left[ \fg_{w}M, \fg_{w'}M \right]  \subset \fg_{w+w'}M   & \quad \text{if $w+w'\leq r$}, \\
                \left[\fg_{w}M,\fg_{w'}M\right] =\{0\}  &  \quad \text{if $w+w'>r $}.
          \label{eq:Carnot.Carnot-grading}
           \end{align}
Therefore, the Lie bracket is compatible with the grading~(\ref{eq-grading}), and so it turns $\fg M(a)$ into a (graded) nilpotent  Lie algebra of step $r$.  
 
\begin{proposition}[\cite{Ta:JMKU70}]
   $(\fg M, [\cdot, \cdot])$ is a smooth bundle of step $r$ graded nilpotent Lie algebras.
\end{proposition}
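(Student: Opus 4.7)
The proposition collects several pieces that are essentially already in place, so my plan is mainly to assemble them and verify the two genuinely algebraic axioms (antisymmetry and Jacobi) on each fiber.

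First, I would check that $\fg M$ is a smooth graded vector bundle. This is immediate from the definition $\fg_w M = H_w/H_{w-1}$, since each $H_w$ is a smooth subbundle of $TM$ and $H_{w-1}$ is a smooth subbundle of $H_w$, so the quotient $\fg_w M$ is smooth; direct-summing gives the grading~(\ref{eq-grading}). Remark~\ref{rmk:Tangent-group.frame-fgM} actually exhibits explicit smooth graded local frames built from any $H$-frame, which will be useful below.

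Next, the smoothness of $[\cdot,\cdot]$ as a bilinear bundle map. By~(\ref{eq-brak-a}) it suffices to check smoothness of each $\cL_{w_1,w_2}$ when $w_1+w_2\leq r$, which is given by Lemma~\ref{lem-dep} (and made explicit via~(\ref{eq:Tangent-Group.Levi-H-frame}), whose coefficients $L_{ij}^k$ are smooth by Remark~\ref{rmk:Carnot-mfld.brackets-H-frame}). The remaining pieces in~(\ref{eq-brak-a}) are zero and hence trivially smooth.

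The main point is to verify the Lie algebra axioms on each fiber $\fg M(a)$. Fix $a\in M$ and pick an $H$-frame $(X_1,\dots,X_n)$ near $a$ with associated graded frame $(\xi_1,\dots,\xi_n)$ of $\fg M$ as in Remark~\ref{rmk:Tangent-group.frame-fgM}. It is enough to check bilinearity, antisymmetry and Jacobi on the basis $\xi_1(a),\dots,\xi_n(a)$. Bilinearity is built into~(\ref{eq-brak-a}). For antisymmetry, when $w_i+w_j\leq r$ formula~(\ref{eq:Tangent-Group.Levi-H-frame}) reduces the question to $[X_i,X_j]=-[X_j,X_i]$ at $a$, which holds for vector fields and descends to the quotient; when $w_i+w_j>r$ both sides are zero by definition. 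For the Jacobi identity, I have to show
\begin{equation*}
[\xi_i(a),[\xi_j(a),\xi_k(a)]] + [\xi_j(a),[\xi_k(a),\xi_i(a)]] + [\xi_k(a),[\xi_i(a),\xi_j(a)]] = 0.
\end{equation*}
If $w_i+w_j+w_k>r$ every term lies in $\fg_{>r}M(a)=\{0\}$ and we are done. If $w_i+w_j+w_k\leq r$, then each inner bracket in~(\ref{eq-brak-a}) is given by~$\cL$, so each nested bracket equals the class of an iterated vector field bracket of the $X$'s in the appropriate $\fg_{w_i+w_j+w_k}M(a)$; summing and using the Jacobi identity for vector fields yields the vanishing. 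This argument is independent of the chosen lifts because Lemma~\ref{lem-dep} already shows that $\cL_{w_1,w_2}$ is well-defined on quotients.

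Finally, grading compatibility~(\ref{eq:Carnot.Carnot-grading}) is exactly the content of~(\ref{eq-brak-a}), and nilpotency of step~$r$ follows: any iterated bracket of length $r+1$ involving homogeneous elements lies in $\fg_wM(a)$ with $w\geq r+1$, hence is zero since the grading stops at weight~$r$. The only delicate step is the Jacobi verification, but as noted it reduces cleanly to the Jacobi identity for vector fields on $M$ thanks to the well-definedness in Lemma~\ref{lem-dep}; I do not expect a serious obstacle.
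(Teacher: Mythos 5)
Your proposal is correct and follows essentially the same route as the paper, which simply invokes Lemma~\ref{lem-dep} for smoothness and representative-independence and then asserts the fiberwise Lie algebra structure, grading compatibility, and step~$r$ nilpotency without spelling out the verifications. You are just filling in the routine details the paper leaves implicit: antisymmetry and Jacobi descend from the Lie bracket of vector fields via the well-definedness in Lemma~\ref{lem-dep}, and the degenerate cases $w_i+w_j>r$ are handled directly by~(\ref{eq-brak-a}).
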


\begin{definition}
The Lie algebra bundle $(\fg M, [\cdot, \cdot])$  is called the \emph{tangent Lie algebra bundle} of $(M,H)$.
\end{definition}

\begin{remark}
 In~\cite{Mo:HMJ93, Ta:JMKU70} the tangent Lie algebra bundle $\fg M$ is called the symbol algebra of $(M,H)$. It is called the nilpotenization of 
    $(M,H)$ in~\cite{GV:JGP88, Mo:AMS02, VG:JSM92}. 
\end{remark}

\begin{remark}\label{rem-xixj}
    Let $(X_{1},\ldots,X_{n})$ be an $H$-frame near a point $a\in M$. As mentioned above, this gives rise to a 
    frame $(\xi_{1},\ldots,\xi_{n})$ of $\fg M$ near $x=a$, where $\xi_{j}$ is the class of $X_{j}$ in 
    $\fg_{w_{j}}M$. Furthermore, it follows from~(\ref{eq:Carnot-mfld.brackets-H-frame}) and~(\ref{eq-brak-a}) that, near $x=a$, we have
    \begin{equation}\label{eq-jk}
        [\xi_{i}(x),\xi_{j}(x)]=\left\{
        \begin{array}{cl}
          {\displaystyle \sum_{w_{i}+w_{j}=w_k}L_{ij}^{k}(x)\xi_{k}(x)}    & \text{if $w_{i}+w_{j}\leq r$},  \\
            0  & \text{if $w_{i}+w_{j}>r$},
        \end{array}\right.
    \end{equation}where the functions $L_{ij}^{k}(x)$ are given by~(\ref{eq:Carnot-mfld.brackets-H-frame}). Specializing this to $x=a$ provides us with the structure constants 
    of $\fg M(a)$ with respect to the basis $(\xi_{1}(a),\ldots,\xi_{n}(a))$.  
\end{remark}

\begin{remark}\label{rem:GM.ECC-Carnot-algebra}
 When $(M,H)$ is an ECC manifold, each fiber $\fg M(a)$, $a\in M$, is  a Carnot algebra in the sense of Definition~\ref{def:Carnot.Carnot-algebra} (see, e.g.,~\cite{CP:Privileged}). 
\end{remark}
  
The Lie algebra bundle $\fg M$ gives rise to a Lie group bundle $GM$ as follows. As a manifold we take $GM$ to be $\fg M$ and we equip the fibers $GM(a)=\fg M(a)$ with the Dynkin product~(\ref{eq:Carnot.Dynkin-product}).This turns $GM(a)$ into a step~$r$ graded nilpotent Lie group with unit $0$ whose Lie algebra is naturally isomorphic to $\fg M(a)$. The fiberwise product on $GM$ is smooth, and so we obtain the following result.

\begin{proposition}
    $GM$ is a smooth bundle of step $r$ graded nilpotent Lie groups. 
\end{proposition}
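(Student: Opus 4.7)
The plan is to leverage everything already set up about $\fg M$ and simply transport the standard Lie group structure on a nilpotent graded Lie algebra (via the Dynkin product) to the bundle setting. The underlying smooth manifold of $GM$ is by definition the smooth manifold $\fg M$, which is already known to carry the structure of a smooth bundle of step $r$ graded nilpotent Lie algebras by the immediately preceding proposition of Tanaka. What remains is to verify that the fiberwise Dynkin product and its inversion assemble into smooth maps $GM\times_M GM\to GM$ and $GM\to GM$, so that $GM$ becomes a smooth Lie group bundle, and that on each fiber one recovers a step $r$ graded nilpotent Lie group.

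First I would observe that on each fiber $\fg M(a)$, the construction reviewed just before Definition~\ref{def:Carnot.Carnot-mfld-map}---namely, that a graded nilpotent Lie algebra, equipped with the Dynkin product~(\ref{eq:Carnot.Dynkin-product}), is a graded nilpotent Lie group with unit $0$, exponential map the identity, and inversion $\xi^{-1}=-\xi$ as in~(\ref{eq:GM.inverse})---applies verbatim, since $\fg M(a)$ is a step $r$ graded nilpotent Lie algebra. Thus each $GM(a)$ is a step $r$ graded nilpotent Lie group whose Lie algebra is canonically identified with $\fg M(a)$ through $T_0 \fg M(a)\simeq \fg M(a)$.

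Next I would check smoothness of the group structure. The key observation is that the Dynkin series~(\ref{eq:Carnot.Dynkin-product}) involves only iterated Lie brackets of length $\leq r$, because all longer brackets vanish in a step $r$ nilpotent Lie algebra, and the coefficients appearing are universal rational numbers. By Tanaka's proposition, the bracket $[\cdot,\cdot]:\fg M\times_M\fg M\to \fg M$ is a smooth bundle map; iterating finitely many times gives smooth bundle maps $\fg M\times_M\fg M\to \fg M$ for each iterated bracket of length $\leq r$. Summing the finitely many resulting smooth maps with universal rational coefficients produces a smooth map $m:GM\times_M GM\to GM$ whose restriction to each fiber is the Dynkin product, so $m$ is the fiberwise group multiplication. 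Inversion $\xi\mapsto -\xi$ is the restriction of the scalar multiplication by $-1$ in the vector bundle $\fg M$, hence smooth.

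To conclude, I would verify smoothness of the grading data in the bundle sense. The grading $\fg M=\fg_1M\oplus\cdots\oplus\fg_r M$ is a smooth vector bundle decomposition, and the anisotropic dilations~(\ref{eq:Carnot.dilations}) assemble into a smooth $\R$-action $\delta_t:GM\to GM$ by Lie group automorphisms of each fiber, since on every fiber $\delta_t$ is an automorphism of $\fg M(a)$ compatible with the grading, hence a Lie group automorphism of $GM(a)$. Putting this together yields the claim. The only potential subtlety---and thus the main point to be careful about---is ensuring that the fiberwise Dynkin product genuinely produces a globally smooth bundle map rather than only a pointwise-defined family; but this is precisely where step $r$ nilpotency is decisive, as it truncates the Dynkin series to a finite universal polynomial in the already-smooth iterated brackets on $\fg M$.
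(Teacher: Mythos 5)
Your proof is correct and follows essentially the same route as the paper, which simply equips each fiber $\fg M(a)$ with the Dynkin product and notes that the fiberwise product is smooth. Your added justification---that nilpotency of step $r$ truncates the Dynkin series to a finite universal polynomial in the iterated brackets, which are smooth bundle maps by Tanaka's proposition---correctly fills in the detail the paper leaves implicit.
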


\begin{definition}\label{def:Tangent.Tangent-group}
    $GM$ is called the \emph{tangent group bundle} of $(M,H)$. Each fiber $GM(a)$, $a\in M$ is called the \emph{tangent group} of $(M,H)$ at $a$. 
\end{definition}

\begin{example}
 Suppose that $r=1$ so that $H_1=H_r=TM$. In this case, each Lie algebra $\fg M(a)$, $a\in M$, is Abelian and agrees with $TM(a)$. Therefore, as a Lie algebra bundle, $\fg M=TM$, and, as a Lie group bundle, $GM=TM$.  
\end{example}

\begin{example}\label{ex:Tangent-group.group}
Let $G$ be a graded nilpotent Lie group with Lie algebra $\fg=\fg_1\oplus \cdots \oplus \fg_r$. We equip $G$ with the left-invariant Carnot manifold structure defined by the grading of $\fg$. Then the left-regular actions of $G$ on itself and on $\fg$ give rise to canonical identifications $\fg G\simeq G\times \fg$ and $GG\simeq G\times G$ (see~\cite{CP:Carnot}). 
\end{example}

\begin{remark}
Some authors call $GM(a)$, $a\in M$, the osculating group of $(M,H)$ at $a$ (see, e.g., \cite{Jv:Preprint10, vE:AM10.Part1,vEY:BLMS17}).  
\end{remark}

\begin{remark}
 We refer to~\cite{CP:Carnot, Po:PJM06} for an explicit description of the tangent group bundle of a Heisenberg manifold. 
\end{remark}

\begin{remark}
When $(M,H)$ is an ECC manifold, it follows from Remark~\ref{rem:GM.ECC-Carnot-algebra} that every tangent group $GM(a)$ is a 
Carnot group in the sense of Definition~\ref{def:Carnot.Carnot-algebra}.
\end{remark}

\begin{remark}\label{rmk:tangent.dilations-group-automorphisms}
 In the same way as above, the grading~(\ref{eq-grading}) gives rise to a smooth family of dilations $\delta_t$, $t\in \R\setminus 0$. 
 This gives rise to Lie algebra automorphisms on the fibers of $\fg M$ and group automorphisms on the fibers of $GM$. In addition, the inversion on the fibers of $GM$ is just  the symmetry $\xi \rightarrow -\xi$. 
 \end{remark}

\section{Anisotropic Asymptotic Analysis}\label{sec:anisotropic}
In this section, we gather various results on anisotropic asymptotic expansions of maps and vector fields. This type of asymptotic expansions have been considered in various levels of generality by a number of authors~\cite{ABB:SRG, BG:CHM, Be:Tangent, Go:LNM76, Gr:CC, He:SIAMR, Je:Brief14, MM:JAM00, Me:CPDE76, Mi:JDG85, Ro:INA, Mo:AMS02, RS:ActaMath76}. We shall follow closely the exposition of~\cite{CP:Privileged}. 

In what follows, we let $\delta_t:\R^n\rightarrow \R^n$, $t\in \R$, be the one-parameter group of anisotropic dilations given by 
\begin{equation}
    \delta_{t}(x)=t\cdot x:=(t^{w_{1}}x_{1},\ldots,t^{w_n}x_{n}), \qquad t\in \R, \ x\in \R^n.
    \label{eq:Nilpotent.dilations2}
\end{equation}
We shall say that a function $f(x)$ on $\R^{n}$ is  \emph{homogeneous} of degree $w$,
$w\in \Z$, with respect to the dilations~(\ref{eq:Nilpotent.dilations2}) when 
\begin{equation}
f(t\cdot x)=t^{w}f(x) \qquad \text{for all $x\in \R^n$ and $t\in \R\setminus 0$}. 
\label{eq:priv.homog-function}
\end{equation}
For instance, given any $\alpha \in \N_{0}^{n}$, the monomial $x^{\alpha}$ is homogeneous of degree $\brak \alpha$, where 
$\brak \alpha:=w_1\alpha_1+ \cdots+w_n\alpha_n$.

In what follows we let $U$ be an open neighborhood of the origin $0\in \R^{n}$. 

\begin{definition}
Let $f\in C^{\infty}(U)$ and $w\in \N_0$. We shall say that
\begin{enumerate}
    \item  $f$ has weight $\geq w$ when $\partial^{\alpha}_{x}f(0)=0$ for all multi-orders
    $\alpha \in \N_{0}^{n}$ such that $\brak\alpha<w$.
    \item  $f$ has weight $w$ when $f(x)$ has weight~$\geq w$ and there is a multi-order
    $\alpha\in \N_{0}^{n}$ with $\brak\alpha=w$ such that $\partial^{\alpha}_{x}f(0)\neq 0$.
\end{enumerate}
\end{definition}

In what follows we assume we are given a pseudo-norm $\|\cdot\|$ on $\R^n$, i.e., a non-negative continuous function which is~$>0$ on $\R^n\setminus 0$ and satisfies 
\begin{equation}
 \|t\cdot x\|= |t| \|x\| \qquad \text{for all $x\in \R^n $ and $t\in \R$}.
 \label{eq:anisotropic.homogeneity-pseudo-norm}
\end{equation}
For instance, we may take $\|x\|_1=  |x_1|^{\frac{1}{w_1}} + \cdots + |x_n|^{\frac{1}{w_n}}$. As pointed out in~\cite{CP:Privileged} all pseudo-norms are equivalent, and so the specific choice of a pseudo-norm is irrelevant to our purpose. 

 In addition, given $n'\in \N$, we let $(w_1', \ldots, w_{n'}')$ be another weight sequence with values in $\{1, \ldots, r\}$, i.e., a non-decreasing sequence such that $w_1'=1$.  This allows us to endow $\R^{n'}$ with the family of anisotropic dilations~(\ref{eq:Nilpotent.dilations2}) associated with this weight sequence. We shall use the same notation for the dilations on $\R^n$ and $\R^{n'}$. When $n=n'$ we shall tacitly  assume that $w_j'=w_j$ for $j=1,\ldots,n$.

We shall say that a map $\Theta:\R^n \rightarrow \R^{n'}$ is \emph{$w$-homogeneous} when 
\begin{equation*}
\Theta(t\cdot x)=t\cdot \Theta(x) \qquad \text{for all $x\in \R^n$ and $t\in \R$}. 
\end{equation*}
Equivalently, if we set $\Theta(x)=(\Theta_1(x), \ldots, \Theta_{n'}(x))$, then the component $\Theta_k(x)$ is homogeneous of degree~$w_k'$ for $k=1,\ldots,n'$. 

\begin{definition}\label{def:anisotropic.Thetaw}
Let $\Theta(x)=(\Theta_{1}(x),\ldots,\Theta_{n'}(x))$ be a  map from $U$  to $\R^{n'}$. Given $m\in \Z$, $m\geq -w'_{n'}$, we say that $\Theta(x)$ is $\Ow(\|x\|^{w+m})$, and write $\Theta(x)=\Ow(\|x\|^{w+m})$, when, for $k=1,\ldots,n'$, we have
\begin{equation*}
 \Theta_{k}(x)=\op{O}(\|x\|^{w_{k}'+m}) \qquad \text{near $x=0$}.  
\end{equation*}
\end{definition}

In what follows we equip $C^\infty(U,\R^{n'})$ with its standard Fr\'echet-space topology. 

\begin{lemma}[see~\cite{CP:Privileged}]\label{lem-eq-we}
Let $\Theta(x)=(\Theta_{1}(x),\ldots,\Theta_{n'}(x))$ be a smooth map from $U$ to $\R^{n'}$, and set $ \cU=\{(x,t)\in U\times \R; \ t\cdot x \in U\}$. Given any $m\in \N_0$, the following are equivalent:
\begin{enumerate}
 \item[(i)]    The map $\Theta(x)$ is $\Ow(\|x\|^{w+m})$ near $x=0$.

 \item[(ii)]   For $k=1,\ldots,n'$, the component $\Theta_{k}(x)$ has weight~$\geq w_{k}'+m$. 

 \item[(iii)] For all $x\in \R^{n}$ and as $t\rightarrow 0$, we have $t^{-1}\cdot \Theta(t\cdot x)=\op{O}(t^{m})$.  

 \item[(iv)]  As $t\rightarrow 0$, we have $t^{-1}\cdot \Theta(t\cdot x)=\op{O}(t^{m})$ in $C^{\infty}(U,\R^{n'})$. 
 
 \item[(v)]  There is $\tilde{\Theta}(x,t)\in C^\infty(\cU, \R^{n'})$ such that 
 $t^{-1}\cdot \Theta(t\cdot x)=t^m \tilde{\Theta}(x,t)$ for all $(x,t)\in \cU$, $t\neq 0$.
\end{enumerate}
\end{lemma}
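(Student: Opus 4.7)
The plan is to reduce the entire statement to a scalar, componentwise equivalence and then combine Taylor expansion with Hadamard's lemma. Because the $k$-th component of $t^{-1}\cdot \Theta(t\cdot x)$ is $t^{-w_k'}\Theta_k(t\cdot x)$, all five conditions split into independent conditions on each component $\Theta_k$ with weight threshold $N:=w_k'+m$. So it suffices to prove, for a single smooth scalar $f$ on $U$ and an integer $N\geq 0$, that the following are equivalent: (a) $f(x)=\op{O}(\|x\|^N)$ near $0$; (b) $\partial^\alpha f(0)=0$ whenever $\brak{\alpha}<N$; (c) $f(t\cdot x)=\op{O}(t^N)$ pointwise as $t\to 0$; (d) $f(t\cdot x)=\op{O}(t^N)$ in $C^\infty(U)$; (e) $f(t\cdot x)=t^N h(x,t)$ for some $h\in C^\infty(\cU)$.

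The ordinary Taylor expansion $f(y)=\sum_{|\alpha|\leq K}\frac{\partial^\alpha f(0)}{\alpha!}y^\alpha+R_K(y)$, together with the elementary anisotropic estimates $|x^\alpha|\leq C\|x\|^{\brak{\alpha}}$ and $|x|\leq C\|x\|$ valid for $x$ near $0$ (both following from $\min_j w_j=1$ and the equivalence of pseudo-norms), gives the implication (b)$\Rightarrow$(a) upon taking $K=N-1$: only monomials with $\brak{\alpha}\geq N$ survive in the truncated sum, and these monomials together with the remainder $R_K(x)=\op{O}(|x|^N)$ are all $\op{O}(\|x\|^N)$. For the reverse direction, and also for (c)$\Rightarrow$(b), I would argue by contradiction along the rescaling curve $t\mapsto t\cdot x$: if some $\partial^{\alpha_0}f(0)\neq 0$ with $\brak{\alpha_0}<N$ minimal, then the leading behavior of $f(t\cdot x)$ is a nonzero polynomial in $x$ of order exactly $t^{\brak{\alpha_0}}$, which cannot be absorbed into $\op{O}(t^N)$ nor into $\op{O}(\|t\cdot x\|^N)=\op{O}(t^N\|x\|^N)$.

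The heart of the argument is (b)$\Rightarrow$(e), which uses Hadamard's lemma in the single variable $t$ applied to the smooth function $g(x,t):=f(t\cdot x)$ on $\cU$. Substituting a Taylor polynomial of $f$ into $g$ and exploiting that the remainder $R_K(t\cdot x)$ vanishes to order $K+1$ in $t$ at $t=0$, one obtains the clean identity
\begin{equation*}
\partial_t^l g(x,0) = l!\sum_{\brak{\alpha}=l}\frac{\partial^\alpha f(0)}{\alpha!}x^\alpha,
\end{equation*}
so condition (b) forces $\partial_t^l g(x,0)=0$ for every $l<N$; the one-variable Hadamard lemma then produces $h\in C^\infty(\cU)$ with $g(x,t)=t^N h(x,t)$, which is (e). The remaining implications (e)$\Rightarrow$(d)$\Rightarrow$(c) are immediate from smoothness of $h$ on $\cU$. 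The only mildly delicate step is precisely the identity for $\partial_t^l g(x,0)$: a brute-force chain-rule expansion of $f(t\cdot x)$ becomes combinatorially opaque because each differentiation in $t$ introduces factors of $w_j t^{w_j-1}x_j$, but the ``Taylor polynomial plus flat remainder'' presentation above feeds cleanly into Hadamard. Everything else amounts to bookkeeping on the anisotropic weights and pseudo-norm bounds.
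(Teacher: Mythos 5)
Your argument is correct, but note first that this paper does not actually prove Lemma~\ref{lem-eq-we}: it is quoted from~\cite{CP:Privileged}, and the only trace of its proof here is in the proof of Lemma~\ref{lem:anisotropic.Theta-parameter}, which reveals the authors' mechanism. Their route is an \emph{anisotropic} Taylor formula in the space variables: they write $f(y)=\sum_{\brak\alpha<N}\frac{1}{\alpha!}\partial^\alpha f(0)y^\alpha+\sum_{|\alpha|\leq N\leq\brak\alpha}y^\alpha R_{N\alpha}(y)$ with $R_{N\alpha}$ smooth on all of $U$, so that under (ii) the rescaled map factors as $t^N\sum t^{\brak\alpha-N}y^\alpha R_{N\alpha}(t\cdot y)$, which is manifestly smooth on all of $\cU$ and gives (v) with an explicit formula for $\tilde\Theta$. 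You instead apply the one-variable Hadamard lemma in the scaling parameter $t$ to $g(x,t)=f(t\cdot x)$, after computing $\partial_t^l g(x,0)=l!\sum_{\brak\alpha=l}\frac{\partial^\alpha f(0)}{\alpha!}x^\alpha$ (which is exactly the statement that the coefficient of $t^\ell$ in Proposition~\ref{lem:multi.Thetat} is $\Theta^{[\ell]}$). Your version is cleaner combinatorially --- it avoids bookkeeping the weighted remainder terms --- but it buys this at the cost of one point you should not gloss over: the integral-remainder formula $h(x,t)=\frac{1}{(N-1)!}\int_0^1(1-s)^{N-1}\partial_t^Ng(x,st)\,ds$ requires the segment $\{(x,st):s\in[0,1]\}$ to lie in $\cU$, and $\cU$ need not be star-shaped in $t$ for a general open neighborhood $U$ of $0$ (e.g.\ a disconnected $U$). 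This is repaired by defining $h=t^{-N}g$ for $t\neq 0$ and by the integral formula on the neighborhood of $U\times\{0\}$ where the segment condition does hold (it holds for $|t|$ small, locally uniformly in $x$), checking agreement on the overlap; the paper's anisotropic-remainder formula sidesteps the issue entirely. Your closing of the cycle (ii)$\Rightarrow$(v)$\Rightarrow$(iv)$\Rightarrow$(iii)$\Rightarrow$(ii), together with (ii)$\Leftrightarrow$(i) via $|x^\alpha|\leq C\|x\|^{\brak\alpha}$ and the minimal-weight contradiction, covers all five equivalences.
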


\begin{proposition}[see~\cite{CP:Privileged}]\label{lem:multi.Thetat}
Let $\Theta(x)=(\Theta_1(x), \ldots, \Theta_{n'}(x))$ be a smooth map from $U$ to $\R^{n'}$. Then, as $t\rightarrow 0$, we have
\begin{equation}
t^{-1}\cdot  \Theta(t\cdot x) \simeq \sum_{\ell \geq -w_{n'}'} t^\ell \Theta^{[\ell]}(x) \qquad \text{in $C^\infty(U,\R^{n'})$},
\label{eq:multi.Thetat}
\end{equation}
where $\Theta^{[\ell]}(x)$ is a polynomial map such that $t^{-1}\cdot \Theta^{[\ell]}(t\cdot x)=t^\ell \Theta^{[\ell]}(x)$ for all $t\in \R^*$.   
\end{proposition}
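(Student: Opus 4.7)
The plan is to produce the expansion by Taylor-expanding each component $\Theta_k$ at the origin, regrouping the monomials by weighted degree $\langle\alpha\rangle$, and controlling the remainder via the equivalence of items (ii) and (iv) in Lemma~\ref{lem-eq-we}. The underlying reason things work is that the anisotropic dilation $x\mapsto t\cdot x$ acts on a monomial $x^\alpha$ by multiplication by $t^{\langle\alpha\rangle}$, while the dilation $t^{-1}\cdot$ on the target divides the $k$-th component by $t^{w_k'}$, so each Taylor monomial contributes a single power of $t$, namely $t^{\langle\alpha\rangle - w_k'}$.

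Concretely, I would fix an integer $N\geq 0$ and write, for each $k=1,\ldots,n'$, the ordinary Taylor expansion of $\Theta_k$ to a sufficiently high order $M\geq w_k'+N$, then regroup by weighted degree to define
\[
P_k^{(N)}(x):=\sum_{\langle\alpha\rangle\leq w_k'+N}\frac{\partial_x^\alpha\Theta_k(0)}{\alpha!}x^\alpha,\qquad R_k^{(N)}(x):=\Theta_k(x)-P_k^{(N)}(x).
\]
By construction $\partial_x^\alpha R_k^{(N)}(0)=0$ whenever $\langle\alpha\rangle\leq w_k'+N$, so each $R_k^{(N)}$ has weight $\geq w_k'+N+1$. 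Collecting the $P_k^{(N)}$ into $P^{(N)}$ and the $R_k^{(N)}$ into $R^{(N)}$, condition (ii) of Lemma~\ref{lem-eq-we} is satisfied for $R^{(N)}$ with exponent $N+1$, and so by (iv) we obtain $t^{-1}\cdot R^{(N)}(t\cdot x)=\op{O}(t^{N+1})$ in $C^\infty(U,\R^{n'})$. Because this holds for every $N$, this remainder estimate is exactly what is needed to upgrade the formal series to a genuine asymptotic expansion in the Fr\'echet topology of $C^\infty(U,\R^{n'})$.

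For the polynomial part, I compute directly, using $(t\cdot x)^\alpha=t^{\langle\alpha\rangle}x^\alpha$, that
\[
\bigl(t^{-1}\cdot P^{(N)}(t\cdot x)\bigr)_k=\sum_{\langle\alpha\rangle\leq w_k'+N}\frac{\partial_x^\alpha\Theta_k(0)}{\alpha!}\,t^{\langle\alpha\rangle-w_k'}x^\alpha=\sum_{\ell=-w_k'}^{N}t^\ell\,\Theta_k^{[\ell]}(x),
\]
where
\[
\Theta_k^{[\ell]}(x):=\sum_{\langle\alpha\rangle=w_k'+\ell}\frac{\partial_x^\alpha\Theta_k(0)}{\alpha!}\,x^\alpha,
\]
with the convention that $\Theta_k^{[\ell]}=0$ if $\ell<-w_k'$. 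Since $w_k'\leq w_{n'}'$ for every $k$, the components $\Theta_k^{[\ell]}$ vanish whenever $\ell<-w_{n'}'$, which yields the lower bound on the summation index in the statement. The assembled map $\Theta^{[\ell]}:=(\Theta_1^{[\ell]},\ldots,\Theta_{n'}^{[\ell]})$ is polynomial, and the homogeneity of each monomial $x^\alpha$ together with $\langle\alpha\rangle=w_k'+\ell$ gives $\Theta_k^{[\ell]}(t\cdot x)=t^{w_k'+\ell}\Theta_k^{[\ell]}(x)$, which is precisely the homogeneity property $t^{-1}\cdot\Theta^{[\ell]}(t\cdot x)=t^\ell\Theta^{[\ell]}(x)$ claimed for $\Theta^{[\ell]}$.

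I do not anticipate a serious obstacle here; the main bookkeeping issue is to make sure the regrouping of Taylor terms by weighted degree is consistent with the cutoff $N$ on the remainder (choosing $M$ large enough that no monomial of weighted degree $\leq w_k'+N$ is dropped) and to invoke Lemma~\ref{lem-eq-we} in the $C^\infty$-topology form (iv) rather than its pointwise form (iii), so that the resulting expansion is valid in the Fr\'echet-space sense demanded by the statement.
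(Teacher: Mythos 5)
Your argument is correct and is essentially the intended one: the paper defers the proof to~\cite{CP:Privileged}, but the explicit formula for $\Theta^{[\ell]}_k$ given in the remark immediately following the proposition, together with the unpacking of the asymptotic expansion in Remark~\ref{rmk:anisotropic.weighted-asymptotic}, shows that the expected proof is precisely your Taylor expansion regrouped by weighted degree, with the remainder controlled via the equivalence (ii)$\Leftrightarrow$(iv) of Lemma~\ref{lem-eq-we}. Your bookkeeping (taking the ordinary Taylor order $M\geq w_k'+N$ so that no monomial with $\brak\alpha\leq w_k'+N$ is dropped, which works since $|\alpha|\leq\brak\alpha$) is sound, so there is nothing to add.
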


\begin{remark}
More explicitly, we have  $\Theta^{[\ell]}(x)=(\Theta^{[\ell]}_1(x), \ldots, \Theta^{[\ell]}_{n'}(x))$, where 
\begin{equation*}
\Theta^{[\ell]}_{k}(x)= \sum_{\brak\alpha=\ell +w_k'}\frac{1}{\alpha!} \partial^\alpha\Theta_k(0)x^\alpha, \qquad k=1,\ldots,n'.  
\end{equation*}
\end{remark}

\begin{remark}\label{rmk:anisotropic.weighted-asymptotic} 
 The asymptotic expansion~(\ref{eq:multi.Thetat}) exactly means that, for every integer $m> -w_{n'}'$, we have 
\begin{equation*}
 \Theta(x) - \sum_{\ell <m}  \Theta^{[\ell]}(x)=\Ow\left( \|x\|^{w+m}\right) \qquad \text{near $x=0$}.
\end{equation*}
In particular, we see that $ \Theta(x)=\Ow( \|x\|^{w+m})$ if and only if $\Theta^{[\ell]}(x)=0$ for $\ell<m$. 
\end{remark}

The previous considerations also extend to vector fields. The dilations~(\ref{eq:Nilpotent.dilations2}) act on vector fields by pullback. If $X=\sum_{j=1}^n a_j(x)\partial_{x_j}$ is a vector field on $U$, then we have 
\begin{equation}
 \delta_t^*X = \sum_{j=1}^n t^{-w_j}a_j(t\cdot x)\partial_{x_j} \qquad \text{for all $t\in \R^*$}. 
 \label{eq:anisotropic-dtX}
\end{equation}
We then shall say that a vector field $X$ on $\R^{n}$  is \emph{homogeneous} of degree $w$, $w\in \Z$, when
\begin{equation}
    \delta_{t}^{*}X=t^{\omega} X \qquad \text{for all $t\in \R^*$}. 
    \label{eq:priv.homog-vecto-field}
\end{equation}
For instance, the vector field $\partial_{x_j}$, $j=1,\ldots, n$, is homogeneous of degree $-w_j$. 

\begin{definition} 
Let $X=\sum_j a_j(x)\partial_j$ be a smooth vector field on $U$. We say that $X$ has weight $w$, $w\in \Z$, when, for every $j=1, \ldots, n$, the coefficient $a_j(x)$ has weight~$\geq w+w_j$ and we have equality at least once. 
 \end{definition}

Let us denote by $\cX(U)$ the space of (smooth) vector fields on $U$. Any vector field $X\in \cX(U)$ has a unique expression of the form, 
\begin{equation*}
    X=\sum_{1\leq j \leq n}a_{j}(X)(x)\partial_{j}, \qquad \text{with $a_{j}(X)\in C^{\infty}(U)$}.
\end{equation*}
If we set $X[x]=(a_1(X)(x),\ldots, a_n(X)(x))$, then $X\rightarrow X[x]$ is a linear isomorphism from $\cX(U)$ onto $C^\infty(U,\R^n)$. The standard Fr\'echet-space topology of $\cX(U)$ is precisely the weakest locally convex space topology with respect to which this linear map is continuous. 

\begin{proposition}[see~\cite{CP:Privileged}]\label{prop:nilp-approx.vector-fields}
    Let $X$ be a smooth vector field on $U$. Then, as $t\rightarrow 0$, we have 
                \begin{equation}\label{eq-delta-X}
            \delta_{t}^{*}X\simeq \sum_{\ell\geq -r} t^{\ell}X^{[\ell]} \qquad \text{in $\cX(U)$},
        \end{equation}
        where $X^{[\ell]}$ is a homogeneous polynomial vector field of degree $\ell$. In particular, 
$X$ has weight $w$ if and only if, as $t\rightarrow 0$, we have 
       \begin{equation*}
       \delta_{t}^{*}X= t^{w}X^{[w]} +\op{O}(t^{w+1})\qquad \text{in $\cX(U)$}.
    \end{equation*}
\end{proposition}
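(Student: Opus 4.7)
The plan is to reduce the vector-field statement to the map statement in Proposition~\ref{lem:multi.Thetat} via the coordinate isomorphism $X \mapsto X[x] = (a_{1}(X)(x), \ldots, a_{n}(X)(x))$ from $\cX(U)$ onto $C^{\infty}(U,\R^{n})$, which by construction is a homeomorphism of Fr\'echet spaces.

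First, I would unpack the action of $\delta_{t}^{*}$ on coefficients. By~(\ref{eq:anisotropic-dtX}), if $X = \sum_{j} a_{j}(x)\partial_{x_{j}}$, then
\begin{equation*}
(\delta_{t}^{*}X)[x] = \bigl(t^{-w_{1}} a_{1}(t\cdot x), \ldots, t^{-w_{n}}a_{n}(t\cdot x)\bigr) = t^{-1}\cdot a(t\cdot x),
\end{equation*}
where $a(x) := X[x]$ is viewed as a smooth map from $U$ to $\R^{n}$ with target weight sequence $w'=w$. This places us exactly in the hypotheses of Proposition~\ref{lem:multi.Thetat}.

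Next, I would apply Proposition~\ref{lem:multi.Thetat} to the map $a(x)$ to obtain, as $t\to 0$,
\begin{equation*}
t^{-1}\cdot a(t\cdot x) \simeq \sum_{\ell \geq -w_{n}} t^{\ell}\, a^{[\ell]}(x) \qquad \text{in $C^{\infty}(U,\R^{n})$},
\end{equation*}
where each $a^{[\ell]}(x)$ is a polynomial map satisfying $t^{-1}\cdot a^{[\ell]}(x) = t^{\ell} a^{[\ell]}(x)$. Setting $X^{[\ell]} := \sum_{j} a_{j}^{[\ell]}(x)\partial_{x_{j}}$, the identity $(\delta_{t}^{*}X^{[\ell]})[x] = t^{-1}\cdot a^{[\ell]}(t\cdot x) = t^{\ell}a^{[\ell]}(x)$ translates (under $X \leftrightarrow X[x]$) into $\delta_{t}^{*}X^{[\ell]} = t^{\ell}X^{[\ell]}$, so $X^{[\ell]}$ is a homogeneous polynomial vector field of degree $\ell$ in the sense of~(\ref{eq:priv.homog-vecto-field}). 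Since $w_{n}\leq r$, the sum may be started at $\ell = -r$ (with zero padding), giving~(\ref{eq-delta-X}).

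For the ``in particular'' part, I would unwind the definition of weight via Remark~\ref{rmk:anisotropic.weighted-asymptotic}. Saying $X$ has weight $w$ amounts to saying that each $a_{j}$ has weight $\geq w + w_{j}$, i.e., $a(x) = \Ow(\|x\|^{w+w})$ in the sense of Definition~\ref{def:anisotropic.Thetaw} with shift parameter $m = w$, together with equality for at least one $j$. By Remark~\ref{rmk:anisotropic.weighted-asymptotic} applied to $a(x)$, this is equivalent to $a^{[\ell]} = 0$ for $\ell < w$ and $a^{[w]}\neq 0$, hence $X^{[\ell]} = 0$ for $\ell < w$ and $X^{[w]}\neq 0$. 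Combined with~(\ref{eq-delta-X}) this yields $\delta_{t}^{*}X = t^{w}X^{[w]} + \op{O}(t^{w+1})$ in $\cX(U)$.

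The only mildly delicate point is the bookkeeping of the weight shift between $a_{j}$ (weight $\geq w + w_{j}$) and the map-level condition (weight $\geq w_{k}' + m$ with $m=w$); once that is aligned, everything follows from Proposition~\ref{lem:multi.Thetat} and Remark~\ref{rmk:anisotropic.weighted-asymptotic} by transport through the isomorphism $\cX(U)\simeq C^{\infty}(U,\R^{n})$, so I do not expect any real obstacle beyond this verification.
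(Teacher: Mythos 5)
Your proposal is correct, and it follows exactly the route the paper sets up: the text introduces the isomorphism $X\mapsto X[x]$ from $\cX(U)$ onto $C^\infty(U,\R^{n})$ immediately before the statement precisely so that Proposition~\ref{lem:multi.Thetat} and Remark~\ref{rmk:anisotropic.weighted-asymptotic} can be transported to vector fields via the identity $(\delta_{t}^{*}X)[x]=t^{-1}\cdot X[t\cdot x]$, which is what you do (the paper itself defers the details to~\cite{CP:Privileged}). Your bookkeeping of the weight shift ($a_{j}$ of weight $\geq w+w_{j}$ corresponding to $m=w$ at the map level) and the padding of the sum from $-w_{n}$ down to $-r$ are both handled correctly.
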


Finally, we will need the following extension of Lemma~\ref{lem-eq-we}. 

\begin{lemma}\label{lem:anisotropic.Theta-parameter}
 Let $W$ be an open subset of $\R^p\times \R^n$ such that $W \cap (\R^p\times \{0\})=U\times \{0\}$, where $U$ is a non-empty (open) subset of $\R^p$. In addition, set 
 $\cU=\{(x,y,t)\in U\times \R^n\times \R; (x,t\cdot y)\in W\}$. Let $\Theta(x,y)\in C^\infty(W,\R^n)$. Then the following are equivalent:
 \begin{enumerate}
 \item[(i)]    For every $x\in U$, we have $\Theta(x,y)=\Ow(\|y\|^{w+m})$ near $y=0$.

 \item[(ii)]   For every $(x,y)\in U\times \R^n$ and as $t\rightarrow 0$, we have $t^{-1}\cdot \Theta(x,t\cdot y)=\op{O}(t^{m})$. 
 
\item[(iii)]  There is a map $\tilde{\Theta}(x,y,t)\in C^\infty(\cU, \R^{n'})$ such that 
 $t^{-1}\cdot \Theta(x,t\cdot y)=t^m \tilde{\Theta}(x,y,t)$ for all $(x,y,t)\in \cU$ with $t\neq 0$.
\end{enumerate}
\end{lemma}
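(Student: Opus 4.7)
The plan is to reduce this to the parameter-free Lemma~\ref{lem-eq-we} together with a standard parameterized Taylor-remainder argument. The implications (iii)$\Rightarrow$(ii) and (ii)$\Rightarrow$(i) should be essentially automatic. For (iii)$\Rightarrow$(ii), any $\tilde{\Theta}$ smooth on $\cU$ is bounded on compact subsets as $t$ approaches $0$, so $t^{-1}\cdot \Theta(x,t\cdot y)=t^{m}\tilde{\Theta}(x,y,t)=\op{O}(t^{m})$. For (ii)$\Rightarrow$(i), I would fix an arbitrary $x\in U$ and apply the equivalence of (iii) with (i) in the already-established Lemma~\ref{lem-eq-we} to the smooth map $y\mapsto \Theta(x,y)$, which is defined on an open neighborhood of $0\in\R^{n}$ by virtue of the assumption $W\cap(\R^{p}\times\{0\})=U\times\{0\}$.

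The substance of the lemma is therefore the implication (i)$\Rightarrow$(iii), which I would attack component by component through a parameterized Taylor expansion in $y$. Fix $k\in\{1,\ldots,n\}$ and choose an integer $N$ with $N+1\geq w_{k}+m$. Taylor's theorem with integral remainder, applied in the $y$-variable and with $x$ as parameter, produces smooth Taylor coefficients $\partial_{y}^{\alpha}\Theta_{k}(x,0)$ and smooth remainder functions $R_{k,\alpha}(x,y)\in C^{\infty}(W)$ with $|\alpha|=N+1$ such that $\Theta_{k}(x,y)$ decomposes as its order-$N$ Taylor polynomial at $y=0$ plus $\sum_{|\alpha|=N+1}y^{\alpha}R_{k,\alpha}(x,y)$. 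By hypothesis (i), specialized for each fixed $x$ and combined with Remark~\ref{rmk:anisotropic.weighted-asymptotic}, every Taylor coefficient with $\brak{\alpha}<w_{k}+m$ vanishes identically in $x$. Substituting $y\mapsto t\cdot y$ and using $(t\cdot y)^{\alpha}=t^{\brak{\alpha}}y^{\alpha}$ then lets one factor $t^{w_{k}+m}$ out of both the polynomial part (since the surviving coefficients have $\brak{\alpha}\geq w_{k}+m$) and the remainder (since $\brak{\alpha}\geq|\alpha|=N+1\geq w_{k}+m$). Consequently $\tilde{\Theta}_{k}(x,y,t):=t^{-(w_{k}+m)}\Theta_{k}(x,t\cdot y)$ extends smoothly across $t=0$. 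Assembling the components and recalling that the $k$-th component of $t^{-1}\cdot$ is multiplication by $t^{-w_{k}}$ yields the desired factorization $t^{-1}\cdot \Theta(x,t\cdot y)=t^{m}\tilde{\Theta}(x,y,t)$.

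The only real delicacy is that $\tilde{\Theta}$ must be smooth on the full open set $\cU$, and not merely on a neighborhood of $U\times\R^{n}\times\{0\}$. This is handled by taking the remainder functions $R_{k,\alpha}(x,y)$ to be genuinely defined on all of $W$, so that $R_{k,\alpha}(x,t\cdot y)$ is automatically smooth in $(x,y,t)\in \cU$ by the very definition of $\cU$. No further gluing or partition-of-unity argument is needed, and the Taylor part of the decomposition is polynomial in $(y,t)$ with smooth coefficients in $x$, hence smooth everywhere.
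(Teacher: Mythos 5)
Your proposal is correct and follows essentially the same strategy as the paper: the implications involving (ii) and (iii) are dispatched by applying Lemma~\ref{lem-eq-we} fiberwise in $x$, and the real content is (i)$\Rightarrow$(iii), proved by a parameterized Taylor expansion in $y$ with an explicit remainder that can be evaluated at $t\cdot y$ so that the factor $t^{w_k+m}$ comes out of each component. The one methodological difference is which Taylor formula you use: the paper invokes the \emph{anisotropic} Taylor formula of~\cite{CP:Privileged}, with polynomial part indexed by $\brak\alpha<N$ and remainder terms indexed by $|\alpha|\leq N\leq \brak\alpha$, taking $N$ equal to the weighted order exactly; you instead take the ordinary isotropic expansion to a high enough order $N+1\geq w_k+m$ and use $\brak\alpha\geq|\alpha|$ to see that the entire remainder automatically has weighted order $\geq w_k+m$. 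Your variant is slightly more elementary, since it avoids the weighted Taylor formula altogether at the cost of carrying extra (vanishing) Taylor coefficients.

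The one place where your write-up overreaches is the claim that Taylor's theorem with integral remainder produces remainders $R_{k,\alpha}$ smooth on all of $W$, with no gluing required. The integral formula $R_{k,\alpha}(x,y)=c_{\alpha}\int_0^1(1-s)^N\partial_y^\alpha\Theta_k(x,sy)\,ds$ needs the segment $\{(x,sy):0\leq s\leq 1\}$ to stay inside $W$, and the hypothesis $W\cap(\R^p\times\{0\})=U\times\{0\}$ does not make the fibers $W_x$ star-shaped about $y=0$. This is easily repaired and does not affect the conclusion: either glue the local decompositions by a partition of unity to get remainders on all of $W$ (which is in effect what the paper defers to~\cite{CP:Privileged} for), or, more directly, note that smoothness of $\tilde{\Theta}$ is local on $\cU$, that it is automatic where $t\neq 0$, and that near any point of $U\times\R^n\times\{0\}$ the argument $(x,t\cdot y)$ is confined to a product neighborhood $U_0\times B\subset W$ of a point of $U\times\{0\}$, on which the integral-remainder formula does apply. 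With that remark added, your proof is complete.
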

\begin{proof}
It follows from Lemma~\ref{lem-eq-we} that (i) and (ii) are equivalent and are both implied by (iii). Therefore, we only have to show that (i) implies (iii). Thus, suppose that, for every $x\in U$, we have $\Theta(x,y)=\Ow(\|y\|^{w+m})$ near $y=0$. If we set $\Theta(x,y)=(\Theta_1(x,y),\ldots, \Theta_{n'}(x,y))$, then this means that, for every $x\in U$ and $k=1,\ldots, n'$, we have $\Theta_k(x,y)=\op{O}(\|y\|^{w_k'+m})$ near $y=0$.  

Given any function $F(x,y)\in C^\infty(W)$ an elaboration of the arguments of the proof of the anisotropic Taylor formula in~\cite{CP:Privileged} shows that, for every $N\in \N_0$, there are functions $R_{N\alpha}(x,y)\in C^\infty(W)$, $|\alpha|\leq N \leq \brak\alpha$, such that
\begin{equation*}
 F(x,y)= \sum_{\brak\alpha <N}\frac{1}{\alpha!} y^\alpha\partial_y^\alpha F(x,0) + \sum_{|\alpha|\leq N \leq \brak\alpha} y^\alpha R_{N\alpha}(x,y) \qquad \text{for all $(x,y)\in W$}.  
\end{equation*}
Moreover, if we further assume that, for every $x\in U$, we have $F(x,y)=\op{O}(\|y\|^N)$ near $y=0$, then the proof of Lemma~\ref{lem-eq-we} for $n'=1$ in~\cite{CP:Privileged} shows that there is a map $G(x,y,t)\in C^\infty(\cU, \R^{n'})$ such that
\begin{equation*}
 F(x,t\cdot y) = t^N G(x,y,t) \qquad \text{for all $(x,y,t)\in \cU$}, 
\end{equation*}
More precisely, the map $G(x,y,t)$ is given by
\begin{equation*}
 G(x,y,t) = \sum_{|\alpha|\leq N \leq \brak\alpha}t^{\brak\alpha -N} y^\alpha R_{N\alpha}(x,t\cdot y), \qquad (x,y,t)\in \cU.  
\end{equation*}

Applying the above considerations to $F(x,y)=\Theta_k(x,y)$, $k=1,\ldots, n'$, and $N=\max\{w_k'+m,0\}$ we see that there is a function $\tilde{\Theta}_k(x, y,t)\in C^\infty(\cU)$ such that $\Theta_k(x,t\cdot y)=t^{w_k'+m}\tilde{\Theta}_k(x,y,t)$ for all $(x,y,t)\in \cU$. Thus, if we set $\tilde{\Theta}(x,y,t)= ( \tilde{\Theta}_1(x,y,t),\ldots, \tilde{\Theta}_{n'}(x,y,t))$, $(x,y,t)\in \cU$, then we obtain a smooth map from $\cU$ to $\R^{n'}$ such that 
\begin{equation*}
 t^{-1}\cdot \Theta(x,t\cdot y)=t^m \tilde{\Theta}(x,y,t)\qquad \text{for all $(x,y,t)\in \cU$, $t\neq 0$}.
\end{equation*}
This shows that (i) implies (iii). The proof is complete. 
\end{proof}

\section{Carnot Coordinates}\label{sec:Carnot-coordinates}
In this section, we survey the main definitions and facts regarding privileged coordinates, nilpotent approximation, and the Carnot coordinates of~\cite{CP:Carnot}. 

Throughout this section we let $(X_{1},\ldots,X_{n})$ be an $H$-frame over an open neighborhood of a given point $a\in M$. 

\subsection{Privileged coordinates and nilpotent approximation} Recall that local coordinates $(x_{1},\ldots,x_{n})$ centered at  $a$ are said to be \emph{linearly adapted} to
   the $H$-frame $(X_{1},\ldots,X_{n})$ when $X_{j}({0})=\partial_{j}$ for $j=1,\ldots,n$. Any system of local coordinates can be converted into a system of linearly adapted coordinates by means of a unique affine transformation (see, e.g., \cite{Be:Tangent, CP:Privileged}). 

\begin{definition}[\cite{Go:LNM76, RS:ActaMath76}]
 We say that local coordinates $(x_{1},\ldots,x_{n})$ are \emph{privileged coordinates at $a$ adapted to the $H$-frame} $(X_1,\ldots, X_n)$ when
 \begin{enumerate}
\item[(i)] They are linearly adapted at $a$ to $(X_1,\ldots, X_n)$.

\item[(ii)] For $j=1, \ldots,n$, the vector field $X_j$ has weight $-w_j$ in these coordinates. 
\end{enumerate}
\end{definition}

\begin{remark}
 In~\cite{Be:Tangent, BS:SIAMJCO90, CP:Privileged} privileged coordinates are defined by replacing the condition (ii) by the requirement that $X^\alpha(x_j)(0)=0$ whenever $\brak\alpha <w_j$. In the terminology of~\cite{Be:Tangent, BS:SIAMJCO90} this means that the coordinate function $x_j$ has order $w_j$ for $j=1,\ldots, n$. It is well known that if $(x_1, \ldots, x_n)$ are linearly adapted coordinates, then this condition implies (ii) (see~\cite{Be:Tangent, BS:SIAMJCO90, Je:Brief14}). The converse is proved in~\cite{CP:Privileged}, and so the two definitions are equivalent. 
\end{remark}

\begin{remark}
 We refer to~\cite{AS:DANSSSR87, Be:Tangent,BS:SIAMJCO90,CP:Privileged, Go:LNM76, He:SIAMR, RS:ActaMath76, St:1988} for various constructions of privileged coordinates. We also refer to~\cite{CP:Privileged} for the precise description of all the systems of privileged coordinates at a given point. 
\end{remark}

Suppose that $(x_1,\ldots, x_n)$ are privileged coordinates at $a$ adapted to $(X_1,\ldots, X_n)$. We let $U\subset \R^n$ be their range. For $j=1, \ldots, n$, the vector field $X_j$ has weight $w_j$ in these coordinates. Therefore, by Proposition~\ref{prop:nilp-approx.vector-fields} there is a polynomial vector field $X^{(a)}_j$ which is homogeneous of degree~$-w_j$ and such that, as $t\rightarrow 0$, we have  
\begin{equation}
 t^{w_j}\delta_t^* X_j=X_j^{(a)}+\op{O}(t) \qquad \text{in $\cX(U)$}. 
 \label{eq:Carnot-coord.approximationXj(a)}
\end{equation}
We call $X_j^{(a)}$ the \emph{model vector field} of $X_j$ at $a$ in the coordinates $(x_1, \ldots, x_n)$. 

Let $\fg^{(a)}$ be the subspace of vector fields on $\R^n$ spanned by $X_1^{(a)}, \ldots, X_n^{(a)}$. There is a natural grading 
$\fg^{(a)}=\fg_{1}^{(a)}\oplus \cdots \oplus \fg_{r}^{(a)}$, where $\fg_{w}^{(a)}$ is spanned by the vector fields $X_j^{(a)}$ with $w_j=w$. Moreover, 
using~(\ref{eq:Carnot-mfld.brackets-H-frame}) and~(\ref{eq:Carnot-coord.approximationXj(a)}) we see that 
 \begin{equation}
     [X_{i}^{(a)}, X_{j}^{(a)}]=\left\{
     \begin{array}{cl}
   {\displaystyle \sum_{w_{k}=w_{i}+w_{j}}L_{ij}^{k}(a)X_{k}^{(a)}} & \text{if $w_{i}+w_{j}\leq  r$},   \\
         0 & \text{otherwise}.  \\
     \end{array}\right.
     \label{eq:Nilpotent.structure-constants-Xj(a)}
 \end{equation}
It then follows that with respect to the bracket of vector fields $\fg^{(a)}$ is a graded nilpotent Lie algebra which is isomorphic to the tangent Lie algebra $\fg M(a)$. 

There is a unique group law on $\R^n$ such that if $G^{(a)}$ is $\R^n$ equipped with this group law, then $G^{(a)}$ is a graded nilpotent Lie group whose Lie algebra of left-invariant vector fields is precisely $\fg^{(a)}$. 

\begin{definition}
 The nilpotent graded Lie group $G^{(a)}$ equipped with its left-invariant Carnot manifold structure is called the \emph{nilpotent approximation} of $(M,H)$ at $a$ in the privileged coordinates $(x_1,\ldots, x_n)$. 
\end{definition}

The dilations~(\ref{eq:Nilpotent.dilations2}) are group automorphisms of the Lie group $G^{(a)}$. In addition, let $\fg(a)$ be graded nilpotent Lie algebra obtained by equipping $T\R^{n}(0)\simeq \R^n$ with the Lie bracket given by
  \begin{equation*}
     [\partial_{i}, \partial_{j}]=\left\{
     \begin{array}{cl}
   {\displaystyle \sum_{w_{k}=w_{i}+w_{j}}L_{ij}^{k}(a)\partial_{k}} & \text{if $w_{i}+w_{j}\leq  r$},   \\
         0 & \text{otherwise}  \\
     \end{array}\right.
 \end{equation*}
It follows from~(\ref{eq:Nilpotent.structure-constants-Xj(a)}) that the Lie algebra $TG^{(a)}(0)$ is precisely $\fg(a)$. Conversely, we have the following result. 

\begin{proposition}[\cite{CP:Privileged}]\label{eq:Carnot-coord.charact-nilpotent-approx} 
 Let $G$ be a nilpotent Lie group built out of $\R^n$. Then $G$ provides us with the nilpotent approximation of $(M,H)$ in some privileged coordinates at $a$ adapted to $(X_1,\ldots, X_n)$ if and only if the following two conditions are satisfied: 
 \begin{enumerate}
\item[(i)] The dilations~(\ref{eq:Nilpotent.dilations2}) are group automorphisms of $G$. 

\item[(ii)] The Lie algebra $TG(0)$ of $G$ agrees with $\fg(a)$. 
\end{enumerate}
\end{proposition}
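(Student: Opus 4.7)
The statement is an equivalence, and I will handle the two directions separately. The ``only if'' direction is almost immediate from the material recorded in the paper just before the proposition; the ``if'' direction is the substantive content and will be my main focus.

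For the ``only if'' direction, assume $G=G^{(a)}$ for some choice of privileged coordinates. Condition (ii) is just the identity $TG^{(a)}(0)=\fg(a)$ already noted in the paper, obtained by evaluating the brackets (\ref{eq:Nilpotent.structure-constants-Xj(a)}) at $0$ and using $X_j^{(a)}(0)=\partial_j$. For (i), each $X_j^{(a)}$ is homogeneous of degree $-w_j$, so $\delta_t$ acts on the Lie algebra $\fg^{(a)}$ of left-invariant vector fields as the graded Lie algebra automorphism $X_j^{(a)}\mapsto t^{-w_j}X_j^{(a)}$; since $G^{(a)}$ is simply connected and nilpotent, this Lie algebra automorphism lifts uniquely to a group automorphism, and inspecting the action on one-parameter subgroups shows the lift is $\delta_t$ itself.

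For the ``if'' direction, my plan is to start from any choice of privileged coordinates at $a$ adapted to $(X_1,\ldots,X_n)$---say those provided by the canonical coordinates of the first kind---and to correct them by an anisotropic-homogeneous polynomial change of coordinates so that the resulting nilpotent approximation becomes the prescribed group $G$. Let $G^{(a)}$ denote the nilpotent approximation in the initial coordinates. By the ``only if'' direction, $G^{(a)}$ also satisfies (i) and (ii), and hence $G$ and $G^{(a)}$ are two simply connected nilpotent Lie groups on $\R^n$ with identity $0$, the same Lie algebra $\fg(a)$ at the origin, and the same family of dilations $\delta_t$ as group automorphisms. Set
\begin{equation*}
\phi:=\exp_{G^{(a)}}\circ\exp_G^{-1}\colon\R^n\longrightarrow\R^n.
\end{equation*}
Since the exponential map of a simply connected nilpotent Lie group on $\R^n$ is a polynomial diffeomorphism, $\phi$ is a polynomial diffeomorphism with $\phi(0)=0$ and $d\phi(0)=\op{id}$; by the Baker--Campbell--Hausdorff formula $\phi$ is moreover a Lie group isomorphism $G\to G^{(a)}$. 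The crucial property is that $\phi$ commutes with every $\delta_t$: since $\delta_t$ is a group automorphism of both $G$ and $G^{(a)}$ with differential at the origin equal to the linear map $\delta_t$ on $\fg(a)$, the naturality of $\exp$ gives $\exp_G\circ\delta_t=\delta_t\circ\exp_G$ and likewise for $G^{(a)}$, whence $\phi\circ\delta_t=\delta_t\circ\phi$. Thus $\phi$ is anisotropic-homogeneous of degree $1$ in the sense of Section~\ref{sec:anisotropic}.

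I then pass to the new coordinates $y:=\phi^{-1}(x)$. The condition $d\phi(0)=\op{id}$ ensures linear adaptedness to the $H$-frame is preserved. The commutation $\phi\circ\delta_t=\delta_t\circ\phi$ yields $\delta_t^*(\phi^*X_j)=\phi^*(\delta_t^*X_j)$; multiplying by $t^{w_j}$ and letting $t\to 0$, (\ref{eq:Carnot-coord.approximationXj(a)}) together with Proposition~\ref{prop:nilp-approx.vector-fields} shows that, in $y$-coordinates, $X_j$ has weight $-w_j$ with model vector field $\phi^*(X_j^{(a)})$. Because $\phi$ is a Lie group isomorphism $G\to G^{(a)}$, each $\phi^*(X_j^{(a)})$ is left-invariant on $G$ and agrees with $\partial_{y_j}$ at $0$. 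Hence the $\phi^*(X_j^{(a)})$ span the Lie algebra of left-invariant vector fields on $G$, and the nilpotent approximation in the $y$-coordinates is precisely $G$. The step I expect to be the main obstacle is the intertwining $\phi\circ\delta_t=\delta_t\circ\phi$: it hinges on the rigidity of the Lie/Lie algebra correspondence for simply connected nilpotent groups applied to \emph{both} $G$ and $G^{(a)}$ simultaneously, and once this is secured the remainder of the argument is a transport-of-structure bookkeeping inside the anisotropic calculus of Section~\ref{sec:anisotropic}.
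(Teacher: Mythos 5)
Your proposal is correct, but note that the paper itself gives no proof of this proposition: it is imported from \cite{CP:Privileged}, and the ``only if'' half is exactly the discussion that precedes the statement (the dilations are automorphisms of $G^{(a)}$ and $TG^{(a)}(0)=\fg(a)$). So the substantive comparison concerns your ``if'' direction, which is a clean transport-of-structure argument and works: since the dilations are automorphisms of both $G$ and $G^{(a)}$ with differential at the unit equal to the linear map $\delta_t$, both exponential maps commute with $\delta_t$ (hence are $w$-homogeneous and therefore polynomial), so $\phi=\exp_{G^{(a)}}\circ\exp_G^{-1}$ is a polynomial group isomorphism fixing $0$, with differential the identity, commuting with the dilations; your computation $\delta_t^*(\psi_*X_j)=\psi_*(\delta_t^*X_j)$ then correctly identifies the new model fields as $\psi_*X_j^{(a)}$, which are precisely the left-invariant vector fields of $G$ equal to $\partial_j$ at the origin. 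This is the same device the paper uses in Remark~\ref{rmk:Carnot-coord.priv-to-Carnot} to convert privileged into Carnot coordinates, so your route is entirely in the spirit of the source. Two small points you should make explicit: first, the concluding step uses the uniqueness clause in the definition of the nilpotent approximation (a group law on $\R^n$ with unit $0$ is determined by its Lie algebra of left-invariant vector fields), which is what lets you say the approximation \emph{is} $G$ rather than merely isomorphic to it; second, the action of $\delta_t$ on $\fg^{(a)}$ relevant to lifting is the pushforward $X_j^{(a)}\mapsto t^{w_j}X_j^{(a)}$ (matching $\delta_t'(0)\partial_j=t^{w_j}\partial_j$), not the pullback $t^{-w_j}X_j^{(a)}$ you wrote; this is only a convention slip and does not affect the argument.
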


We refer to~\cite{CP:Privileged} for the explicit constructions of examples of groups satisfying the properties (i)--(ii). When $r=2$ these examples provide us with all the nilpotent approximations at a given point. In general, this shows that the nilpotent approximation encompasses a very large class of groups, and so this process is very sensitive to the choice of the privileged coordinates.

\subsection{Coordinate description of $GM(a)$} 
Let $U_0$ be the domain of the $H$-frame $(X_1, \ldots, X_n)$. By Remark~\ref{rmk:Carnot-mfld.brackets-H-frame} there are functions $L_{ij}^k(x)\in C^\infty(U_0)$, $w_k\leq w_i+w_j$, given by the commutator relations~(\ref{eq:Carnot-mfld.brackets-H-frame}). 
As mentioned in Remark~\ref{rmk:Tangent-group.frame-fgM},  the $H$-frame $(X_1, \ldots, X_n)$ gives rise to a graded basis $(\xi_1(a), \ldots, \xi_n(a))$ of $\fg M(a)$, where $\xi_j(a)$ be the class of $X_j(a)$ in $\fg_{w_j}M(a)$. This graded basis defines a global system of coordinates on $\fg M(a)=GM(a)$. In these coordinates the dilations~(\ref{eq:Carnot.dilations}) are given by~(\ref{eq:Nilpotent.dilations2}). Moreover, as explained in, e.g.,~\cite{CP:Privileged}, the product law of $GM(a)$ can be described as follows. 

Let $\xi=\sum x_{i}\xi_{i}(a)$, $x_j\in \R$, be an element of ${\fg}M(a)$. The matrix $A_{a}(x)=\left(A_{a}(x)_{kj}\right)_{1\leq k,j\leq n}$ of the adjoint endomorphism $\ad_{\xi}$ with respect to the basis $(\xi_{1}(a),\ldots,\xi_{n}(a))$ is given by
\begin{equation*}
    A_{a}(x)_{kj} =\left\{\begin{array}{cl} {\displaystyle \sum_{w_i +w_j= w_k} L_{ij}^{k}(a) x_i }&\textup{if $w_j<w_k$},\\
0 &\textup{otherwise.}\end{array}\right.
\end{equation*}
In particular, this matrix  is nilpotent and lower-triangular.  Combining this with~(\ref{eq:Carnot.Dynkin-product}) shows that, in the coordinates
defined by the basis $(\xi_{1}(a),\ldots,\xi_{n}(a))$,  the product of $GM(a)$ is given by
\begin{equation}
  x\cdot y  =  \sum_{n\geq 1} \frac{(-1)^{n+1}}{n} \sum_{\substack{\alpha, \beta \in \N_0^n\\ \alpha_j+\beta_j\geq 1}} 
 \frac{(|\alpha|+|\beta|)^{-1}}{\alpha!\beta!} A_a(x)^{\alpha_1} A_a(y)^{\beta_1} \cdots A_a(x)^{\alpha_n} A_a(y)^{\beta_n-1}y. 
 \label{eq:GM.group-law}
\end{equation}
Therefore, for $k=1,\ldots, n$, we have
\begin{equation}
    (x\cdot y)_{k}=x_{k}+y_{k}+ \frac{1}{2} \sum_{w_i+w_j=w_k} L_{ij}^k(a)x_iy_j + \sum_{\substack{\brak\alpha+\brak\beta=w_{k}\\ |\alpha|+|\beta|\geq 3}}
    B_{\alpha\beta}^k\left(L(a)\right)x^{\alpha}y^{\beta},
    \label{eq:GM.group-law2}
\end{equation}where the coefficients $B_{\alpha\beta}^k(L(a))$ are universal polynomials in the structure constants $L_{pq}^{l}(a)$, $w_p+w_q=w_l$, (see~\cite{CP:Carnot}). The quadratic terms (resp., super-quadratic terms) appear only when $w_k\geq 2$ (resp., $w_k\geq 3$). 

\begin{definition}
 The graded nilpotent Lie group $G(a)$ is $\R^n$ equipped with the group law~(\ref{eq:GM.group-law}). 
\end{definition}

\begin{remark}\label{rmk:Carnot-coord.independence-G(a)}
 As~(\ref{eq:GM.group-law2}) shows, the group law of $G(a)$ only depends on the structure constants $L_{ij}^k(a)$, $w_i+w_j=w_k$. In particular,   $G(a)$ only depends on the jets of the $H$-frame $(X_1,\ldots,X_n)$ at $x=a$. 
\end{remark}

We summarize the previous discussion in the following. 

\begin{proposition}\label{prop:Carnot-coord.GM(a)-G(a)}
 Let $(\xi_1(a), \ldots, \xi_n(a))$ be the graded basis of $\fg M(a)$ defined by the $H$-frame $(X_1,\ldots, X_n)$. Then it defines a global system of coordinates that identifies the tangent group $GM(a)$ with the graded nilpotent Lie group $G(a)$. 
\end{proposition}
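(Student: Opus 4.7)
The plan is to check that the linear isomorphism $\Phi:\R^n\rightarrow \fg M(a)$ defined by $\Phi(x)=\sum_j x_j\xi_j(a)$ carries the group law~(\ref{eq:GM.group-law}) of $G(a)$ to the Dynkin product on $\fg M(a)$ that defines $GM(a)$, and is compatible with the dilations. The compatibility with the dilations is immediate: since $(\xi_1(a),\ldots,\xi_n(a))$ is a \emph{graded} basis, with $\xi_j(a)\in \fg_{w_j}M(a)$, the dilations~(\ref{eq:Carnot.dilations}) on $\fg M(a)$ pulled back by $\Phi$ are precisely the anisotropic dilations~(\ref{eq:Nilpotent.dilations2}) on $\R^n$.

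Next I would compute, in coordinates, the adjoint endomorphism $\ad_\xi$ for $\xi=\Phi(x)$. By Remark~\ref{rem-xixj}, the Lie bracket on $\fg M(a)$ satisfies $[\xi_i(a),\xi_j(a)]=\sum_{w_k=w_i+w_j} L_{ij}^k(a)\xi_k(a)$ when $w_i+w_j\leq r$, and vanishes otherwise. Bilinearity then gives
\begin{equation*}
 \ad_\xi \xi_j(a) = \sum_i x_i [\xi_i(a),\xi_j(a)] = \sum_k \Biggl(\sum_{w_i+w_j=w_k} L_{ij}^k(a) x_i\Biggr)\xi_k(a).
\end{equation*}
Thus, in the basis $(\xi_1(a),\ldots,\xi_n(a))$, the matrix of $\ad_\xi$ is exactly $A_a(x)$ as defined in the statement. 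Any nonzero entry $A_a(x)_{kj}$ requires some $i$ with $w_i\geq 1$ and $w_i+w_j=w_k$, forcing $w_j<w_k$, so $A_a(x)$ is strictly lower-triangular with respect to the ordering by weight and therefore nilpotent.

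Finally, since $GM(a)$ is, by definition, $\fg M(a)$ equipped with the Dynkin product~(\ref{eq:Carnot.Dynkin-product}), I would substitute the matrix $A_a(x)$ into the Dynkin series applied to $\xi=\Phi(x)$ and $\eta=\Phi(y)$. The nilpotence of $A_a(x)$ truncates the series to a finite sum, and reading off the $k$-th coordinate yields precisely~(\ref{eq:GM.group-law}) (and hence the more explicit form~(\ref{eq:GM.group-law2})). This shows that $\Phi$ intertwines the group law of $G(a)$ with that of $GM(a)$, so it is the desired Lie group isomorphism. The only delicate bookkeeping will be matching the structure constants in the Dynkin expansion with the universal polynomials $B_{\alpha\beta}^k(L(a))$ appearing in~(\ref{eq:GM.group-law2}), which is the same computation that produces the formula in the first place.
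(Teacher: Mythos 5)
Your proposal is correct and takes essentially the same route as the paper: the proposition there is explicitly a summary of the preceding discussion, which likewise computes the matrix $A_a(x)$ of $\ad_\xi$ in the graded basis via the structure constants $L_{ij}^k(a)$, notes its nilpotent lower-triangular form, and substitutes it into the Dynkin product to recover the group law~(\ref{eq:GM.group-law}) defining $G(a)$.
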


It is immediate from~(\ref{eq:GM.group-law2}) that the dilations~(\ref{eq:Nilpotent.dilations2}) are group automorphisms of $G(a)$. For $j=1,\ldots,n$, let $X_j^a$ be the left-invariant vector field  on $G(a)$ that agrees with $\partial_j$ at $x=0$. In addition, 
let $(\epsilon_1, \ldots, \epsilon_n)$ be the canonical basis of $\R^n$. We shall regard the vectors $\epsilon_j$ as elements of $\N_0^n$. Using~(\ref{eq:GM.group-law}) it then can be shown that
\begin{equation}
 X_j^a(x)= \partial_j +  \frac{1}{2} \sum_{w_i+w_j=w_k} L_{ij}^k(a)x_i \partial_k + \sum_{\substack{\brak\alpha+w_j=w_{k}\\ |\alpha|\geq 2}}
    B_{\alpha \epsilon_j}^k\left(L(a)\right)x^{\alpha}\partial_k.  
    \label{eq:GM-formula-Xja}
\end{equation}
It follows from this that the vector fields $X_1^a, \ldots, X_n^a$ satisfy the commutator relations~(\ref{eq:Nilpotent.structure-constants-Xj(a)}). This implies that the Lie algebra of $G(a)$ is $\fg(a)$. 

\subsection{Carnot coordinates}
As mentioned above,  the dilations~(\ref{eq:Nilpotent.dilations2}) are group automorphisms of $G(a)$. Moreover, its Lie algebra is $\fg(a)$. Therefore, by Proposition~\ref{eq:Carnot-coord.charact-nilpotent-approx}   the graded nilpotent Lie group $G(a)$ provides us with the nilpotent approximation of $(M,H)$  at $a$ in suitable systems of privileged coordinates. These systems of coordinates were the main object of study of~\cite{CP:Carnot}.  They can be described as follows.

\begin{definition}[\cite{CP:Carnot}]
 Let $(x_{1},\ldots,x_{n})$ be local coordinates centered at $a$ with range $U\subset \R^n$. We say that $(x_{1},\ldots,x_{n})$ are \emph{Carnot coordinates} at $a$ adapted to the $H$-frame $(X_1, \ldots, X_n)$ when 
 \begin{enumerate}
\item[(i)] They are linearly adapted at $a$ to $(X_1, \ldots, X_n)$. 

\item[(ii)] For $j=1, \ldots, n$ and as $t\rightarrow 0$, we have 
               \begin{equation}
            t^{w_{j}}\delta_{t}^{*}X_{j}=X^{a}_{j}+\op{O}(t) \qquad \text{in $\cX(U)$}.
             \label{eq:Carnot.Xj(a)=Xja}
        \end{equation}    
where $X_j^a$ is given by~(\ref{eq:GM-formula-Xja}).
\end{enumerate}
\end{definition}

\begin{remark}
 As $X_j^{a}$ is homogeneous of degree $-w_j$, we know by Proposition~\ref{prop:nilp-approx.vector-fields} that the asymptotics~(\ref{eq:Carnot.Xj(a)=Xja}) implies that $X_j$ has weight~$-w_j$. Therefore, Carnot coordinates are just privileged coordinates such that, for $j=1, \ldots, n$, the model vector field of $X_j$ is $X_j^a$. 
\end{remark}

\begin{proposition}[\cite{CP:Carnot}]
Let $(x_{1},\ldots,x_{n})$ be privileged coordinates at $a$ adapted to $(X_{1},\ldots,X_{n})$. Then $(x_1,\ldots, x_n)$ are Carnot coordinates if and only if in these coordinates the nilpotent approximation of $(M,H)$ at $a$ is given by the graded nilpotent group $G(a)$.  
\end{proposition}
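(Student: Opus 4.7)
The plan is to recast the equivalence as the coincidence of two distinguished frames of left-invariant vector fields at the origin, by placing $G^{(a)}$ and $G(a)$ on the same footing via the uniqueness built into the definition of $G^{(a)}$ recalled in Section~\ref{sec:Carnot-coordinates}.

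First, I would unpack the relevant frames. By construction, the nilpotent approximation $G^{(a)}$ is the unique Lie group structure on $\R^n$ whose Lie algebra of left-invariant vector fields is $\fg^{(a)}=\op{span}\{X_1^{(a)},\ldots,X_n^{(a)}\}$, with the model vector fields $X_j^{(a)}$ determined by the asymptotics~(\ref{eq:Carnot-coord.approximationXj(a)}). Since $X_j(0)=\partial_j$ in linearly adapted coordinates, passing to the leading term in that asymptotics forces $X_j^{(a)}(0)=\partial_j$, so $X_j^{(a)}$ is the unique element of $\fg^{(a)}$ taking the value $\partial_j$ at the origin. In parallel, the vector field $X_j^{a}$ defined in~(\ref{eq:GM-formula-Xja}) is the unique left-invariant vector field on $G(a)$ with $X_j^a(0)=\partial_j$, and $(X_1^a,\ldots,X_n^a)$ is a basis of the Lie algebra of $G(a)$.

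With this setup in place, the forward direction is immediate: if $(x_1,\ldots,x_n)$ are Carnot coordinates, then $X_j^{(a)}=X_j^a$ for every $j$, so $\fg^{(a)}$ coincides with the Lie algebra of left-invariant vector fields of $G(a)$, and the uniqueness of $G^{(a)}$ yields $G^{(a)}=G(a)$. Conversely, if $G^{(a)}=G(a)$ as Lie group structures on $\R^n$, then their Lie algebras of left-invariant vector fields coincide, and both $X_j^{(a)}$ and $X_j^a$ are characterized as the unique left-invariant vector field of this common group taking the value $\partial_j$ at the origin; hence $X_j^{(a)}=X_j^a$, which is precisely the Carnot condition~(\ref{eq:Carnot.Xj(a)=Xja}). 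The only substantive point in this argument is the uniqueness statement for the group law on $\R^n$ given its Lie algebra of left-invariant vector fields; however, this is the very content of the construction of $G^{(a)}$ recalled above, so no additional work is required.
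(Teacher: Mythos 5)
Your proof is correct. The paper itself states this proposition as a citation from~\cite{CP:AMP17} without reproducing a proof, but your argument is exactly the natural one suggested by the definitions in Section~\ref{sec:Carnot-coordinates}: both directions reduce to the identity $X_j^{(a)}=X_j^a$ via the uniqueness of the group law on $\R^n$ with prescribed Lie algebra of left-invariant vector fields, and you correctly handle the one point that needs checking, namely that linear adaptedness forces $X_j^{(a)}(0)=\partial_j$ so that $X_j^{(a)}$ and $X_j^a$ are both singled out as the left-invariant field with value $\partial_j$ at the unit.
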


In other words, the Carnot coordinates are precisely the privileged coordinates at $a$ adapted to $(X_{1},\ldots,X_{n})$ in which we have a natural identification between the nilpotent approximation and the tangent group $GM(a)$. 
In addition, the group $G(a)$ only depends on the structure constants $L_{ij}^k$, $w_i+w_j=w_k$ (\emph{cf.}~Remark~\ref{rmk:Carnot-coord.independence-G(a)}). Therefore, the Carnot coordinates provides us with a class of systems of privileged coordinates in which the nilpotent approximation is independent of the choice of the coordinate system in that class.  

\begin{example}
 When $r=1$ any system of linearly adapted coordinates is a system of Carnot coordinates (see~\cite{CP:Carnot}). 
\end{example}

\begin{example}
 If $(M,H)$ is a contact manifold, then any system of Darboux coordinates centered at a given point $a\in M$ is a system of Carnot coordinates at $a$ (see~\cite{CP:Carnot}). \end{example}

\begin{example}
 On any Carnot manifold the canonical coordinates of the 1st kind of Goodman~\cite{Go:LNM76} and Rothschild-Stein~\cite{RS:ActaMath76} are given by the inverse of the diffeomorphism,
 \begin{equation}
V\ni x\longrightarrow \exp(x_1X_1+\ldots +x_nX_n)(a) \in V_0,
\label{eq:Carnot-coord.can-coord1st}
\end{equation}
where $V$ is a sufficiently small open neighborhood of the origin and $V_0$ is an open neighborhood of $a$. It is shown in~\cite{CP:Carnot} that  these coordinates are Carnot coordinates at $a$ adapted to $(X_{1},\ldots,X_{n})$.
\end{example}

\begin{remark}
 In general the polynomial privileged coordinates of Bella\"iche~\cite{Be:Tangent} are not Carnot coordinates. Moreover, in step~$r\geq 2$ the canonical coordinates of the 2nd kind of Bianchini-Stefani~\cite{BS:SIAMJCO90} and Hermes~\cite{He:SIAMR} are never Carnot coordinates (see~\cite{CP:Carnot}). 
\end{remark}

\begin{remark}\label{rmk:Carnot-coord.priv-to-Carnot} 
 Let $(x_1, \ldots, x_n)$ be privileged coordinates at $a$ adapted to $(X_1, \ldots, X_n)$. They always can be converted into Carnot coordinates (see~\cite{CP:Carnot}). More precisely, for $j=1, \ldots, n$, let $X_j^{(a)}$ be the model vector field of $X_j$. We then obtain Carnot coordinates by means of the change of variables  provided by the diffeomorphism,  
\begin{equation*}
 \R^n\ni x \longrightarrow \exp\left(x_1X_1^{(a)}+\cdots + x_nX_n^{(a)}\right) \in \R^n, 
\end{equation*}
where $\exp(x_1X_1^{(a)}+\cdots + x_nX_n^{(a)}):=  \exp[t(x_1X_1^{(a)}+\cdots + x_nX_n^{(a)})](0)|_{t=1}$. Thanks to the homogeneity of the vector fields $X_j$ this diffeomorphism is defined on all $\R^n$ and is $w$-homogeneous (see, e.g., \cite{CP:Privileged}). 
\end{remark}

\subsection{The $\varepsilon$-Carnot coordinates} 
In what follows we will make use of a special type of polynomial Carnot coordinates.  

\begin{proposition}[\cite{CP:Carnot}] \label{prop:Carnot-coord.vareps-carnot-cor}
Let $(x_{1},\ldots,x_{n})$ be local coordinates near $a$. There is a unique polynomial change of coordinates $x\rightarrow \varepsilon_a(x)$ such that
\begin{enumerate}
 \item[(i)] This provides us with Carnot coordinates at $a$ adapted to the $H$-frame $(X_1,\ldots, X_n)$. 
 
\item[(ii)] The map $\varepsilon_a(x)$ is of the form  $\hat{\varepsilon}\circ T(x)$,  where $T(x)$ is an invertible affine map such that $T(x(a))=0$, and $\hat{\varepsilon}(x)$ is a polynomial diffeomorphism whose components $\hat{\varepsilon}_{k}(x)$, $k=1,\ldots,n$, are of the form, 
        \begin{equation}
         \hat{\varepsilon}_{k}(x)=x_{k}+\sum_{\substack{\brak\alpha \leq w_{k}\\|\alpha|\geq 2}}d_{k\alpha}x^{\alpha}, \qquad d_{k\alpha} \in \R. 
                              \label{eq:Carnot-coord.phi-vareps}
     \end{equation}
\end{enumerate}
\end{proposition}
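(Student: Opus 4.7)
The plan is to proceed in two stages. First, I would absorb the affine part into a unique map $T$: if $X_j=\sum_k X_j^k(x)\partial_k$ in the initial coordinates, then $T(x):=A(x-a)$, where $A$ is the inverse of the matrix $(X_j^k(a))_{k,j}$, is the unique affine map sending $a$ to the origin and satisfying $T_*X_j(0)=\partial_j$. Condition (i) forces $\varepsilon_a$ to be linearly adapted at $a$, and since $\hat\varepsilon$ in~(\ref{eq:Carnot-coord.phi-vareps}) has identity Jacobian at $0$, the affine part in any factorization $\hat\varepsilon\circ T$ is forced to be precisely this $T$. This handles the affine step and secures its uniqueness.

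For the polynomial step, I would work in the linearly adapted coordinates $y=T(x)$ and construct $\hat\varepsilon$ by composing two polynomial changes of variables, each already having the triangular form~(\ref{eq:Carnot-coord.phi-vareps}). First, convert linearly adapted coordinates into privileged coordinates via Bella\"iche's polynomial privileged construction, whose components can be arranged in the form $\psi_k(y)=y_k+\sum_{|\alpha|\geq 2,\,\brak\alpha\leq w_k}c_{k\alpha}y^\alpha$. Then apply the exponential change of variables $z\mapsto \exp(z_1X_1^{(a)}+\cdots+z_nX_n^{(a)})$ from Remark~\ref{rmk:Carnot-coord.priv-to-Carnot}, which converts privileged coordinates at $a$ into Carnot coordinates. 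Since each model vector field $X_j^{(a)}$ is $w$-homogeneous of degree $-w_j$, this exponential map is $w$-homogeneous, so its $k$-th component is a polynomial whose monomials have weighted degree exactly $w_k$. The composition $\hat\varepsilon$ therefore preserves the triangular structure~(\ref{eq:Carnot-coord.phi-vareps}).

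Uniqueness of $\hat\varepsilon$ I would establish by induction on the weighted order $\brak\alpha$. Suppose $\hat\varepsilon_1$ and $\hat\varepsilon_2$ are two admissible polynomial maps of the form~(\ref{eq:Carnot-coord.phi-vareps}) yielding Carnot coordinates at $a$, and let $(k_0,\alpha_0)$ be a pair with $|\alpha_0|\geq 2$ and minimal $\brak{\alpha_0}$ at which $d_{k_0\alpha_0}^{(1)}\neq d_{k_0\alpha_0}^{(2)}$. Pushing $X_j$ forward through each $\hat\varepsilon_i$ and isolating the $(-w_j)$-homogeneous components, both must equal the uniquely determined $X_j^a$ of~(\ref{eq:GM-formula-Xja}). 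Picking $j$ with $\alpha_{0,j}\geq 1$, the coefficient of $y^{\alpha_0-\epsilon_j}\partial_{k_0}$ in this matching involves $d_{k_0\alpha_0}$ linearly with coefficient $\alpha_{0,j}\neq 0$, modulo terms involving only coefficients of strictly smaller weighted order (which already coincide by induction). This forces $d_{k_0\alpha_0}^{(1)}=d_{k_0\alpha_0}^{(2)}$, a contradiction.

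The main obstacle will be the weighted-degree bookkeeping underlying both the existence and the uniqueness arguments, specifically the verification that composing $\psi$ and the exponential map yields no monomials of weighted order strictly greater than $w_k$ in the $k$-th component. This requires a careful reading of the power-series expansion of the exponential flow, using the homogeneity degrees of the $X_j^{(a)}$. A related subtlety is exhibiting explicitly the triangular dependence of $\hat\varepsilon_*X_j$ on the coefficients $d_{k\alpha}$; both issues ultimately rest on the same weighted-homogeneity structure that underlies the whole theory of privileged coordinates.
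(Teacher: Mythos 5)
The paper does not actually prove this proposition: it is imported from \cite{CP:AMP17}, and the only indication given here is the sentence following the definition of $\varepsilon$-Carnot coordinates, which says they are obtained by converting Bella\"iche's polynomial privileged coordinates into Carnot coordinates via the $w$-homogeneous exponential change of variables of Remark~\ref{rmk:Carnot-coord.priv-to-Carnot}. Your existence argument is exactly this construction, and your bookkeeping claim is correct: if $\psi_k(y)=y_k+\sum_{|\alpha|\geq 2,\,\brak\alpha\leq w_k}c_{k\alpha}y^\alpha$ and $\Phi$ is a $w$-homogeneous polynomial diffeomorphism with $\Phi'(0)=I$ (so that each monomial of $\Phi_k^{\pm 1}-z_k$ has $\brak\alpha=w_k$ and $|\alpha|\geq 2$), then every monomial of $(\Phi^{-1}\circ\psi)_k-y_k$ has $\brak\alpha\leq w_k$ (since each factor $\psi_j$ contributes weighted degree $\leq w_j$) and $|\alpha|\geq 2$ (since each factor contributes ordinary degree $\geq 1$). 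The identification of the affine factor $T$ and its uniqueness are also correct.

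The one step I would not sign off on as written is the uniqueness induction. You induct on the weighted order $\brak{\alpha_0}$ and claim that the coefficient of $y^{\alpha_0-\epsilon_j}\partial_{k_0}$ in the matching involves, besides $\alpha_{0,j}d_{k_0\alpha_0}$, only coefficients of strictly smaller \emph{weighted} order. That is not guaranteed: the term $d_{k_0\beta}\,\beta_l\,x^{\beta-\epsilon_l}a_{jl}(x)$ coming from $\partial_l\hat\varepsilon_{k_0}$ can contribute to the monomial $y^{\alpha_0-\epsilon_j}$ whenever $|\beta|<|\alpha_0|$, by pairing $x^{\beta-\epsilon_l}$ with higher-order Taylor terms of $a_{jl}$ (which, in merely linearly adapted coordinates, satisfy no weight constraint); such a $\beta$ may well have $\brak\beta>\brak{\alpha_0}$ while still being admissible, i.e.\ $\brak\beta\leq w_{k_0}$ (e.g.\ weights $(1,1,2,4)$, $k_0=4$, $\alpha_0=(3,0,0,0)$, $\beta=(0,0,2,0)$). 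So the linear system is not triangular with respect to the weighted-order filtration. It \emph{is} triangular with respect to the ordinary degree: every contribution to the coefficient of $y^{\alpha_0-\epsilon_j}$ other than $\alpha_{0,j}d_{k_0\alpha_0}$ involves either some $d_{k_0\beta}$ with $|\beta|<|\alpha_0|$ or coefficients of $\hat\varepsilon^{-1}$ attached to multi-orders $\gamma$ with $|\gamma|\leq|\alpha_0|-1$, and the latter are determined by the $d_{m\gamma'}$ with $|\gamma'|\leq|\gamma|$. Replacing your induction variable $\brak{\alpha_0}$ by $|\alpha_0|$ closes the argument; as stated, the induction hypothesis does not cover all the coefficients that appear at a given step.
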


\begin{definition}
    The Carnot coordinates provided by the change of coordinates $x\rightarrow \varepsilon_{a}(x)$ are called \emph{$\varepsilon$-Carnot coordinates}. 
    The map $\varepsilon_{a}:\R^n\rightarrow \R^n$ is called the \emph{$\varepsilon$-Carnot coordinate map}. 
\end{definition}

The $\varepsilon$-Carnot coordinates are obtained by converting to Carnot coordinates the polynomial privileged coordinates of Bella\"iche~\cite{Be:Tangent} (and their extension to Carnot manifolds in~\cite{CP:Privileged}) by means of the $w$-homogeneous change of variables described in Remark~\ref{rmk:Carnot-coord.priv-to-Carnot}. As a result we have an effective construction of Carnot coordinates. The coefficients of the matrix of the affine map $T$ and the coefficients $d_{k\alpha}$ in~(\ref{eq:Carnot-coord.phi-vareps}) are universal polynomial in the partial derivatives of the coefficients of the vector fields $X_j$, $j=1, \ldots, n$, in the original local coordinates (see~\cite{CP:Carnot}). In particular, we have the following smoothness result. 

 \begin{proposition}[\cite{CP:Carnot}]\label{prop-carnot-sm}
    The maps $(x,y)\rightarrow 
    \varepsilon_{x}(y)$ and $(x,y)\rightarrow 
    \varepsilon_{x}^{-1}(y)$ are smooth maps from $U\times \R^{n}$ to $\R^{n}$. 
\end{proposition}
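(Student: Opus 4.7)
The plan is to deduce both smoothness statements directly from the explicit construction of $\varepsilon_x$ described in Proposition~\ref{prop:Carnot-coord.vareps-carnot-cor}. The key point made just before the proposition is that $\varepsilon_a = \hat{\varepsilon} \circ T$, where the coefficients of the invertible affine map $T = T_a$ and the constants $d_{k\alpha} = d_{k\alpha}(a)$ appearing in~(\ref{eq:Carnot-coord.phi-vareps}) are universal polynomials in a finite number of partial derivatives of the coefficients of the vector fields $X_1,\ldots,X_n$ evaluated at $a$. Since the $X_j$ are smooth on $U_0$, all such partial derivatives depend smoothly on $a$; hence the coefficients of both $T_a$ and $\hat{\varepsilon}_a$ are smooth functions of $a\in U$.

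First I would verify smoothness of $(x,y)\mapsto \varepsilon_x(y)$. The map $(x,y)\mapsto T_x(y)$ is smooth because $T_x$ is affine with smoothly $x$-varying coefficients. The map $(x,y)\mapsto \hat{\varepsilon}_x(y)$ is smooth because each component $\hat{\varepsilon}_{x,k}(y)$ is a polynomial in $y$ of bounded degree whose coefficients depend smoothly on $x$. Composing these two smooth maps on $U\times \R^n$ yields the smoothness of $(x,y)\mapsto \varepsilon_x(y)$.

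Next I would handle the inverse. A crucial observation is the weighted triangular shape of $\hat{\varepsilon}_x$: if $|\alpha|\geq 2$ and $\brak\alpha\leq w_k$, then every index $i$ with $\alpha_i\neq 0$ satisfies $w_i<w_k$, so $\hat{\varepsilon}_{x,k}(y)=y_k+P_{x,k}(y_{i_1},\ldots,y_{i_\ell})$ where each $i_j$ has strictly smaller weight than $w_k$. One can therefore invert $\hat{\varepsilon}_x$ on all of $\R^n$ by iterating on the weight $w=1,2,\ldots,r$: recover the variables of weight $1$ (where $\hat{\varepsilon}_{x,k}(y)=y_k$), substitute them into the polynomial part of the weight-$2$ equations, and so on. At each step the expressions involved are polynomial in the $d_{k\alpha}(x)$, which gives $\hat{\varepsilon}_x^{-1}$ as a polynomial map whose coefficients are polynomials in the $d_{k\alpha}(x)$, and therefore smooth in $x$. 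Hence $(x,y)\mapsto \hat{\varepsilon}_x^{-1}(y)$ is smooth on $U\times \R^n$. Combined with smoothness of $(x,y)\mapsto T_x^{-1}(y)$ (standard, since $x\mapsto T_x$ is smooth with values in the invertible affine group), one concludes that $(x,y)\mapsto \varepsilon_x^{-1}(y)=T_x^{-1}(\hat{\varepsilon}_x^{-1}(y))$ is smooth on $U\times \R^n$.

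The main obstacle, if any, is purely bookkeeping: one must be confident that the iterative inversion procedure for $\hat{\varepsilon}_x$ really produces a polynomial in $y$ (rather than only a local real-analytic map), and that its coefficients are polynomial in the $d_{k\alpha}(x)$. Both follow cleanly from the weighted triangular structure recalled above; no implicit function theorem or local versus global distinction is needed, since the map is globally a polynomial diffeomorphism on $\R^n$.
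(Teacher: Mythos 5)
Your argument is correct and follows exactly the route the paper indicates: the paper does not prove this proposition itself (it is quoted from~\cite{CP:AMP17}), but the sentence immediately preceding it --- that the coefficients of $T$ and the $d_{k\alpha}$ are universal polynomials in the partial derivatives of the coefficients of the $X_j$ --- is precisely the input you use, and your weighted-triangular inversion of $\hat{\varepsilon}_x$ (valid since $|\alpha|\geq 2$ and $\brak\alpha\leq w_k$ force every variable occurring in $x^\alpha$ to have weight $<w_k$) is the standard way to see that the inverse is again a polynomial map with coefficients depending polynomially, hence smoothly, on $x$. Nothing is missing.
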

 
\begin{remark}[\cite{CP:Carnot}]\label{rmk:Carnot-coord.characterization-Carnot-charts}
 The $\varepsilon$-Carnot coordinates form the building block of all systems of Carnot coordinates. More precisely,  
 the systems of Carnot coordinates at $a$ that are adapted to $(X_1,\ldots, X_n)$ are exactly the coordinates systems arising from local charts of the form $(\op{id}+\Theta)\circ \varepsilon_{\kappa(a)} \circ \kappa$, where $\kappa$ is an arbitrarily local chart near $a$ and $\Theta(x)$ is $\Ow(\|x\|^{w+1})$ near $x=0$. Incidentally, given any system of local Carnot coordinates $(x_1,\ldots, x_n)$ at $a$ that are adapted  to $(X_1,\ldots, X_n)$, a change of coordinates $x\rightarrow \phi(x)$ produces the same kind of Carnot coordinates if and only if $\phi(x)=x+ \Ow(\|x\|^{w+1})$ near $x=0$. 
\end{remark}

It is worth looking at the construction of the $\varepsilon$-Carnot coordinates in the special case of a graded nilpotent group $G$ with unit $e$.  
The Lie algebra $\fg:=TG(e)$ then has a grading $\fg=\fg_1\oplus \cdots \oplus \fg_r$ which is compatible with its Lie bracket. This grading defines a left-invariant Carnot filtration $H=(H_1,\ldots, H_r)$. Let  $(\xi_1, \ldots, \xi_n)$ be a graded basis of $\fg$, in the sense that  $\xi_j\in \fg_{w_j}$ for $j=1,\ldots, n$. For $j=1,\ldots, n$, we let $X_j$ be left-invariant vector field on $G$ such that $X_j(e)=\xi_j$. This yields a (global) left-invariant $H$-frame $(X_1,\ldots, X_n)$ of $TG$.  

As $G$ is a connected simply connected nilpotent Lie group, the exponential map $\exp:\fg \rightarrow G$ is a global diffeomorphism. Thus, we define a global diffeomorphism $\exp_X:\R^n\rightarrow G$ by letting 
\begin{align}
 \exp_X(x)&= \exp(x_1\xi_1+\cdots + x_n\xi_n) \nonumber \\
 & = \exp\left[t \left(x_1X_1+\cdots +x_nX_n\right)\right](e)|_{t=1}, \qquad x\in \R^n.
 \label{eq:Carnot-coord.canonical-coord}
\end{align}
This defines a global system of coordinates on $G$. These coordinates are the standard canonical coordinates of the first kind on $G$. They identify $G$ with $\R^n$ equipped with the Dynkin product~(\ref{eq:GM.group-law}) associated with the structure constants of $\fg$ with respect to the graded basis $(\xi_1, \ldots, \xi_n)$. 

\begin{proposition}\label{prop:Carnot-coord.graded-nilpotent-group}
In the canonical coordinates given by~(\ref{eq:Carnot-coord.canonical-coord}) we have
\begin{equation*}
 \varepsilon_y(x)=(-y)\cdot x=y^{-1}\cdot x \qquad \text{for all $x,y\in \R^n$}. 
\end{equation*}
\end{proposition}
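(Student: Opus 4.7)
The plan is to invoke the uniqueness statement of Proposition~\ref{prop:Carnot-coord.vareps-carnot-cor}: I show that the polynomial map $\psi_y(x) := y^{-1}\cdot x$ (which equals $(-y)\cdot x$ by~(\ref{eq:GM.inverse})) satisfies both characterizing conditions (i) and (ii) at $a=y$ with respect to the left-invariant $H$-frame $(X_1,\ldots,X_n)$, so that by uniqueness $\psi_y = \varepsilon_y$.

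First I verify (i). Since $\exp_X$ is the identity in the canonical coordinates~(\ref{eq:Carnot-coord.canonical-coord}) and the $X_j$ are left-invariant, the flow identity $\exp(x_1X_1+\cdots+x_nX_n)(y) = y\cdot \exp_X(x) = y\cdot x$ shows that the canonical chart of the first kind at $y$ from~(\ref{eq:Carnot-coord.can-coord1st}) is precisely $x\mapsto y\cdot x$, and its inverse coordinate function is $\psi_y$. The general result recalled before Proposition~\ref{prop:Carnot-coord.vareps-carnot-cor} then guarantees that $\psi_y$ provides Carnot coordinates at $y$ adapted to $(X_1,\ldots,X_n)$.

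Next I verify (ii) by producing an explicit decomposition $\psi_y = \hat\varepsilon\circ T$. Take $A := d\psi_y(y) = dL_{y^{-1}}\!\mid_y$, set $T(x) := A(x-y)$ (affine with $T(y)=0$), and define $\hat\varepsilon(u) := \psi_y(y+A^{-1}u)$. Then $\hat\varepsilon$ is a polynomial diffeomorphism (being the composition of $\psi_y$ with an affine isomorphism), $\hat\varepsilon(0)=\psi_y(y)=0$, and $\hat\varepsilon'(0) = d\psi_y(y)\cdot A^{-1} = I$, so the linear part of each $\hat\varepsilon_k(u)$ is exactly $u_k$. By left-invariance, $A^{-1}\partial_j = X_j(y)$; using the explicit formula~(\ref{eq:GM-formula-Xja}) for $X_j$ in canonical coordinates, the component $X_j(y)_k$ vanishes unless $w_k\geq w_j$, and for $w_k>w_j$ it is polynomial in $y$ of weighted degree $w_k-w_j$. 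Hence, when coordinates are grouped by increasing weight, both $A^{-1}$ and $A$ are block lower-triangular with identity blocks on the diagonal.

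The remaining and main obstacle is to show that every monomial $u^\alpha$ appearing in $\hat\varepsilon_k(u)$ satisfies $\brak\alpha \leq w_k$. This uses the $w$-homogeneity of the Dynkin product~(\ref{eq:Carnot.Dynkin-product}): the $k$-th component $(y^{-1}\cdot z)_k$ is a polynomial of $w$-weighted degree $w_k$ in the joint variables $(y,z)$, so as a polynomial in $z$ its monomials $z^\beta$ satisfy $\brak\beta \leq w_k$. Substituting $z = y + A^{-1}u$ and using the triangular structure of $A^{-1}$, each coordinate $(A^{-1}u)_j$ is a linear combination of $u_i$ with $w_i\leq w_j$; expanding $z^\beta$ in $u$ therefore produces only monomials $u^\gamma$ with $\brak\gamma \leq \brak\beta \leq w_k$. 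Combined with $\hat\varepsilon(0)=0$ and $\hat\varepsilon'(0)=I$, this places $\hat\varepsilon$ in the precise form~(\ref{eq:Carnot-coord.phi-vareps}), confirming (ii). The uniqueness clause of Proposition~\ref{prop:Carnot-coord.vareps-carnot-cor} then yields $\varepsilon_y = \psi_y = y^{-1}\cdot x = (-y)\cdot x$.
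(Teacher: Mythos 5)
The paper itself gives no proof of this proposition --- it is recalled from~\cite{CP:AMP17} (see the remark immediately following it) --- so there is nothing internal to compare against; I have therefore checked your argument on its own terms, and it is correct and complete. Your route is the natural one: condition (i) of Proposition~\ref{prop:Carnot-coord.vareps-carnot-cor} follows because left-invariance of the $X_j$ identifies the chart~(\ref{eq:Carnot-coord.can-coord1st}) at $y$ with $z\mapsto y^{-1}\cdot z$, which is a system of canonical coordinates of the first kind and hence Carnot coordinates; condition (ii) follows from the weighted homogeneity of the group law~(\ref{eq:GM.group-law2}) (every monomial $y^{\alpha}z^{\beta}$ in the $k$-th component has $\brak\alpha+\brak\beta=w_k$) together with the block-triangularity of $d\psi_y(y)^{-1}$ read off from~(\ref{eq:GM-formula-Xja}), which keeps all monomials within the weight bound $\brak\alpha\le w_k$ of~(\ref{eq:Carnot-coord.phi-vareps}); and the uniqueness clause then forces $\varepsilon_y=\psi_y$. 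The only caveats are cosmetic: the displayed identity in the statement contains a typo ($(-y)\cdot y$ should read $(-y)\cdot x$, as you correctly assume), and in verifying (i) you should say explicitly that the frame is identified with its pushforward under $\exp_X^{-1}$, consistent with the paper's stated convention.
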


\begin{remark}\label{rmk:Carnot-coord.graded-nilpotent-group}
Given $a\in G$, let $\lambda_a:G\rightarrow G$ be the left-multiplication by $a$. Proposition~\ref{prop:Carnot-coord.graded-nilpotent-group} 
 implies that the global chart $(\lambda_a \circ \exp_X )^{-1}:G\rightarrow \R^n$ provides us with $\varepsilon$-Carnot coordinates at $a$ adapted to $(X_1,\ldots, X_n)$ (see~\cite{CP:Carnot}).  
\end{remark}

For general Carnot manifolds we have an asymptotic version of Proposition~\ref{prop:Carnot-coord.graded-nilpotent-group} in the sense that, in Carnot coordinates at $a$, the  $\varepsilon$-Carnot coordinate map is osculated by the group law of the tangent group $GM(a)$. To make this precise we endow $\R^n\times \R^n$ with the dilations, 
\begin{equation*}
 t \cdot (x,y)=(t\cdot x, t\cdot y), \qquad x,y\in \R^n,\ t\in \R. 
\end{equation*}
We also assume that we are given a pseudo-norm $\| \cdot \|:\R^n\times \R^n \rightarrow [0,\infty)$ that satisfies~(\ref{eq:anisotropic.homogeneity-pseudo-norm}) with respect to these dilations. For instance, we may take 
 \begin{equation*}
\|(x,y)\|= |x_1|^{\frac{1}{w_1}} + |y_1|^{\frac{1}{w_1}} + \cdots + |x_n|^{\frac{1}{w_n}}+ |y_n|^{\frac{1}{w_n}},
 \qquad x,y\in \R^n.
\end{equation*}
 The pseudo-norm  $\| \cdot \|$ on $\R^n\times \R^n$ enables us to speak about $\Ow(\|(x,y)\|^{w+m})$-maps in the same way as in Definition~\ref{def:anisotropic.Thetaw}. In particular, if $\Theta(x,y)=\left(\Theta_{1}(x,y),\ldots, \Theta_{n}(x,y)\right)$ is a smooth map from 
    $U$ to $\R^{n}$, where $U$ is an open neighborhood 
    of $(0,0)\in \R^{n}\times \R^{n}$, then $\Theta(x,y)=\Ow( \|(x,y)\|^{w+m})$ near $(x,y)=(0,0)$ if and only if 
    $ \Theta_{j}(x,y)=\op{O}(\|(x,y)\|^{w_{j}+m})$ for $j=1,\ldots, n$. We then have the following approximation result. 

\begin{proposition}[\cite{CP:Carnot}]\label{prop-com}
    Let $(x_{1},\ldots,x_{n})$ be Carnot coordinates at $a$ that are adapted to the $H$-frame $(X_{1},\ldots,X_{n})$. Then, near 
    $(x,y)=(0,0)$, we have
    \begin{gather}
        \varepsilon_{y}(x)=(-y)\cdot x +\Ow\left(\|(x,y)\|^{w+1}\right),
        \label{eq:smoothness.vareps-yx1} \\ 
       \varepsilon_{y}^{-1}(x)=y\cdot x +\Ow\left(\|(x,y)\|^{w+1}\right), 
               \label{eq:smoothness.vareps-yx2}
    \end{gather}where $\cdot$ is the group law of $G(a)$ (i.e., the group law of $GM(a)$ under the identification described above).
\end{proposition}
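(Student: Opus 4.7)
Set $F(x,y):=\varepsilon_y(x)$; by Proposition~\ref{prop-carnot-sm} this is smooth on a neighborhood of $(0,0)$ in $\R^n\times\R^n$. Endow $\R^n\times\R^n$ with the combined anisotropic dilations $t\cdot(x,y):=(t\cdot x,t\cdot y)$ and apply Proposition~\ref{lem:multi.Thetat} to $F$. This yields a $C^\infty$ asymptotic expansion
\begin{equation*}
t^{-1}\cdot\varepsilon_{t\cdot y}(t\cdot x)\simeq\sum_{\ell\geq -r}t^\ell F^{[\ell]}(x,y)\qquad\text{as $t\to 0$,}
\end{equation*}
in which each $F^{[\ell]}$ is a $w$-homogeneous polynomial map of degree $\ell$. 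Reading off~(\ref{eq:GM.group-law2}), every monomial appearing in the $k$-th component of $(-y)\cdot x$ satisfies $\brak\alpha+\brak\beta=w_k$, so $(x,y)\mapsto(-y)\cdot x$ is itself $w$-homogeneous of degree $0$. By Remark~\ref{rmk:anisotropic.weighted-asymptotic}, the first formula~(\ref{eq:smoothness.vareps-yx1}) is then equivalent to the twin identifications $F^{[\ell]}=0$ for $\ell<0$ and $F^{[0]}(x,y)=(-y)\cdot x$.

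To identify the limit $\phi_0(x,y):=\lim_{t\to 0}\phi_t(x,y)$ of the rescaled map $\phi_t(x,y):=t^{-1}\cdot\varepsilon_{t\cdot y}(t\cdot x)$, I would interpret $\phi_t$ geometrically. A chain-rule computation, using that $\delta_t$ is linear, shows that for $t\neq 0$ the map $\phi_t(\cdot,y)$ linearly adapts at $y$ to the rescaled $H$-frame $X_j^{(t)}:=t^{w_j}\delta_t^*X_j$. By the defining property~(\ref{eq:Carnot.Xj(a)=Xja}) of Carnot coordinates, $X_j^{(t)}\to X_j^a$ in $C^\infty$ as $t\to 0$, so the rescaled frame converges to the left-invariant $H$-frame $(X_1^a,\ldots,X_n^a)$ on the graded nilpotent Lie group $G(a)$. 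Since the affine and polynomial ingredients of $\varepsilon_{t\cdot y}$ from Proposition~\ref{prop:Carnot-coord.vareps-carnot-cor} are universal polynomials in the partial derivatives of the coefficients of the $X_j$'s at $t\cdot y$, and since in Carnot coordinates at $a$ the coefficients $(X_j)_k(y)$ possess anisotropic weight~$\geq w_k-w_j$ (because $X_j$ has weight $-w_j$), the conjugations $\delta_{t^{-1}}\circ A_{t\cdot y}\circ\delta_t$ and $\delta_{t^{-1}}\circ\hat\varepsilon_{t\cdot y}\circ\delta_t$ underlying $\phi_t$ extend smoothly across $t=0$. The limit $\phi_0(x,y)$ is then the $\varepsilon$-Carnot chart at $y$ on $G(a)$ expressed in the canonical coordinates~(\ref{eq:Carnot-coord.canonical-coord}); by Proposition~\ref{prop:Carnot-coord.graded-nilpotent-group} (and Remark~\ref{rmk:Carnot-coord.graded-nilpotent-group}) this limit is precisely $x\mapsto(-y)\cdot x$, yielding~(\ref{eq:smoothness.vareps-yx1}). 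The second formula~(\ref{eq:smoothness.vareps-yx2}) follows by the same anisotropic expansion applied to the smooth map $(x,y)\mapsto\varepsilon_y^{-1}(x)$, whose rescaled limit at $t=0$ is the inverse of $x\mapsto(-y)\cdot x$, namely $z\mapsto y\cdot z$; alternatively it is deducible from~(\ref{eq:smoothness.vareps-yx1}) by an implicit-function inversion using $(-y)\cdot(y\cdot z)=z$ and the anisotropic homogeneity of the group law.

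The main obstacle lies in the smoothness of $\phi_t$ across $t=0$ claimed in the second paragraph: one must carry out the precise weight bookkeeping showing that, at $y=t\cdot y'$, the universal-polynomial dependence of the coefficients of $A_y$ and $\hat\varepsilon_y$ on the jets of the $X_j$'s generates only combinations in which the anisotropic weights in $y'$ exactly counterbalance the negative powers of $t$ arising from the conjugation. This is where the hypothesis of working in Carnot coordinates at $a$ is essential, since it is precisely the weight property $(X_j)_k(y)=\Ow(\|y\|^{w+w_k-w_j})$ that makes the cancellation happen. Once this is verified, the $C^\infty$ convergence $\phi_t\to(-y)\cdot x$ is a routine consequence of the continuity of the universal polynomial expressions together with the convergence $X_j^{(t)}\to X_j^a$.
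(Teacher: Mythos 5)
The paper does not actually prove this proposition: it is imported wholesale from~\cite{CP:AMP17} (the citation in the proposition header), so there is no in-paper argument to compare yours against, and I can only judge your proposal on its own terms. Your overall strategy is the natural one and is correctly set up: apply Proposition~\ref{lem:multi.Thetat} to $F(x,y)=\varepsilon_y(x)$ for the product dilations, note that $(x,y)\mapsto(-y)\cdot x$ is $w$-homogeneous of degree $0$ (the dilations are automorphisms of $G(a)$), and reduce~(\ref{eq:smoothness.vareps-yx1}) via Remark~\ref{rmk:anisotropic.weighted-asymptotic} to the single pointwise limit $\lim_{t\to 0}t^{-1}\cdot \varepsilon_{t\cdot y}(t\cdot x)=(-y)\cdot x$. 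You also correctly identify where the hypothesis of Carnot coordinates enters, namely through $t^{w_j}\delta_t^*X_j=X_j^a+\op{O}(t)$, and the deduction of~(\ref{eq:smoothness.vareps-yx2}) from~(\ref{eq:smoothness.vareps-yx1}) by inverting the degree-$0$ part is fine.

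The genuine gap is the one you flag yourself: you never establish the limit, deferring it to an unperformed ``weight bookkeeping'' showing that the conjugated coefficients $t^{\brak\alpha-w_k}d_{k\alpha}(t\cdot y)$ stay bounded as $t\to 0$. As written, the central analytic step is therefore missing. Note that this bookkeeping can be bypassed: for $t\neq 0$ the map $\psi_t:=\delta_t^{-1}\circ\varepsilon_{t\cdot y}\circ\delta_t$ is again of the form $\hat{\varepsilon}'\circ T'$ demanded by Proposition~\ref{prop:Carnot-coord.vareps-carnot-cor}(ii) (conjugation by $\delta_t$ preserves that shape), and a short pushforward computation shows that it yields Carnot coordinates at $y$ adapted to the frame $(t^{w_j}\delta_t^*X_j)$, so the \emph{uniqueness} clause of Proposition~\ref{prop:Carnot-coord.vareps-carnot-cor} identifies $\psi_t$ with the $\varepsilon$-Carnot map of that rescaled frame at $y$. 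Since the coefficients of the $\varepsilon$-Carnot map depend on the frame through universal polynomials in finitely many jets, and $t^{w_j}\delta_t^*X_j\to X_j^a$ in $C^\infty$, the convergence $\psi_t\to \varepsilon^{X^a}_y$, i.e.\ to $x\mapsto (-y)\cdot x$ by Proposition~\ref{prop:Carnot-coord.graded-nilpotent-group}, is then automatic; the boundedness you worry about comes out as a consequence rather than an input. Either way, a self-contained argument must make precise two facts this paper only states informally and itself borrows from~\cite{CP:AMP17}: the uniqueness characterization of the $\varepsilon$-Carnot map and the universal-polynomial dependence of its coefficients on the frame.
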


\begin{remark}
 As we shall see, Proposition~\ref{prop-com} will be an important ingredient in the construction of tangent groupoid of a Carnot manifold in Section~\ref{sec:tangent-groupoid}. It is also an important ingredient in the construction of a full symbolic calculus for hypoelliptic pseudodifferential operators on Carnot manifolds in~\cite{CP:PsiDOs}. 
 \end{remark}
 
 \section{The Carnot Differential of a Carnot Manifold Map}\label{sec:Carnot-Differential}
 In this section, we show that the differential of any Carnot manifold map induces a smooth group bundle map between the tangent group bundles. We will see later that this map is the natural generalization to the setup of general Carnot manifolds of the Pansu derivative~\cite{Pa:AM89} for maps between nilpotent graded groups.  
 
 Throughout this section we let $(M,H)$ and $(M',H')$ be step $r$ Carnot manifolds with respective Carnot filtrations $H=(H_{1},\ldots,H_{r})$ and  
 $H'=(H_{1}',\ldots,H_{r}')$. 
 We set $n=\dim M$ and $n'=\dim M'$, and let $(w_{1},\ldots,w_{n})$ and $(w_{1}',\ldots,w_{n'}')$ be the respective weight sequences of $(M,H)$ and $(M',H')$. 
 In addition, we let $\phi:M \rightarrow M'$ be a Carnot manifold map in the sense of Definition~\ref{def:Carnot.Carnot-mfld-map}. That is, $\phi$ is a smooth map  such that $\phi'(x)\left( H_w(x)\right) \subset H'_w(\phi(x))$ for all $x\in M$ and $w=1,\ldots, r$. 

\begin{lemma}
For $w=1,\ldots, r$, the differential $\phi':TM\rightarrow TM'$ induces a smooth vector bundle map $\hat{\phi}_{[w]}':\fg_w M\rightarrow \fg_w M'$ that covers $\phi$.  
\end{lemma}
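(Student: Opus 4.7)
The plan is to observe that the Carnot manifold condition delivers exactly the compatibility needed to pass the differential $\phi'$ to the quotient bundles $\fg_wM=H_w/H_{w-1}$, and then to read off smoothness locally from a choice of $H$-frames.

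First, fix $w\in\{1,\ldots,r\}$. By Definition~\ref{def:Carnot.Carnot-mfld-map} applied simultaneously to $w$ and to $w-1$ (with $H_0=\{0\}$), the differential $\phi'(x)$ sends $H_w(x)$ into $H'_w(\phi(x))$ and $H_{w-1}(x)$ into $H'_{w-1}(\phi(x))$. Consequently the restriction $\phi'_w:=\phi'_{|H_w}$ is a smooth vector bundle map $H_w\to H'_w$ covering $\phi$, and composing with the quotient projection $\pi'_w:H'_w\twoheadrightarrow \fg_wM'$ yields a smooth bundle map $\pi'_w\circ \phi'_w:H_w\to \fg_wM'$ which vanishes on $H_{w-1}$ fiber by fiber. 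By the universal property of the quotient bundle this factors uniquely through $\pi_w:H_w\twoheadrightarrow \fg_wM$, producing a fiberwise-linear map $\phi'_{[w]}:\fg_wM\to \fg_wM'$ covering $\phi$ and characterized by
\begin{equation*}
\phi'_{[w]}\bigl(\pi_w(X)\bigr)=\pi'_w\bigl(\phi'(x)X\bigr),\qquad x\in M,\ X\in H_w(x).
\end{equation*}

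The remaining point is smoothness of $\phi'_{[w]}$. I would verify this locally using $H$-frames. Pick a point $a\in M$, an $H$-frame $(X_1,\ldots,X_n)$ near $a$, and an $H'$-frame $(X_1',\ldots,X'_{n'})$ near $\phi(a)$. By Remark~\ref{rmk:Tangent-group.frame-fgM} the induced classes $\xi_j=[X_j]\in\fg_{w_j}M$ and $\xi_k'=[X'_k]\in \fg_{w'_k}M'$ furnish smooth graded local frames of $\fg M$ and $\fg M'$. The Carnot condition~(\ref{eq:Carnot.Carnot-map}) means that, near $a$, we may write
\begin{equation*}
\phi'(x)X_j(x)=\sum_{w'_k\leq w_j} a_j^k(x)X'_k(\phi(x))
\end{equation*}
with smooth functions $a_j^k$. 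Taking classes modulo $H'_{w_j-1}$ keeps only the terms with $w'_k=w_j$, so that
\begin{equation*}
\phi'_{[w]}\bigl(\xi_j(x)\bigr)=\sum_{w'_k=w_j} a_j^k(x)\,\xi'_k(\phi(x)),\qquad w_j=w.
\end{equation*}
Since the coefficients $a_j^k$ and the frames are smooth, $\phi'_{[w]}$ is smooth on the chart, and globality follows by uniqueness.

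The only place that requires care is the factorization through the quotient bundle: one must know that a fiberwise-linear bundle map from $H_w$ into $\fg_wM'$ that kills $H_{w-1}$ descends to a \emph{smooth} map on $\fg_wM=H_w/H_{w-1}$. This is standard, but is precisely the step where the smoothness of the quotient bundle structure is used, and the explicit frame computation above makes it concrete. Finally, the fact that $\phi'_{[w]}$ covers $\phi$ is automatic from its construction.
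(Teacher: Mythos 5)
Your proposal is correct and follows essentially the same route as the paper: descend $\phi'$ fiberwise using that it preserves both $H_w$ and $H_{w-1}$, then verify smoothness by expanding $\phi'(x)X_j(x)$ in an $H'$-frame with smooth coefficients and passing to classes, which retains only the terms with $w'_k=w_j$. The extra framing via the universal property of the quotient bundle is a harmless reformulation of what the paper does implicitly.
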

\begin{proof}
 Let $a\in M$ and set $a'=\phi(a)$. As $\phi$ is a Carnot manifold map, for $w=1,\ldots, r$,  its differential $\phi'(a)$ maps the subspace $H_w(a)$ to $H'_w(a')$, and so it descends to a  linear map $\hat{\phi}_{[w]}'(a): \fg_w M(a)\rightarrow \fg_w M'(a')$. We get a bundle map $\hat{\phi}_{[w]}':\fg_w M\rightarrow \fg_w M'$ which covers $\phi$. It  remains to show that this map is smooth. 
 
 Let $(X_1,\ldots, X_n)$ be an $H$-frame over an open neighborhood $U$ of $a$ and $(X_1',\ldots, X_{n'}')$ an $H'$-frame over an open neighborhood $U'$ of $a'$ with $\phi(U) \subset  U'$. Then $\{X_j; w_j \leq w\}$ and $\{X_k'; w_k'\leq w\}$ are smooth local frames of $H_w$ and $H_{w'}$, respectively. Here $\phi':TM\rightarrow TM'$ is a smooth vector bundle map  that sends $H_w(x)$ to $H_{w}'(\phi(x))$ for every $x\in M$. Therefore, there are functions $c_{jk}(x)\in C^\infty(U)$, $w_k'\leq w_j$, such that, for $j=1,\ldots, n$, we have  
\begin{equation}
 \phi'(x) \left( X_j(x)\right) = \sum_{w'_k \leq w_j} c_{jk}(x) X'_k\left(\phi(x)\right)\qquad \text{for all $x\in U$}.
 \label{eq:Carnot-map.hphiw-Xj} 
\end{equation}

For $j=1,\ldots, n$ and $x\in U$, let $\xi_j(x)$ be the class of $X_j(x)$ in $\fg_{w_j}  M(x)$. Then $\{\xi_j; w_j=w\}$ is a smooth frame of $\fg_w M$ over $U$. Likewise, we have a smooth frame $\{\xi_k'; w'_k=w\}$ of $\fg_{w}M'$ over $U'$, where $\xi_k'(x')$ is the class of  $X'_k(x)$ in $\fg_{w} M'$ for all $x'\in U'$. By construction $\hat\phi'_{[w]}(x)(\xi_j(x))$ is the class of $\phi'(x)(X_j(x))$ in $\fg_{w_j}M'(x)$. Combining this with~(\ref{eq:Carnot-map.hphiw-Xj}) gives
\begin{equation}
 \hat\phi'_{[w]}(x)(\xi_j(x))=  \sum_{w'_k = w_j} c_{jk}(x) \xi'_k\left(\phi(x)\right)\qquad \text{for all $x\in U$}. 
 \label{eq:Carnot-diff.smoothness-hatphiw}
\end{equation}
As the coefficients $c_{jk}(x)$ are smooth, this shows that the map $\hat\phi'_{[w]}:\fg M \rightarrow \fg M'$ is smooth. The proof is complete. 
\end{proof}

Using the gradings $\fg M = \fg_1 M\oplus \cdots \oplus \fg_r M$ and $\fg M' = \fg_1 M'\oplus \cdots \oplus \fg_r M'$, we can form the direct sum of the bundle maps $\hat{\phi}'_{[w]}$, $w=1,\ldots,r$, to get a smooth bundle map $ \hat{\phi}':\fg M\rightarrow \fg M'$ which covers $\phi$, so that we have
\begin{equation*}
   \hat{\phi}'(x)=  \hat{\phi}'_{[1]}(x)\oplus \cdots \oplus  \hat{\phi}'_{[r]}(x) \qquad \text{for all $x\in M$}. 
\end{equation*}
Since at the manifold level $GM=\fg M$ and $GM'=\fg M'$, we may also regard $\hat{\phi}'$ as a smooth bundle map from $GM$ to $GM'$. Moreover, as the construction of $\hat{\phi}'$ is compatible with the gradings  we see that, with respect to the dilations~(\ref{eq:Carnot.dilations}), we have
\begin{equation}
 \hat{\phi}'(a)(t\cdot \xi) = t \cdot  \left(\hat{\phi}'(a)\xi\right) \qquad \text{for all $(a,\xi)\in GM$ and $t\in \R$}. 
 \label{eq:Carnot-map.hatphi-homogeneity}
\end{equation}
Recall that the dilations~(\ref{eq:Carnot.dilations}) are group automorphisms (\emph{cf.}\ Remark~\ref{rmk:tangent.dilations-group-automorphisms}).

\begin{definition}
 The map $\hat{\phi}':GM\rightarrow GM'$ is called the \emph{Carnot differential of $\phi$}. For every $a\in M$, the map $\hat{\phi}'(a):GM(a)\rightarrow GM'(\phi(a))$ is called the  \emph{Carnot differential of $\phi$ at $a$} . 
\end{definition}

We have the following chain rule for Carnot differentials. 

\begin{proposition}\label{prop:Carnot.product-tangent-map}
  Let $\psi:M' \rightarrow M''$ be a Carnot manifold map, where $(M'',H'')$ is a Carnot manifold of step $r$.  Then, for all $a\in M$, we have
  \begin{equation*}
      \widehat{(\psi \circ \phi)}'(a)=\hat{\psi}'\left( \phi(a)\right) \circ \hat{\phi}'(a). 
  \end{equation*}
\end{proposition}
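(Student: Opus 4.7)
The plan is to deduce the chain rule from the ordinary chain rule $(\psi \circ \phi)'(a) = \psi'(\phi(a)) \circ \phi'(a)$ by exploiting the functoriality of the quotient construction $H_w/H_{w-1}$. The key observation is that $\hat{\phi}'$ is built fiberwise and weight-by-weight from $\phi'$ by passing to quotients, and the composition of two quotient maps induced from compatible linear maps is the quotient map induced from the composition.

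To carry this out, fix $a\in M$ and set $a'=\phi(a)$ and $a''=\psi(a')$. For each weight $w\in\{1,\ldots,r\}$ and each $\xi\in\fg_wM(a)$, choose a representative $X\in H_w(a)$, so that $\xi$ is the class of $X$ in $H_w(a)/H_{w-1}(a)=\fg_wM(a)$. By the definition of the Carnot differential, $\hat{\phi}'_{[w]}(a)\xi$ is the class of $\phi'(a)X\in H'_w(a')$ in $\fg_wM'(a')$. Since $\psi$ is a Carnot manifold map, $\psi'(a')$ sends $H'_w(a')$ into $H''_w(a'')$ and descends to $\hat{\psi}'_{[w]}(a')$, so
\begin{equation*}
\hat{\psi}'_{[w]}(a')\bigl(\hat{\phi}'_{[w]}(a)\xi\bigr)=\textup{class of }\psi'(a')\phi'(a)X\textup{ in }\fg_wM''(a'').
\end{equation*}
By the ordinary chain rule, $\psi'(a')\phi'(a)X=(\psi\circ\phi)'(a)X$, and the right-hand side class is exactly $\widehat{(\psi\circ\phi)}'_{[w]}(a)\xi$. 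Thus the two maps agree on each graded piece $\fg_wM(a)$.

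Summing over $w=1,\ldots,r$, and recalling that $\hat{\phi}'(a)=\bigoplus_{w}\hat{\phi}'_{[w]}(a)$ (and similarly for $\hat\psi'$ and $\widehat{(\psi\circ\phi)}'$), the two maps agree on all of $\fg M(a)$. Since at the manifold level $GM(a)=\fg M(a)$ and $GM''(a'')=\fg M''(a'')$, this yields the claimed identity of maps $GM(a)\to GM''(a'')$. There is essentially no obstacle here beyond carefully unwinding the definition of the Carnot differential weight by weight; the only ingredient beyond the ordinary chain rule is the condition that each of $\phi$ and $\psi$ preserves the filtration, which is precisely the Carnot manifold map hypothesis and guarantees that every quotient involved is well defined.
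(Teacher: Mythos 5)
Your proof is correct and follows essentially the same route as the paper: both arguments unwind the definition of the Carnot differential on each graded piece $\fg_w M(a)$ via a representative $X\in H_w(a)$, apply the ordinary chain rule $(\psi\circ\phi)'(a)=\psi'(\phi(a))\circ\phi'(a)$ to the representatives, and conclude that the induced quotient maps agree. No gaps.
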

\begin{proof}
Let $a\in M$ and set $a'=\phi(a)$ and $a''=\psi\circ \phi(a)$. Given  $\xi\in \fg_w M(a)$, $w=1,\ldots, r$, let $X\in H_w(a)$ represent $\xi$ in $ \fg_w M(a)$. By construction 
$\phi'(a)X$ represents $\hat{\phi}'(a)\xi$ in $\fg_w M'(a')$, and so $\psi'(a')(\phi'(a)(X))$ represents  $\hat{\psi}'(a')(\hat{\phi}'(a)\xi)$ in $\fg_w M''(a'')$. Likewise, 
$(\psi \circ \phi)'(a)X$ represents  $\widehat{(\psi \circ \phi)}'(a)\xi$  in $\fg_w M''(a'')$. As $(\psi \circ \phi)'(a)= \psi'(a')\circ \phi'(a)$, we see that $ \widehat{(\psi \circ \phi)}'(a)\xi=\hat{\psi}'(a')( \hat{\phi}'(a)\xi)$. This gives the result. 
\end{proof}

\begin{proposition}\label{prop:Carnot-map.group-map}
For every $a\in M$, the Carnot differential $\hat{\phi}'(a)$ is a Lie algebra map from $\fg M(a)$ to $\fg M'(\phi(a))$ and a group map from $GM(a)$ to $GM'(\phi(a))$. 
\end{proposition}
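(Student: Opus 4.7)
The plan is to first establish the Lie algebra homomorphism assertion, and then deduce the group homomorphism property automatically. Indeed, both $GM(a)$ and $GM'(\phi(a))$ are identified as manifolds with their Lie algebras $\fg M(a)$ and $\fg M'(\phi(a))$ via the exponential map (which is the identity under this identification), and their products are given by the Dynkin product~(\ref{eq:Carnot.Dynkin-product}). Since this product is a universal polynomial expression in iterated Lie brackets, any Lie algebra homomorphism automatically intertwines the two Dynkin products and therefore descends to a group map $GM(a)\to GM'(\phi(a))$.

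To prove that $\hat{\phi}'(a)$ is a Lie algebra map, I would work in local $H$-frames $(X_{1},\ldots,X_{n})$ near $a$ and $(X_{1}',\ldots,X_{n'}')$ near $a'=\phi(a)$, and verify the compatibility at the level of structure constants. Let $(\xi_{j}(x))$ and $(\xi_{k}'(x'))$ be the associated graded frames of $\fg M$ and $\fg M'$, so that the structure constants $L_{ij}^{p}(a)$ and $L'^{m}_{kl}(a')$ are given by~(\ref{eq-jk}) on each side. From~(\ref{eq:Carnot-diff.smoothness-hatphiw}) we have, at $x=a$,
\begin{equation*}
\hat{\phi}'(a)\xi_{j}(a)=\sum_{w_{k}'=w_{j}}c_{jk}(a)\xi_{k}'(a'),
\end{equation*}
with the coefficients $c_{jk}(x)$ of~(\ref{eq:Carnot-map.hphiw-Xj}). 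Expanding $[\hat{\phi}'(a)\xi_{i},\hat{\phi}'(a)\xi_{j}]$ and $\hat{\phi}'(a)[\xi_{i},\xi_{j}]$ in the basis $(\xi_{m}'(a'))$ reduces the claim to the identity
\begin{equation*}
\sum_{w_{p}=w_{i}+w_{j}}L_{ij}^{p}(a)\,c_{pm}(a)=\sum_{\substack{w_{k}'=w_{i}\\ w_{l}'=w_{j}}} c_{ik}(a)\,c_{jl}(a)\,L'^{m}_{kl}(a') \qquad \text{for all $m$ with $w_{m}'=w_{i}+w_{j}$},
\end{equation*}
whenever $w_{i}+w_{j}\leq r$ (when $w_{i}+w_{j}>r$ both brackets are zero by~(\ref{eq:Carnot.Carnot-grading}), and there is nothing to check).

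The main step, and the core technical input, would be to derive this identity by computing $[X_{i},X_{j}](g\circ\phi)$ at $x=a$ in two ways, for an arbitrary test function $g\in C^{\infty}(M')$. On the one hand, writing $[X_{i},X_{j}]=\sum_{p}L_{ij}^{p}(x)X_{p}$ and using~(\ref{eq:Carnot-map.hphiw-Xj}) yields
\begin{equation*}
[X_{i},X_{j}](g\circ\phi)(a)=\sum_{p,m}L_{ij}^{p}(a)\,c_{pm}(a)\,(X_{m}'g)(a').
\end{equation*}
On the other hand, expanding $X_{i}(X_{j}(g\circ\phi))-X_{j}(X_{i}(g\circ\phi))$ by Leibniz and using the relation $[X_{k}',X_{l}']=\sum_{m}L'^{m}_{kl}(x')X_{m}'$ produces
\begin{equation*}
[X_{i},X_{j}](g\circ\phi)(a)=\sum_{m}\bigl(X_{i}(c_{jm})(a)-X_{j}(c_{im})(a)\bigr)(X_{m}'g)(a')+\sum_{k,l,m}c_{ik}(a)c_{jl}(a)L'^{m}_{kl}(a')(X_{m}'g)(a').
\end{equation*}
Since $(X_{1}',\ldots,X_{n'}')$ is a frame, the values $(X_{m}'g)(a')$ can be chosen independently, so we may equate coefficients. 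Restricting to indices $m$ with $w_{m}'=w_{i}+w_{j}$, the coefficient $c_{jm}$ vanishes identically (because $w_{m}'>w_{j}$ forces $c_{jm}\equiv 0$ in view of~(\ref{eq:Carnot-map.hphiw-Xj})), and similarly $c_{im}\equiv 0$, so the derivative terms drop out. The weight constraints $c_{pm}(a)=0$ for $w_{p}<w_{m}'$, $c_{ik}(a)=0$ for $w_{k}'>w_{i}$, and $L'^{m}_{kl}(a')=0$ for $w_{k}'+w_{l}'<w_{m}'$ then collapse the two sums onto the index ranges stated above, and we obtain precisely the required identity. This is the delicate bookkeeping step, and essentially the only place where the weight-grading hypotheses must be used carefully; everything else is formal.
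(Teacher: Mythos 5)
Your proposal is correct, and it takes a genuinely different route from the paper. The paper first reduces to the case where $\phi$ is an immersion (by replacing $\phi$ with its graph embedding $x\mapsto (x,\phi(x))$ into $M\times M'$ equipped with the product Carnot filtration), then uses the constant rank theorem to produce a local retraction $\psi$ with $\psi\circ\phi=\op{id}$, defines auxiliary vector fields $\tilde{X}(x')=\phi'(\psi(x'))[X(\psi(x'))]$ on $M'$, and proves $[\tilde{X},\tilde{Y}]\circ\phi=\widetilde{[X,Y]}\circ\phi$ by a chain-rule computation. You avoid the immersion reduction and the auxiliary fields entirely by never pushing vector fields forward: you only ever apply $X_i$, $X_j$ to functions of the form $g\circ\phi$, which is always well defined, and you extract the needed identity on structure constants by equating coefficients in the frame $(X_m')$ (legitimate, since the values $(X_m'g)(a')$ can be prescribed arbitrarily). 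I checked the bookkeeping: the second-order terms recombine into $\sum c_{ik}c_{jl}([X_k',X_l']g)\circ\phi$ as you claim, and for $w_m'=w_i+w_j$ the first-order terms $X_ic_{jm}-X_jc_{im}$ do vanish because $c_{jm}$ and $c_{im}$ are identically zero (their indices violate the constraint $w_m'\leq w_j$, resp.\ $w_m'\leq w_i$, in~(\ref{eq:Carnot-map.hphiw-Xj})); the remaining weight constraints collapse both sums onto the stated index ranges. Your reduction of the group-map property to the Lie-algebra-map property via the universality of the Dynkin product is exactly the paper's first step. What your version buys is a shorter, uniformly valid argument with no case distinction; what the paper's version buys is a reusable geometric object ($\tilde{X}$, essentially a $\phi$-related extension of $\phi_*X$) and a formulation of the bracket compatibility that does not depend on choosing frames.
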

\begin{proof}
 Let $a \in M$ and set $a'=\phi(a)$. As $GM(a)$ (resp., $GM'(a')$) is just $\fg M(a)$ (resp., $\fg M'(a')$) equipped with the Dynkin product~(\ref{eq:Carnot.Dynkin-product}), we only have to prove the compatibility of $\hat{\phi}'(a)$ with the Lie brackets of $\fg M(a)$ and $\fg M'(a')$. We shall first establish the result when $\phi$ has a smooth left-inverse $\psi:M'\rightarrow M$, i.e., $\psi \circ \phi=\op{id}$. In particular, the map $\phi$ is a smooth embedding.

\begin{claim}
For $w=1,\ldots,r$, there are an open neighborhood $U$ of $a$ in $M$ and an open neighborhood $U'$ of $\phi(U)$ in $M'$, so that, for every  section $X$ of $H_w$ over $U$, there is a section $X'$ of $H_w'$ over $U'$ such that
\begin{equation}
 \phi'(x) \big[ X(x)\big] = X'\big(\phi(x)\big) \qquad \text{for all $x\in U$}. 
 \label{eq:Carnot-map.phi-related}
\end{equation}
\end{claim}
\begin{proof}[Proof of the claim] Let $(X_1',\ldots, X'_{m'})$ be a frame of $H_w'$ over an open neighborhood $U_0'$ of $a'$ in $M'$ and $(X_1,\ldots, X_m)$ a frame of $H_w$ over an open neighborhood $U_0$ of $a$ in $M$. Set $U=U_0\cap \phi^{-1}(U_0')$ and $U'=U_0'\cap \psi^{-1}(U)$. Then $U$ and $U'$ are open neighborhoods of 
$a$ and $a'$, respectively. By definition $\phi(U)\subset U_0'$ and $\phi(U')\subset U$. As $\psi \circ \psi =\op{id}$ we also have $\phi(U) \subset \psi^{-1}(U)$, and so $\phi$ is contained in $U_0\cap \psi^{-1}(U)=U'$. Furthermore, as $\phi$ is a Carnot manifold map, for $j=1,\ldots, n$ the vector $\phi'(x)[X_j(x)]$ is in $H_w'(\phi(x))$ for all $x\in U$. Thus, as in~(\ref{eq:Carnot-map.hphiw-Xj}), there are functions $c_{jk}(x)\in C^\infty(U)$, $k=1,\ldots, m'$, such that 
\begin{equation}
 \phi'(x)\big[ X_j(x)\big] = \sum_{1\leq k \leq m'} c_{jk}(x) X_k'\left(\phi(x)\right) \qquad \text{for all $x\in U$}.
 \label{eq:Carnot-map.hphiw-Xj-Hw} 
\end{equation}

Let $X$ be a section of $H_w$ over $U$. Set $X(x)= \sum_{j=1}^m a_j(x)X_j(x)$, $a_j(x)\in C^\infty(U)$, and let $X'$ be the smooth section of  $H_w'$ over $U'$ defined by
\begin{equation*}
 X'(x')= \sum_{1\leq j \leq m} \sum_{1\leq k \leq m'} a_{j}\left(\psi(x')\right) c_{jk}\left(\psi(x')\right) X_k'(x'), \qquad x'\in U'. 
\end{equation*}
Let $x\in U$. As $\psi\circ \phi(x)=x$, we have
\begin{equation*}
 X'\big(\phi(x)\big)= \sum_{1\leq j \leq m} \sum_{1\leq k \leq m'} a_{j}\left(x\right) c_{jk}\left(x\right) X_k'(x'). 
\end{equation*}
Combining this with~(\ref{eq:Carnot-map.hphiw-Xj-Hw}) then gives
\begin{equation*}
  X'\big(\phi(x)\big)= \sum_{1\leq j \leq m} a_{j}\left(x\right)  \phi'(x)\big[ X_j(x)\big] =  \phi'(x)\big[ X(x)\big]. 
\end{equation*}
This proves the claim. 
\end{proof}
 
 In what follows, we shall say that a vector field $X$ on an open neighborhood of $a$ and a vector field $X'$ on an open neighborhood of $a'$ are $\phi$-related near $a$ when they satisfy~(\ref{eq:Carnot-map.phi-related}) near $a$. It follows from the above claim that, given any $\xi\in \fg_w M(a)$, $w\leq r$, we can find a smooth section $X$ of $H_w$ near $a$ and a smooth section $X'$ of $H_w'$ near $a'$ such that $X(a)$ represents $\xi$ in $\fg_w M(a)$ and $X$ and $X'$ are $\phi$-related near $a$. By construction $\hat\phi'(a) (\xi(a))$ is the class of $\phi'(a)[X(a)]$ in $\fg_{w} M(a')$. As $\phi'(a)[X(a)]=X'(\phi(a))=X'(a')$, we then get
\begin{equation}
 \hat\phi'(a) (\xi(a)) = \text{class of $X'(a')$ in $\fg_w M'(a')$}. 
 \label{eq:Carnot-map.hphi'-tildeX}
\end{equation}

Given $\eta \in \fg_{w'}M(a)$, $w'\leq r-w$, let $Y$ be a smooth section  $H_{w'}$ near $a$ and $Y'$  a smooth section  $H_{w'}'$ near $a'$ such that $Y(a)$ represents $\xi$ in $\fg_{w'} M(a)$ and $Y$ and $Y'$ are $\phi$-related near $a$. As in~(\ref{eq:Carnot-map.hphi'-tildeX}) $\hat{\phi}'(a)\eta$ is the class of $Y'(a')$ in $\fg_{w'} M'(a')$. The definition of the Lie bracket of $\fg'M"(a')$ then implies that
\begin{equation}
 \big[\hat{\phi}'(a)\xi,\hat{\phi}'(a)\eta\big] = \text{class of $[X',Y'](a')$ in $\fg_{w+w'} M'(a')$}.
 \label{eq:Carnot-map.hphi'-bracket}
\end{equation}

The definition of the Lie bracket of $\fg M(a)$ ensures that the Lie bracket $[\xi,\eta]$ is the class of $[X,Y](a)$ in $\fg_w M(a)$. As $X$ (resp., $Y$) is $\phi$-related to $X'$ (resp., $Y'$) near $a$, it follows from~\cite[Lemma~I.3.10]{Mi:AMS08} that $[X,Y]$ and $[X',Y']$ are $\phi$-related near $a$. Note that $[X,Y]$ and $[X',Y']$ are local sections 
of $H_{w+w'}$ and $H_{w+w'}'$, respectively. Therefore, as in~(\ref{eq:Carnot-map.hphi'-tildeX}) we have
\begin{equation*}
\hat{\phi}'(a)\big([\xi,\eta]\big) = \text{class of $[X',Y'](a')$ in $\fg_{w+w'} M'(a')$}.
\end{equation*}
Combining this with~(\ref{eq:Carnot-map.hphi'-bracket}) then shows that $\hat{\phi}'(a)\big([\xi,\eta]\big)=\big[\hat{\phi}'(a)\xi,\hat{\phi}'(a)\eta\big]$. This proves the compatibility of $\hat{\phi}'(a)$ with the Lie bracket when $\phi$ is a left-invertible Carnot manifold map.

Suppose now that $\phi:M\rightarrow M'$ is an arbitrary Carnot manifold map. We reduce to the left-invertible case as follows. Set $\tilde{M}=M\times M'$ and $\tilde{a}=(a,a')$. Let $\pi_1:\tilde{M} \rightarrow  M$ and $\pi_2:\tilde{M} \rightarrow  M'$ be the first factor and second factor projections. The differential $\pi_1'$ (resp., $\pi_2'$) induces a vector bundle isomorphism from the normal bundle $\ker \pi_2'$ (resp., $\ker \pi_1'$) onto $\pi_1^*TM$ (resp., $\pi_2^*TM'$), and so we get a natural vector bundle identification between $TM=\ker \pi_2' \oplus \ker \pi_1'$ and $\pi_1^*TM \oplus \pi_2^*TM'$. Define 
\begin{equation*}
 \tilde{H}_w=\pi_1^*H_w \oplus  \pi_2^*H_w', \qquad w=1, \ldots, r,
\end{equation*}
where $\pi_1^*H_w$ (resp., $\pi_2 ^*H_w'$) is seen as a sub-bundle of $\ker \pi_2'$ (resp., $\ker \pi_1'$). If $w+w'\leq r$, then
\begin{equation}
 \big[ \tilde{H}_w, \tilde{H}_{w'}\big]= \pi_1^*\left[H_w,H_{w'}\right]\oplus \pi_2^*[H_{w}',H_{w'}'] 
 \subset \pi_1^*H_{w+w'} \oplus \pi_2^*H_{w+w'}'= \tilde{H}_{w+w'}. 
 \label{eq:Carnot-map.bracket-tH}
\end{equation}
This shows that $\tilde{H}:=(\tilde{H}_1, \ldots, \tilde{H}_r)$ is a Carnot filtration, and so $(\tilde{M},\tilde{H})$ is a Carnot manifold. 

The projections $\pi_1:\tilde{M}\rightarrow M$ and $\pi_2:\tilde{M} \rightarrow M'$ are both Carnot manifold maps. Furthermore, thanks to~(\ref{eq:Carnot-map.bracket-tH}) we have a Lie algebra isomorphism,
\begin{equation*}
 \hat{\pi}_1'(\tilde{a}) \oplus \hat{\pi}_2'(\tilde{a}): \fg \tilde{M}(\tilde{a}) \stackrel{\sim}{\longrightarrow} \fg M(a) \oplus \fg M'(a').
\end{equation*}
Incidentally, the Carnot differentials $\hat{\pi}_1'(\tilde{a}):\fg \tilde{M}(\tilde{a})  \rightarrow \fg M(a)$ and $\hat{\pi}_2'(\tilde{a}): \fg \tilde{M}(\tilde{a})  \rightarrow \fg M'(a')$ are both Lie algebra maps. 

Let $\Phi: M\rightarrow \tilde{M}$ be the smooth map defined by 
\begin{equation*}
 \Phi(x)=\left(x,\phi(x)\right), \qquad x\in M. 
\end{equation*}
Given any $x\in M$, for $w=1,\ldots, r$, we have 
\begin{equation*}
 \Phi'(x)\big(H_w(x)\big)= H_w(x) \oplus \phi'(x)\big(H_w(x)\big)\subset H_w(x) \oplus H_{w}(\phi(x)) =\tilde{H}_w(\Phi(x)). 
\end{equation*}
This shows that $\Phi$ is a Carnot manifold map. As $\pi_1\circ \Phi =\op{id}$, we also see that $\Phi$ is has smooth left-inverse. Therefore, by the first part of the proof  the Carnot differential $\widehat{\Phi}'(\tilde{a}): \fg M (a)\rightarrow \fg \tilde{M}(\tilde{a})$ is a Lie algebra map. 

Note that $\phi = \pi_2 \circ \Phi$, and so by Proposition~\ref{prop:Carnot.product-tangent-map} we have $\hat{\phi}'(a) = \hat{\pi}_2'(\tilde{a}) \circ \hat{\Phi}'(a)$. As $\hat{\Phi}'(a): \fg M (a)\rightarrow \fg \tilde{M}(\tilde{a})$  and $ \hat{\pi}_2'(\tilde{a}): \fg \tilde{M}(\tilde{a}) \rightarrow \fg M'(a')$ are both Lie algebra maps, we then deduce that $\hat{\phi}'(a):\fg M(a)\rightarrow \fg M'(a')$ is a Lie algebra map. As mentioned above, this  implies that $\hat{\phi}'(a)$ is compatible with the product laws of $GM(a)$ and $GM'(a')$. The proof is complete. 
\end{proof}


Combining Proposition~\ref{prop:Carnot.product-tangent-map} and Proposition~\ref{prop:Carnot-map.group-map} leads us to the following functoriality result. 

\begin{proposition}\label{prop:Carnot-diff.functor}
 The assignment $(M,H)\rightarrow GM$ is a functor from the category of (step $r$) Carnot manifolds  to the category of smooth bundles of (step $r$) graded nilpotent Lie groups. 
\end{proposition}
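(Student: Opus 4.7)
The plan is to verify the four data/axioms that make the assignment a functor, and to show that each of them has already been established elsewhere in the paper (or is an immediate consequence of the definitions). Concretely, one must check: (a) the object assignment $(M,H)\mapsto GM$ lands in the correct category; (b) the morphism assignment $\phi\mapsto \hat{\phi}'$ lands in the correct category; (c) the identity on $(M,H)$ is sent to the identity on $GM$; (d) composition is preserved.

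For (a), the result of Section~\ref{sec:tangent-group} already provides the content: for any step-$r$ Carnot manifold $(M,H)$, the tangent Lie algebra bundle $\fg M$ is a smooth bundle of step-$r$ graded nilpotent Lie algebras, and equipping each fiber with the Dynkin product~(\ref{eq:Carnot.Dynkin-product}) turns $GM=\fg M$ into a smooth bundle of step-$r$ graded nilpotent Lie groups. For (b), given a Carnot manifold map $\phi:M\to M'$, the Carnot differential $\hat{\phi}':GM\to GM'$ is a smooth bundle map covering $\phi$ by the construction preceding~(\ref{eq:Carnot-diff.smoothness-hatphiw}), and Proposition~\ref{prop:Carnot-map.group-map} asserts that each fiberwise restriction $\hat{\phi}'(a):GM(a)\to GM'(\phi(a))$ is a group homomorphism, preserving the grading in view of~(\ref{eq:Carnot-map.hatphi-homogeneity}). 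Thus $\hat{\phi}'$ is a morphism in the target category.

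For (c), when $\phi=\op{id}_M$ one has $\phi'(a)=\op{id}_{TM(a)}$, which by construction descends on each quotient $H_w(a)/H_{w-1}(a)=\fg_w M(a)$ to the identity; taking the direct sum over $w=1,\ldots,r$ gives $\hat{\phi}'(a)=\op{id}_{GM(a)}$ for every $a\in M$, hence $\hat{(\op{id}_M)}'=\op{id}_{GM}$. For (d), the chain rule for Carnot differentials stated as Proposition~\ref{prop:Carnot.product-tangent-map} provides exactly the identity $\widehat{(\psi\circ\phi)}'(a)=\hat{\psi}'(\phi(a))\circ\hat{\phi}'(a)$ at every point $a\in M$, which is the required functorial compatibility with composition.

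Since nothing new needs to be proved beyond citing these earlier results, there is no real obstacle; the proof is essentially a bookkeeping paragraph that assembles Proposition~\ref{prop:Carnot-map.group-map}, Proposition~\ref{prop:Carnot.product-tangent-map}, and the smoothness/grading properties of $\hat{\phi}'$. The only point that arguably deserves an explicit remark is the identity case (c), since it was not stated separately earlier; but it is immediate from unwinding the definition of $\hat{\phi}'$ as the graded map induced on the associated graded of the filtration by $\phi'(a)$.
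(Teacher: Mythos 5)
Your proposal is correct and follows essentially the same route as the paper, which simply observes that the functoriality is obtained by combining Proposition~\ref{prop:Carnot.product-tangent-map} (the chain rule) with Proposition~\ref{prop:Carnot-map.group-map} (the fiberwise group-map property) together with the smoothness of $\hat{\phi}'$. Your explicit check of the identity axiom is a small but harmless addition that the paper leaves implicit.
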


In particular, in the case of Carnot diffeomorphisms we have the following statement.   

\begin{proposition}\label{prop:Carnot.inverse-tangent-map}
 Suppose that $\phi:M\rightarrow M'$ is a Carnot diffeomorphism. Then, for every $a\in M$, the Carnot differential $\hat{\phi}'(a)$ is a group isomorphism from $GM(a)$ onto $GM'(\phi(a))$ such that
        \begin{equation*}
           \hat{\phi}'(a)^{-1}= \widehat{\left(\phi^{-1}\right)}'\left( \phi(a)\right).
        \end{equation*}
\end{proposition}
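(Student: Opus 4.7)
The plan is to derive this as a direct consequence of the chain rule (Proposition~\ref{prop:Carnot.product-tangent-map}) together with Proposition~\ref{prop:Carnot-map.group-map}. The starting observation is that, by Remark~\ref{rmk:Carnot-mfld.Carnot-diffeo}, a Carnot diffeomorphism $\phi$ has an inverse $\phi^{-1}:M'\rightarrow M$ which is itself a Carnot diffeomorphism, so in particular the Carnot differential $\widehat{(\phi^{-1})}'(\phi(a)):GM'(\phi(a))\rightarrow GM(a)$ is defined and, by Proposition~\ref{prop:Carnot-map.group-map}, is a Lie group map.

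Next, I would establish the trivial fact that the Carnot differential of $\op{id}_M$ at any point $a$ equals $\op{id}_{GM(a)}$. This follows immediately from the definition: the differential $(\op{id}_M)'(a)=\op{id}_{T_aM}$ preserves each $H_w(a)$ and descends on the quotient $\fg_w M(a)=H_w(a)/H_{w-1}(a)$ to the identity map, and taking the direct sum over $w=1,\ldots,r$ gives the identity on $\fg M(a)=GM(a)$.

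With these two ingredients in hand, I apply Proposition~\ref{prop:Carnot.product-tangent-map} to the compositions $\phi^{-1}\circ\phi=\op{id}_M$ and $\phi\circ\phi^{-1}=\op{id}_{M'}$. This yields
\begin{gather*}
\op{id}_{GM(a)}=\widehat{(\op{id}_M)}'(a)=\widehat{(\phi^{-1})}'\bigl(\phi(a)\bigr)\circ\hat\phi'(a),\\
\op{id}_{GM'(\phi(a))}=\widehat{(\op{id}_{M'})}'\bigl(\phi(a)\bigr)=\hat\phi'(a)\circ\widehat{(\phi^{-1})}'\bigl(\phi(a)\bigr).
\end{gather*}
Thus $\hat\phi'(a)$ admits $\widehat{(\phi^{-1})}'(\phi(a))$ as both a left and a right inverse, hence is a bijection with $\hat\phi'(a)^{-1}=\widehat{(\phi^{-1})}'(\phi(a))$. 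Combined with Proposition~\ref{prop:Carnot-map.group-map}, which already tells us that $\hat\phi'(a)$ is a group map, this upgrades it to a group isomorphism.

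There is no real obstacle in this argument; the only point that deserves being spelled out is the identity-map computation, and the rest is purely formal once the functorial chain rule of Proposition~\ref{prop:Carnot.product-tangent-map} is invoked. Equivalently, one can phrase the whole argument as a corollary of the functoriality statement Proposition~\ref{prop:Carnot-diff.functor}: a functor sends isomorphisms to isomorphisms and sends the inverse morphism to the inverse morphism.
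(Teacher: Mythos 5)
Your proof is correct and follows essentially the same route as the paper, which states this proposition as an immediate consequence of the functoriality (Proposition~\ref{prop:Carnot-diff.functor}, itself obtained by combining the chain rule of Proposition~\ref{prop:Carnot.product-tangent-map} with Proposition~\ref{prop:Carnot-map.group-map}). You have simply spelled out the details the paper leaves implicit, namely that $\widehat{(\op{id})}'(a)=\op{id}$ and that applying the chain rule to $\phi^{-1}\circ\phi$ and $\phi\circ\phi^{-1}$ produces the two-sided inverse.
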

 
\begin{remark}
 It is not necessary to require the map $\phi$ to be smooth to be able to define its Carnot differential at a given point $a\in M$. We only need $\phi$ to be differentiable at $a$ in such a way its differential $\phi'(a)$ maps $H_j(a)$ to $H'_j(\phi(a))$ for $j=1,\ldots, n$. The proof of Proposition~\ref{prop:Carnot-map.group-map} requires some extra regularity, but its arguments still go through when $\phi$ is $C^2$ near the point $a$. Thus, at least under this regularity assumption, the Carnot differential $\hat{\phi}'(a)$ is a group map. It would be interesting to obtain this property under a weaker regularity assumption. 
\end{remark}

 \section{Carnot Differential and Tangent Approximation}\label{sec:Tangent-approximation}
 In this section, we show that, in Carnot coordinates, any Carnot manifold map is approximated in a very precise way by its Carnot differential.  
 This result recasts in a differentiable context and extends to Carnot manifold maps between general Carnot manifolds the approximation results of Pansu~\cite{Pa:AM89} for locally Lipschitz continuous maps and quasi-conformal maps between Carnot groups, and their generalizations to quasi-conformal maps between ECC manifolds 
 by Margulis-Mostow~\cite{MM:GAFA95}. 
 
 Throughout this section we let $\phi:M\rightarrow M'$ be a Carnot manifold map, where $(M,H)$ and $(M',H')$ are step $r$ Carnot manifolds. We shall keep using the notation of the previous section. We also let $a\in M$ and set $a'=\phi(a)$. In addition, we let  $(X_{1},\ldots,X_{n})$ be an 
 $H$-frame on an open neighborhood $U_0$ of $a$ and $(X_{1}',\ldots,X_{n}')$ an 
 $H'$-frame  on an open neighborhood $U_0'$  of $a'$ containing $\phi(U_0)$. As in~(\ref{eq:Carnot-map.hphiw-Xj}) the fact that $\phi$ is a Carnot manifold map ensures us there are functions $c_{jk}(x)\in C^\infty(U_0)$, $w_k'\leq w_j$, such that on $U_0$ we have
\begin{equation}
     \phi'(x)\left( X_{j}(x)\right)=\sum_{w_{k}'\leq w_{j}}c_{jk}(x)X_{k}'\left(\phi(x)\right), \qquad j=1,\ldots,n. 
     \label{eq-phistar}
 \end{equation} 

As in Remark~\ref{rmk:Tangent-group.frame-fgM}, the $H$-frame $(X_1, \ldots, X_n)$ gives rise to a graded basis $(\xi_1(a), \ldots, \xi_n(a))$ of $\fg M(a)$, where $\xi_j(a)$ is the class of $X_j(a)$ in $\fg_{w_j}M(a)$. This graded basis defines a global system of coordinates that identifies the tangent group $GM(a)$ with the graded nilpotent Lie group $G(a)$.  This group is obtained by endowing $\R^n$ with the Dynkin product~(\ref{eq:GM.group-law}) associated with the structure constants $L_{ij}^k(a)$, $w_i+w_j=w_k$,  defined by the commutator relations~(\ref{rmk:Carnot-mfld.brackets-H-frame}). 

Likewise, the commutator relations~(\ref{rmk:Carnot-mfld.brackets-H-frame}) for $X_1', \ldots,X_{n'}'$ uniquely defines coefficients $\tilde{L}_{ij}^k(x')$ in $C^\infty(U_0')$ with $w_k'\leq w_i'+w_j'$. Let  $G'(a')$ be the graded nilpotent Lie group obtained by equipping $\R^{n'}$ with the Dynkin product~(\ref{eq:GM.group-law}) associated with the coefficients $\tilde{L}_{ij}^k(a')$, $w_i'+w_j'=w_k'$. As above, the $H'$-frame $(X_1',\ldots, X_{n'}')$ gives rise to a graded basis $(\xi_1'(a'), \ldots, \xi_{n'}'(a'))$ of $\fg M'(a')$ which defines a system of coordinates that identifies $GM'(a')$ with $G'(a')$.  

The Carnot differential $\hat{\phi}'(a)$ is a Lie group map from $GM(a)$ to $GM'(a')$. Using the identifications described above we may regard it as a Lie group map, 
\begin{equation}
 \hat{\phi}'(a):G(a) \longrightarrow G'(a'). 
 \label{eq:Tangent-approx.hphi'(a)-G(a)-G'(a')}
\end{equation}
 Note that~(\ref{eq:Carnot-map.hatphi-homogeneity}) implies that the above map is $w$-homogeneous. Recall also that $\hat{\phi}'(a)$ was originally defined as a linear map from $\fg M(a)=GM(a)$ to $\fg M'(a')=GM'(a')$. Therefore, under the identification~(\ref{eq:Tangent-approx.hphi'(a)-G(a)-G'(a')}) above we obtain a $w$-homogeneous linear map from $\R^n=G(a)$ to $\R^{n'}=G'(a')$. In particular, the map~(\ref{eq:Tangent-approx.hphi'(a)-G(a)-G'(a')}) is given by an $n'\times n$-matrix $(\hat{\phi}_{kj}'(a))$. In fact, it follows from~(\ref{eq:Carnot-diff.smoothness-hatphiw}) that we have
\begin{equation*}\label{eq-mat-3}
 \hat{\phi}_{kj}'(a)= \left\{ 
\begin{array}{ll}
 c_{jk}(a) & \text{if $w_j=w_k'$},\\
 0 & \text{otherwise}. 
\end{array}\right. 
\end{equation*}

 In what follows, we let $(x_{1},\ldots,x_{n})$ be privileged coordinates at $a$ adapted to 
 $(X_{1},\ldots,X_{n})$ with range $U$. We also let $(x_{1}',\ldots,x_{n'}')$ be privileged coordinates at $a'$ adapted to
 $(X_{1},\ldots,X_{n'})$ with range $U'\supset \phi(U)$. In these coordinates the map $\phi$ then appears as a smooth map from $U$ to $U'$. Note that $U$ and $U'$ are open neighborhoods of the origins of $\R^n$ and $\R^{n'}$, respectively. 
 
In the coordinates $(x_{1},\ldots,x_{n})$, for $j=1,\ldots, n$, we may write
  \begin{equation*}
        X_{j}(x)=\sum_{1\leq l \leq n}b_{jp}^{X}(x)\partial_{x_{p}}, \qquad b_{jp}^{X}(x)\in C^\infty(U). 
     \end{equation*}
  Likewise, in the coordinates $(x_{1}',\ldots,x_{n'}')$, for $k=1, \ldots, n'$, we have
  \begin{equation*}
       X_{k}'(x')=\sum_{1\leq l\leq  n'}b_{kq}^{X'}(x')\partial_{x_{q}'}, \qquad b_{kq}^{X'}(x')\in C^\infty(U'). 
     \end{equation*}
 If we regard the coefficients $c_{jk}(x)$, $w_k'\leq w_j$, in~(\ref{eq-phistar}) as smooth functions on $U$, then in terms of the coefficients $b_{jp}^{X}(x)$ and $b_{kq}^{X'}(x')$ above the relations~(\ref{eq-phistar}) imply that, for $j=1, \ldots, n$ and $k=1, \ldots, n'$, we have 
     \begin{equation}
       \sum_{1\leq p \leq n}b_{jp}^{X}(x)\partial_{x_{p}}\phi_{q}(x) = \sum_{w_{k}'\leq w_{j}} c_{jk}(x) b_{kq}^{X'}\left(\phi(x)\right), \qquad x\in U. 
       \label{eq:Carnot-prop.phi'XX'-coefficients}
     \end{equation}
  
 \begin{lemma}\label{lem:Carnot-prop.phi-hatphi-Ow}
 There is a $w$-homogeneous polynomial map $\tilde{\phi}:\R^n \rightarrow \R^{n'}$ such that 
 \begin{equation}
     \phi(x)=\tilde{\phi}(x)+\Ow(\|x\|^{w+1}) \qquad \text{near $x=0$}, 
     \label{eq:Carnot-prop.phi-hatphi-Ow}
 \end{equation}
  \end{lemma}
 \begin{proof}
Proposition~\ref{lem:multi.Thetat} and Remark~\ref{rmk:anisotropic.weighted-asymptotic} it is enough to show that $\phi(x)=\Ow(\|x\|^{w})$ near $x=0$. 
We also know by Lemma~\ref{lem-eq-we} that $\phi(x)=\Ow(\|x\|^{w})$ near $x=0$ if and only if, for every $q=1,\ldots, n'$, the component $\phi_q(x)$ has weight~$\geq w_q'$. That is, we have
     \begin{equation}
     \partial_{x}^{\alpha}\phi_{q}(0)=0 \qquad \text{whenever $w_{q}'-\brak \alpha>0$}. 
     \label{eq:Carnot-prop.weight-phi}
  \end{equation}
  Therefore, we only need to establish~(\ref{eq:Carnot-prop.weight-phi}) to prove the lemma. We shall prove~(\ref{eq:Carnot-prop.weight-phi}) by induction on $|\alpha|$. Let us say that~(\ref{eq:Carnot-prop.weight-phi}) holds up to order $m$ when it holds 
     for every multi-order $\alpha$ such that $w_{q}'-\brak \alpha>0$ and $|\alpha|\leq m$. We note that it holds up to 
     order $0$ since $\phi(0)=0$. 
     
     It remains to show that if~(\ref{eq:Carnot-prop.weight-phi}) holds up to order~$m$, then it holds up to 
     order~$m+1$. To reach this end we observe that, as the coordinates $(x_{1},\ldots,x_{n})$ are privileged coordinates at 
     $a$ adapted to the $H$-frame $(X_{1},\ldots, X_{n})$, we know that the vector field $X_{j}$ has weight $-w_{j}$ 
     and agrees with $\partial_{x_{j}}$ at $x=0$. The former property means that each coefficient $b_{jp}^{X}(x)$ has 
     weight~$\geq w_{p}-w_{j}$. Therefore, we see that, for $j=1,\ldots,n$ and $p=1,\ldots,n$, we have
     \begin{equation}
         b_{jp}^{X}(0)=\delta_{jp} \qquad \text{and} \qquad \partial^{\alpha}_{x}b_{jp}^{X}(0)=0  \quad \text{whenever 
         $w_{p}-w_{j}-\brak \alpha >0$}.
         \label{eq:Carnot-prop.bjlX-order-wj}
     \end{equation}Likewise, for $k=1,\ldots,n'$ and $q=1,\ldots,n'$, we have 
         \begin{equation}
         b_{kq}^{X'}(0)=\delta_{kq} \qquad \text{and} \qquad \partial^{\alpha}_{x'}b_{kq}^{X'}(0)=0  \quad \text{whenever 
         $w_{q}'-w_{k}'-\brak \alpha >0$}.
          \label{eq:Carnot-prop.bjlX'-order-w'l}
     \end{equation}
     
     Suppose now that~(\ref{eq:Carnot-prop.weight-phi}) holds up to order $m$. Given $j\in \{1,\ldots,n\}$ and $q\in \{1,\ldots,n'\}$, 
     let $\alpha \in \N_{0}^{n}$ be such that $w_{q}'-w_{j}-\brak \alpha>0$ and $|\alpha|\leq 
     m$. Using the equality $b_{jp}^{X}(0)=\delta_{jp}$, we see that the partial derivative of order $\alpha$ at $x=0$ of the l.h.s.~of~(\ref{eq:Carnot-prop.phi'XX'-coefficients}) is equal to
     \begin{equation*}
        \partial_{x}^{\alpha}\partial_{x_{j}}\phi_{q}(0)+ 
         \sum_{\substack{\beta+\gamma=\alpha\\ \gamma \neq 
         \alpha}}\binom{\alpha}{\beta}\partial_{x}^{\beta}b_{jp}^{X}(0)\partial^{\gamma}_{x}\partial_{x_{p}}\phi_{q}(0).
     \end{equation*}
     We claim that in the summation above each term 
     $\partial_{x}^{\beta}b_{jp}^{X}(0)\partial^{\gamma}_{x}\partial_{x_{p}}\phi_{q}(0)$ is zero. To see this we observe 
     that
     \begin{equation*}
         (w_{p}-w_{j}-\brak \beta)+(w_{q}'-w_{p}-\brak \gamma)=w_{q}'-w_{j}-\brak \alpha>0.
     \end{equation*}Therefore, at least one the integers $w_{p}-w_{j}-\brak \beta$ or $w_{q}'-w_{p}-\brak \gamma$ must be 
     positive. If the former is positive, then~(\ref{eq:Carnot-prop.bjlX-order-wj}) ensures us that $\partial_{x}^{\beta}b_{jp}^{X}(0)=0$. If 
     $w_{q}'-w_{p}-\brak \gamma>0$, then, as $\gamma \neq \alpha$, we have 
     $|\gamma|+1\leq (|\alpha|-1)+1\leq m$. As~(\ref{eq:Carnot-prop.weight-phi}) holds up to order $m$, we 
     see that 
     $\partial^{\gamma}_{x}\partial_{x_{p}}\phi_{q}(0)=0$. In any case  
     $\partial_{x}^{\beta}b_{jq}^{X}(0)\partial^{\gamma}_{x}\partial_{x_{p}}\phi_{q}(0)$ must be zero. We then deduce 
     that the partial derivative of order $\alpha$ at $x=0$ of the l.h.s.~of~(\ref{eq:Carnot-prop.phi'XX'-coefficients}) is equal to 
     $\partial_{x}^{\alpha}\partial_{x_{j}}\phi_{q}(0)$. 
       
     Bearing this in mind, the partial derivative of order $\alpha$ at $x=0$ of the r.h.s.~of~(\ref{eq:Carnot-prop.phi'XX'-coefficients}) is equal to 
     \begin{equation}
        \sum_{\substack{\beta+\gamma=\alpha\\ w_{k}'\leq 
         w_{j}}}\binom{\alpha}{\beta}\partial_{x}^{\beta}c_{jk}(0)\partial^{\gamma}_{x}(b_{kq}^{X'}\circ \phi)(0).
         \label{eq:Carnot-prop.phi'XX'-coefficients-darhs}
     \end{equation}
     
     \begin{claim}
         Let $\gamma \in \N_{0}^{n}$ and $b\in C^{\infty}(U')$. Then $\partial^{\gamma}_{x}(b\circ \phi)(x)$ is a linear 
         combination of terms of the form
         \begin{equation}
           \partial^{\gamma_{1}}_{x}\phi_{q_{1}}(x) \cdots   
           \partial^{\gamma_{\ell}}_{x}\phi_{q_{\ell}}(x)(\partial_{x_{q_{1}}'}\cdots  \partial_{x_{q_{\ell}}'}b)(\phi(x)),
           \label{eq:Carnot-prop.higher-order-chain-rule}
         \end{equation}where $\ell$ ranges over $\{0,\ldots,|\gamma|\}$, the integers $q_{1},\ldots,q_{\ell}$ range over 
         $\{1,\ldots,n'\}$ and $(\gamma_{1},\ldots,\gamma_{\ell})$ ranges over all $p$-tuples in $(\N_{0}^{n})^{p}$ such that 
         $\gamma_{1}+\cdots +\gamma_{\ell}=\gamma$. 
     \end{claim}
     \begin{proof}
         This claim can be obtained as a by-product of the multivariate higher-order chain rule (a.k.a.\ multivariate Fa\`a di 
         Bruno formula). For our purpose we don't need the precise expressions of the coefficients of the various terms~(\ref{eq:Carnot-prop.higher-order-chain-rule}). 
         We thus can proceed to prove the claim by induction on $|\gamma|$ as follows. Let us say that a term of the 
         form~(\ref{eq:Carnot-prop.higher-order-chain-rule}) has order $m$, with $m=|\gamma|$.  
         We note that the claim at order $1$ is an immediate consequence of the multivariate chain rule. 
         Furthermore, this rule also implies that if  we apply a partial derivative $\partial_{x_{i}}$ to a term of the 
         form~(\ref{eq:Carnot-prop.higher-order-chain-rule}), then we get a sum of terms of the 
         form~(\ref{eq:Carnot-prop.higher-order-chain-rule}) of the next order. Thus, if the claim is true at a given 
         order $m$, then it is true at order $m+1$. It then follows that it holds at any order. This proves the claim. 
      \end{proof}
     
     Thanks to the claim above we know that in~(\ref{eq:Carnot-prop.phi'XX'-coefficients-darhs}) each partial derivative 
     $\partial^{\gamma}_{x}(b_{kq}^{X'}\circ\phi)(0)$ is a linear combination of terms of the form,
     \begin{equation}
          \partial^{\gamma_{1}}_{x}\phi_{q_{1}}(0) \cdots   
           \partial^{\gamma_{\ell}}_{x}\phi_{q_{\ell}}(0)(\partial_{x_{q_{1}}'}\cdots  \partial_{x_{q_{\ell}}'}b_{kq}^{X'})(0),
           \label{eq:Carnot-prop.higher-order-chain-rule-bX'}
         \end{equation}where $\ell,q_{1},\ldots,q_{\ell},\gamma_{1},\ldots,\gamma_{\ell}$ are as in~(\ref{eq:Carnot-prop.higher-order-chain-rule}). We observe that
         \begin{multline*}
             (w_{q_{1}}'-\brak{\gamma_{1}})+\cdots +  (w_{q_{\ell}}'-\brak{\gamma_{\ell}})+(w_{q}'-w_{k}'-(w_{q_{1}}'+\cdots 
             +w_{q_{\ell}}')) \\
             = w_{q}'-w_{k}'-\brak \gamma\geq w_{q}'-w_{j}-\brak \alpha>0.
         \end{multline*}
         Therefore, at least one of the numbers $w_{q_{s}}'-\brak{\gamma_{s}}$, $s=1,\ldots,\ell$, or 
         $w_{q}'-w_{k}'-(w_{q_{1}}'+\cdots 
             +w_{q_{\ell}}')$ must be positive. If $w_{q_{s}}'-\brak{\gamma_{s}}>0$, then, as $|\gamma_{s}|\leq 
             |\gamma|\leq |\alpha|\leq m$, the induction assumption implies that
             $\partial_{x}^{\gamma_{s}}\phi_{q_{s}}(0)=0$. 
             If $w_{q}'-w_{k}'-(w_{q_{1}}'+\cdots 
             +w_{q_{\ell}}')>0$, then~(\ref{eq:Carnot-prop.bjlX'-order-w'l}) implies that $(\partial_{x_{q_{1}}'}\cdots  
             \partial_{x_{q_{\ell}}'}b_{kq}^{X'})(0)=0$. In any case we see that all the terms~(\ref{eq:Carnot-prop.higher-order-chain-rule-bX'}) vanish, 
             and hence $\partial^{\gamma}_{x}(b_{kq}^{X'}\circ 
     \phi)(0)=0$. It then follows that the partial derivative of order $\alpha$ at $x=0$ of the r.h.s.~of~(\ref{eq:Carnot-prop.phi'XX'-coefficients}) is 
     zero. As the partial derivative of order $\alpha$ at $x=0$ of the l.h.s.~agrees with 
     $\partial_{x}^{\alpha}\partial_{x_{j}}\phi_{q}(0)$, we deduce that 
     \begin{equation*}
      \partial_{x}^{\alpha}\partial_{x_{j}}\phi_{q}(0)= 0 \qquad \text{whenever $w_{q}-\brak \alpha-w_{j}>0$ and 
      $|\alpha|\leq m$}.
     \end{equation*}
     This proves that~(\ref{eq:Carnot-prop.weight-phi}) holds up to order~$m+1$. The proof is complete. 
 \end{proof}
 
We shall now identify $\tilde{\phi}(x)$. In what follows, we denote by $G^{(a)}$ the nilpotent approximation of $(M,H)$ at $a$ in the privileged coordinates $(x_1,\ldots, x_n)$ and, for $j=1,\ldots,n$, we let $X_j^{(a)}$ be the model vector field of $X_j$ at $a$ in these coordinates.  Likewise, we denote by $G^{'(a')}$ the nilpotent approximation of  $(M',H')$ at $a'$ in the privileged coordinates $(x_1',\ldots, x_{n'}')$ and, for $k=1,\ldots, n'$, we let $X^{'(a')}_k$ be the model vector field of $X_k'$ at $a'$ in these  coordinates. As in~(\ref{eq:Carnot-coord.canonical-coord}) we define the exponential maps $\exp_{X^{(a)}}: \R^n \rightarrow \R^n$ and  $\exp_{X^{'(a')}}: \R^{n'} \rightarrow \R^{n'}$ by 
\begin{gather*}
 \exp_{X^{(a)}}(x)= \exp\left( x_1 X_1^{(a)} +\cdots + x_n X_n^{(a)}\right), \qquad x=(x_1,\ldots, x_n)\in\R^n,\\
 \exp_{X^{'(a')}}(x')= 
\exp\left( x_1' X_1^{'(a')} +\cdots + x_{n'}' X_{n'}^{'(a')}\right), 
 \qquad x'=(x_1',\ldots, x_{n'}')\in\R^{n'}. 
\end{gather*}
Note that  $\exp_{X^{(a)}}$ is a $w$-homogeneous polynomial diffeomorphism. This is also is a Lie group isomorphism from $G(a)$ onto $G^{(a)}$, since it identifies $G^{(a)}$ with $\R^n$ equipped with the Dynkin product~(\ref{eq:GM.group-law}). Similarly, $\exp_{X^{'(a')}}$ is a $w$-homogeneous polynomial Lie group isomorphism from $G'(a')$ onto $G^{'(a')}$. 
 
 \begin{lemma}\label{lem:Carnot-prop.hatphi=phi'H}
   We have
\begin{equation}
 \tilde{\phi} = \exp_{X^{'(a')}} \circ \hat{\phi}'(a) \circ \exp_{X^{(a)}}^{-1}. 
 \label{eq:Carnot-prop.tphi-hatphi}
\end{equation}
In addition, $\tilde{\phi}$ is a Lie group map from $G^{(a)}$ to $G^{'(a')}$. 
 \end{lemma}
 \begin{proof}
The proof of~(\ref{eq:Carnot-prop.tphi-hatphi}) is based on the following claim. 
 
 \begin{claim}
 For $ j=1,\ldots, n$,  we have
\begin{equation}
 \tilde{\phi}'(x)\big[ X_j^{(a)}(x)\big] = \sum_{w'_k=w_j} c_{jk}(a) X^{'(a')}_k\big(\tilde{\phi}(x)\big) \qquad  \text{for all $x\in \R^n$}. 
 \label{eq:tangent-approx.tphiXj(a)-Xk(a')tphi} 
\end{equation}
 \end{claim}
 \begin{proof}[Proof of the Claim]
 For $j=1,\ldots,n$ and $k=1,\ldots,n'$,  set 
 \begin{equation}
     X_{j}^{(a)}(x)=\sum_{1\leq p \leq n}b_{jp}^{(a)}(x)\partial_{x_{p}} \qquad \text{and} \qquad 
     X_{k}^{'(a')}(x)=\sum_{1 \leq q \leq n'}b_{kq}^{(a')}(x')\partial_{x_{q}'},
     \label{eq:tangent-map.Xj(a)-X'k(a')}
 \end{equation}
 As $X_j$ is a homogenous polynomial vector field of weight $-w_j$, each coefficient $b_{jp}^{(a)}(x)$ is a (weighted) homogeneous polynomial of degree $w_p-w_j$, and so  $b_{jp}^{(a)}(x)=0$ when $w_p<w_j$. 
 Similarly, each coefficient $b_{kq}^{(a')}(x')$ is  a (weighted) homogeneous polynomial of degree $w_q'-w_j$, and so $b_{kq}^{(a')}(x')=0$ when $w_q'<w_k'$.
 
As $(x_{1},\ldots,x_{n})$ are privileged coordinates at $a$ adapted to $(X_{1},\ldots,X_{n})$, for $j=1,\ldots,n$ and as $t\rightarrow 0$, we have
 \begin{equation*}
     t^{w_{j}}\delta_{t}^{*}X_{j}=X_{j}^{(a)}+\op{O}(t) \qquad \text{in $\cX(U)$}. 
 \end{equation*}
 In terms of the coefficients $b_{jp}^{X}(x)$, $p=1, \ldots, n$,  this implies that, as $t\rightarrow 0$, we have
 \begin{equation}
    t^{w_{j}-w_{p}}b_{jp}^{X}(t\cdot x)=b_{jp}^{(a)}(x)+\op{O}(t) \qquad \text{in $C^{\infty}(U)$}. 
       \label{eq:tangent-map.rescaling-Xj}
 \end{equation}
 Likewise, for $k=1,\ldots,n'$ and $q=1,\ldots,n'$, and as $t\rightarrow 0$, we have
 \begin{equation*}
     t^{w_{k}'-w_{q}'}b_{kq}^{X'}(x')=b_{kq}^{(a')}(x')+\op{O}(t) \qquad \text{in $C^{\infty}(U')$}. 
 \end{equation*}
By Lemma~\ref{lem-eq-we} this implies that, if we set $\cU'=\{(x',t)\in U' \times \R; \ t\cdot x' \in U'\}$, then there are functions $\Theta_{kq}(x',t)\in C^\infty(\cU')$, $k,q=1,\ldots, n'$,  such that 
 \begin{equation}
 t^{w_{k}'-w_{q}'}b_{kq}^{X'}(t\cdot x')=b_{kq}^{(a')}(x')+t \Theta_{kq}(x',t) \qquad \text{for all $(x',t)\in \cU'$}.
 \label{eq:tangent-map.rescaling-Xk'} 
\end{equation}
 
 We also observe that~(\ref{eq:Carnot-prop.phi-hatphi-Ow}) and Lemma~\ref{lem-eq-we} imply that, as $t\rightarrow 0$, we have
 \begin{equation}
     t^{-1}\cdot \phi(t\cdot x)=\tilde{\phi}(x)+\op{O}(t) \qquad \text{in $C^{\infty}(U,\R^{n'})$}.
     \label{eq:tangent-map.rescaling-phi}
 \end{equation}In particular, we may termwise differentiate the asymptotics and see that, given any $x\in \R^{n}$, for $p=1,\ldots,n$ and 
 $q=1,\ldots,n'$,  we have
 \begin{equation}
     t^{w_{p}-w_{q}'}\partial_{x_{p}}\phi_{q}(t\cdot x)=\partial_{x_{p}}\tilde{\phi}_{q}(x) +\op{O}(t) \qquad \text{as 
     $t\rightarrow 0$}.
     \label{eq:tangent-map.rescaling-dphi}
 \end{equation}
 Moreover, by combining~(\ref{eq:tangent-map.rescaling-Xk'}) and~(\ref{eq:tangent-map.rescaling-phi}) we see that, as $t\rightarrow 0$, we have 
\begin{align}
  t^{w_{k}'-w_{q}'}b_{kq}^{X'}\big(\phi(t\cdot x)\big) & = t^{w_{k}'-w_{q}'}b_{kq}^{X'}\left[t \cdot \big(t^{-1}\cdot \phi(t\cdot x)\big)\right] \nonumber \\ 
  & = b_{kq}^{(a')}\big(t^{-1}\cdot \phi(t\cdot x)\big)+t \Theta_{kq}\big(t^{-1}\cdot \phi(t\cdot x), t\big)
  \label{eq:tangent-map.rescaling-Xk'-tphi} \\
  &  = b_{kq}^{(a')}\big(\tilde{\phi}(x)\big)+ \op{O}(t). \nonumber
\end{align}
 
 Given $x\in \R^n$ and $t\in \R\setminus 0$ such that $t\cdot x\in U$, substituting $t\cdot x$ for $x$ in~(\ref{eq:Carnot-prop.phi'XX'-coefficients}) and multiplying both sides by 
 $t^{w_{j}-w_{q}'}$ gives
     \begin{equation}
       \sum_{1\leq p \leq n}t^{w_{j}-w_{p}}b_{jp}^{X}(t\cdot x)t^{w_{p}-w_{q}'}\partial_{x_{p}}\phi_{q}(t\cdot x) \\  = 
       \sum_{w_{k}'\leq w_{j}} t^{w_{j}-w_{k}'}c_{jk}(t\cdot x) 
       t^{w_{k}'-w_{q}'}b_{kq}^{X'}\big[\phi(t\cdot x))\big]. 
       \label{eq:tangent-map.rescaling-equation-Xj-phi}
     \end{equation}
Letting $t\rightarrow 0$ and using~(\ref{eq:tangent-map.rescaling-Xj}), (\ref{eq:tangent-map.rescaling-dphi}) and~(\ref{eq:tangent-map.rescaling-Xk'-tphi}) gives
     \begin{equation}
       \sum_{1\leq p \leq n} b_{jp}^{(a)}(x)\partial_{x_{p}}\tilde{\phi}_{q}(x) = 
       \sum_{w_{k}'=w_{j}} c_{jk}(a) b_{kq}^{(a')}(\tilde{\phi}(x)). 
       \label{eq:tangent-map.tphiXj(a)-X_k(a')tphi}
     \end{equation}
 Note that we have
\begin{equation*}
 \tilde{\phi}'(x)\big[ X_j^{(a)}(x)\big] = \sum_{1\leq p \leq n} b_{jp}^{(a)}(x)\tilde{\phi}'(x) [\partial_{x_p}] =  
 \sum_{1\leq q \leq n'} \sum_{1\leq p \leq n} b_{jp}^{(a)}(x)   \partial_{x_p} \tilde{\phi}_q(x) \partial_{x'_q}. 
\end{equation*}
 Combining this with~(\ref{eq:tangent-map.Xj(a)-X'k(a')}) and~(\ref{eq:tangent-map.tphiXj(a)-X_k(a')tphi}) we then obtain
\begin{equation*}
  \tilde{\phi}'(x)\big[ X_j^{(a)}(x)\big] = \sum_{1\leq q \leq n'}  \sum_{w_{k}'=w_{j}} c_{jk}(a) b_{kq}^{(a)'}(\tilde{\phi}(x))\partial_{x'_q} =   \sum_{w_{k}'=w_{j}} c_{jk}(a) X_{k}^{'(a')}\big ( \tilde{\phi}(x)\big). 
\end{equation*}
 The claim is thus proved.
 \end{proof}
 
Let us go back to the proof of the lemma. Let $x\in \R^n$, and set $x'=\hat{\phi}'(a)x$. Note that in view of~(\ref{eq-mat-3}) we have
\begin{equation*}
 x_k'= \sum_{0\leq j \leq n}\hat{\phi}'(a)_{kj}x_j = \sum_{w_j=w_k'} c_{jk}(a)x_j.  
\end{equation*}
For $t\geq 0$, we also set
\begin{equation*}
 x(t)= \exp\big[t\big(x_1X_1^{(a)}+\cdots +x_nX_n^{(a)}\big)\big](0), \quad y(t)= \exp\big[t\big(x_1'X_1^{'(a')}+\cdots +x_{n'}X_{n'}^{'(a')}\big)\big](0). 
\end{equation*}
By definition $x(t)$ and $y(t)$ are the solutions of the initial-value problems, 
\begin{gather}
 x(0)=0, \qquad \dot{x}(t)=  \sum_{1\leq j \leq n} x_j X_j^{(a)}\big( x(t)\big),
 \label{eq:tangent-approx.IVP-x} \\
 y(0)=0, \qquad  \dot{y}(t)=  \sum_{1\leq k \leq n'} x_k' X_k^{'(a')}\big( y(t)\big) = \sum_{w_j=w_k'} c_{jk}(a) x_j X_k^{'(a')}\big( y(t)\big). 
  \label{eq:tangent-approx.IVP-y}
\end{gather}
 Note also that $x(1)=\exp_{X^{(a)}}(x)$ and $y(1)=\exp_{X^{'(a')}}(x')$. 
 
We have $\tilde{\phi}(x(0))=\tilde{\phi}(0)=0$. Moreover, by using~(\ref{eq:tangent-approx.tphiXj(a)-Xk(a')tphi}) and~(\ref{eq:tangent-approx.IVP-x}) we see that $\frac{d}{dt}\tilde{\phi}(x(t))$ is equal to
 \begin{equation*}
\tilde{\phi}'(x)\big[ \dot{x}(t)\big] = \sum_{1\leq j \leq n} x_j  \tilde{\phi}'(x)\big[ X_j^{(a)}\big(x(t)\big)\big] =  
\sum_{w_j=w_k'} x_j  c_{jk}(a) X_k^{'(a')}\big[ \tilde{\phi}\big ( x(t)\big) \big]. 
\end{equation*}
 This shows that $\tilde{\phi}(x(t))$ is a solution of the initial-value problem~(\ref{eq:tangent-approx.IVP-y}), and so it must agree with $y(t)$ for all $t\geq 0$. As $\tilde{\phi}(x(1))=\tilde{\phi}(\exp_{X^{(a)}}(x))$ and $y(1)=\exp_{X^{'(a')}}(x')= \exp_{X^{'(a')}}(\hat{\phi}'(a)x)$ we deduce that 
 \begin{equation*}
\tilde{\phi}\big(\exp_{X^{(a)}}(x)\big)= \exp_{X^{'(a')}}\big(\hat{\phi}'(a)x\big) \qquad \text{for all $x\in \R^n$}. 
\end{equation*}
This proves~(\ref{eq:Carnot-prop.tphi-hatphi}). 
 
 Finally, recall that $\hat{\phi}'(a)$ is a Lie group map from $G(a)$ to $G'(a')$. As mentioned above, $ \exp_{X^{(a)}}$  is a Lie group map from $G(a)$ to $G^{(a)}$ and  $ \exp_{X^{'(a')}}$  is a Lie group map from $G'(a')$ to $G^{'(a')}$. Thus, the formula~(\ref{eq:Carnot-prop.tphi-hatphi}) immediately implies that $\tilde{\phi}$ is a Lie group map from $G^{(a)}$ to $G^{'(a')}$. The proof is complete. 
 \end{proof}
 
Combining Lemma~\ref{lem:Carnot-prop.phi-hatphi-Ow} and Lemma~\ref{lem:Carnot-prop.hatphi=phi'H} provides us with the following approximation result.
 
 \begin{proposition}\label{prop:Carnot-prop.tangent-map-approx-privileged}
     Let $(x_{1},\ldots,x_{n})$ be privileged coordinates at $a$ adapted to the $H$-frame 
 $(X_{1},\ldots,X_{n})$  and $(x_{1},\ldots,x_{n'})$ privileged coordinates at $a'$ adapted to the $H'$-frame 
 $(X_{1},\ldots,X_{n'})$. Then, in these coordinates, we have
 \begin{equation*}
      \phi(x)= \tilde{\phi}(x)+\Ow(\|x\|^{w+1}) \qquad \text{near $x=0$}, 
 \end{equation*}
 where $\tilde{\phi}(x)$ is the Lie group map from $G^{(a)}$ to $G^{'(a')}$ given by~(\ref{eq:Carnot-prop.tphi-hatphi}).  
 \end{proposition}

When $(x_1,\ldots,x_n)$ and $(x_1, \ldots, x_{n'}')$ are Carnot coordinates the exponential maps $\exp_{X^{(a)}}$ and $\exp_{X^{'(a')}}$ are the identity maps on $\R^n$ and $\R^{n'}$, respectively. Thus, in this case the Lie group map $\tilde{\phi}$ agrees with $\hat{\phi}'(a)$. Therefore, we arrive at the following result. 

\begin{theorem}\label{thm:Carnot-prop.tangent-map-approx}
     Let $(x_{1},\ldots,x_{n})$ be Carnot coordinates at $a$ adapted to the $H$-frame 
 $(X_{1},\ldots,X_{n})$  and $(x_{1},\ldots,x_{n'})$ Carnot coordinates at $a'$ adapted to the $H'$-frame 
 $(X_{1},\ldots,X_{n'})$. Then, in these coordinates, we have
 \begin{equation*}
      \phi(x)= \hat{\phi}'(a)x+\Ow(\|x\|^{w+1}) \qquad \text{near $x=0$}, 
 \end{equation*}
 where $\hat{\phi}'(a)$ is regarded as a $w$-homogeneous group map from $G(a)$ to $G'(a')$. 
 \end{theorem}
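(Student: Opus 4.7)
The plan is to decompose the proof into three stages: first verify the coarse weight bound $\phi(x)=\Ow(\|x\|^w)$; then extract the $w$-homogeneous leading part $\tilde\phi(x)$ via the anisotropic Taylor expansion of Proposition~\ref{lem:multi.Thetat}; and finally identify this leading part with the Carnot differential $\hat\phi'(a)$.

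First I would work in the chosen Carnot coordinates and exploit the pushforward identity~(\ref{eq:Carnot-prop.phi'XX'-coefficients}) that follows from $\phi$ being a Carnot manifold map. In Carnot coordinates the coefficients $b_{jl}^X(x)$ and $b_{kp}^{X'}(x')$ satisfy $b_{jl}^X(0)=\delta_{jl}$ and vanish to the appropriate weighted order at the origin (and similarly on the target side), reflecting the fact that each $X_j$ has weight $w_j$. Differentiating~(\ref{eq:Carnot-prop.phi'XX'-coefficients}) to order $\alpha$ at $0$ and arguing by induction on $|\alpha|$, I can show that $\partial_x^\alpha \phi_k(0)=0$ whenever $\brak\alpha<w_k'$. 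This requires a multivariate Fa\`a di Bruno expansion to control derivatives of the composite $b_{lk}^{X'}\circ\phi$, together with the observation that each resulting summand carries a factor that either vanishes by the Carnot-coordinate hypothesis on the frames or by the inductive hypothesis on $\phi$. By Lemma~\ref{lem-eq-we} this is equivalent to $\phi(x)=\Ow(\|x\|^w)$ near $x=0$.

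Once this is established, Proposition~\ref{lem:multi.Thetat} and Remark~\ref{rmk:anisotropic.weighted-asymptotic} yield a $w$-homogeneous polynomial map $\tilde\phi$ with $\phi(x)=\tilde\phi(x)+\Ow(\|x\|^{w+1})$, reducing the theorem to the identity $\tilde\phi=\hat\phi'(a)$. For this I would pull back~(\ref{eq:Carnot-prop.phi'XX'-coefficients}) along the dilation $x\mapsto t\cdot x$ with the weighted rescaling factor $t^{w_j-w_k'}$ and let $t\to 0^+$. The Carnot coordinate condition gives $t^{w_j}\delta_t^*X_j\to X_j^a$ in $\cX(U)$, and analogously on the target, while termwise differentiation of the asymptotic expansion of $\phi$ yields $t^{w_j-w_k'}\partial_{x_j}\phi_k(t\cdot x)\to\partial_{x_j}\tilde\phi_k(x)$. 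This produces a functional equation linking $\tilde\phi$ to the model vector fields $X_j^a$ and $(X_k')^{a'}$ on $G(a)$ and $G'(a')$.

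An entirely analogous relation is satisfied by $\hat\phi'(a)$ itself, because as a Lie group map $G(a)\to G'(a')$ it intertwines the left-invariant frames---this is essentially the content of~(\ref{eq:tangent-map.pushforward-Xja}). Subtracting the two equations and proceeding by induction on $w_k'-w_j$ (the case $w_k'<w_j$ being trivial by homogeneity of both sides), I obtain $\partial_{x_j}\tilde\phi_k(x)=\hat\phi'_{kj}(a)$ for all admissible indices, which combined with $\tilde\phi(0)=0$ gives $\tilde\phi=\hat\phi'(a)$. I expect the main obstacle to be organizing this induction cleanly: at each stage one must verify that the polynomial coefficients $b_{lk}^{a'}(x')$ appearing on the right-hand side depend only on components $\tilde\phi_p$ with $w_p'$ strictly smaller than the current step, so that the inductive hypothesis allows substituting $\hat\phi'(a)x$ for $\tilde\phi(x)$ inside these compositions without introducing uncontrolled error.
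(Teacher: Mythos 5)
Your proposal is correct and follows essentially the same route as the paper: the coarse bound $\phi(x)=\Ow(\|x\|^{w})$ by induction on $|\alpha|$ using the pushforward identity, the weighted vanishing of the frame coefficients, and a Fa\`a di Bruno argument; then extraction of the $w$-homogeneous part $\tilde\phi$ via Proposition~\ref{lem:multi.Thetat}; and finally the identification $\tilde\phi=\hat{\phi}'(a)$ by rescaling the pushforward identity, comparing with the intertwining relation~(\ref{eq:tangent-map.pushforward-Xja}), and inducting on $w_k'-w_j$. You have also correctly isolated the one delicate point of that induction, namely that $b_{lk}^{a'}$ only involves components of weight already settled at the current stage.
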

 
\begin{remark}\label{rmk:Tangent-map.need-Carnot-coord}
   It is only by working in Carnot coordinates that we obtain an approximation by the Carnot tangent map $\hat{\phi}'(a)$ (compare~\cite[Prop.~5.20]{Be:Tangent}). 
\end{remark}

  \begin{remark}
     In the case of Heisenberg manifolds a version of Theorem~\ref{thm:Carnot-prop.tangent-map-approx} was obtained for Heisenberg diffeomorphisms 
     in~\cite{Po:PJM06}. 
 \end{remark}

Let us mention a couple of corollaries of Theorem~\ref{thm:Carnot-prop.tangent-map-approx}. First, by specializing Theorem~\ref{thm:Carnot-prop.tangent-map-approx} to $\varepsilon$-Carnot coordinates leads us to the following corollary.

\begin{corollary}\label{cor:Carnot-map.approx-vareps-Carnot-coord}
 Let $\kappa$ be a local chart near $a$ and $\tilde{\kappa}$ a local chart near $a'=\phi(a)$. Denote by $\varepsilon^{\kappa}$ the $\varepsilon$-Carnot coordinate map in the local coordinates defined by $\kappa$ (resp., $\tilde{\kappa}$) which is associated with the $H$-frame $(X_1,\ldots, X_n)$ (resp., the $H'$-frame $(X_1',\ldots, X_n')$).   Then, near $x=0$, we have 
\begin{equation*}
\left( \varepsilon_{\tilde{\kappa}(a')}^{\tilde{\kappa}}\circ \tilde{\kappa}\right) \circ \phi \circ \left( \varepsilon^\kappa_{\kappa(a)} \circ \kappa \right)^{-1}(x)= \hat{\phi}'(a)x +\Ow\left( \|x\|^{w+1}\right).
\end{equation*}
\end{corollary}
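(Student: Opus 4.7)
The plan is to apply Theorem~\ref{thm:Carnot-prop.tangent-map-approx} with the Carnot coordinates at $a$ and $a'$ furnished by the two $\varepsilon$-Carnot coordinate maps. The first step is to check that the local chart $\varepsilon^{\kappa}_{\kappa(a)}\circ \kappa:M\rightarrow \R^{n}$ defines a system of $\varepsilon$-Carnot coordinates at $a$ adapted to the $H$-frame $(X_{1},\ldots,X_{n})$. This is exactly the content of Proposition~\ref{prop:Carnot-coord.vareps-carnot-cor}: starting from the arbitrary chart $\kappa$ (whose local coordinates have $\kappa(a)$ as their base point), one composes with the unique polynomial change of variables $\varepsilon^{\kappa}_{\kappa(a)}$ that converts $\kappa$ into a system of Carnot coordinates centered at $a$. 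Equivalently, this is a special case of the characterization given in Remark~\ref{rmk:Carnot-coord.characterization-Carnot-charts} with $\Theta\equiv 0$. In the same way, $\varepsilon^{\tilde{\kappa}}_{\tilde{\kappa}(a')}\circ \tilde{\kappa}$ provides a system of $\varepsilon$-Carnot coordinates at $a'=\phi(a)$ adapted to the $H'$-frame $(X_{1}',\ldots,X_{n'}')$.

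Set $\Psi=\varepsilon^{\kappa}_{\kappa(a)}\circ \kappa$ and $\tilde{\Psi}=\varepsilon^{\tilde{\kappa}}_{\tilde{\kappa}(a')}\circ \tilde{\kappa}$. Then $\tilde{\Psi}\circ \phi\circ \Psi^{-1}$ is precisely the coordinate expression of the Carnot manifold map $\phi$ in the source Carnot coordinates $\Psi$ and the target Carnot coordinates $\tilde{\Psi}$. Applying Theorem~\ref{thm:Carnot-prop.tangent-map-approx} to this pair of coordinate systems immediately gives
\[
\tilde{\Psi}\circ \phi\circ \Psi^{-1}(x)=\hat{\phi}'(a)x+\Ow\left(\|x\|^{w+1}\right) \qquad \text{near $x=0$},
\]
which is the statement of the corollary.

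The argument is therefore entirely routine once the two charts are recognized as Carnot coordinates, and no step should present a real obstacle since all the substantive analytical work has already been carried out in the proof of Theorem~\ref{thm:Carnot-prop.tangent-map-approx}. The only mild caveat to keep in mind is that $\hat{\phi}'(a)$ on the right-hand side is understood through the coordinate identifications $GM(a)\simeq G(a)$ and $GM'(a')\simeq G'(a')$ defined by the graded bases $(\xi_{1}(a),\ldots,\xi_{n}(a))$ and $(\xi_{1}'(a'),\ldots,\xi_{n'}'(a'))$ associated with the two $H$-frames; as noted in Remark~\ref{rmk:Carnot-coord.independence-G(a)}, these identifications depend only on the frames and not on the particular choice of the initial charts $\kappa$ and $\tilde{\kappa}$, so the right-hand side of the asymptotic expansion is unambiguously defined.
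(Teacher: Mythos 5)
Your proof is correct and follows exactly the paper's own argument: recognize via Proposition~\ref{prop:Carnot-coord.vareps-carnot-cor} that $\varepsilon^{\kappa}_{\kappa(a)}\circ\kappa$ and $\varepsilon^{\tilde\kappa}_{\tilde\kappa(a')}\circ\tilde\kappa$ are Carnot coordinate charts at $a$ and $a'$, then apply Theorem~\ref{thm:Carnot-prop.tangent-map-approx} to the coordinate expression of $\phi$ in these charts. Your extra remarks on the chart-independence of the identification of $\hat{\phi}'(a)$ are a harmless elaboration beyond what the paper records.
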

\begin{proof}
 The map $( \varepsilon_{\tilde{\kappa}(a')}^{\tilde{\kappa}}\circ \tilde{\kappa}) \circ \phi \circ( \varepsilon^\kappa_{\kappa(a)} \circ \kappa)^{-1}$ is the map $\phi$ in the Carnot coordinates defined by the local charts $\varepsilon^\kappa_{\kappa(a)} \circ \kappa$ and $\varepsilon_{\tilde{\kappa}(a')}^{\tilde{\kappa}}\circ \tilde{\kappa}$. The result then follows by applying Theorem~\ref{thm:Carnot-prop.tangent-map-approx}. 
\end{proof}
 
In addition, combining Theorem~\ref{thm:Carnot-prop.tangent-map-approx} with Lemma~\ref{lem-eq-we} immediately gives the following result. 

\begin{corollary}\label{cor:Carnot-prop.tangent-map-approx}
     Let $(x_{1},\ldots,x_{n})$ be Carnot coordinates at $a$ adapted to the $H$-frame 
 $(X_{1},\ldots,X_{n})$  and $(x_{1},\ldots,x_{n'})$ Carnot coordinates at $a'$ adapted to the $H'$-frame 
 $(X_{1},\ldots,X_{n'})$. Denote by $U$ the range of the coordinates $(x_{1},\ldots,x_{n})$. Then, in these coordinates and as $t\rightarrow 0$, we have
 \begin{equation}
     t^{-1}\cdot \phi(t\cdot x)= \hat{\phi}'(a)x+\op{O}(t) \qquad \text{in $C^\infty(U,\R^{n'})$}.
     \label{eq:Tangent-approx.phitx-hphi'(a)x}  
 \end{equation}
In particular, for all $x\in \R^n$, we have 
\begin{equation}
\lim_{t\rightarrow 0} t^{-1}\cdot \phi(t\cdot x)= \hat{\phi}'(a)x.
\label{eq:Tangent-approx.phitx-hphi'(a)x-lim} 
\end{equation}
\end{corollary}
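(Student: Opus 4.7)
The plan is to deduce the corollary directly from Theorem~\ref{thm:Carnot-prop.tangent-map-approx} by applying the anisotropic rescaling characterization of $\Ow$-maps in Lemma~\ref{lem-eq-we}. First, I would use Theorem~\ref{thm:Carnot-prop.tangent-map-approx} to write, in the given Carnot coordinates at $a$ and $a'$,
\begin{equation*}
 \phi(x) = \hat{\phi}'(a)x + R(x), \qquad \text{where } R(x) = \Ow\!\left( \|x\|^{w+1}\right) \text{ near } x=0,
\end{equation*}
with $\hat{\phi}'(a)$ regarded as the $w$-homogeneous group map $G(a)\to G'(a')$ from Section~\ref{sec:Carnot-Differential}.

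Next, I would exploit the $w$-homogeneity of $\hat{\phi}'(a)$ recorded in~(\ref{eq:Carnot-map.hatphi-homogeneity}), which gives $\hat{\phi}'(a)(t\cdot x) = t\cdot \hat{\phi}'(a)x$ for all $t\in \R^*$, so that
\begin{equation*}
 t^{-1}\cdot \hat{\phi}'(a)(t\cdot x) = \hat{\phi}'(a)x \qquad \text{for all } (x,t)\in \R^n\times \R^*.
\end{equation*}
For the remainder, I would apply the equivalence (i)$\Leftrightarrow$(iv) in Lemma~\ref{lem-eq-we} with $m=1$ to the map $R(x)$: since $R(x)=\Ow(\|x\|^{w+1})$ near $x=0$, we obtain
\begin{equation*}
 t^{-1}\cdot R(t\cdot x) = \op{O}(t) \qquad \text{in } C^\infty(U,\R^{n'}) \text{ as } t\to 0.
\end{equation*}

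Summing these two contributions then yields~(\ref{eq:Tangent-approx.phitx-hphi'(a)x}). The pointwise limit~(\ref{eq:Tangent-approx.phitx-hphi'(a)x-lim}) follows immediately by evaluating the $C^\infty$-asymptotics at a fixed $x\in \R^n$ and letting $t\to 0$. There is no real obstacle here since all the substantive work has already been done in Theorem~\ref{thm:Carnot-prop.tangent-map-approx} and Lemma~\ref{lem-eq-we}; the only point requiring some care is ensuring the $w$-homogeneity of $\hat{\phi}'(a)$ is correctly invoked so that the leading term of the rescaled expression is exactly $\hat{\phi}'(a)x$ rather than some $t$-dependent object.
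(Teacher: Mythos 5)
Your proposal is correct and matches the paper's own (one-line) proof, which simply combines Theorem~\ref{thm:Carnot-prop.tangent-map-approx} with Lemma~\ref{lem-eq-we}; your decomposition into the $w$-homogeneous leading term plus a remainder $R(x)=\Ow(\|x\|^{w+1})$, handled via the equivalence (i)$\Leftrightarrow$(iv) of that lemma with $m=1$, is exactly the intended argument. The pointwise limit~(\ref{eq:Tangent-approx.phitx-hphi'(a)x-lim}) then follows as you say by evaluating the $C^\infty$-asymptotics at a fixed point.
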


\begin{remark}
 We will see in Section~\ref{sec:Pansu-derivative} that the asymptotics~(\ref{eq:Tangent-approx.phitx-hphi'(a)x}) leads us to the identification of the Carnot differential with Pansu derivative in the case of maps between nilpotent graded groups. More generally, in the setup of ECC manifolds we can identify the Carnot differential with the differential of 
Margulis-Mostow~\cite{MM:GAFA95}. This uses the identification of the tangent group $GM(a)$ as described in Section~\ref{sec:Carnot-Manifolds} with the tangent group of  Margulis-Mostow~\cite{MM:GAFA95, MM:JAM00}, which is defined in terms of equivalence classes of paths. 
\end{remark}

\begin{remark}
 The equality~(\ref{eq:Tangent-approx.phitx-hphi'(a)x-lim}) provides us with an alternative definition of the Carnot differential. It actually allows us to extend the definition of Carnot differential to a larger class of maps between Carnot manifolds by requiring the existence of the limit in the left-hand side of~(\ref{eq:Tangent-approx.phitx-hphi'(a)x-lim}). Using this definition it would be interesting to have a version of  the theorem of Rademacher-Stepanov~\cite{St:MS25} on the differentiability of locally Lipschitz continuous  functions. In the setup of Carnot groups and ECC manifolds results of Pansu~\cite{Pa:AM89} and Margulis-Mostow~\cite{MM:GAFA95} formulate this in terms of locally Lipschitz continuity with respect to Carnot-Carath\'eodory metrics. For general Carnot manifolds we don't have such metrics. Nevertheless,  for  map $\phi:M\rightarrow M'$ between general Carnot manifolds it seems natural to replace local Lipschitz continuity by requiring that, in any Carnot coordinates at $a$ and at $a'=\phi(a)$, we have 
\begin{equation*}
 \phi(x)= \Ow\left(\|x\|^w\right)\qquad \text{near $x=0$}, 
\end{equation*}
where the bounds of the asymptotics are locally uniform with respect to $a$. When $r=1$ this is equivalent to local Lipschitz continuity. 
\end{remark}

Finally, we mention the following result which describes the action of Carnot diffeomorphisms on Carnot coordinates 

 \begin{proposition}\label{prop:Carnot-approx.action-Carnot-coordinates}
 Suppose that $\phi$ is a Carnot diffeomorphism and $\kappa$ is a local chart near $a$ that gives rise to Carnot coordinates at $a$ adapted to the $H$-frame $(X_{1},\ldots,X_{n})$. Then the local chart $\hat{\phi}'(a)\circ \kappa\circ \phi^{-1}$ provides us with  Carnot coordinates at $\phi(a)$ that are adapted to the $H'$-frame 
    $(X_{1}',\ldots,X_{n}')$.  
 \end{proposition}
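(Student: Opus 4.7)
The plan is to verify that the chart $\tilde\kappa:=\hat{\phi}'(a)\circ \kappa\circ \phi^{-1}$ fits into the characterization of Carnot coordinates recalled in Remark~\ref{rmk:Carnot-coord.characterization-Carnot-charts}. The main engine will be Theorem~\ref{thm:Carnot-prop.tangent-map-approx} applied both to $\phi$ and to its inverse, which is also a Carnot diffeomorphism with Carnot differential $\hat{\phi}'(a)^{-1}$ at $a':=\phi(a)$ by Proposition~\ref{prop:Carnot.inverse-tangent-map}. Note that since $\phi$ is a Carnot diffeomorphism, Remark~\ref{rmk:Carnot-mfld.Carnot-diffeo} gives $n=n'$ and $w_j=w_j'$ for $j=1,\ldots,n$, so that $\hat{\phi}'(a)$ is an invertible linear map from $\R^n$ to $\R^n$.

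First I would pick an auxiliary local chart $\nu$ near $a'$ and form the associated $\varepsilon$-Carnot chart $\mu:=\varepsilon_{\nu(a')}^{\nu}\circ \nu$ adapted to the $H'$-frame $(X_1',\ldots,X_n')$ furnished by Proposition~\ref{prop:Carnot-coord.vareps-carnot-cor}. In the Carnot coordinates given by $\kappa$ at $a$ and by $\mu$ at $a'$, applying Theorem~\ref{thm:Carnot-prop.tangent-map-approx} to $\phi^{-1}$ yields
\[
\kappa \circ \phi^{-1} \circ \mu^{-1}(y) = \hat{\phi}'(a)^{-1} y + \Ow(\|y\|^{w+1}) \qquad \text{near $y=0$},
\]
and composing on the left with the linear map $\hat{\phi}'(a)$ then gives
\[
\tilde\kappa \circ \mu^{-1}(y) = y + \hat{\phi}'(a)\bigl[\Ow(\|y\|^{w+1})\bigr] \qquad \text{near $y=0$}.
\]

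The next and really only step of substance is to observe that, by~(\ref{eq:Tangent-approx.hphi'(a)-TRn-TRn'}), the matrix entry $\hat{\phi}'_{kj}(a)$ is nonzero only when $w_k'=w_j$; in particular, $\hat{\phi}'(a)$ sends the component of weight $w_j$ of a map $\Theta(y)=\Ow(\|y\|^{w+1})$ to a sum of contributions in components of weight $w_k'=w_j$, each of which is $\op{O}(\|y\|^{w_j+1})=\op{O}(\|y\|^{w_k'+1})$. Hence $\hat{\phi}'(a)$ stabilizes the class $\Ow(\|y\|^{w+1})$, so that there exists a smooth map $\Theta(y)=\Ow(\|y\|^{w+1})$ with
\[
\tilde\kappa = (\op{id}+\Theta)\circ \varepsilon_{\nu(a')}^{\nu}\circ \nu.
\]
By the characterization of Remark~\ref{rmk:Carnot-coord.characterization-Carnot-charts} applied with the $H'$-frame $(X_1',\ldots,X_n')$, this exhibits $\tilde\kappa$ as a system of Carnot coordinates at $a'$ adapted to $(X_1',\ldots,X_n')$, which is the desired conclusion.

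The only mildly delicate point in this argument is the stability of the class $\Ow(\|y\|^{w+1})$ under $\hat{\phi}'(a)$; this is however immediate from the weight-preserving block pattern of its matrix, so that, once one has set up the interplay between Theorem~\ref{thm:Carnot-prop.tangent-map-approx} for $\phi^{-1}$ and the characterization of Carnot charts, the proposition reduces to a single line of composition.
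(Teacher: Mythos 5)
Your proof is correct and follows essentially the same route as the paper: express $\phi^{-1}$ in the Carnot coordinates given by $\kappa$ and an auxiliary $\varepsilon$-Carnot chart at $\phi(a)$, apply Theorem~\ref{thm:Carnot-prop.tangent-map-approx} together with Proposition~\ref{prop:Carnot.inverse-tangent-map}, and conclude via the characterization of Carnot charts in Remark~\ref{rmk:Carnot-coord.characterization-Carnot-charts}. The only (immaterial) difference is that you justify the stability of the class $\Ow(\|y\|^{w+1})$ under $\hat{\phi}'(a)$ directly from the weight-preserving block structure of its matrix, whereas the paper derives it from the $w$-homogeneity of $\hat{\phi}'(a)$ and the rescaling characterization of Lemma~\ref{lem-eq-we}; both arguments are valid.
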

 \begin{proof}
     Set $a'=\phi(a)$ and let $\tilde{\kappa}$ be a local chart near $a'$. Let $\varepsilon^{\tilde{\kappa}}$ be the $\varepsilon$-Carnot coordinate map in the local coordinates defined by $\tilde{\kappa}$ which is associated with the $H'$-frame  $(X_{1}',\ldots,X_{n}')$, and set $\psi=\kappa \circ \phi^{-1} \circ (\varepsilon^{\tilde{\kappa}}_{\tilde{\kappa}(a')} \circ \tilde{\kappa})^{-1}$. This is the Carnot diffeomorphism $\phi^{-1}$ in the local coordinates provided by the local charts $\kappa$ and $\varepsilon^{\tilde{\kappa}}_{\tilde{\kappa}(a')} \circ \tilde{\kappa}$. By  assumption the chart $\kappa$ gives rise to Carnot coordinates at $a$ adapted to $(X_{1},\ldots,X_{n})$. We also know by Proposition~\ref{prop:Carnot-coord.vareps-carnot-cor} that  $\varepsilon^{\tilde{\kappa}}_{\tilde{\kappa}(a')} \circ \tilde{\kappa}$ gives rise to Carnot coordinates at $a'$ that are adapted to $(X_{1}',\ldots,X_{n}')$.  Therefore, by Theorem~\ref{thm:Carnot-prop.tangent-map-approx} and Proposition~\ref{prop:Carnot.inverse-tangent-map}, near $x=0$, we have 
     \begin{equation*}
         \psi(x)= \widehat{\left(\phi^{-1}\right)}'(a') + \Ow\left(\|x\|^{w+1}\right)= \hat{\phi}(a)^{-1}x + \Ow\left(\|x\|^{w+1}\right). 
     \end{equation*}
     By Lemma~\ref{lem-eq-we} this implies that, for all $x \in \R^{n}$, we have 
     $t^{-1}\cdot \psi(t\cdot x)= \hat{\phi}'(a)^{-1}x+\op{O}(t)$ as $t\rightarrow 0$. Substituting $\hat{\phi}'(a)x$ and using  the $w$-homogeneity of $ \hat{\phi}'(a)$ shows that, 
     for all $x \in \R^{n}$ and as $t\rightarrow 0$, we have 
     \begin{equation*}
        t^{-1}\cdot \hat{\phi}'(a) \circ \psi (t\cdot x) = \hat{\phi}'(a) \left[  t^{-1}\cdot \psi(t\cdot x)\right] = x +\op{O}(t).   
     \end{equation*}
      Using Lemma~\ref{lem-eq-we} once again, we deduce that $\hat{\phi}'(a) \circ \psi(x)=x +\Ow(\|x\|^{w+1})$ near $x=0$. That is, the map $\Theta(x):= \hat{\phi}'(a) \circ \psi(x)$ is a $\Ow(\|x\|^{w+1})$-perturbation of the identity map. To conclude we observe that
$ \hat{\phi}'(a)\circ \kappa\circ \phi^{-1}=   \hat{\phi}'(a)\circ \psi \circ \varepsilon^{\tilde{\kappa}}_{\tilde{\kappa}(a')} \circ \tilde{\kappa}= \Theta \circ \varepsilon^{\tilde{\kappa}}_{\tilde{\kappa}(a')} \circ \tilde{\kappa}$. Thus, the local chart $ \hat{\phi}'(a)\circ \kappa\circ \phi^{-1}$ is of the same type as the charts considered in 
Remark~\ref{rmk:Carnot-coord.characterization-Carnot-charts}, and so it provides us with
 Carnot coordinates at $a'$ adapted to $(X_{1}',\ldots,X_{n}')$. The proof is complete. 
 \end{proof}
 
 \section{Carnot Differential and Pansu Derivative}\label{sec:Pansu-derivative} 
In this section, we establish the precise relationship between the Carnot differential and the Pansu derivative in the case of maps between nilpotent graded groups. This ultimately shows that the Carnot differential is the natural generalization of the Pansu derivative to maps between general Carnot manifolds. 

Throughout this section we let $G$ and $G'$ be graded nilpotent Lie groups of step~$r$. We denote by $e$ their respective units. 
The Lie algebras $\fg=TG(e)$ and $\fg'=TG'(e)$ have gradings $\fg =\fg_1 \oplus \cdots \oplus \fg_r$ and  $\fg' =\fg_1' \oplus \cdots \oplus \fg_r'$ that are compatible with their respective Lie brackets. 
These gradings give rise to dilations $\delta_t$, $t\in \R\setminus 0$, that are group automorphisms of $G$ and $G'$, respectively. 
They also give rise to vector bundle gradings $TG=E_1\oplus \cdots \oplus E_r$ and $TG'=E_1'\oplus \cdots \oplus E_r'$, where $E_w$ (resp., $E_{w}'$) is obtained by left-translating $\fg_w$ (resp.,  $\fg_w'$) over $G$ (resp., $G'$). 
These vector bundle gradings yield left-invariant Carnot filtrations $H_1\subset \cdots \subset H_r=TG$ and $H_1'\subset \cdots \subset H_r'=TG'$. 
They also provide us with natural vector bundle identifications $TG\simeq \fg G$ and $TG'\simeq \fg G'$, where $\fg G$ and $\fg G'$ are the respective tangent Lie algebra bundles of $G$ and $G'$. This identifies the $w$-th components $\fg_w G$ and $\fg_{w} G'$ with the bundles $E_w$ and $E_w'$, respectively.

In addition, as mentioned in~\cite{CP:Carnot} and in Example~\ref{ex:Tangent-group.group}, the left-regular action of $G$ on itself gives rise to a natural global trivialization $\lambda: \fg G \stackrel{\sim}{\rightarrow} G\times \fg$.  If for any given $a\in G$ we let $\lambda_a:G\rightarrow G$ be the left-multiplication by $a$, then we have 
\begin{equation*}
 \lambda(a,\xi)= \left( a, \lambda_a'(e)^{-1} \xi\right) \qquad \text{for all $(a,\xi)\in \fg G$}. 
\end{equation*}
Here $ \lambda_a'(e)^{-1} $ is regarded as a linear map from $\fg G(a) \simeq TG(a)$ onto $\fg$. Let us denote by $GG$ the tangent Lie group bundle of $G$. Composing the trivialization $\lambda$ with the exponential map $\exp:\fg \rightarrow G$ provides us with a global trivialization $\Lambda: GG \stackrel{\sim}{\rightarrow} G\times G$. More precisely, if for any $a\in G$, we set $\Lambda_a=\exp \circ \lambda_a'(e)^{-1}: \fg G(a)=GG(a)\rightarrow G$, then we have 
\begin{equation*}
\Lambda(a,\xi)= \left( a, \Lambda_a \xi\right)   \qquad \text{for all $(a,\xi)\in G G$}.
\end{equation*}
 Likewise, we have global trivializations $\lambda: \fg G' \stackrel{\sim}{\rightarrow} G\times \fg'$ and  $\Lambda: GG' \stackrel{\sim}{\rightarrow} G'\times G'$, where $GG'$ is the tangent group bundle of $G'$. 
  
\begin{definition}[Pansu~\cite{Pa:AM89}] Given a map $\phi:G\rightarrow G'$ and a point $a\in G$, its \emph{Pansu derivative} at $a$ is the map $D\phi(a):G\rightarrow G'$ defined by 
\begin{equation*}
 D\phi(a)x = \lim_{t\rightarrow 0} \delta_t^{-1} \left[ \phi(a)^{-1} \cdot \phi\left( a\cdot \delta_t(x)\right) \right], \qquad x\in G, 
\end{equation*}
 provided the limit exists. 
\end{definition}
When it exists the Pansu derivative is compatible with the dilations of $G$ and $G'$. When $G$ and $G'$ are Carnot groups we can equip them with Carnot-Carath\'eodory metrics associated with left-invariant sub-Riemannian metrics on $H_1$ and $H_1'$. A result of Pansu~\cite{Pa:AM89} asserts that the Pansu derivative $D\phi(a)$ exists and is a group map for almost every point  $a\in G$ when $\phi$ is locally Lipschitz continuous with respect to these metrics.  

Suppose that $\phi:G\rightarrow G'$ is a (smooth) Carnot manifold map. For every $a\in G$, the Carnot differential $\hat{\phi}'(a)$ is a group map from $GG(a)$ to $GG'(a')$, with $a'=\phi(a)$. Under the identifications $GG(a)\simeq G$ and $GG'(a')\simeq G'$ described above this is the group map, 
\begin{equation}
 \Lambda_{a'} \circ \hat{\phi}'(a) \circ \Lambda_a^{-1}: G \longrightarrow G'. 
 \label{eq:Carnot-diff.identification-hphi'-groups}
\end{equation}
 
\begin{theorem}\label{thm:Pansu.Carnot-Pansu}
 Let $\phi: G \rightarrow G'$ be a (smooth) Carnot manifold map. Then its Pansu derivative exists everywhere, and  we have  
\begin{equation}
 D\phi(a) = \Lambda_{\phi(a)} \circ \hat{\phi}'(a) \circ \Lambda_a^{-1} \qquad \forall a \in G.
 \label{eq:Carnot-diff.Carnot-diff-Pansu-derivative} 
\end{equation}
Thus, under the identification~(\ref{eq:Carnot-diff.identification-hphi'-groups}), the Pansu derivative agrees with the Carnot differential $\hat{\phi}'$ at every point. 
\end{theorem}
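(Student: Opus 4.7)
The plan is to reduce the statement to a direct application of Corollary~\ref{cor:Carnot-prop.tangent-map-approx} by working in the $\varepsilon$-Carnot coordinates produced in Remark~\ref{rmk:Carnot-coord.graded-nilpotent-group}, and then carefully unwinding the trivializations $\Lambda_a$ and $\Lambda_{a'}$. Fix $a\in G$ and set $a'=\phi(a)$. Pick graded bases $(\xi_1,\ldots,\xi_n)$ of $\fg$ and $(\xi_1',\ldots,\xi_{n'}')$ of $\fg'$, and let $(X_1,\ldots,X_n)$ and $(X_1',\ldots,X_{n'}')$ be the corresponding left-invariant $H$-frames of $TG$ and $TG'$. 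By Remark~\ref{rmk:Carnot-coord.graded-nilpotent-group}, the global charts $\kappa_a:=(\exp_X\circ \lambda_a)^{-1}$ and $\kappa_{a'}':=(\exp_{X'}\circ \lambda_{a'})^{-1}$ provide $\varepsilon$-Carnot coordinates at $a$ and $a'$ adapted to these $H$-frames, and I would let $\tilde\phi:=\kappa_{a'}'\circ \phi\circ \kappa_a^{-1}:\R^n\rightarrow \R^{n'}$ be the coordinate expression of $\phi$.

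The core of the argument is the asymptotic
$$\lim_{t\rightarrow 0} t^{-1}\cdot \tilde\phi(t\cdot x)=\hat\phi'(a)x \qquad \text{for all $x\in \R^n$},$$
supplied by Corollary~\ref{cor:Carnot-prop.tangent-map-approx}, with $\hat\phi'(a)$ viewed as the $w$-homogeneous group map $G(a)=\R^n\rightarrow G'(a')=\R^{n'}$. To connect this with the group-theoretic definition of $D\phi(a)$, I would use that the dilations $\delta_t$ on $G$ and $G'$ are the group automorphisms induced from the gradings of $\fg$ and $\fg'$, so the exponentials intertwine them with the anisotropic dilations on $\R^n$ and $\R^{n'}$: we have $\exp_X(t\cdot x)=\delta_t(\exp_X(x))$ for all $x\in \R^n$ and $t\in \R$, and likewise for $\exp_{X'}$.

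Setting $y=\exp_X(x)$ and using the definitions of $\kappa_a$ and $\kappa_{a'}'$, one computes
$$t^{-1}\cdot \tilde\phi(t\cdot x)=\exp_{X'}^{-1}\bigl(\delta_t^{-1}\bigl[(a')^{-1}\cdot \phi(a\cdot \delta_t(y))\bigr]\bigr).$$
Thus the displayed asymptotic forces the limit in the definition of the Pansu derivative to exist, and it gives
$$D\phi(a)y=\exp_{X'}\bigl(\hat\phi'(a)\exp_X^{-1}(y)\bigr).$$

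Finally, I would identify this with the right-hand side of~(\ref{eq:Carnot-diff.Carnot-diff-Pansu-derivative}) by unpacking the trivializations. The graded basis $(\xi_1(a),\ldots,\xi_n(a))$ of $\fg G(a)$, with $\xi_j(a)=\lambda_a'(e)\xi_j$, identifies $GG(a)=\fg G(a)$ with $\R^n$; under this identification the map $\Lambda_a=\exp\circ\lambda_a'(e)^{-1}$ sends $x\in \R^n$ to $\exp(\sum x_j\xi_j)=\exp_X(x)$, so $\Lambda_a$ corresponds precisely to $\exp_X$, and similarly $\Lambda_{a'}$ corresponds to $\exp_{X'}$. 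Substituting into the displayed formula for $D\phi(a)y$ yields $D\phi(a)=\Lambda_{a'}\circ \hat\phi'(a)\circ \Lambda_a^{-1}$, as claimed. The only real obstacle is the bookkeeping required to align the natural identifications between $\fg G(a)$, $GG(a)$, $\R^n$, and $G$ so that dilations and group operations match on the nose; once that is done, the theorem is immediate from Corollary~\ref{cor:Carnot-prop.tangent-map-approx}.
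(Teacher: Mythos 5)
Your proposal is correct and follows essentially the same route as the paper's proof: both invoke Remark~\ref{rmk:Carnot-coord.graded-nilpotent-group} to get $\varepsilon$-Carnot coordinates from the charts $(\exp_X\circ\lambda_a)^{-1}$ and $(\exp_{X'}\circ\lambda_{a'})^{-1}$, apply Corollary~\ref{cor:Carnot-prop.tangent-map-approx}, use the $w$-homogeneity of the exponential maps to rewrite the Pansu difference quotient, and identify $\Lambda_a$ with $\exp_X$ via the graded basis $(\xi_1(a),\ldots,\xi_n(a))$. The only difference is cosmetic: the paper names the identification $\chi_a:\R^n\to\fg G(a)$ explicitly and works with $\bar{\phi}'(a)=\chi_{a'}^{-1}\circ\hat{\phi}'(a)\circ\chi_a$, whereas you carry the same bookkeeping implicitly.
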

\begin{proof}
The result is merely a special case of Corollary~\ref{cor:Carnot-prop.tangent-map-approx}. The main task is to unwind the various identifications involved. 

Let $(\xi_1, \ldots, \xi_n)$ be a graded basis of $\fg$. For $j=1, \ldots, n$, let $X_j$ be the left-invariant vector field on $G$ such that $X_j(e)=\xi_j$. Then $(X_1,\ldots, X_n)$ is a (global) $H$-frame of $TG$. Let $\exp_X:\R^n \rightarrow G$ be the $w$-homogeneous diffeomorphism~(\ref{eq:Carnot-coord.canonical-coord}).  In addition, as above the 
 $H$-frame $(X_1, \ldots, X_n)$ gives rise to a graded basis $(\xi_1(a),\ldots, \xi_n(a))$ of $\fg G(a)$, where $\xi_j(a)$ is the class of $X_j(a)$ in $\fg_{w_j} G(a)$. In fact, under the identification $\fg G(a)\simeq TG(a)$ this is just $X_j(a)$. As $X_j$ is a left-invariant vector field, we have $\xi_j(a)= \lambda_a'(e)[X_j(e)]=\lambda_a'(e)\xi_j$. Let 
 $\chi_{a}$ be the isomorphism from $\R^n$ onto $\fg G(a)=GG(a)$ defined by the basis $(\xi_1(a),\ldots, \xi_n(a))$. Then, for all $x\in \R^n$, we have 
\begin{align*}
 \Lambda_a \circ\chi_{a} (x)& = \Lambda_a \left( x_1 \xi_1(a) + \cdots +  x_n \xi_n(a)\right) \\  
 & = \exp \circ \lambda_a'(e)^{-1}\left( x_1 \lambda_a'(e)\xi_1 + \cdots +  x_n\lambda_a'(e)\xi_n\right) \\ \
 & = \exp(x_1\xi_1 + \cdots + x_n \xi_n).  
\end{align*}
As $\exp(x_1\xi_1 + \cdots + x_n \xi_n)=\exp(x_1X_1 + \cdots + x_n X_n)=\exp_X(x)$, we see that
\begin{equation}
 \exp_X(x)=  \Lambda_a \circ\chi_{a} (x)  \qquad \text{for all $x\in \R^n$}. 
  \label{eq:Carnot-diff.expX}
\end{equation}

Similarly, let $(\xi_1',\ldots, \xi_{n'}')$ be a graded basis of $\fg'$ and $(X_1', \ldots, X_{n'}')$ the associated left-invariant $H'$-frame of $TG'$. Let $\exp_{X'}:\R^{n'}\rightarrow G'$ be the $w$-homogeneous diffeomorphism~(\ref{eq:Carnot-coord.canonical-coord}) associated with $(X_1', \ldots, X_{n'}')$. 
The $H'$-frame $(X_1', \ldots, X_{n'}')$ also gives rise to a graded basis $(\xi_1'(a'), \ldots, \xi_{n'}'(a'))$ of $\fg G'(a')$ and a corresponding linear isomorphism $\chi_{a'}:\R^{n'}\rightarrow G'$. In the same way as in~(\ref{eq:Carnot-diff.expX}) we have
\begin{equation}
  \Lambda_{a'} \circ \chi_{a'} (x') = \exp_{X'}(x') \qquad \text{for all $x'\in \R^{n'}$}.
  \label{eq:Carnot-diff.expX'} 
\end{equation}
In addition, as above the graded bases $(\xi_1(a),\ldots, \xi_n(a))$ and $(\xi_1'(a'), \ldots, \xi_{n'}'(a'))$ enables us to identify the Carnot differential $\hat{\phi}'(a)$ with the map, 
\begin{equation}
 \bar{\phi}'(a):= \chi_{a'}^{-1}\circ \hat{\phi}'(a) \circ \chi_{a} : \R^n \longrightarrow \R^{n'}.
 \label{eq:Carnot-diff.barphi'a} 
\end{equation}

Bearing all this in mind,  we know by Proposition~\ref{prop:Carnot-coord.graded-nilpotent-group} and Remark~\ref{rmk:Carnot-coord.graded-nilpotent-group} 
that the global chart $ (\lambda_a \circ \exp_X)^{-1}: G \rightarrow \R^n$ provides us with Carnot coordinates at $a$ adapted to $(X_1,\ldots, X_n)$. Likewise, the chart $( \lambda_{a'} \circ \exp_{X'})^{-1}: G' \rightarrow \R^{n'}$ gives rise to Carnot coordinates at $a'$ adapted to $(X_1',\ldots, X_n')$. Therefore, by Corollary~\ref{cor:Carnot-prop.tangent-map-approx}, for all $x\in \R^n$ and as $t\rightarrow 0$, we have 
\begin{equation}
 t^{-1} \cdot \left[ \left(\lambda_{a'} \circ \exp_{X'}\right)^{-1} \circ \phi \circ \left(\lambda_a \circ \exp_X \right)(t\cdot x)\right]=\bar{\phi}'(a)x +\op{O}(t). 
\end{equation}
Given any $y\in G$,  substituting $\exp_X^{-1}(y)$ for $x$ and using the $w$-homogeneity of $\exp_{X}$ and $\exp_{X'}$ we deduce that, for all $x\in G$ and as $t\rightarrow 0$, we have 
\begin{align*}
 \delta_t^{-1} \left[ \phi(a)^{-1} \cdot \phi\left( a\cdot \delta_t(y)\right) \right] & = \left(\exp_{X'} \circ \delta_t^{-1} \circ \exp_{X'}^{-1}\right) \circ \left(\lambda_{a'}^{-1}\circ \phi \circ 
 \lambda_a \right) \circ \left(\exp_X\circ \delta_t\circ \exp_X^{-1}\right)\!(y)\\ 
 & = \exp_{X'}\left\{  t^{-1} \cdot \left[ \left(\lambda_{a'} \circ \exp_{X'}\right)^{-1} \circ \phi \circ \left(\lambda_a \circ \exp_X\right)\left(t\cdot \exp_X^{-1}(y)\right)\right]\right\}\\
 & = \exp_{X'}\left\{  \bar{\phi}'(a) \exp_X^{-1}(y)  +\op{O}(t) \right\} \\
 &= \exp_{X'}\circ \bar{\phi}'(a) \circ  \exp_X^{-1}(y) +\op{O}(t). 
\end{align*}
This shows that $\phi$ admits a Pansu derivative at $a$. Moreover, by using~(\ref{eq:Carnot-diff.expX}), (\ref{eq:Carnot-diff.expX'}) and~(\ref{eq:Carnot-diff.barphi'a})  we see that the Pansu derivative is equal to
\begin{align*}
 \exp_{X'}\circ \bar{\phi}'(a) \circ  \exp_X^{-1} & = \left(  \Lambda_{a'} \circ \chi_{a'}\right) \circ \left(  \chi_{a'}^{-1}\circ \hat{\phi}'(a) \circ \chi_{a}\right) \circ 
  \left(  \Lambda_a \circ\chi_{a}\right)^{-1}\\
  &  =  \Lambda_{a'} \circ \hat{\phi}'(a) \circ \Lambda_a^{-1}. 
\end{align*}
This proves the result.
\end{proof}
  
 \section{Lie Groupoids and Connes' Tangent Groupoid}\label{sec:Lie-groupoids}
 In this section, we recall the main definitions and examples of Lie groupoids and review the construction of the tangent groupoid of a manifold by Connes~\cite{Co:NCG}. 
 
 The notion of groupoid interpolates between groups and spaces. Therefore, they provide us with a natural framework for the encoding of deformations of spaces to groups. 
 
\begin{definition} A groupoid structure on a set $\cG$ is given by
    \begin{enumerate}
        \item [(i)] A set $\cG^{(0)}$ (called base) and a one-to-one map $\epsilon:\cG^{(0)}\rightarrow \cG$ (called unit map). 
    
        \item[(ii)] Maps $s: \cG\rightarrow \cG^{(0)}$ and $r: \cG\rightarrow \cG^{(0)}$ (called range and 
        source maps, respectively) such that
        \begin{equation*}
            s\left[\epsilon(\gamma)\right]=r\left[\epsilon(\gamma)\right]=\gamma  \quad \text{for all  
            $\gamma \in \cG^{(0)}$}.
        \end{equation*}
    
        \item[(iii)] A multiplication $ \mu: \cG^{(2)}\ni (\gamma_{1},\gamma_{2}) \rightarrow  \gamma_{1}\cdot \gamma_{2}\in \cG$, where 
        $\cG^{(2)}=\{(\gamma_{1},\gamma_{2})\in \cG^{2};\ s(\gamma_{1})=r(\gamma_{2})\}$ and the following properties are satisfied:
        \begin{gather*}
            \gamma\cdot \epsilon\left[s (\gamma)\right]=  \epsilon\left[r(\gamma)\right]\cdot 
        \gamma=   \gamma   \quad \text{for all  $\gamma \in \cG$},\\
    s (\gamma_1 \cdot \gamma_2) =s (\gamma_2) \quad \text{and} \quad r(\gamma_1 \cdot \gamma_2) = r (\gamma_1) 
        \quad  \text{for all $(\gamma_1, \gamma_2 ) \in \mathcal{G}^{(2)}$}, \\    
         (\gamma_1 \cdot \gamma_2) \cdot \gamma_3 = \gamma_1 \cdot (\gamma_2 \cdot \gamma_3) \quad \text{for all 
        $(\gamma_{1},\gamma_{2},\gamma_{3})\in \cG^{(3)}$}.   
        \end{gather*} Here we have set $\cG^{(3)}=\left\{ (\gamma_{1},\gamma_{2},\gamma_{3})\in \cG^{3};\ 
        s(\gamma_{1})=r(\gamma_{2}) \ \text{and} \ s(\gamma_{2})=r(\gamma_{3})\right\}$.
        
        \item[(iv)] An inverse map $\iota:\cG\ni \gamma\rightarrow \gamma^{-1}\in \cG$ such that
        \begin{equation}\label{eq-gp-1}
            \gamma \cdot \gamma^{-1}= \epsilon\left[r (\gamma)\right] \quad  \text{and} \quad 
           \gamma^{-1}\cdot \gamma = \epsilon\left[s (\gamma) \right]
            \quad \text{for all  $\gamma \in \cG$}.
        \end{equation}
    \end{enumerate}
\end{definition}

\begin{remark}
    The equations~(\ref{eq-gp-1}) uniquely determine the inverse $\gamma^{-1}$. This implies that $(\gamma^{-1})^{-1}=\gamma$ 
    for all $\gamma \in \cG$, i.e., the inverse map is an involution. 
\end{remark}

\begin{remark}\label{rmk:Groupoid.morphism}
    Given groupoids $\cG$ and $\cG'$, a groupoid map from $\cG$ to $\cG'$ is given by maps $\Phi:\cG\rightarrow 
    \cG'$ and $\Phi^{0}:\cG^{(0)}\rightarrow (\cG')^{(0)}$ such that
    \begin{gather*}
        \Phi\circ  \epsilon=  \epsilon\circ \Phi^{0}, \qquad \Phi^{0}\circ s=s\circ \Phi, \qquad \Phi^{0}\circ r=r\circ \Phi,\\
        (\Phi\otimes \Phi)\circ \mu=\mu \circ \Phi, \qquad \Phi\circ \iota =\iota \circ \Phi.
    \end{gather*}
\end{remark}

\begin{example}[Pair Groupoid]\label{ex:Groupoid.pair-groupoid}
  Let $X$ be any set. Then $X\times X$ is a groupoid with $\cG^{0}=X$, and the unit, source and range maps defined by
  \begin{equation*}
     \epsilon(x)=(x,x), \qquad s(x,y)=y, \qquad  r(x,y)=x.
  \end{equation*}The multiplication and inverse maps are given by
  \begin{equation*}
      (x,y)\cdot (y,z)=(x,z) \qquad \text{and} \qquad (x,y)^{-1}=(y,x).
  \end{equation*}
\end{example}

\begin{example}[Group Bundle]\label{ex:Groupoid.group-bundle}
Let $G\stackrel{\pi}{\longrightarrow}M$ be a group bundle over a space $X$, so that each fiber $G(x)=\pi^{-1}(x)$, $x\in X$, is a group. Then $G$ defines a groupoid with  $\cG=G$ and 
$\cG^{(0)}=M$. The unit map is given by 
\begin{equation*}
    \epsilon(x)=e_{x} \qquad \text{for all $x\in X$},
\end{equation*}
where $e_{x}$ is the unit element of $G(x)$. The source and range maps 
are equal to $\pi$. Thus,
\begin{equation*}
    \cG^{(2)}=\{(g_{1},g_{2})\in G^{2}; \ \pi(g_{1})=\pi(g_{2})\}.
\end{equation*}That is, a pair $(g_{1},g_{2})\in G^{2}$ is in $\cG^{(2)}$ if and only if $g_{1}$ and $g_{2}$ lie in the 
same fiber. The multiplication and inverse maps of $\cG$ are given by the fiberwise multiplication and inverse maps of 
$G$. 
\end{example}

\begin{example}[Action Groupoid]\label{ex:Groupoid.group-action}
 Let $X$ be a set equipped with the left-action $G\times X\ni (g,x)\rightarrow gx\in X$ of some group $G$. The groupoid associated with this action is defined as follows. We 
 take $\cG=G\times X$ and $\cG^{(0)}=X$. The unit, source and range maps are defined by
 \begin{equation*}
     \epsilon(x)=(e,x), \qquad \sigma(g,x)=x, \qquad r(g,x)=gx.
 \end{equation*}The multiplication and inverse maps are given by
\begin{equation*}
     (h,gx)\cdot (g,x)=(hg,x) \qquad \text{and}\qquad (g,x)^{-1}=(g^{-1},gx).
\end{equation*}
\end{example}

In what follows, we will be interested in groupoids carrying differentiable structures in the following sense. 

\begin{definition} We shall say that a groupoid $\cG$ is a \emph{Lie groupoid} when
    \begin{enumerate}
        \item[(i)] $\cG$ and $\cG^{(0)}$ are smooth manifolds.  
    
        \item[(ii)] The unit map $\epsilon:\cG^{(0)}\rightarrow \cG$ is a smooth embedding.  
    
        \item[(iii)] The source and range maps $s:\cG\rightarrow \cG^{(0)}$ and   $r:\cG\rightarrow \cG^{(0)}$  are 
        smooth submersions (so that $\cG^{(2)}$ is a manifold). 
    
        \item[(iv)] The multiplication map $\mu:\cG^{(2)}\rightarrow \cG$  and the inverse map $\iota:\cG\rightarrow 
        \cG$ are smooth maps.  
    \end{enumerate}
\end{definition}

\begin{remark} 
    The following are examples of Lie groupoids:
    \begin{itemize}
        \item  The pair groupoid of any smooth manifold (\emph{cf.}~Example~\ref{ex:Groupoid.pair-groupoid}).
    
        \item  The groupoid associated with a smooth bundle of Lie groups 
        (\emph{cf.}~Example~\ref{ex:Groupoid.group-bundle}).
    
        \item  The action groupoid of the (smooth) action of a Lie group on a manifold (\emph{cf.}~Example~\ref{ex:Groupoid.group-action}).
    \end{itemize}
\end{remark}

\subsection{Connes' tangent groupoid}
As mentioned above, groupoids interpolate between spaces and groups. This aspect especially pertains in the construction of the tangent groupoid of a manifold of Connes~\cite{Co:NCG}. More precisely, given any smooth manifold $M$, the tangent groupoid $\cG=\cG M$ is a Lie groupoid that encodes the smooth deformation of the space $M\times M$ to the tangent bundle $TM$. This groupoid is obtained as follows.

Set $\R^*=\R\setminus \{0\}$.  At the set-theoretic level $\cG$ and the base $\cG^{(0)}$ are defined by
\begin{equation}
\cG = TM \sqcup (M \times M \times \R^*) \quad \text{and} \quad \cG^{(0)} = M \times \R. 
\label{eq:Groupoid.Connes-groupoid}
\end{equation}
The unit map $\epsilon:\cG^{(0)}\rightarrow \cG$ is  given by
\begin{equation*}
\epsilon(x,t) = \left\{ 
\begin{array}{ll}
(x,x,t) & \text{for $t\neq 0$ and $x \in M$},\smallskip \\
(x,0) \in TM(x) & \text{for $t=0$ and $x \in M$}.
\end{array} \right.
\end{equation*}
The range and source maps are defined by
\begin{equation*}
    \begin{array}{cl}
        r (x,y,t) = (x, t) \quad \text{and} \quad s (x,y,t) = (y,t) &\qquad \text{for $t\neq 0$ and $x,y \in M$},\smallskip\\
    r (x, v) = s (x, v) = (x, 0) &\qquad \textrm{for $x\in M$ and $v \in TM(x)$}.
    \end{array}
\end{equation*}

We have  $\cG^{(2)}=\cG^{(2)*} \sqcup TM^{(2)}$, where 
\begin{gather}
  \cG^{(2)*}:=\left\{ \left( (x,y,t),(y,z,t)\right); x,y,z\in M, \ t\in \R^*\right\}, 
  \label{eq:Groupoid.cG(2)}\\
 TM^{(2)}:= \left\{\left((x,v),(x,w)\right)\in TM\times TM; x\in M, \ v,w\in TM(x)\right\}.  
\end{gather}
In particular, $TM^{(2)}$ is just the fiber product over $M$ of $TM$ with itself. We then define the multiplication map $\cG^{(2)} \rightarrow \cG$ by 
\begin{align*}
(x,y,t) \cdot (y,z,t) = (x,z,t) & \qquad \text{for $t\neq 0$ and $x, y, z\in M$},\\
(x,v) \cdot(x,w) = (x,v+w) & \qquad \text{for $x\in M$ and $v,w\in TM(x)$}.
\end{align*}
In addition, the inverse map $\cG \rightarrow \cG$ is given by
\begin{align*}
         (x,y,t)^{-1}=(y,x,t) &\qquad \text{for $t\neq 0$ and $x,y\in M$},\\
    (x,v)^{-1} = (x,-v)&  \qquad \text{for $x\in M$ and $v\in TM(x)$}.
\end{align*}

Note that $\cG^{(0)}=M\times \R$ is a manifold and $\cG=\cG M$ is the disjoint union of the manifolds $TM$ 
and $\cG^*:=M\times M\times \R^*$. We glue their topologies and differentiable structures as follows. 

The topology of $\cG$ is the weakest topology such that
    \begin{itemize}
        \item The inclusion of $TM$ into  $\cG$ is a topological embedding. 
    
        \item The inclusion of $\cG^*$ into $\cG$ is an open continuous map.
    
        \item A sequence $\left( (x_{\ell},y_{\ell},t_{\ell})\right)_{\ell\geq 0}\subset \cG^*$ converges to a point $(x,v)\in TM$ if 
        and only if, as $\ell \rightarrow \infty$, we have 
        \begin{gather}
            x_{\ell}\longrightarrow x, \qquad y_{\ell}\longrightarrow x, \qquad t_{\ell}\longrightarrow 0,  \qquad \text{and}\\
             \frac{1}{t_{\ell}}\left( \kappa(y_{\ell})-\kappa(x_{\ell})\right) \longrightarrow \kappa'(x)v \qquad \text{for any local 
             chart $\kappa$ near $x$}.\label{eq-gp-2}
        \end{gather}
    \end{itemize}
It can be checked that the condition~(\ref{eq-gp-2}) does not depend on the choice of the chart $\kappa$ near $x$.     

The differentiable structure of $\cG=\cG M$ is such that
   \begin{itemize}
    \item The inclusion of $\cG^*$ into $\cG$ is a smooth embedding.  

    \item For any point $x_{0}\in M$ and any local chart $\kappa$ near $x_{0}$, a local chart from a neighborhood of 
    $TM(x_0)$ to $\R^{n}\times \R^{n}\times \R$ is given by 
     \begin{equation}
         \begin{array}{cl}
            \gamma_{\kappa}(x,y,t)=\left(\kappa(x),t^{-1}\left(\kappa(y)-\kappa(x)\right),t\right) &\quad \text{for $t\neq 0$ and 
         $x,y\in \op{dom}(\kappa)$},\smallskip \\
          \gamma_{\kappa}(x,v)=\left(\kappa(x),\kappa'(x)v, 0\right)&\quad \text{for $x\in \op{dom}(\kappa)$ and $v\in 
          TM(x)$}.  
         \end{array}
         \label{eq:Lie-Groupoid.chart-gammak}
     \end{equation}
\end{itemize}
If $\kappa$ and $\kappa_{1}$ are local charts near $x_{0}$ and $\phi=\kappa_{1}\circ \kappa^{-1}$ is the 
    corresponding transition map, then, for all $(x,v,t)\in \R^{n}\times \R^{n}\times \R$ such that $x$ and $x+tv$ 
    are in $\op{dom}(\phi)$, we have
    \begin{equation*}
        \gamma_{\kappa_{1}}\circ \gamma_{\kappa}^{-1}(x,v,t)=\left\{ 
        \begin{array}{ll}
            \left( \phi(x),t^{-1}\left(\phi(x+tv)-\phi(x)\right),t\right) & \text{if $t\neq 0$},\smallskip\\
            \left(\phi(x),\phi'(x)v,0\right) & \text{if $t=0$}.
        \end{array}\right.
    \end{equation*}
    Therefore, the maps $\gamma_{\kappa}$ define a system of local charts near $TM$. We then have the following result.

\begin{proposition}[Connes~\cite{Co:NCG}]\label{prop:Groupoid.Connes-groupoid-bdiff} 
    With respect to the differentiable structure above, the tangent groupoid 
    $\cG=\cG M$ is a Lie groupoid. 
\end{proposition}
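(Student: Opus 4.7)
The plan is to verify the four axioms of a Lie groupoid by working in the explicit coordinate charts $\gamma_{\kappa}$ of~(\ref{eq:Lie-Groupoid.chart-gammak}) together with the inherited smooth structure on the open subset $\cG^*:=M\times M\times \R^*$. Two preliminary points must be settled first: Hausdorffness of $\cG$ (the only potential failure is a sequence in $\cG^*$ converging simultaneously to a point of $TM$ and to a point of $\cG^*$, but convergence to $TM$ forces $t_{\ell}\to 0$ by~(\ref{eq-gp-2}), which excludes convergence inside $\cG^*$), and compatibility of the $\gamma_{\kappa}$ with each other and with the inclusion $\cG^*\hookrightarrow \cG$. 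On $\cG^*$ the restriction of each $\gamma_{\kappa}$ is a smooth diffeomorphism onto its image, so the one nontrivial point is the smoothness of $\gamma_{\kappa_{1}}\circ \gamma_{\kappa}^{-1}$ at $t=0$.

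That smoothness follows from Hadamard's lemma applied to the transition map $\phi=\kappa_{1}\circ \kappa^{-1}$: one rewrites
\[
 t^{-1}\left(\phi(x+tv)-\phi(x)\right) = \int_{0}^{1} \phi'(x+stv)v\, ds,
\]
and observes that the right-hand side depends smoothly on $(x,v,t)$ in the full neighborhood and takes the value $\phi'(x)v$ at $t=0$, matching the definition of the transition at $t=0$ given in the excerpt. Differentiation under the integral sign shows the same for all higher $t$-derivatives. Consequently the $\gamma_{\kappa}$ together with the inclusion $\cG^*\hookrightarrow \cG$ form a smooth atlas, and $\cG$ is a smooth manifold; the base $\cG^{(0)}=M\times \R$ is evidently a manifold.

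It then remains to read off the structure maps in coordinates. With the base chart $(x,t)\mapsto (\kappa(x),t)$ and the chart $\gamma_{\kappa}$ on $\cG$, one computes
\[
 \epsilon: (X,t)\longmapsto (X,0,t), \qquad r: (X,V,t)\longmapsto (X,t), \qquad s: (X,V,t)\longmapsto (X+tV,t),
\]
which are all smooth; $\epsilon$ is a smooth embedding and $r,s$ are smooth submersions (the second component of each is already a submersion onto $\R$, and the first component is a submersion onto $M$). Hence $\cG^{(2)}$ is a smooth submanifold of $\cG\times \cG$, and a direct calculation yields
\[
 \bigl((X,V_{1},t),(X+tV_{1},V_{2},t)\bigr) \stackrel{\mu}{\longmapsto} (X,V_{1}+V_{2},t), \qquad (X,V,t) \stackrel{\iota}{\longmapsto} (X+tV,-V,t),
\]
both smooth and reducing at $t=0$ to the fiberwise addition and negation on $TM$ prescribed in the definition of $\cG M$. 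This checks all the Lie-groupoid axioms.

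The main obstacle is really the first step: showing that every ``divide by $t$'' expression appearing in the definition of $\gamma_{\kappa}$, of the source map, and of the inverse, extends smoothly across $t=0$ to the anticipated value on $TM$. All such extensions are controlled by the single integral-remainder identity above (and its derivatives in $(x,v)$); once this is in place, the remaining verifications reduce to mechanical coordinate calculations.
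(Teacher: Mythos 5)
Your proof is correct, and it supplies detail that the paper itself omits: Proposition~\ref{prop:Groupoid.Connes-groupoid-bdiff} is stated with attribution to Connes and no proof beyond the preceding computation of the transition maps $\gamma_{\kappa_{1}}\circ\gamma_{\kappa}^{-1}$, so there is no argument in the text to compare against line by line. Your route is the standard one, and it is essentially the same strategy the paper carries out in full for the Carnot analogue in Section~\ref{sec:tangent-groupoid}: compute $\epsilon$, $r$, $s$, $\iota$, $\mu$ in the charts $\gamma_{\kappa}$ (and the induced charts $\gamma_{\kappa}^{(2)}$ on $\cG^{(2)}$) and reduce everything to the smooth extension across $t=0$ of the rescaled difference quotients. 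The one genuine input, which you isolate correctly, is the identity $t^{-1}(\phi(x+tv)-\phi(x))=\int_{0}^{1}\phi'(x+stv)v\,ds$; in the Carnot setting this elementary Hadamard-type argument is no longer available because the rescaling is anisotropic, and the paper replaces it by the combination of Corollary~\ref{cor:Carnot-map.approx-vareps-Carnot-coord} with the parametrized remainder statement of Lemma~\ref{lem:anisotropic.Theta-parameter}, which yields $t^{-1}\cdot\Theta(x,t\cdot y)=\hat\phi'(x)y+t\tilde\Theta(x,y,t)$ with $\tilde\Theta$ smooth. Your coordinate formulas for $r$, $s$, $\mu$, $\iota$ all check out against the definitions, and your remark on Hausdorffness (convergence to $TM$ forces $t_{\ell}\to 0$) disposes of the only delicate point in glueing the two topologies. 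In short: a complete and correct proof, matching in structure the paper's proof of Theorem~\ref{thm:Groupoid.main} specialized to $r=1$.
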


The original motivation of Connes~\cite{Co:NCG} for the construction of the tangent groupoid was to produce a new proof of the index theorem of Atiyah-Singer~\cite{AS:AM68}. This construction also give some nice insight on the notion of tangent space of a manifold. More precisely, the fact that $\cG M$ is a differentiable manifold implies that, for every $x_0\in M$, the submanifolds $\{x_0\}\times M\times \{t\}$ look more and more like the tangent space $TM(x_0)$ as $t\rightarrow 0$. To understand these manifolds in local coordinates, let $\kappa:V\rightarrow U$ be a local chart whose domain is an open neighborhood of $x_0$. Up to translation the chart $\gamma_\kappa$ in~(\ref{eq:Lie-Groupoid.chart-gammak}) gives rise to a local chart $\tilde{\gamma}_\kappa^t: \{x_0\}\times V\times \{t\}\rightarrow W_t$, where $W_t$ is an open neighborhood of $\kappa(x_0)$ and we have 
\begin{equation*}
 \tilde{\gamma}_\kappa^t(x_0,x,t)=\kappa(x_0)+t^{-1}\left(\kappa(x)-\kappa(x_0)\right)\qquad \forall x\in V. 
\end{equation*}
The transition map $ \tilde{\gamma}_\kappa^1\circ ( \tilde{\gamma}_\kappa^t)^{-1}$ is just the rescaling map $x\rightarrow \kappa(x_0)+t^{-1}(x-\kappa(x_0))$. Thus, in local coordinates, the submanifold $\{x_0\}\times M\times \{t\}$ is merely a zoomed in version of $\{x_0\}\times M\times \{1\}\simeq M$, where the zooming performed by rescaling by $t^{-1}$ along rays out of $x_0$. Thus Connes' construction recasts in the language of groupoids the well-known idea that, as we zoom in more and more around $x_0$, the manifold $M$ looks more and more like the tangent space $TM(x_0)$. In addition, the very fact that we obtain a groupoid, and not just any manifold, further accounts for the additive group structure of $TM(x_0)$.

\section{The Tangent Groupoid of a Carnot Manifold}\label{sec:tangent-groupoid}
In this section, we construct the analogue for Carnot manifolds of Connnes' tangent groupoid. Bella\"iche~\cite{Be:Tangent} conjectured the existence of such a groupoid for and this would explain why the tangent space of a Carnot manifold should be a group.  


 Throughout this section we let $(M,H)$ be a Carnot manifold. 
 
\subsection{Construction of $\cG_H M$ as an abstract groupoid}
Connes tangent groupoid is obtained as the Lie groupoid that encodes the smooth deformation of the pair manifold $M\times M$ to the tangent bundle $TM$. Likewise, we shall associated with the Carnot manifold a Lie groupoid $\cG_H M$ that encodes the smooth deformation of $M\times M$ to the tangent group bundle $GM$. 

At the set-theoretic level we define the groupoid $\cG=\cG_H M$ and its base $\cG^{(0)}$ by
\begin{equation*}
   \cG= GM \sqcup ( M \times M \times \R^* ) \qquad \text{and} \qquad \cG^{(0)}=M \times \R. 
\end{equation*}
The unit map $\epsilon:\cG^{(0)}\rightarrow \cG$ is  given by
\begin{equation*}
\epsilon(x,t) = \left\{ \begin{array}{ll}
(x,x,t) & \text{for $t\neq 0$ and $x \in M$},\smallskip \\
(x,0) \in GM & \text{for $t=0$ and $x \in M$}.
\end{array} \right.
\end{equation*}
The range and source maps $r,s:\mathcal{G}\rightarrow \cG^{(0)}$ are defined by
\begin{equation*}
    \begin{array}{cl}
        r (x,y,t) = (x, t) \quad \text{and} \quad s (x,y,t) = (y,t) &\qquad \text{for $t\neq 0$ and $x,y \in M$},\smallskip\\
    r (x, \xi) = s (x, \xi) = (x, 0) &\qquad \textrm{for $x\in M$ and $\xi \in GM(x)$}.
    \end{array}
\end{equation*}

We have $\cG^{(2)}= \cG^{(2)*}\sqcup GM^{(2)}$, where $\cG^{(2)*}$ is defined as in~(\ref{eq:Groupoid.cG(2)}) and $GM^{(2)}$ is the fiber product over $M$ of $GM$ with itself. That is, 
\begin{equation}
 GM^{(2)}= \left\{\left((x,\xi),(x,\eta)\right)\in GM\times GM; x\in M, \ \xi,\eta\in GM(x)\right\}.
 \label{eq:Groupoid.GM(2)}
\end{equation}
 We define the multiplication map $\cG^{(2)}\rightarrow \cG$ by
\begin{align*}
(x,y,t) \cdot (y,z,t) = (x,z,t) & \qquad \text{for $t\neq 0$ and $x, y, z\in M$},\\
(x,\xi) \cdot(x,\eta) = (x,\xi\cdot \eta) & \qquad \text{for $x\in M$ and $\xi,\eta\in GM(x)$},
\end{align*}where $\cdot$ is the product law of $GM(x)$. 

Finally, the inverse map $\cG\rightarrow \cG$ is given by
\begin{align*}
         (x,y,t)^{-1}=(y,x,t) &\qquad \text{for $t\neq 0$ and $x,y\in M$},\\
    (x,\xi)^{-1} = (x,\xi^{-1})=(x,-\xi)&  \qquad \text{for $x\in M$ and $\xi\in GM(x)$}.
\end{align*}
It is immediate to check that this defines an abstract groupoid.  

\begin{definition}
   The groupoid $\cG_{H}M$ defined above is called the \emph{tangent groupoid} of the Carnot manifold $(M,H)$. 
\end{definition}

\subsection{Topology and differentiable structure on $\cG_H M$} 
As in the case of Connes' tangent groupoid, $\cG^{(0)}=M\times \R$ is a manifold and $\cG=\cG_H M$ 
is the disjoint union of the manifolds $GM$ and $\cG^*=M\times M\times \R^*$. We glue together the topologies and differentiable structures of these manifolds as follows. 

\begin{definition}
 An \emph{$H$-chart} is given by the data of a local chart $\kappa:M\supset U\rightarrow V\subset \R^n$ for $M$ and an $H$-frame over $U$. 
\end{definition}

Given any  local $H$-chart $\kappa: U\rightarrow V$ with $H$-frame $(X_{1},\ldots,X_{n})$, we let $\varepsilon^\kappa: V\times \R^n \rightarrow  \R^n$, 
$(x,y)\rightarrow \varepsilon^{\kappa}_{x}(y)$, be the  $\varepsilon$-Carnot coordinate map in the local coordinates defined by 
$\kappa$ which is associated with the frame $(\kappa_{*}X_{1},\ldots,\kappa_{*}X_{n})$. As we have implicitly done before, we  will often identify the $H$-frame $(X_{1},\ldots,X_{n})$ with its pushforward by $\kappa$. Incidentally, we regard $V$ as a Carnot manifold so that $\kappa$ becomes a Carnot diffeomorphism. 

As mentioned in Remark~\ref{rmk:Tangent-group.frame-fgM}, the $H$-frame $(X_{1},\ldots,X_{n})$ gives rise to a smooth graded frame $(\xi_1(x), \ldots, \xi_n(x))$ of $\fg M$ over $U$, where $\xi_j(x)$ is the class of $X_j(x)$ in $\fg_{w_j}(x)$. This provides us with a local trivialization of $\fg M$ and $GM$ over $U$.  At every point $a\in U$ the tangent group $GM(a)$ is identified with the graded nilpotent $G(a)$, which is obtained by equipping $\R^n$ with the Dynkin product associated with the structure constants $L_{ij}^k(a)$, $w_i+w_j=w_k$, arising from the commutator relations~(\ref{eq:Carnot-mfld.brackets-H-frame}). 
We have a similar identification of $GV(\kappa(a))$ with $G(a)$ by using the $H$-frame $(\kappa_{*}X_{1},\ldots,\kappa_{*}X_{n})$, since the coefficients in the commutator relations~(\ref{eq:Carnot-mfld.brackets-H-frame}) are just $L_{ij}^k(\kappa^{-1}(x))$. Note also that, the graded frame $(\xi_1^\kappa(x), \ldots, \xi_n^\kappa(x))$ of $\fg V$ defined by $(\kappa_{*}X_{1},\ldots,\kappa_{*}X_{n})$ is such that $\xi_j^\kappa(\kappa(x))=\hat\kappa'(x)\xi_j(x)$ for all $x\in V$ and $j=1, \ldots, n$. 

As $V$ is an open subset of $\R^n$ and $(\kappa_{*}X_{1},\ldots,\kappa_{*}X_{n})$ is a global $H$-frame over $V$, at the manifold level it is only natural to identify $\fg V$ and $GV$ with the trivial bundle $V\times \R^n$. We obtain these identifications by using the graded frame $(\xi_1^\kappa(x), \ldots, \xi_n^\kappa(x))$. At the algebraic level this identifies $GV(\kappa(a))$ with $G(a)$. Under this identification the Carnot differential $\hat\kappa'(a)$ becomes a (homogeneous) Lie group isomorphism $\hat\kappa'(a):GM(a)\rightarrow G(a)$. As $\xi_j^\kappa(\kappa(a))=\hat\kappa'(a)\xi_j(a)$ what we obtain is just the identification of $GM(a)$ with $G(a)$ provided by the graded basis $(\xi_1(a), \ldots, \xi_n(a))$. \emph{In what follows this is how we are going to regard the Carnot differential of an $H$-chart.} 

\begin{definition}
    The topology of $\cG=\cG_H M$ is the weakest topology such that
    \begin{itemize}
        \item The inclusion of $GM$ into $\cG$ is a topological embedding.  
    
        \item The inclusion of $\cG^*= M\times M\times \R^*$ into $\cG$ is an open continuous map.
    
        \item A sequence $\left( (x_{\ell },y_{\ell },t_{\ell })\right)_{\ell \geq 1}\subset \cG^*$ converges to a point $(x,\xi)\in GM$ if 
        and only if, as $\ell \rightarrow \infty$, we have 
        \begin{gather}
            x_{\ell }\longrightarrow x, \qquad y_{\ell }\longrightarrow x, \qquad t_{\ell }\longrightarrow 0,  \qquad \text{and}\\
            t_{\ell }^{-1}\cdot \left(\varepsilon_{\kappa(x_{\ell })}^{\kappa}\circ \kappa\right)(y_{\ell }) \longrightarrow 
           \hat{\kappa}'(x)\xi \qquad \text{for any local 
             $H$-chart $\kappa$ near $x$}.\label{eq-gp-3}
        \end{gather}
    \end{itemize}
\end{definition}

The consistency of the condition~(\ref{eq-gp-3}) is the purpose of the following lemma.

\begin{lemma}
   The condition~(\ref{eq-gp-3}) does not depend on the choice of the local $H$-chart $\kappa$. 
\end{lemma}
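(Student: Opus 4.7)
The plan is to reduce the question to Corollary~\ref{cor:Carnot-map.approx-vareps-Carnot-coord} applied to the transition diffeomorphism between the two $H$-charts, and then to combine it with the chain rule for Carnot differentials. The main difficulty will be upgrading the pointwise asymptotic estimate supplied by that corollary to a bound uniform in the base point; this is handled by Lemma~\ref{lem:anisotropic.Theta-parameter}.

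First I will fix two $H$-charts $\kappa:U\to V$ and $\tilde\kappa:\tilde U\to\tilde V$ near $a$, with respective $H$-frames, and consider the transition map $\psi := \tilde\kappa\circ\kappa^{-1}$. This is a Carnot diffeomorphism between the open subsets $\kappa(U\cap\tilde U)$ and $\tilde\kappa(U\cap\tilde U)$ of $\R^n$ equipped with the Carnot structures $\kappa_*H$ and $\tilde\kappa_*H$. Applying Corollary~\ref{cor:Carnot-map.approx-vareps-Carnot-coord} to $\psi$ with identity local charts and at each base point $p\in U\cap\tilde U$, I will obtain that the remainder
\begin{equation*}
\Phi(p,z):=\varepsilon^{\tilde\kappa}_{\tilde\kappa(p)}\circ\psi\circ(\varepsilon^\kappa_{\kappa(p)})^{-1}(z)-\hat\psi'(\kappa(p))z
\end{equation*}
is $\Ow(\|z\|^{w+1})$ near $z=0$ for each fixed $p$. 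By Proposition~\ref{prop-carnot-sm} the map $\Phi$ is smooth jointly in $(p,z)$. I will then invoke Lemma~\ref{lem:anisotropic.Theta-parameter} with $p$ as parameter and $z$ as the anisotropically scaled variable to produce a smooth function $\tilde\Phi$ satisfying $t^{-1}\cdot\Phi(p,t\cdot v)=t\,\tilde\Phi(p,v,t)$.

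Next, given any sequence $(x_\ell,y_\ell,t_\ell)$ in $\cG^*$ with $x_\ell\to a$, $y_\ell\to a$, $t_\ell\to 0$, I will set $z_\ell:=\varepsilon^\kappa_{\kappa(x_\ell)}(\kappa(y_\ell))$ and $\tilde z_\ell:=\varepsilon^{\tilde\kappa}_{\tilde\kappa(x_\ell)}(\tilde\kappa(y_\ell))$. By construction they satisfy $\tilde z_\ell=\hat\psi'(\kappa(x_\ell))z_\ell+\Phi(x_\ell,z_\ell)$. Assuming $v_\ell:=t_\ell^{-1}\cdot z_\ell$ converges to some $v\in\R^n$, the $w$-homogeneity~(\ref{eq:Carnot-map.hatphi-homogeneity}) of the Carnot differential together with its smooth dependence on the base point will give
\begin{equation*}
t_\ell^{-1}\cdot\hat\psi'(\kappa(x_\ell))z_\ell=\hat\psi'(\kappa(x_\ell))v_\ell\longrightarrow\hat\psi'(\kappa(a))v,
\end{equation*}
while the identity for $\tilde\Phi$ will give $t_\ell^{-1}\cdot\Phi(x_\ell,z_\ell)=t_\ell\,\tilde\Phi(x_\ell,v_\ell,t_\ell)\to 0$ since $(x_\ell,v_\ell,t_\ell)\to(a,v,0)$ stays in the domain of $\tilde\Phi$. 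Hence $t_\ell^{-1}\cdot\tilde z_\ell\to\hat\psi'(\kappa(a))v$. Taking $v=\hat\kappa'(a)\xi$, the chain rule (Proposition~\ref{prop:Carnot.product-tangent-map}) applied to $\tilde\kappa=\psi\circ\kappa$ yields $\hat\psi'(\kappa(a))\circ\hat\kappa'(a)=\hat{\tilde\kappa}'(a)$, so this limit is exactly $\hat{\tilde\kappa}'(a)\xi$. The reverse implication will follow by swapping the roles of $\kappa$ and $\tilde\kappa$.

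The main technical obstacle is precisely the passage from the pointwise estimate of Corollary~\ref{cor:Carnot-map.approx-vareps-Carnot-coord} to a bound that is uniform as the base point converges to $a$. Lemma~\ref{lem:anisotropic.Theta-parameter} is exactly what bridges this gap; without the smooth rescaling $\tilde\Phi$ it produces, the vanishing of $t_\ell^{-1}\cdot\Phi(x_\ell,z_\ell)$ would not follow directly from the $\Ow$-estimate.
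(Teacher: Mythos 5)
Your proposal is correct and follows essentially the same route as the paper's proof: both reduce to the transition diffeomorphism between the two charts, apply Corollary~\ref{cor:Carnot-map.approx-vareps-Carnot-coord} to it, upgrade the pointwise $\Ow(\|z\|^{w+1})$ estimate to a smooth parametrized remainder via Lemma~\ref{lem:anisotropic.Theta-parameter}, and identify the limit using the chain rule of Proposition~\ref{prop:Carnot.product-tangent-map}. The only differences are cosmetic (you isolate the remainder as a separate map and parametrize it by the manifold point rather than its chart image).
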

\begin{proof}
 Let $\left( (x_{\ell },y_{\ell },t_{\ell })\right)_{\ell \geq 1}\subset \cG^*$ and $(x_{0},\xi_{0})\in GM$ be such that as $\ell \rightarrow \infty$ we have 
        \begin{gather*}
            x_{\ell }\longrightarrow x_{0}, \qquad y_{\ell }\longrightarrow x_{0}, \qquad t_{\ell }\longrightarrow 0,  \qquad \text{and}\\
            t_{\ell }^{-1}\cdot \left(\varepsilon_{\kappa(x_{\ell })}^{\kappa}\circ \kappa\right)(y_{\ell }) \longrightarrow 
            \hat{\kappa}'(x_{0})\xi_{0} \qquad \text{for some local 
             $H$-chart $\kappa$ near $x_{0}$}.
        \end{gather*}
Let $\kappa_{1}$ be another local $H$-chart near $x_{0}$. Let $\phi=\kappa _{1}\circ \kappa^{-1}$ be the corresponding 
transition map with domain $U:=\ran (\kappa) \cap \kappa(\dom (\kappa_{1}))$.  
For 
$\ell$ large enough $\kappa(x_{\ell})$ and $\kappa(y_{\ell})$ are in $U$. Set $\hat{x}_{\ell}=\kappa(x_{\ell})$ and $\hat{y}_{\ell}=t_{\ell }^{-1}\cdot 
\left(\varepsilon_{\kappa(x_{\ell })}^{\kappa}\circ \kappa\right)(y_{\ell })$. Then
\begin{align}\label{eq-gp-4}
    \left(\varepsilon_{\kappa_{1}(x_{\ell})}^{\kappa_{1}}\circ \kappa_{1}\right)(y_{\ell}) & = \varepsilon_{\phi\circ \kappa (x_\ell)}^{\kappa_1}\circ (\kappa_1 \circ \kappa^{-1})\circ 
    \kappa(y_{\ell}) \nonumber\\
    &  = \varepsilon_{\phi(\hat{x}_\ell)}^{\kappa_1}\circ \phi \circ \left(\varepsilon_{\kappa(x_\ell)}^\kappa\right)^{-1} \circ \left( \varepsilon_{\kappa(x_\ell)}^\kappa\circ \kappa\right)(y_\ell) \\
&  =\varepsilon_{\phi(\hat{x}_{\ell})}^{\kappa_1}\circ \phi\circ \left(\varepsilon^{\kappa}_{\hat{x_{\ell}}}\right)^{-1}(t_{\ell}\cdot \hat{y}_{\ell}).\nonumber
\end{align}    

Set $W=\{(x,y)\in U\times \R^n;(\varepsilon^{\kappa}_{x})^{-1}(y)\in U\}$ and $\cU= \{(x,y,t)\in U\times \R^{n}\times \R;(x,t\cdot y)\in W\}$. Let $\Phi:W\rightarrow \R^n$ be the smooth map defined by $\Phi(x,y)= \varepsilon_{\phi(x)}^{\kappa_1}\circ \phi\circ \left(\varepsilon^{\kappa}_{x}\right)^{-1}(y)$ for all $(x,y)\in W$. It follows from Corollary~\ref{cor:Carnot-map.approx-vareps-Carnot-coord} and Lemma~\ref{lem:anisotropic.Theta-parameter} that there is a smooth map $\Theta:\cU\rightarrow \R^n$ such that 
\begin{equation*}
 t^{-1}\cdot \Phi(x,t\cdot y)= \hat\phi'(x)y+ t\Theta(x,y,t) \qquad \text{for all $(x,y,t)\in \cU$, $t\neq 0$}. 
\end{equation*}
Combining this with~(\ref{eq-gp-4}) we see that, as soon as $\ell$ is large enough, we have
\begin{equation*}
 t_\ell^{-1}\cdot \left(\varepsilon_{\kappa_{1}(x_{\ell})}^{\kappa_{1}}\circ \kappa_{1}\right)(y_{\ell})=  t_\ell^{-1}\cdot \Phi(\hat{x}_\ell, t_\ell \cdot \hat{y}_\ell) = 
 \hat\phi'(\hat{x}_\ell)\hat{y}_\ell+ t_\ell\Theta(\hat{x}_\ell,\hat{y}_\ell, t_\ell). 
\end{equation*}
We know that $\hat{x}_{\ell}\rightarrow \kappa(x_{0})$ and $\hat{y}_{\ell}\rightarrow \hat{\kappa}'(x_{0})\xi_{0}$ as 
 $\ell \rightarrow \infty$. Therefore, as $\ell \rightarrow \infty$ we have
 \begin{equation*}
      t_{\ell}^{-1}\cdot \left[ (\varepsilon_{\kappa_{1}(x_{\ell})}^{\kappa_{1}}\circ 
    \kappa_{1})(y_{\ell})\right] \longrightarrow  \hat\phi'\left(\kappa(x_0)\right)\hat{\kappa}'(x_0)\xi_0. 
 \end{equation*}
As $\phi=\kappa_1\circ \kappa^{-1}$ by Proposition~\ref{prop:Carnot.product-tangent-map} we have $\hat\phi'(\kappa(x_0))\hat{\kappa}'(x_0)=\widehat{(\phi\circ \kappa)}'(x_0)=\hat{\kappa}_1'(x_0)$. Thus, 
 \begin{equation*}
      t_{\ell}^{-1}\cdot \left[ (\varepsilon_{\kappa_{1}(x_{\ell})}^{\kappa_{1}}\circ 
    \kappa_{1})(y_{\ell})\right] \longrightarrow \hat{\kappa}_1'(x_0)\xi_0 \qquad \text{as $\ell\rightarrow \infty$}. 
 \end{equation*}
 This shows that if the condition~(\ref{eq-gp-3}) is satisfied by a local $H$-chart near $x_{0}$, then it is satisfied by any 
 other local $H$-chart near $x_{0}$. The lemma is thus proved. 
 \end{proof}

Let $\kappa:V\rightarrow U$ be a local $H$-chart. Note that $V$ (resp., $U$) 
is an open subset of $M$ (resp., $\R^{n}$). Set
 \begin{gather*}
     \cV=GM_{|V} \sqcup \left(V\times V\times \R^*\right), \\
     \cU=\left\{(x,y,t)\in U\times \R^{n}\times \R;\ (\varepsilon_{x}^{\kappa})^{-1}(t\cdot y)\in U\right\}.
 \end{gather*}
We define the map $\gamma_{\kappa}:\cV\rightarrow \cU$ by
\begin{equation}\label{eq-gp-7}
    \begin{array}{cl}
       \gamma_{\kappa}(x,y,t)=(\kappa(x), t^{-1}\cdot \varepsilon_{\kappa(x)}^{\kappa}\circ \kappa(y),t) & \quad 
       \text{for $x,y\in V$ and $t\neq 0$}, \smallskip\\
       \gamma_{\kappa}(x,\xi)=\left(\kappa(x),\hat{\kappa}'(x)\xi, 0\right) & \quad \text{for $x\in V$ and $\xi\in GM(x)$}.
    \end{array}
\end{equation}

\begin{lemma}\label{lem-gp-1} The set $\cV$ is an open subset of $\cG$ and $\gamma_{\kappa}$ is a homeomorphism from 
    $\cV$ onto $\cU$ whose inverse map is given by
        \begin{equation}\label{eq-gp-6}
         \gamma_{\kappa}^{-1}(x,y,t)= \left\{   \begin{array}{ll}
               (\kappa^{-1}(x),(\varepsilon_{x}^{\kappa}\circ \kappa)^{-1}(t\cdot y),t) &  \textup{for  $(x,y,t)\in 
	\cU$ 
	and $t\neq 0$},\smallskip\\
	(\kappa^{-1}(x),\widehat{(\kappa^{-1})}'\!(x)y) & \textup{for $(x,y)\in U\times \R^{n}$ and $t=0$}.
            \end{array}\right.
        \end{equation}    
\end{lemma}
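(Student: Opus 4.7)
The plan is to verify, in order: (i) the formula in (\ref{eq-gp-6}) gives a two-sided set-theoretic inverse to $\gamma_\kappa$; (ii) $\gamma_\kappa$ and its inverse are continuous on each of the two pieces $V\times V\times \R^*$ and $GM_{|V}$ separately; (iii) both maps respect the convergence condition (\ref{eq-gp-3}) across the interface $t=0$; and (iv) $\cV$ is open. The bulk of the work lies in step (iii), and I expect (iv) to follow cleanly from the sequential characterization of the topology on $\cG$.

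For (i), on the $t\neq 0$ piece everything reduces to the fact that $y\mapsto \varepsilon_{\kappa(x)}^\kappa\circ\kappa(y)$ is a diffeomorphism for each fixed $x\in V$ (Proposition~\ref{prop-carnot-sm}) and that $y\mapsto t\cdot y$ is invertible for $t\neq 0$. On the $GM_{|V}$ piece, the inverse of the linear isomorphism $\hat\kappa'(x):GM(x)\to G(\kappa(x))$ is $\widehat{(\kappa^{-1})}'(\kappa(x))$ by Proposition~\ref{prop:Carnot.inverse-tangent-map}. For (ii), on $V\times V\times \R^*$ both maps are smooth by Proposition~\ref{prop-carnot-sm}, and on $GM_{|V}$ both are smooth in the trivialization of $GM$ provided by the graded frame $(\xi_1,\ldots,\xi_n)$, since $\hat\kappa'$ depends smoothly on the base point.

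The main obstacle is (iii); I split it into two directions. For $\gamma_\kappa$, take a sequence $(x_\ell,y_\ell,t_\ell)\in V\times V\times \R^*$ converging in $\cG$ to $(x,\xi)\in GM_{|V}$; applying the convergence criterion (\ref{eq-gp-3}) with the $H$-chart $\kappa$ itself gives exactly the convergence of the three components of $\gamma_\kappa(x_\ell,y_\ell,t_\ell)$ to $(\kappa(x),\hat\kappa'(x)\xi,0)$. For $\gamma_\kappa^{-1}$, take $(\tilde x_\ell,\tilde y_\ell,t_\ell)\in \cU$ with $t_\ell\neq 0$ tending in $U\times\R^n\times\R$ to $(\tilde x,\tilde y,0)$, and set $x_\ell:=\kappa^{-1}(\tilde x_\ell)$ and $y_\ell:=(\varepsilon^\kappa_{\tilde x_\ell}\circ\kappa)^{-1}(t_\ell\cdot\tilde y_\ell)$. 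Using boundedness of $(\tilde y_\ell)$ and the homogeneity of the pseudo-norm gives $t_\ell\cdot \tilde y_\ell\to 0$, whence $y_\ell\to \kappa^{-1}(\tilde x)=:x_0$ by continuity of $(x,y)\mapsto (\varepsilon^\kappa_x)^{-1}(y)$ together with the identity $(\varepsilon^\kappa_{x})^{-1}(0)=x$. By construction,
\begin{equation*}
t_\ell^{-1}\cdot \big(\varepsilon^\kappa_{\kappa(x_\ell)}\circ\kappa\big)(y_\ell)=\tilde y_\ell\longrightarrow \tilde y=\hat\kappa'(x_0)\,\widehat{(\kappa^{-1})}'(\tilde x)\,\tilde y,
\end{equation*}
the last equality by Proposition~\ref{prop:Carnot.inverse-tangent-map}. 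Applying (\ref{eq-gp-3}) again delivers $(x_\ell,y_\ell,t_\ell)\to (x_0,\widehat{(\kappa^{-1})}'(\tilde x)\tilde y)$ in $\cG$, and this limit is precisely $\gamma_\kappa^{-1}(\tilde x,\tilde y,0)$.

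Finally, for (iv) I would argue by contradiction on sequences: if a sequence in $\cG\setminus\cV$ converged to a point of $\cV$, a limit in $V\times V\times \R^*$ would force the tail to lie in this open subset of $\cG^*$, while a limit $(x,\xi)\in GM_{|V}$ would force either the base points of a $\cG^*$-tail (via $x_\ell,y_\ell\to x\in V$) or of a $GM$-tail (via bundle-wise convergence of the base) to eventually enter $V$. In either case the tail lies in $\cV$, which is the desired contradiction and establishes openness.
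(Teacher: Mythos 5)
Your proposal is correct and follows essentially the same route as the paper: you verify bijectivity via Proposition~\ref{prop-carnot-sm} and Proposition~\ref{prop:Carnot.inverse-tangent-map}, establish continuity of $\gamma_{\kappa}$ and $\gamma_{\kappa}^{-1}$ through the sequential convergence criterion~(\ref{eq-gp-3}) (with the key boundary computation $t_{\ell}^{-1}\cdot(\varepsilon^{\kappa}_{\kappa(x_{\ell})}\circ\kappa)(y_{\ell})=\tilde{y}_{\ell}\rightarrow \hat{\kappa}'(x_{0})\widehat{(\kappa^{-1})}'(\tilde{x})\tilde{y}$ matching the paper's exactly), and prove openness of $\cV$ by a sequential argument on the complement that is the contrapositive of the paper's closedness argument.
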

\begin{proof}
    Let us show that $\cV$ is an open subset of $\cG$ by showing that its complement is a closed subset. We have
    \begin{equation*}
        \cG\setminus \cV = GM_{|M\setminus V} \sqcup \left(\cG\setminus \cV\right)^*, \quad \text{where}\  \left(\cG\setminus \cV\right)^*=\left[(M\times 
        M)\setminus(V\times V)\right] \times \R^*. 
    \end{equation*}
  We observe that  $GM_{|M\setminus V}$ is a closed subset of $GM$ and  $(\cG\setminus \cV)^*$ is a closed subset of $M\times 
  M\times\R^*$. In addition, let $\left((x_{\ell},y_{\ell},t_{\ell})\right)_{\ell\geq 0}$ be a sequence in $(\cG\setminus \cV)^*$ that 
  converges in $\cG$ to a point $(x_{0},\xi_{0})$ in $GM$. This implies that $x_{\ell}$ and $y_{\ell}$ both converge to $x_{0}$ in $M$. 
  We observe that, as $(x_{\ell},y_{\ell})$ is in $(M\times 
        M)\setminus(V\times V)$, for every $\ell=1,2, \ldots$, at least one of the points $x_{\ell}$ or $y_{\ell}$ must be contained in 
        $M\setminus V$. Therefore, the subset $M\setminus V$ contains an infinite subsequence of at least one of the 
        sequences $(x_{\ell})_{\ell \geq 0}$ and $(y_{\ell})_{\ell \geq 0}$. As both sequences converge to $x_{0}$ and $M\setminus V$ is closed, we then deduce that $x_{0}$ must be contained in 
        $M\setminus V$, and so $(x_{0},\xi_{0})$ is contained in $GM_{|M\setminus V}$. It follows from this that $\cG\setminus \cV$ is a closed subset of $\cG$, 
        and hence $\cV$ is an open subset of $\cG$. 
    
  It remains to show that $\gamma_{\kappa}$ is a homeomorphism. It is straightforward to check that $\gamma_{\kappa}$ is a 
  bijection whose inverse map is given by~(\ref{eq-gp-6}). Moreover, it is immediate from the definition of the topology of 
  $\cG$ that $\gamma_{\kappa}$ is a continuous map. Therefore, we only need to check that the inverse map 
  $\gamma_{\kappa}^{-1}$ is continuous. It is clear that it is continuous on the open subset $\cU^*:=\{(x,y,t)\in \cU; \ t\neq 0\}$ and on 
  the closed subset $U\times \R^{n}\times\{0\}$. In addition, let $\left((x_{\ell},y_{\ell},t_{\ell})\right)_{\ell\geq 0}$ be a sequence in $\cU^*$ 
  converging to a boundary point $(x_{0},y_{0},0)\in U\times \R^{n}\times \{0\}$. For $\ell=0,1,\ldots$, set $\hat{x}_{\ell}=\kappa^{-1}(x_{\ell})$ and 
  $\hat{y}_{\ell}= (\varepsilon_{x_{\ell}}^{\kappa}\circ \kappa)^{-1}(t_{\ell}\cdot y_{\ell})$, so that 
  $\gamma_{\ell}^{-1}(x_{\ell},y_{\ell},t_{\ell})=(\hat{x}_{\ell},\hat{y}_{\ell},t_{\ell})$. As $\ell \rightarrow \infty$ we have
  \begin{equation*}
      \hat{x}_{\ell}\longrightarrow \kappa^{-1}(x_{0}), \quad \hat{y}_{\ell}\longrightarrow (\varepsilon_{x_{0}}^{\kappa}\circ 
      \kappa)^{-1}(0)=\kappa^{-1}(x_{0}), \quad t_{\ell}\longrightarrow 0.
  \end{equation*}
  Note also that $\left[ (\varepsilon^{\kappa}_{\kappa(\hat{x}_{\ell})}\circ \kappa)(\hat{y}_{\ell})\right]= (\varepsilon^{\kappa}_{x_{\ell}}\circ \kappa)\circ (\varepsilon_{x_{\ell}}^{\kappa}\circ \kappa)^{-1}(t_{\ell}\cdot y_{\ell})=t_{\ell}\cdot y_{\ell}$. Thus, 
  \begin{equation*}
     t_{\ell}^{-1}\cdot \left[ (\varepsilon^{\kappa}_{\kappa(\hat{x}_{\ell})}\circ \kappa)(\hat{y}_{\ell})\right]=y_{\ell}\longrightarrow 
     y_{0}=\hat{\kappa}'(\kappa^{-1}(x_{0}))\left[\widehat{(\kappa^{-1})}'\!(x_{0})y_{0}\right].
  \end{equation*}Therefore, we see that in $\cG$ we have 
  \begin{equation*}
      \gamma_{\kappa}^{-1}(x_{\ell},y_{\ell},t_{\ell})\longrightarrow 
      (\kappa^{-1}(x_{0}),\widehat{(\kappa^{-1})}'(x_{0})y_{0})=\gamma_{\kappa}^{-1}(x_{0},y_{0},0).
  \end{equation*}
  This shows that $\gamma_{\kappa}^{-1}$ is continuous at any boundary point $(x_{0},y_{0},0)\in U\times \R^{n}\times \{0\}$. Therefore, this is a continuous map, and hence
  $\gamma_{\kappa}$ is a homeomorphism. The proof is  complete. 
\end{proof}

\begin{lemma}\label{lem-gp-2}
    Let $\kappa_{1}:V_1\rightarrow U_1$ be another local $H$-chart and
    $\gamma_{\kappa_{1}}:\cV_{1}\rightarrow \cU_{1}$ the homeomorphism~(\ref{eq-gp-7}) associated with 
    $\kappa_{1}$. Then the transition map $\gamma_{\kappa_{1}}\circ \gamma_{\kappa}^{-1}:  
    \gamma_{\kappa}(\cV\cap \cV_{1})\rightarrow \gamma_{\kappa_{1}}(\cV\cap \cV_{1})$ is a 
    smooth diffeomorphism.
\end{lemma}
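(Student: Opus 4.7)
The plan is to reduce smoothness of the transition map across the gluing locus $t=0$ to the parametric asymptotic analysis provided by Lemma~\ref{lem:anisotropic.Theta-parameter}. Writing $\phi=\kappa_{1}\circ \kappa^{-1}$ for the transition map between the underlying coordinate charts, and using the explicit formulas~(\ref{eq-gp-7}) and~(\ref{eq-gp-6}), a direct computation shows that, for $t\neq 0$,
\begin{equation*}
\gamma_{\kappa_{1}}\circ \gamma_{\kappa}^{-1}(x,y,t) = \bigl(\phi(x),\; t^{-1}\cdot \Phi(x,t\cdot y),\; t\bigr),
\end{equation*}
where $\Phi(x,y) := \varepsilon^{\kappa_{1}}_{\phi(x)} \circ \phi \circ (\varepsilon^{\kappa}_{x})^{-1}(y)$. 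The smoothness of $\Phi(x,y)$ on its natural open domain follows at once from Proposition~\ref{prop-carnot-sm}, which asserts the joint smoothness of the maps $(x,y)\rightarrow \varepsilon^{\kappa}_{x}(y)$ and $(x,y)\rightarrow (\varepsilon^{\kappa_{1}}_{x})^{-1}(y)$.

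Next I would extend the second coordinate smoothly across $t=0$. For each fixed $x$, Corollary~\ref{cor:Carnot-map.approx-vareps-Carnot-coord} applied to $\phi$ yields the expansion $\Phi(x,y) = \hat{\phi}'(x)y + \Ow(\|y\|^{w+1})$ near $y=0$, and in particular $\Phi(x,y) = \Ow(\|y\|^{w})$. This is precisely the hypothesis of Lemma~\ref{lem:anisotropic.Theta-parameter} applied with $m=0$, which then produces a smooth map $\tilde{\Phi}(x,y,t)$ on the open set $\{(x,y,t);\, (x,t\cdot y)\in \dom \Phi\}$ satisfying $t^{-1}\cdot \Phi(x,t\cdot y)=\tilde{\Phi}(x,y,t)$ for $t\neq 0$. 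Letting $t\rightarrow 0$ identifies $\tilde{\Phi}(x,y,0)=\hat{\phi}'(x)y$, and the chain rule for Carnot differentials (Proposition~\ref{prop:Carnot.product-tangent-map}) gives $\hat{\phi}'(x) = \hat{\kappa}_{1}'(\kappa^{-1}(x))\circ \widehat{(\kappa^{-1})}'(x)$. This is precisely the second coordinate of $\gamma_{\kappa_{1}}\circ \gamma_{\kappa}^{-1}(x,y,0)$ dictated by~(\ref{eq-gp-7}) and~(\ref{eq-gp-6}). Consequently
\begin{equation*}
\gamma_{\kappa_{1}}\circ \gamma_{\kappa}^{-1}(x,y,t) = \bigl(\phi(x),\,\tilde{\Phi}(x,y,t),\,t\bigr)
\end{equation*}
is a single smooth formula valid on all of $\cU \cap \gamma_{\kappa}(\cV_{1})$, including at $t=0$.

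Swapping the roles of $\kappa$ and $\kappa_{1}$ runs the same argument for the inverse map $\gamma_{\kappa}\circ \gamma_{\kappa_{1}}^{-1}$, so the transition map is a smooth diffeomorphism. The main obstacle is the passage from the pointwise (in $x$) asymptotic expansion of Corollary~\ref{cor:Carnot-map.approx-vareps-Carnot-coord} to a smooth parametrized statement valid across $t=0$; this is exactly the content of Lemma~\ref{lem:anisotropic.Theta-parameter}, and is what makes the machinery of $\varepsilon$-Carnot coordinates indispensable for the construction of the tangent groupoid.
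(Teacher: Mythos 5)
Your proposal is correct and follows essentially the same route as the paper: compute the transition map explicitly for $t\neq 0$ and $t=0$, invoke Corollary~\ref{cor:Carnot-map.approx-vareps-Carnot-coord} together with Lemma~\ref{lem:anisotropic.Theta-parameter} to extend the second component smoothly across $t=0$, identify the boundary value via the chain rule of Proposition~\ref{prop:Carnot.product-tangent-map}, and handle the inverse by symmetry. The only cosmetic difference is that the paper applies the parametric lemma with $m=1$ to the remainder $\Phi(x,y)-\hat{\phi}'(x)y$, writing the second component as $\hat{\phi}'(x)y+t\Theta(x,y,t)$, whereas you apply it with $m=0$ to $\Phi$ itself and then identify the boundary value by a limit; both are valid.
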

\begin{proof}
    The inverse map of $\gamma_{\kappa_{1}}\circ \gamma_{\kappa}^{-1}$ is $\gamma_{\kappa}\circ 
    \gamma_{\kappa_{1}}^{-1}: 
    \gamma_{\kappa_{1}}(\cV\cap \cV_{1})\rightarrow \gamma_{\kappa}(\cV\cap \cV_{1})$. Therefore, it is enough to prove that 
    $\gamma_{\kappa_{1}}\circ \gamma_{\kappa}^{-1}$ is a smooth map, since swapping the roles of $\kappa$ and 
    $\kappa_{1}$ would then show that its inverse map is smooth as well. 
    
Let $\phi=\kappa _{1}\circ \kappa^{-1}$ be the corresponding transition map. Its domain is $U_{\phi}:=\kappa(V\cap V_{1})$. 
We also set $\cU_\phi= \gamma_{\kappa}(\cV\cap \cV_{1})$.  
A point $(x,y,t)\in U\times \R^{n}\times \R^*$  is contained in $\cU_\phi$ if and only if the following conditions are satisfied:
\begin{equation*}
    \kappa^{-1}(x)\in V_{1},  \qquad (\varepsilon_{x}^{\kappa})^{-1}(t\cdot y)\in U,  \qquad 
    \kappa^{-1}\circ (\varepsilon_{x}^{\kappa})^{-1}(t\cdot y)\in V_{1}.
\end{equation*}Thus,
\begin{equation*}
    \cU_{\phi}=\left\{ (x,y,t)\in U_{\phi}\times \R^{n}\times \R;\ (\varepsilon_{x}^{\kappa})^{-1}(t\cdot y)\in 
    U_{\phi}\right\}.
\end{equation*}

Let $(x,y,t)\in  \gamma_{\kappa}(\cV\cap\cV_{1})$. If $t\neq 0$, then 
\begin{equation*}
\begin{split}
    \gamma_{\kappa_{1}}\circ \gamma_{\kappa}^{-1}(x,y,t) & = \left( \kappa_{1}\left[\kappa^{-1}(x)\right], t^{-1}\cdot 
    \left[ (\varepsilon^{\kappa_{1}}_{\kappa_{1}\left[\kappa^{-1}(x)\right]}\circ \kappa_{1})\circ 
    (\varepsilon_{x}^{\kappa}\circ \kappa)^{-1}(t\cdot y)\right], t\right)\\
     & = \left( \phi(x),  t^{-1}\cdot \left[(\varepsilon^{\kappa_1}_{\phi(x)}\circ \phi \circ (\varepsilon_{x}^{\kappa})^{-1})(t\cdot 
    y)\right], t\right).
\end{split}
\end{equation*}
If $t=0$, then using Proposition~\ref{prop:Carnot.product-tangent-map} we obtain
\begin{equation*}
   \gamma_{\kappa_{1}}\circ \gamma_{\kappa}^{-1}(x,y,0)= \left(  \kappa_{1}\left[\kappa^{-1}(x)\right], 
   \hat{\kappa}_{1}'(\kappa(x))\widehat{(\kappa^{-1})}'(x)y,0\right)= \left( \phi(x), \hat{\phi}'(x)y,0\right).   
\end{equation*}
In addition, it follows from Corollary~\ref{cor:Carnot-map.approx-vareps-Carnot-coord} and Lemma~\ref{lem:anisotropic.Theta-parameter} that  there is a smooth map $\Theta:\cU_{\phi}\rightarrow \R^{n}$ such that, for all $(x,y,t)\in \cU_\phi$ with $t\neq 0$, we have
\begin{equation*}
    t^{-1}\cdot \left[(\varepsilon^{\kappa_1}_{\phi(x)}\circ \phi \circ (\varepsilon_{x}^{\kappa})^{-1})(t\cdot 
    y)\right]= \hat{\phi}'(x)y+t\Theta(x,y,t). 
\end{equation*}
It follows from all this that, for all $(x,y,t)\in \cU_\phi$, we have
\begin{equation*}
    \gamma_{\kappa_{1}}\circ \gamma_{\kappa}^{-1}(x,y,t)= \left( \phi(x), \hat{\phi}'(x)y+t\Theta(x,y,t),t\right).  
\end{equation*}
This shows that the transition map $ \gamma_{\kappa_{1}}\circ \gamma_{\kappa}^{-1}$ is smooth. The proof is 
complete. 
\end{proof}

Lemma~\ref{lem-gp-1} and Lemma~\ref{lem-gp-2} precisely show that, as $\kappa$ ranges over all the local $H$-charts of $M$, the maps 
$\gamma_{\kappa}$ form a system of local charts near $GM$.  This leads us to the following definition.

\begin{definition}
    The differentiable structure of $\cG=\cG_H M$ is the unique differentiable structure such that
    \begin{itemize}
        \item  The inclusion of $\cG^*=M\times M\times \R^*$ into $\cG$ is a smooth embedding.
    
        \item  A system of local charts near $GM$ is given by the maps $\gamma_{\kappa}$, where $\kappa$ ranges over 
        all local $H$-charts on $M$. 
    \end{itemize}
\end{definition}

\subsection{$\cG_H M$ as a Lie groupoid} 
The above differentiable structure turns $\cG=\cG_H M$ into a smooth manifold  where $\cG^*=M\times 
M\times \R^*$ is an open subset and $GM$ is a hypersurface. We shall now check that with respect to this differentiable structure $\cG_H M$ is a Lie groupoid. 

\begin{lemma}\label{lem:groupoid.unit-map}
    The unit map $\epsilon:\cG^{(0)}\rightarrow \cG$ is a smooth embedding.
\end{lemma}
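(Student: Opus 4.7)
The plan is to verify that $\epsilon$ is (i) smooth, (ii) an immersion, and (iii) a topological embedding onto its image. The bulk of the work is (i), and even that reduces to a single observation about $\varepsilon$-Carnot coordinates.

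On the open submanifold $\cG^\ast = M\times M\times \R^\ast$, the map $\epsilon$ restricts to $(x,t)\mapsto (x,x,t)$, which is manifestly smooth. Near $GM$ I would work in an arbitrary chart $\gamma_\kappa:\cV\to \cU$ coming from an $H$-chart $\kappa:V\to U$, and compute $\gamma_\kappa\circ \epsilon$ directly from the definitions. For $(x,t)\in V\times \R^\ast$,
\[
\gamma_\kappa\bigl(\epsilon(x,t)\bigr) = \gamma_\kappa(x,x,t) = \bigl(\kappa(x),\; t^{-1}\!\cdot \varepsilon^{\kappa}_{\kappa(x)}(\kappa(x)),\; t\bigr),
\]
and because $y\mapsto \varepsilon^{\kappa}_{\kappa(x)}(y)$ is a system of coordinates \emph{centered} at $\kappa(x)$ (Proposition~\ref{prop:Carnot-coord.vareps-carnot-cor}), we have $\varepsilon^{\kappa}_{\kappa(x)}(\kappa(x))=0$, and hence the middle entry reduces to $0$. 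For $(x,0)\in V\times \{0\}$, the definition~(\ref{eq-gp-7}) gives $\gamma_\kappa(\epsilon(x,0))=\gamma_\kappa(x,0)=(\kappa(x),\hat\kappa'(x)\cdot 0)=(\kappa(x),0)$. Assembling these,
\[
\gamma_\kappa\circ \epsilon(x,t) = \bigl(\kappa(x),\, 0,\, t\bigr) \qquad \text{for all $(x,t)\in V\times \R$,}
\]
which is a smooth map into $\cU$. This establishes (i) on a neighborhood of $\epsilon(M\times\{0\})$, and combined with smoothness on $\cG^\ast$ gives smoothness everywhere.

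From this same local expression, the Jacobian in any chart $(x,t)\leftrightarrow (\kappa(x),0,t)$ has rank $n+1 = \dim \cG^{(0)}$, so $\epsilon$ is an immersion. For (iii), $\epsilon$ is injective because $s\circ \epsilon=\mathrm{id}_{\cG^{(0)}}$, and this same identity shows that the set-theoretic inverse of $\epsilon$ on its image is the restriction of the continuous source map $s$, so $\epsilon$ is a homeomorphism onto its image. Equivalently, the local coordinate description above realises $\epsilon(V\times \R)$ as the closed submanifold $\kappa(V)\times\{0\}\times \R$ of $\cU$, and on $\cG^\ast$ it realises $\epsilon(M\times \R^\ast)$ as the diagonal, giving the embedding property chart by chart.

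I do not anticipate a genuine obstacle; the only non-routine ingredient is the identity $\varepsilon^{\kappa}_{a}(a)=0$, which is part of the very definition of the $\varepsilon$-Carnot coordinate map, and the compatibility $\hat\kappa'(x)\cdot 0 = 0$, which is just linearity of the Carnot differential on fibres. All subsequent checks are immediate consequences of the formula $\gamma_\kappa\circ\epsilon(x,t)=(\kappa(x),0,t)$.
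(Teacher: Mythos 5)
Your proposal is correct and follows essentially the same route as the paper: compute $\gamma_\kappa\circ\epsilon$ in an $H$-chart, use $\varepsilon^\kappa_{\kappa(x)}(\kappa(x))=0$ to get the normal form $(\kappa(x),0,t)$, and conclude. You are in fact slightly more careful than the paper on the embedding (as opposed to injective immersion) part, using $s\circ\epsilon=\mathrm{id}$ to see that $\epsilon$ is a homeomorphism onto its image, whereas the paper dispatches this with the remark that $\epsilon$ is one-to-one.
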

\begin{proof}
    As the unit map $\epsilon$ is one-to-one we only have to show it is an immersion. Let $\kappa$ be a local $H$-chart with domain 
    $V$ and range $U$ and $\gamma_{\kappa}:\cV\rightarrow \cU$ the local chart~(\ref{eq-gp-7}) associated with 
    $\kappa$. Let $(x,t)\in U\times \R$. If $t\neq 0$, then 
    \begin{equation*}
       ( \gamma_{\kappa}\circ 
        \epsilon)(\kappa^{-1}(x),t)=\gamma_{\kappa}\left(\kappa^{-1}(x),\kappa^{-1}(x),t\right)=\left(x,t^{-1}\cdot 
        \varepsilon_{x}^{\kappa}(x),t\right)=(x,0,t).
    \end{equation*}
    If $t=0$, then $(\gamma_{\kappa}\circ \epsilon)(\kappa^{-1}(x),0)=\gamma_{\kappa}(\kappa^{-1}(x),0)=(x,0,0)$. Thus,
    \begin{equation*}
        (\gamma_{\kappa}\circ\epsilon)(\kappa^{-1}(x),t)=(x,0,t) \qquad \text{for all $(x,t)\in U\times \R$}.
    \end{equation*}
    It follows from this that the unit map $\epsilon$ is an immersion. The proof is complete.
\end{proof}

\begin{lemma}\label{lem:Carnot-groupoid.inverse-map}
    The inverse map $\iota:\cG\rightarrow \cG$ is a diffeomorphism.
\end{lemma}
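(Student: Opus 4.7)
Since the inverse map is an involution ($\iota\circ\iota = \op{id}_\cG$), it suffices to establish that $\iota$ is smooth; bijectivity and the diffeomorphism conclusion are then immediate. On the open submanifold $\cG^* = M\times M\times \R^*$ the map $(x,y,t)\mapsto (y,x,t)$ is visibly smooth, and on the hypersurface $GM$ considered alone the restriction agrees with the fiberwise group inversion $(x,\xi)\mapsto (x,-\xi)$, which is smooth by Remark~\ref{rmk:tangent.dilations-group-automorphisms}. Thus the only delicate point is the compatibility of these two pieces, which we verify in the local charts $\gamma_\kappa: \cV\to \cU$ of~(\ref{eq-gp-7}).

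A direct computation, using the formula~(\ref{eq-gp-6}) for $\gamma_\kappa^{-1}$ and Proposition~\ref{prop:Carnot.product-tangent-map} (which yields $\hat\kappa'(\kappa^{-1}(x))\circ \widehat{(\kappa^{-1})}'(x) = \op{id}$), gives $\Phi := \gamma_\kappa \circ \iota \circ \gamma_\kappa^{-1}$ explicitly:
\begin{equation*}
\Phi(x,y,t) = \begin{cases} \bigl((\varepsilon^\kappa_x)^{-1}(t\cdot y),\ t^{-1}\cdot \varepsilon^\kappa_{(\varepsilon^\kappa_x)^{-1}(t\cdot y)}(x),\ t\bigr), & t\neq 0, \\ (x,-y,0), & t=0. \end{cases}
\end{equation*}
The smoothness of the first and third components is immediate from Proposition~\ref{prop-carnot-sm}. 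The whole question therefore reduces to showing that the second component $\Psi(x,y,t):= t^{-1}\cdot \varepsilon^\kappa_{(\varepsilon^\kappa_x)^{-1}(t\cdot y)}(x)$, initially defined only for $t\neq 0$, extends smoothly to $t=0$ with value $-y$.

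To analyze $\Psi$, I would fix a base point $x_0$ in the range of $\kappa$ and reduce to the case of Carnot coordinates at $x_0$ (e.g.\ via Remark~\ref{rmk:Carnot-coord.characterization-Carnot-charts}). In such coordinates, Proposition~\ref{prop-com} yields $(\varepsilon^\kappa_x)^{-1}(z) = x\cdot z + \Ow(\|(x,z)\|^{w+1})$ and $\varepsilon^\kappa_z(x) = (-z)\cdot x + \Ow(\|(x,z)\|^{w+1})$, where $\cdot$ denotes the Dynkin product of $G(x_0)$. Substituting $z = (\varepsilon^\kappa_x)^{-1}(t\cdot y) = x\cdot(t\cdot y) + \Ow(\|(x,t\cdot y)\|^{w+1})$ and using the group-theoretic identities $(-x\cdot(t\cdot y))\cdot x = -(t\cdot y)$ together with $-(t\cdot y) = t\cdot (-y)$, a short calculation gives the leading contribution $-y$ after the anisotropic $t^{-1}$-rescaling. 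The $\Ow$-remainders are then promoted to a smooth $O(t)$-term $t\cdot \tilde\Theta(x,y,t)$ via the parametric smoothness result of Lemma~\ref{lem:anisotropic.Theta-parameter}, exactly in the spirit of the remainder extraction carried out in the proof of Lemma~\ref{lem-gp-2}. The principal obstacle is precisely this last step: the base point of the $\varepsilon$-Carnot map in $\Psi$ itself depends on $(x,y,t)$, so Proposition~\ref{prop-com} cannot be applied directly but must be used in a parametric fashion, and it is Lemma~\ref{lem:anisotropic.Theta-parameter} that legitimizes converting the pointwise asymptotic into a smooth function of all the variables down to $t=0$. Once this is done, $\Phi$ is smooth in $\cU$, hence $\iota$ is smooth on $\cG$, and the involution property concludes the proof.
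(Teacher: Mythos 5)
Your overall architecture matches the paper's: reduce to smoothness near $GM$, compute $\iota_\kappa=\gamma_\kappa\circ\iota\circ\gamma_\kappa^{-1}$ in the charts~(\ref{eq-gp-7}), isolate the second component $\Psi(x,y,t)=t^{-1}\cdot\varepsilon^\kappa_{(\varepsilon^\kappa_x)^{-1}(t\cdot y)}(x)$ as the only delicate piece, and aim to feed a weighted asymptotic into Lemma~\ref{lem:anisotropic.Theta-parameter}. You also correctly identify where the difficulty sits. However, the mechanism you propose for producing that asymptotic --- substituting the two expansions of Proposition~\ref{prop-com} into each other after recentering at a single base point $x_0$ --- does not deliver the hypothesis that Lemma~\ref{lem:anisotropic.Theta-parameter} actually requires, namely that \emph{for every fixed $x$ in the chart domain} one has $\varepsilon^\kappa_{(\varepsilon^\kappa_x)^{-1}(y)}(x)=-y+\Ow(\|y\|^{w+1})$ near $y=0$, with a remainder controlled by $\|y\|$ alone. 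Your substitution yields a remainder of the form $\Ow(\|(x,t\cdot y)\|^{w+1})$, valid only jointly near $(x,t\cdot y)=(0,0)$. For a fixed $x\neq 0$ (in the coordinates centered at $x_0$) the argument $z=(\varepsilon^\kappa_x)^{-1}(t\cdot y)$ tends to $x$, not to $0$, as $t\to 0$, so the $k$-th component of this remainder is merely $\op{O}(\|x\|^{w_k+1})$ --- a quantity that does not vanish with $t$ --- and after the anisotropic $t^{-1}$-rescaling it blows up. In short, Proposition~\ref{prop-com} controls the error near the \emph{center} of the Carnot coordinates, whereas what is needed is decay in $y$ uniformly over all $x$ in the chart.

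The paper closes exactly this gap with one observation you are missing: for each $x\in U$, the chart $\varepsilon^\kappa_x\circ\kappa$ is itself a system of Carnot coordinates centered at $\kappa^{-1}(x)$, and in these coordinates the $\varepsilon$-Carnot coordinate map becomes $(y,z)\mapsto\varepsilon^\kappa_{(\varepsilon^\kappa_x)^{-1}(y)}\circ(\varepsilon^\kappa_x)^{-1}(z)$. Applying Proposition~\ref{prop-com} to this \emph{recentered} map gives, for every $x$, the expansion $(-y)\cdot z+\Ow(\|(y,z)\|^{w+1})$; setting $z=0$ and using $(\varepsilon^\kappa_x)^{-1}(0)=x$ produces precisely the per-$x$ estimate $\varepsilon^\kappa_{(\varepsilon^\kappa_x)^{-1}(y)}(x)=-y+\Ow(\|y\|^{w+1})$, after which Lemma~\ref{lem:anisotropic.Theta-parameter} applies as you intended. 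Without this recentering step (or an equivalent argument showing how the $\varepsilon$-Carnot maps transform under the change of chart $\kappa\rightsquigarrow\varepsilon^\kappa_x\circ\kappa$), the passage from Proposition~\ref{prop-com} to the smooth extension of $\Psi$ across $t=0$ does not go through. Note also that even the group-identity computation $\bigl(-(x\cdot(t\cdot y))\bigr)\cdot x=-(t\cdot y)$ only handles the leading term; the issue above concerns the remainders, which are where the actual content of the lemma lies.
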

\begin{proof}
    As the inverse map $\iota:\cG\rightarrow \cG$ is an involution and restricts to a smooth map on the open set $\cG^*=M\times M\times \R^*$, we 
    only have to check it is smooth near the hypersurface $GM$. Let $\kappa$ be a local $H$-chart with domain 
    $V$ and range $U$ and $\gamma_{\kappa}:\cV\rightarrow \cU$ the local chart~(\ref{eq-gp-7}) associated with 
    $\kappa$. In addition, to simplify notation we shall let $U\times \R^{n}\ni (x,y)\rightarrow \varepsilon_{x}(y)$ 
 be its $\varepsilon$-Carnot coordinate map. We observe that $\iota(\cV)=\cV$. Set $\iota_{\kappa}=\gamma_{\kappa}\circ \iota \circ 
    \gamma_{\kappa}^{-1}$; this is a map from $\cU$ to itself.  Let $(x,y,t)\in \cU$. If $t\neq 0$, then 
    \begin{equation}\label{eq-gp-10}
      \iota_{\kappa}(x,y,t)=\gamma_{\kappa}\left( 
        (\varepsilon_{x}\circ \kappa)^{-1}(t\cdot y), \kappa^{-1}(x),t\right)=\left(\varepsilon_{x}^{-1}(t\cdot y), 
        t^{-1}\cdot \varepsilon_{\varepsilon_{x}^{-1}(t\cdot y)}(x),t \right).
    \end{equation}
    If $t=0$, then by using Proposition~\ref{prop:Carnot.inverse-tangent-map} we get
    \begin{equation}\label{eq-gp-11}
      \iota_{\kappa}(x,y,t)=  \gamma_{\kappa}\left( \kappa^{-1}(x),-\widehat{(\kappa^{-1})}'(x)y \right)= \gamma_{\kappa}\left( 
        \kappa^{-1}(x),[\hat{\kappa}'(\kappa^{-1}(x))]^{-1}(-y)\right) = (x,-y).
    \end{equation}
    
   Let $x\in U$. We observe that in the $\varepsilon$-Carnot coordinates at $x$ the $\varepsilon$-Carnot coordinate map is the map $(y,z)\rightarrow \varepsilon_{\varepsilon_x^{-1}(y)} \circ \varepsilon_x^{-1}(z)$. Therefore, by Proposition~\ref{prop-com} near $(y,z)=(0,0)$ we have   
\begin{equation*}
 \varepsilon_{\varepsilon_x^{-1}(y)} \circ \varepsilon_x^{-1}(z)=(-y)\cdot z+ \Ow\left(\|(y,z)\|^{w+1}\right). 
\end{equation*}
 Setting $z=0$ and using the fact that $  \varepsilon_x^{-1}(0)=x$ shows that, for all $x\in U$, we have 
 \begin{equation*}
\varepsilon_{\varepsilon_x^{-1}(y)}(x)=-y+\Ow\left(\|y\|^{w+1}\right) \qquad \text{near $y=0$}.  
\end{equation*}
 Using Lemma~\ref{lem:anisotropic.Theta-parameter} we then deduce there is a smooth map $\Theta:\cU\rightarrow \R^n$ such that 
     \begin{equation*}
    t^{-1}\cdot \varepsilon_{\varepsilon_{x}^{-1}(t\cdot y)}(x)=-y + t\Theta(x,y,t) \quad \text{for all $(x,y,t)\in \cU$, $t\neq 0$}.
    \end{equation*}
    Combining this with~(\ref{eq-gp-10}) and (\ref{eq-gp-11}) we see that,  for all $(x,y,t)\in \cU$, we have
    \begin{equation*}
       \iota_{\kappa}(x,y,t)=  \left(\varepsilon_{x}^{-1}(t\cdot y),  
        -y+t\Theta(x,y,t),t\right).  
    \end{equation*}
   This shows that $ \iota_{\kappa}$ is a smooth map from $\cU$ to itself. This establishes the smoothness of the inverse map $ \iota$ near $GM$. The proof is complete.    
\end{proof}

\begin{lemma}\label{lem-sub}
    The range map $r:\cG\rightarrow \cG^{(0)}$ and the source map $s:\cG\rightarrow \cG^{(0)}$ are submersions.
\end{lemma}
\begin{proof}
    As $s=r\circ \iota$ and we know by Lemma~\ref{lem:Carnot-groupoid.inverse-map} that $\iota$ is a diffeomorphism, we only have to show that the range map $r$ is a submersion. Moreover, it is immediate from its definition that $r$ induce submersions on the open set $\cG^*$. Therefore, we 
    only need to check it is a submersion near the hypersurface $GM$. 
    
    Let $\kappa$ be a local $H$-chart with domain 
    $V$ and range $U$ and let $\gamma_{\kappa}:\cV\rightarrow \cU$ be the local chart~(\ref{eq-gp-7}) associated with 
    $\kappa$.     Let $(x,y,t)\in \cU$. If $t\neq 0$, then 
    \begin{equation*}
     (r\circ \gamma_{\kappa}^{-1})(x,y,t)=r\left(\kappa^{-1}(x), (\varepsilon_{x}^{\kappa}\circ \kappa)^{-1}(t\cdot 
     y),t\right)=(\kappa^{-1}(x),t).
    \end{equation*}
    If $t=0$, then $\gamma_{\kappa}^{-1}(x,y,0)=(\kappa^{-1}(x),\widehat{(\kappa^{-1})}'(x)y)$, and so we have
    \begin{equation*}
      (r\circ \gamma_{\kappa}^{-1})(x,y,0)=(s\circ \gamma_{\kappa}^{-1})(x,y,0)=\left( \kappa^{-1}(x),0\right).  
    \end{equation*}
    Note that $(\varepsilon_{x}^{\kappa}\circ \kappa)^{-1}(0)=\kappa^{-1}\circ 
    (\varepsilon_{x}^{\kappa})^{-1}(0)=\kappa^{-1}(x)$. Thus, 
    \begin{equation*}
      \left((\kappa\otimes \op{id})\circ r\circ \gamma_{\kappa}^{-1}\right)(x,y,t) = (x,t) \qquad \text{for all $(x,y,t)\in \cU$}. 
    \end{equation*}
    It immediately follows from this that the range map $r$ is a submersion near $GM$. The proof is complete. 
\end{proof}

It remains to look at the multiplication map $\mu: \cG^{(2)}\rightarrow \cG$. It follows from Lemma~\ref{lem-sub} that $\cG^{(2)}$ is a submanifold of $\cG\times \cG$. 

\begin{lemma}\label{lem-gp-3}
    The multiplication map $\mu:\cG^{(2)}\rightarrow \cG$ is smooth.
\end{lemma}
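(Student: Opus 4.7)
The plan is to check smoothness of $\mu$ at points of $GM^{(2)}$; elsewhere, $\mu$ is manifestly smooth, and its restriction to $GM^{(2)}$ is the smooth fiberwise product on the Lie group bundle $GM$. It suffices therefore to work in charts $\gamma_\kappa$ associated with local $H$-charts $\kappa$ and check smoothness across $t=0$.

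Let $\alpha=\gamma_\kappa^{-1}(x_1,y_1,t)$ and $\beta=\gamma_\kappa^{-1}(x_2,y_2,t)$. Using the formulas for $r$ and $s$ obtained in the proof of Lemma~\ref{lem-sub}, the composability condition $s(\alpha)=r(\beta)$ reduces, for $t\neq 0$, to $x_2=(\varepsilon_{x_1}^{\kappa})^{-1}(t\cdot y_1)$. A direct substitution into the definition of $\mu$ then gives
\[
\gamma_\kappa(\alpha\cdot\beta)=\Big(x_1,\,t^{-1}\cdot\Phi(x_1,\,t\cdot y_1,\,t\cdot y_2),\,t\Big),
\]
where
\[
\Phi(x,u,v):=\varepsilon_{x}^{\kappa}\circ\left(\varepsilon_{(\varepsilon_x^\kappa)^{-1}(u)}^{\kappa}\right)^{-1}(v)
\]
is smooth on its natural open domain by Proposition~\ref{prop-carnot-sm}. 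The problem reduces to showing that the rescaled map $(x,u,v,t)\mapsto t^{-1}\cdot\Phi(x,t\cdot u,t\cdot v)$ extends smoothly across $t=0$ with the correct limit.

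The analytic heart of the proof is the pointwise expansion
\[
\Phi(x,u,v)=u\cdot_{G(x)} v+\Ow\!\left(\|(u,v)\|^{w+1}\right)\qquad\text{near $(u,v)=(0,0)$},
\]
with $\cdot_{G(x)}$ the Dynkin product of $G(x)$. To obtain this I would mimic the coordinate change used in the proof of Lemma~\ref{lem:Carnot-groupoid.inverse-map}: for each fixed $x$, applying $\varepsilon_x^\kappa$ to the ambient space brings us into Carnot coordinates at $x$, and in these coordinates the composition appearing in $\Phi$ is identified (up to an $\Ow(\|(u,v)\|^{w+1})$ perturbation, in view of Remark~\ref{rmk:Carnot-coord.characterization-Carnot-charts}) with the inverse of an $\varepsilon$-Carnot coordinate map based at $u$. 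Proposition~\ref{prop-com} then yields precisely the displayed expansion, with $\cdot_{G(x)}$ the tangent group law at $x$; this is the step where the osculation of $\varepsilon$-Carnot coordinates by the tangent group law enters.

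Because the dilations $\delta_t$ are group automorphisms of $G(x)$ we have $(t\cdot u)\cdot_{G(x)}(t\cdot v)=t\cdot(u\cdot_{G(x)} v)$, so the above expansion together with Lemma~\ref{lem:anisotropic.Theta-parameter} (with $x$ as the smooth parameter and $(u,v)$ as the rescaled variable, equipped with its natural weight sequence) produces a smooth map $\tilde\Theta$ with
\[
t^{-1}\cdot\Phi(x,t\cdot u,t\cdot v)=u\cdot_{G(x)} v+t\,\tilde\Theta(x,u,v,t)
\]
on the relevant open set, for all $t\neq 0$. This formula extends smoothly across $t=0$, and at $t=0$ it recovers (after translation through $\hat\kappa'$) the fiberwise group product on $GM$. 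Thus $\mu$ is smooth near $GM^{(2)}$, completing the lemma. The main delicacy is the uniform-in-$x$ identification of the leading term of $\Phi$ with the tangent group law, so that Lemma~\ref{lem:anisotropic.Theta-parameter} can be applied with smooth parameter dependence.
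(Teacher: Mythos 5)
Your proposal is correct and follows essentially the same route as the paper: reduce to charts near $GM^{(2)}$, express the multiplication there as the rescaled map $(x,u,v,t)\mapsto t^{-1}\cdot\varepsilon_x^{\kappa}\circ\bigl(\varepsilon_{(\varepsilon_x^{\kappa})^{-1}(t\cdot u)}^{\kappa}\bigr)^{-1}(t\cdot v)$, identify its leading term with the Dynkin product of $G(x)$ via Proposition~\ref{prop-com} applied in the $\varepsilon$-Carnot coordinates at $x$, and invoke Lemma~\ref{lem:anisotropic.Theta-parameter} for smooth dependence on the base point. This is precisely the paper's argument.
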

\begin{proof}
We know that $\cG^{(2)}=\cG^{(2)*} \sqcup GM^{(2)}$, where $\cG^{(2)*}$ and $GM^{(2)}$ are defined as in~(\ref{eq:Groupoid.cG(2)}) and~(\ref{eq:Groupoid.GM(2)}), respectively. 
    It will be convenient to introduce the following notation: 
    \begin{align*}
        [x,y,z,t]:=\left( (x,y,t), (y,z,t)\right) \in \cG^*\times \cG^* & \quad \text{for $x,y,z\in 
        M$ and $t\neq 0$},\\
        [x,\xi,\eta]:=\left( (x,\xi),(x,\eta)\right)\in GM\times GM  & \quad \text{for $x\in M$ and $\xi,\eta\in 
        GM(x)$}.
    \end{align*}
Using this notation, we have 
\begin{equation*}
    \cG^{(2)*}:=\left\{[x,y,z,t];\ x,y,z\in M, \ t\neq 0\right\}, \qquad GM^{(2)}:= \left\{[x,\xi,\eta]; \ x\in M, \ \xi,\eta\in 
        GM(x)\right\}.
\end{equation*}
Note that $ \cG^{(2)*}$ is an open subset of $\cG^{(2)}$ which is diffeomorphic to $M\times M\times M\times \R^*$, and $GM^{(2)}$ is a hypersurface in $\cG^{(2)}$. Moreover, we have 
\begin{equation*}
 \mu\left([x,y,z,t]\right)=(x,z,t) \qquad \text{for all $x,y,z\in M$ and $t\in \R^*$}. 
\end{equation*}
 This shows that the map $\mu$ is smooth on $\cG^{(2)*}$. Therefore, we only have to check it is smooth near the hypersurface
$GM^{(2)}$. 

Bearing this in mind, a system of local charts near $GM^{(2)}$ is obtained as follows. Let $\kappa:V\rightarrow U$ be a local $H$-chart and $\gamma_{\kappa}:\cV\rightarrow \cU$ the local 
 chart~(\ref{eq-gp-7}) of $\cG$ associated with $\kappa$. In addition, to simplify notation we shall let $U\times \R^{n}\ni (x,y)\rightarrow \varepsilon_{x}(y)$ 
 be its $\varepsilon$-Carnot coordinate map. Set 
 \begin{equation*}
     \cU^{(2)}=\left\{(x,y,z,t)\in U\times \R^{n}\times \R^{n}\times \R; \ \varepsilon_{x}^{-1}(t\cdot y)\in U 
     \ \text{and} \ (\varepsilon_{\varepsilon_{x}^{-1}(t\cdot y)})^{-1}(t\cdot z)\in U\right\}.
 \end{equation*}
We also set $\cV^{(2)}=GM^{(2)}_{|V}\sqcup \cV^{(2)*}$, with
\begin{equation*}
     \cV^{(2)*}:=\left\{[x,y,z,t];\ x,y,z\in V, \ t\neq 0\right\} \quad \text{and} \quad GM^{(2)}_{|V}= 
     \left\{[x,\xi,\eta]; \ x\in V, \ \xi,\eta\in 
        GM(x)\right\}.   
\end{equation*}
We then define the local chart $\gamma_{\kappa}^{(2)}:\cV^{(2)}\rightarrow \cU^{(2)}$ by letting
\begin{equation*}
    \begin{array}{cl}
        \gamma_{\kappa}^{(2)}\left([x,y,z,t]\right)=\left( \kappa(x), t^{-1}\cdot \left[(\varepsilon_{\kappa(x)}\circ 
        \kappa)(y)\right],t^{-1}\cdot \left[(\varepsilon_{\kappa(y)}\circ \kappa)(z)\right], t\right)& \text{for $x,y,z\in 
            V$ and $t\neq 0$},\medskip\\
           \gamma_{\kappa}^{(2)}\left([x,\xi,\eta]\right)=\left( \kappa(x), \hat{\kappa}'(x)\xi,    
           \hat{\kappa}'(x)\eta, 0\right) & \text{for $x\in V$ and $\xi,\eta\in 
            GM(x)$}.
    \end{array}
\end{equation*}
The inverse map $(\gamma_{\kappa}^{(2)})^{-1}:\cU^{(2)}\rightarrow \cV^{(2)}$ is such that, for all $(x,y,z,t)\in 
\cU^{(2)}$, we have
\begin{equation*}
 \left(\gamma_{\kappa}^{(2)}\right)^{-1}(x,y,z,t)=\left\{ 
 \begin{array}{ll}
     \left[\kappa^{-1}(x), (\varepsilon_{x}\circ \kappa)^{-1}(t\cdot y), (\varepsilon_{\varepsilon_{x}^{-1}(t\cdot 
     y)}\circ \kappa)^{-1}(t\cdot z), t\right] & \text{if $t\neq 0$},\smallskip\\
     \left[ \kappa^{-1}(x), \widehat{(\kappa^{-1})}'(x)y, \widehat{(\kappa^{-1})}'(x)z\right] & \text{if $t=0$}.
 \end{array}\right.
\end{equation*}
As $\kappa$ ranges over local $H$-charts, the maps $\gamma_{\kappa}^{(2)}$ form a system of local charts near $GM^{(2)}$. 

Keeping on using the notation above, set $\mu_{\kappa}=\gamma_{\kappa}\circ \mu \circ (\gamma_{\kappa}^{(2)})^{-1}: 
\cU^{(2)}\rightarrow \cU$. Let $(x,y,z,t)\in \cU^{(2)}$. If $t\neq 0$, then we have
\begin{align}\label{eq-gp-15}
    \mu_{\kappa}(x,y,z,t) 
    & = \gamma_{\kappa}\left( \kappa^{-1}(x), (\varepsilon_{\varepsilon_{x}^{-1}(t\cdot y)}\circ 
    \kappa)^{-1}(t\cdot z),t\right) \nonumber \\ 
    & = \left( x, t^{-1}\cdot \left[( \varepsilon_{x}\circ 
    \varepsilon_{\varepsilon_{x}^{-1}(t\cdot y)}^{-1})(t\cdot z)\right],t\right).
\end{align}
If $t=0$, then using Proposition~\ref{prop:Carnot.inverse-tangent-map} we get
\begin{equation}\label{eq-gp-16}
\begin{split}
\mu_{\kappa}(x,y,z,0) & =  \gamma_{\kappa}\left( \kappa^{-1}(x), \left[ \widehat{(\kappa^{-1})}'(x)y\right]\cdot  \left[ 
 \widehat{(\kappa^{-1})}'(x)z\right]\right)\\ 
 & = \gamma_{\kappa}\left( \kappa^{-1}(x),\hat{\kappa}'\left(\kappa^{-1}(x)\right)^{-1}(y\cdot z)\right)\\
 & = (x,y\cdot z,0).
\end{split}
\end{equation}

Let $x\in U$. As mentioned in the proof of Lemma~\ref{lem:Carnot-groupoid.inverse-map} above, in the $\varepsilon$-Carnot coordinates at $x$ the $\varepsilon$-Carnot coordinate map is the map $(y,z)\rightarrow \varepsilon_{\varepsilon_x^{-1}(y)} \circ \varepsilon_x^{-1}(z)$. Therefore, by using Proposition~\ref{prop-com}  we see that, for all $x\in U$, near $(y,z)=(0,0)$,  we have   
\begin{equation*}
\varepsilon_x\circ \varepsilon_{\varepsilon_x^{-1}(y)}^{-1}(z)= \left(\varepsilon_{\varepsilon_x^{-1}(y)} \circ \varepsilon_x^{-1}\right)^{-1}(z)=y\cdot z+ \Ow\left(\|(y,z)\|^{w+1}\right). 
\end{equation*}
Using Lemma~\ref{lem:anisotropic.Theta-parameter} we then deduce there is a smooth map $\Theta:\cU^{(2)} \rightarrow \R^n$ such that, for all $(x,y,z,t)\in \cU^{(2)}$ with $t\neq 0$, we have
\begin{equation*}
 t^{-1}\cdot \varepsilon_x\circ \varepsilon_{\varepsilon_x^{-1}(t\cdot y)}^{-1}(t\cdot z)= y\cdot z + t\Theta(x,y,z,t).  
\end{equation*}
Combining this with~(\ref{eq-gp-15})--(\ref{eq-gp-16}) we see that
\begin{equation*}
    \mu_{\kappa}(x,y,z,t)= \left( x, y\cdot z +t\Theta(x,y,z,t),t\right) \quad \text{for 
    all $(x,y,z,t)\in \cU^{(2)}$}.
\end{equation*}
This shows that $\mu_{\kappa}$ is a smooth map from $\cU^{(2)}$ to $\cU$. As the maps
$\gamma_{\kappa}^{(2)}$ form a system of local charts near $GM^{(2)}$, it then follows that the multiplication map is 
smooth near $GM^{(2)}$. This completes the proof. 
\end{proof}

Combining Lemma~\ref{lem:groupoid.unit-map}, Lemma~\ref{lem-sub}, Lemma~\ref{lem:Carnot-groupoid.inverse-map}, and Lemma~\ref{lem-gp-3} together we arrive at the main result of this section. 

\begin{theorem}\label{thm:Groupoid.main}
 The groupoid $\cG=\cG_{H}M$ is a Lie groupoid.  
\end{theorem}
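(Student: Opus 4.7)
The plan is to verify the four conditions in the definition of a Lie groupoid one at a time, and each has essentially been set up by the preceding analysis. The set $\cG_H M$ has already been equipped with a smooth manifold structure via the atlas built from the open piece $\cG^* = M \times M \times \R^*$ together with the charts $\gamma_\kappa:\cV \to \cU$ indexed by $H$-charts $\kappa$; consistency of this atlas is exactly the content of Lemmas~\ref{lem-gp-1} and~\ref{lem-gp-2}. Since $\cG^{(0)} = M \times \R$ is manifestly a smooth manifold, I only need to check the smoothness properties of the structural maps.

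Away from the hypersurface $GM \subset \cG_H M$, every map restricts to the pair groupoid structure on $M \times M \times \R^*$, where smoothness is automatic. The whole issue is therefore concentrated near $GM$, i.e.\ at $t=0$, and I would treat it by working in the charts $\gamma_\kappa$. Concretely I would proceed in this order: (i) compute $\gamma_\kappa\circ \epsilon$ in coordinates to see the unit map is a smooth embedding (Lemma~\ref{lem:groupoid.unit-map}); (ii) compute $r$ and $s$ in the same charts to see they are submersions, which in turn gives $\cG^{(2)}$ a natural submanifold structure (Lemma~\ref{lem-sub}); (iii) verify that $\iota$ is a self-diffeomorphism (Lemma~\ref{lem:Carnot-groupoid.inverse-map}); (iv) verify that $\mu$ is smooth (Lemma~\ref{lem-gp-3}).

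For steps (iii) and (iv), the crucial analytic input is Proposition~\ref{prop-com}. In coordinates, $\iota$ produces quantities of the form $t^{-1}\cdot \varepsilon_{\varepsilon_x^{-1}(t\cdot y)}(x)$ and $\mu$ produces $t^{-1}\cdot \varepsilon_x\circ \varepsilon_{\varepsilon_x^{-1}(t\cdot y)}^{-1}(t\cdot z)$, each of which is a priori only defined for $t\neq 0$. Proposition~\ref{prop-com} gives the osculation
\[
 \varepsilon_x\circ \varepsilon_{\varepsilon_x^{-1}(y)}^{-1}(z) = y\cdot z + \Ow\!\left(\|(y,z)\|^{w+1}\right),
\]
and combined with the parametric remainder result (Lemma~\ref{lem:anisotropic.Theta-parameter}) this lets me factor out the required power of $t$ and extend the expression smoothly across $t=0$, with the value at $t=0$ being precisely the group-theoretic operation on $GM(x)$. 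This is the one place where the specific properties of $\varepsilon$-Carnot coordinates really matter.

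The main obstacle is thus step (iv), and its resolution is entirely an application of Proposition~\ref{prop-com}: general privileged coordinates would only approximate by Lie algebra maps rather than group maps, and the construction would collapse. Once the lemmas are in place, the theorem is a direct concatenation — noting at the end that by (ii) the fiber product $\cG^{(2)}$ is a submanifold of $\cG\times \cG$, so asserting smoothness of $\mu:\cG^{(2)}\to \cG$ is meaningful — and there is nothing further to do.
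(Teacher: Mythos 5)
Your proposal follows essentially the same route as the paper: the theorem is obtained by concatenating Lemmas~\ref{lem:groupoid.unit-map}, \ref{lem-sub}, \ref{lem:Carnot-groupoid.inverse-map}, and \ref{lem-gp-3}, with the smoothness of $\iota$ and $\mu$ across $t=0$ extracted exactly as you describe from Proposition~\ref{prop-com} together with Lemma~\ref{lem:anisotropic.Theta-parameter}. Your coordinate expressions for the inverse and multiplication maps match the paper's, so there is nothing to add.
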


\begin{remark}
    When $r=1$ the groupoid $\cG_{H}M$ agrees with Connes' tangent groupoid~(\ref{eq:Groupoid.Connes-groupoid}). 
\end{remark}

\begin{remark}
The Lie groupoid $\cG=\cG_H M$ contains the submanifold with boundary $\overline{\cG}^+=\{ t\geq 0\}=\cG^+\sqcup GM$, where $\cG^+=M\times M\times (0,\infty)$. Here the tangent group bundle $GM$ appears as the boundary of $\overline{\cG}^+$. Moreover, $\overline{\cG}^+$ is a groupoid in the category of manifolds with boundary (in the sense of~\cite{NWX:Pacific99}). In the case of Heisenberg manifolds we recover the tangent groupoid of~\cite{Po:PJM06}. We refer to~\cite{vEY:BLMS17} for an alternative construction of the tangent groupoid of a Carnot manifold as a groupoid in the category of manifolds of with boundary (see also~\cite{Jv:Preprint10, vE:AM10.Part1}). 
\end{remark}

In the same way, as with Connes' tangent groupoid, the fact that $\cG_H M$ is a differentiable manifold implies that, for every $x_0\in M$, the submanifolds $\{x_0\}\times M\times \{t\}$ looks more and more like the tangent group $GM(x_0)$ as $t\rightarrow 0$ and this convergence is uniform (and even smooth) with respect to the base point $x_0$. Let $\kappa:V\rightarrow U$ be a local $H$-chart, where $V$ is an open neighborhood of $x_0$. For each $t\in \R^*$, the chart $\gamma_\kappa$ in~(\ref{eq-gp-6}) gives rise to the chart $\tilde{\gamma}_\kappa^t:\{x_0\}\times V\times \{t\}\rightarrow W_t$, where $W_t$ is an open neighborhood of $\kappa(x_0)$ and we have 
\begin{equation*}
 \tilde{\gamma}_\kappa^t (x_0,x,t)= \varepsilon_{\kappa(x_0)}^{-1} \circ \delta_{t^{-1}} \circ (\varepsilon_{\kappa(x_0)} \circ \kappa)(x) \qquad \forall x \in V. 
\end{equation*}
The transition map $ \tilde{\gamma}_\kappa^t\circ ( \tilde{\gamma}_\kappa^1)^{-1}$ is the anisotropic rescaling map $x\rightarrow \varepsilon_{\kappa(x_0)}^{-1}\circ \delta_{t^{-1}} \circ \varepsilon_{\kappa(x_0)}(x)$. Thus, in local coordinates, the submanifold $\{x_0\}\times M\times \{t\}$ is a zoomed-in version's  of $\{x_0\}\times M\times \{1\}\simeq M$, where the zooming is performed by the anisotropic dilation $\delta_{t^{-1}}$ in $\varepsilon$-Carnot coordinates at $x_0$. Furthermore, as with Connes' tangent groupoid, the very fact that we obtain a Lie groupoid accounts for the group structure of the tangent group $GM(x_0)$. Therefore, going back to the conjecture of Bella\"iche alluded to at the beginning of the section, we see that we obtain an explanation why the tangent space of a Carnot manifold should be a group.  

\subsection{Functoriality}
Let us now look at the functoriality of the construction of the tangent groupoid of a Carnot manifold. Let $(M,H)$ 
and $(M',H')$ be Carnot manifolds of step $r$ with respective tangent groupoids $\cG=\cG_{H}M$ and $\cG'=\cG_{H'}M'$. (We do not assume that $M$ and $M'$ have same dimension.) 
Given a Carnot map $\phi:M\rightarrow M'$ we define maps $\Phi:\cG\rightarrow \cG'$ and $\Phi^{0}:\cG^{0}\rightarrow (\cG')^{0}$ by
\begin{align}
\Phi(x,y,t)= \left(\phi(x),\phi(y),t\right) & \qquad \text{for $t\neq 0$ and $x, y\in M$},\label{eq:Groupoid.Phi1}\\
\Phi(x,\xi) = \left(\phi(x), \hat{\phi}'(x)\xi\right) & \qquad \text{for $(x,\xi)\in GM$},\\
\Phi^{0}(x,t)=\left( \phi(x),t\right) & \qquad \text{for $t\in \R$ and $x\in M$}.
\label{eq:Groupoid.Phi3}
\end{align}
The pair $(\Phi,\Phi^{0})$ provides us with a morphism of groupoids in the sense of Remark~\ref{rmk:Groupoid.morphism}. 

\begin{lemma}\label{lem:groupoid.smoothness-Phi}
The maps $\Phi$ and $\Phi^{0}$ are smooth.
\end{lemma}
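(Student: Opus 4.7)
Since $\phi$ is smooth, the map $\Phi^0:\cG^{(0)}\to(\cG')^{(0)}$ is obviously smooth, so the main task is to establish the smoothness of $\Phi$. On the open subset $\cG^*=M\times M\times \R^*$ the map $\Phi$ reduces to $(x,y,t)\mapsto(\phi(x),\phi(y),t)$, which is smooth. Moreover, as explained in Section~\ref{sec:Carnot-Differential}, the Carnot differential $\hat{\phi}':GM\to GM'$ is a smooth vector bundle map, so $\Phi$ restricts to a smooth map $GM\to GM'$. Thus the only genuine issue is smoothness across the boundary hypersurface $GM\subset \cG$.

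To address this, I would fix a point $a\in M$ and work in an adapted pair of local $H$-charts: take a local $H$-chart $\kappa:V\to U\subset \R^n$ at $a$ with $H$-frame $(X_1,\ldots,X_n)$, and a local $H'$-chart $\kappa':V'\to U'\subset \R^{n'}$ at $a'=\phi(a)$ with $H'$-frame $(X_1',\ldots,X_{n'}')$, chosen so that $\phi(V)\subset V'$. These produce local charts $\gamma_\kappa:\cV\to \cU$ and $\gamma_{\kappa'}:\cV'\to \cU'$ for $\cG$ and $\cG'$ near $GM$ and $GM'$ respectively (via the formulas~(\ref{eq-gp-7})), and we may assume $\Phi(\cV)\subset \cV'$. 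The plan is then to write out $\Phi_{\kappa,\kappa'}:=\gamma_{\kappa'}\circ \Phi\circ \gamma_\kappa^{-1}$ and check directly that it is a smooth map from $\cU$ to $\cU'$. Setting $\tilde{\phi}=\kappa'\circ\phi\circ \kappa^{-1}$, a direct computation using~(\ref{eq-gp-6}) yields, for $t\neq 0$,
\begin{equation*}
\Phi_{\kappa,\kappa'}(x,y,t)=\left(\tilde{\phi}(x),\, t^{-1}\cdot \left[\bigl(\varepsilon^{\kappa'}_{\tilde{\phi}(x)}\circ \tilde{\phi}\circ (\varepsilon^{\kappa}_x)^{-1}\bigr)(t\cdot y)\right],\, t\right),
\end{equation*}
while at $t=0$, combining $\gamma_\kappa^{-1}(x,y,0)=(\kappa^{-1}(x),\widehat{(\kappa^{-1})}'(x)y)$ with the chain rule (Proposition~\ref{prop:Carnot.product-tangent-map}) gives $\Phi_{\kappa,\kappa'}(x,y,0)=(\tilde{\phi}(x),\hat{\tilde{\phi}}'(x)y,0)$.

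The heart of the argument, and what I expect to be the main obstacle, is gluing these two formulas into a single smooth expression across $t=0$. For this I would apply Corollary~\ref{cor:Carnot-map.approx-vareps-Carnot-coord} (the $\varepsilon$-Carnot approximation) to the map $\tilde{\phi}$ viewed as a Carnot map between the pulled-back Carnot structures on $U$ and $U'$. This yields
\begin{equation*}
\bigl(\varepsilon^{\kappa'}_{\tilde{\phi}(x)}\circ \tilde{\phi}\circ (\varepsilon^{\kappa}_x)^{-1}\bigr)(y)=\hat{\tilde{\phi}}'(x)\,y+\Ow\!\left(\|y\|^{w+1}\right)\qquad\text{near }y=0,
\end{equation*}
uniformly with respect to $x$ in the sense of Lemma~\ref{lem:anisotropic.Theta-parameter}. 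Invoking that parameter-dependent lemma produces a smooth map $\Theta(x,y,t)$ defined on the appropriate open set such that
\begin{equation*}
t^{-1}\cdot \bigl(\varepsilon^{\kappa'}_{\tilde{\phi}(x)}\circ \tilde{\phi}\circ (\varepsilon^{\kappa}_x)^{-1}\bigr)(t\cdot y)=\hat{\tilde{\phi}}'(x)\,y+t\,\Theta(x,y,t)
\end{equation*}
for all admissible $(x,y,t)$ with $t\neq 0$.

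Substituting this back gives the single formula
\begin{equation*}
\Phi_{\kappa,\kappa'}(x,y,t)=\bigl(\tilde{\phi}(x),\,\hat{\tilde{\phi}}'(x)\,y+t\,\Theta(x,y,t),\,t\bigr)
\end{equation*}
valid for all $(x,y,t)\in \cU$, including $t=0$ (where it matches the computed value since the $t\Theta$ term vanishes). Since $\tilde{\phi}$ is smooth, the Carnot differential $\hat{\tilde{\phi}}'$ depends smoothly on $x$, and $\Theta$ is smooth by construction, this shows that $\Phi_{\kappa,\kappa'}$ is a smooth map. As the charts $\gamma_\kappa$ cover a neighborhood of $GM$ in $\cG$ as $\kappa$ varies over local $H$-charts of $M$, and $\Phi$ is already known to be smooth on $\cG^*$ and on $GM$, we conclude that $\Phi$ is smooth everywhere. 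The delicate point throughout is the uniform-in-$x$ control of the remainder, which is precisely what Lemma~\ref{lem:anisotropic.Theta-parameter} was designed to supply.
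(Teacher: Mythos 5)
Your proposal is correct and follows essentially the same route as the paper: reduce to the chart representation $\gamma_{\kappa'}\circ\Phi\circ\gamma_\kappa^{-1}$, compute it separately for $t\neq 0$ and $t=0$, and then glue the two formulas across $t=0$ by combining the osculation result of Corollary~\ref{cor:Carnot-map.approx-vareps-Carnot-coord} with the parameter-dependent remainder Lemma~\ref{lem:anisotropic.Theta-parameter}. The resulting single expression $\bigl(\tilde{\phi}(x),\hat{\tilde{\phi}}'(x)y+t\Theta(x,y,t),t\bigr)$ is exactly the one the paper obtains.
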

\begin{proof}
It is immediate that $\Phi^{0}$  is a smooth map between the manifolds $\cG^{0}=M\times \R$ and 
$(\cG')^{0}=M'\times \R$. It is also immediate that $\Phi$ induces a smooth map between the open sets $\cG^*=M\times M\times \R^*$ and 
$(\cG')^*=M\times M\times \R^*$. Therefore, we only have to check that $\Phi$ is smooth near the hypersurface $GM$. 

Given $a\in M$ and setting $a'=\phi(a)$, let $\kappa_{a}:U\rightarrow V$  be a local $H$-chart near $a$ 
and $\kappa_{a'}:U'\rightarrow V'$  be a local $H'$-chart near $a'$. Without any loss of generality we may assume that 
$\phi(V)$ is contained in $V'$. We also let $\gamma_{\kappa_{a}}:\cV\rightarrow \cU$ 
(resp., $\gamma_{\kappa_{a'}}:\cV'\rightarrow \cU'$) be the local chart~(\ref{eq-gp-7}) associated with $\kappa_{a}$ (resp., 
$\kappa_{a'}$). Note that $\cU$ and $\cU'$ are open subsets of $\R^{n}\times \R^{n} \times \R$ and $\R^{n'}\times 
\R^{n'} \times \R$, respectively, where $n$ and $n'$ are the respective dimensions of $M$ and $M'$. To simplify notation 
let $\varepsilon_{x}^{a}:U\times \R^{n}\ni (x,y)\rightarrow \varepsilon_{x}^{a}(y)$ and $\varepsilon_{x}^{a'}: U'\times \R^{n'}\ni (x,y)\rightarrow 
\varepsilon_{x}^{a'}(y)$ be the respective $\varepsilon$-Carnot coordinate map of $\kappa_{a}$ and $\kappa_{a'}$.     

Set $\phi_\kappa=\kappa_{a'}\circ \phi \circ \kappa_a^{-1}:U\rightarrow U'$ and 
$\Phi_{\kappa}=\gamma_{\kappa_{a'}}\circ \Phi \circ \gamma_{\kappa_{a}}^{-1}:\cU\rightarrow \cU'$. Let $(x,y,t)\in 
\cU$. If $t\neq 0$, then we have
 \begin{align}
 \Phi_{\kappa}(x,y,t)& = \gamma_{\kappa_{a'}}\circ \Phi\left( \kappa_{a}^{-1}(x), (\varepsilon_{x}^{a}\circ 
  \kappa_{a})^{-1}(t\cdot y),t\right)\nonumber \\
      & = \gamma_{\kappa_{a'}}\left(\phi\circ\kappa_{a}^{-1}(y), \phi\circ 
      \kappa_{a}^{-1}\circ(\varepsilon_{x}^{a})^{-1}(t\cdot y), t\right)  
      \label{eq:Groupoid.Phik1}\\
      & = \left( \kappa_{a'}\circ \phi\circ\kappa_{a}^{-1}(x),  t^{-1}\cdot [\varepsilon^{a'}_{ \kappa_{a'}\circ \phi\circ\kappa_{a}^{-1}(x)}\circ 
      \kappa_{a'}\circ \phi\circ\kappa_{a}^{-1} \circ (\varepsilon_{x}^{a})^{-1}(t\cdot y)],t\right) \nonumber \\
     & = \left(\phi_\kappa(x),  t^{-1}\cdot [\varepsilon^{a'}_{\phi_\kappa(x)}\circ \phi_\kappa \circ (\varepsilon_{x}^{a})^{-1}(t\cdot y)],t\right). \nonumber
 \end{align}
 If $t=0$, then we have
 \begin{align*}
  \Phi_{\kappa}(x,y,0)     & =\gamma_{\kappa_{a'}}\circ \Phi\left( \kappa_{a}^{-1}(x),\widehat{(\kappa_{a}^{-1})}'(x)y\right)\nonumber \\
      & = \gamma_{\kappa_{a'}}\left( \phi\circ \kappa_{a}^{-1}(x), \hat{\phi}'\left(\kappa_{a}^{-1}(x)\right)\circ  \widehat{(\kappa_{a}^{-1})}'(x)y \right)\\
      & = \left( \kappa_{a'}\circ \phi\circ\kappa_{a}^{-1}(x), 
       \hat{\kappa}_{a'}'\left(  \phi\circ \kappa_{a}^{-1}(x)\right) \circ  \hat{\phi}'\left(\kappa_{a}^{-1}(x)\right)\circ  \widehat{(\kappa_{a}^{-1})}'(x)y, 
      0\right). \nonumber 
 \end{align*}
Proposition~\ref{prop:Carnot.product-tangent-map} ensures us that $ 
\hat{\kappa}_{a'}'\left(  \phi\circ \kappa_{a}^{-1}(x)\right) \circ  \hat{\phi}'\left(\kappa_{a}^{-1}(x)\right)\circ  \widehat{(\kappa_{a}^{-1})}'(x)= \hat{\phi}_\kappa'(x)$. Thus, \begin{equation}
  \Phi_{\kappa}(x,y,0) =  \left( \phi_\kappa(x), \hat{\phi}_\kappa'(x)y, 0\right) \qquad \text{if $t=0$}. 
\label{eq:Groupoid.Phik2}
\end{equation}

In the same way as in the proof of Lemma~\ref{lem-gp-2}, it follows from Corollary~\ref{cor:Carnot-map.approx-vareps-Carnot-coord} and Lemma~\ref{lem:anisotropic.Theta-parameter} that there is a smooth map $\Theta:\cU\rightarrow \cU'$ such that we have 
     \begin{equation*}
        t^{-1}\cdot \left[\varepsilon^{a'}_{\phi_\kappa(x)}\circ \phi_\kappa\circ (\varepsilon_x^a)^{-1}(t\cdot x)\right] 
        = \hat{\phi}_\kappa'(x)y + t\Theta(x,y,t) \qquad \text{for all $(x,y,t)\in \cU$, $t\neq 0$}. 
     \end{equation*}
 Combining  this with~(\ref{eq:Groupoid.Phik1})--(\ref {eq:Groupoid.Phik2}) we see that, for all $(x,y,t)\in \cU$, we have
\begin{equation*}
 \Phi_{\kappa}(x,y,t)= \left( \phi_\kappa(x),  \hat{\phi}_\kappa'(x)y + t\Theta(x,y,t),t\right).   
\end{equation*}
This shows that $\Phi_{\kappa}$ is a smooth map from $\cU$ to $\cU'$. It then follows that $\Phi$ is smooth near  the 
boundary $GM$. The proof is complete.  
\end{proof}

\begin{lemma}\label{lem:groupoid.composition-Phi}
    Let $(M'',H'')$ be a step $r$ Carnot manifold with tangent groupoid $\cG''=\cG_{H''}M''$. Let $\psi:M'\rightarrow 
    M''$ be a (smooth) Carnot manifold map and set $\pi=\psi\circ \phi$. In addition, let $\Psi:\cG'\rightarrow \cG''$ (resp., 
    $\Pi:\cG\rightarrow \cG''$) and $\Psi^{0}:(\cG')^{0}\rightarrow (\cG'')^{0}$ (resp., $\Pi^{0}:\cG^{0}\rightarrow 
    (\cG'')^{0}$) be the maps~(\ref{eq:Groupoid.Phi1})--(\ref{eq:Groupoid.Phi3}) associated with $\Psi$ (resp., $\Pi$). Then we have
    \begin{equation*}
        \Pi=\Psi\circ \Phi \qquad \text{and} \qquad \Pi^{0}=\Psi^{0}\circ \Phi^{0}.
    \end{equation*}
\end{lemma}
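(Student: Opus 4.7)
The proof is a direct verification that amounts to chasing the definitions and invoking the chain rule for Carnot differentials (Proposition~\ref{prop:Carnot.product-tangent-map}). The plan is to check the two equalities separately on each piece of the decomposition $\cG = GM \sqcup (M\times M\times \R^*)$ (respectively on $\cG^0 = M\times \R$), since both sides of each asserted equality are defined piecewise.

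First I would dispatch $\Pi^0 = \Psi^0 \circ \Phi^0$, which is entirely trivial: the base maps are simply $\Phi^0(x,t)=(\phi(x),t)$, $\Psi^0(x',t)=(\psi(x'),t)$, and $\Pi^0(x,t)=(\pi(x),t)$, so the identity follows immediately from $\pi = \psi\circ\phi$. Next, on the open piece $M\times M\times \R^*$ of $\cG$, the relevant maps send $(x,y,t)$ to $(\phi(x),\phi(y),t)$, $(\psi(x'),\psi(y'),t)$, and $(\pi(x),\pi(y),t)$ respectively, so $\Psi\circ \Phi(x,y,t) = (\psi(\phi(x)),\psi(\phi(y)),t) = \Pi(x,y,t)$, again using only $\pi = \psi\circ \phi$.

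The remaining, non-trivial case is the restriction to $GM$. For $(x,\xi)\in GM$ we compute
\begin{equation*}
\Psi\circ \Phi(x,\xi) = \Psi\bigl(\phi(x),\hat{\phi}'(x)\xi\bigr)=\Bigl(\psi\bigl(\phi(x)\bigr),\, \hat{\psi}'\bigl(\phi(x)\bigr)\circ \hat{\phi}'(x)\,\xi\Bigr).
\end{equation*}
By the chain rule for Carnot differentials (Proposition~\ref{prop:Carnot.product-tangent-map}) we have $\hat{\psi}'(\phi(x))\circ \hat{\phi}'(x) = \widehat{(\psi\circ\phi)}'(x) = \hat{\pi}'(x)$, and of course $\psi\circ\phi = \pi$, so the right-hand side equals $(\pi(x),\hat{\pi}'(x)\xi) = \Pi(x,\xi)$, as required.

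There is essentially no obstacle here: once the equality of base points is noted, the only substantive input needed is the functoriality of the Carnot differential, which is precisely Proposition~\ref{prop:Carnot.product-tangent-map}. Combined with the smoothness established in Lemma~\ref{lem:groupoid.smoothness-Phi}, this lemma immediately yields the functoriality statement $(M,H)\mapsto \cG_H M$ advertised just after the theorem, since the identity map clearly induces the identity groupoid morphism.
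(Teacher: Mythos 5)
Your proof is correct and follows exactly the same route as the paper: the equalities are checked piecewise on the base, on $M\times M\times\R^*$, and on $GM$, with the only substantive input being the chain rule of Proposition~\ref{prop:Carnot.product-tangent-map} for the $GM$ piece. The paper merely states these three checks more tersely; your write-up just supplies the explicit computations.
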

\begin{proof}
    It is immediate that $\Pi^{0}=\Psi^{0}\circ \Phi^{0}$ and $ \Pi=\Psi\circ \Phi$ on $\cG^*=M\times M\times 
    \R^*$. In addition, it follows from Proposition~\ref{prop:Carnot.product-tangent-map} that  $ \Pi=\Psi\circ \Phi$ on $GM$. This proves the 
    result.
\end{proof}

Combining Lemma~\ref{lem:groupoid.smoothness-Phi} and Lemma~\ref{lem:groupoid.composition-Phi} we then arrive at the following statement. 
\begin{proposition}
 The pair $(\Phi,\Phi^{0})$ is a morphism of Lie groupoids. 
\end{proposition}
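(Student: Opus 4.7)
The plan is to establish the proposition by reducing it to three ingredients: the smoothness already provided by Lemma~\ref{lem:groupoid.smoothness-Phi}, the group map property of the Carnot differential (Proposition~\ref{prop:Carnot-map.group-map}), and a direct case-by-case verification of the five algebraic compatibility conditions listed in Remark~\ref{rmk:Groupoid.morphism}. Since $\Phi$ and $\Phi^0$ are defined piecewise on the two strata $\cG^* = M\times M\times \R^*$ and $GM$ of $\cG$, each compatibility condition reduces to two elementary checks.

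First I would dispose of the structural compatibilities. For the unit map, on $\cG^*$ we compute $\Phi(\epsilon(x,t))=\Phi(x,x,t)=(\phi(x),\phi(x),t)=\epsilon(\Phi^0(x,t))$ for $t\neq 0$, while on $GM$ we use the linearity of $\hat{\phi}'(x)$ to get $\Phi(\epsilon(x,0))=\Phi(x,0)=(\phi(x),\hat\phi'(x)\cdot 0)=(\phi(x),0)=\epsilon(\Phi^0(x,0))$. The identities $\Phi^0\circ s=s\circ \Phi$ and $\Phi^0\circ r=r\circ \Phi$ follow in the same way from unpacking definitions, since on $GM$ both sides of each equation equal $(\phi(x),0)$.

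The substantive step is the compatibility with the multiplication map $\mu$. On $\cG^{(2)*}$ the verification is routine: $\Phi((x,y,t)\cdot (y,z,t))=\Phi(x,z,t)=(\phi(x),\phi(z),t)$, which equals $\Phi(x,y,t)\cdot\Phi(y,z,t)$. On $GM^{(2)}$, however, we need
\begin{equation*}
\Phi\left((x,\xi)\cdot(x,\eta)\right)=\left(\phi(x),\hat{\phi}'(x)(\xi\cdot\eta)\right)\quad\text{versus}\quad \Phi(x,\xi)\cdot\Phi(x,\eta)=\left(\phi(x),\hat{\phi}'(x)\xi\cdot\hat{\phi}'(x)\eta\right),
\end{equation*}
and the equality of the second coordinates is exactly the statement that $\hat{\phi}'(x):GM(x)\to GM'(\phi(x))$ is a group homomorphism. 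This is the nontrivial input and is already provided by Proposition~\ref{prop:Carnot-map.group-map}. Similarly, the inverse compatibility $\Phi\circ \iota=\iota\circ \Phi$ reduces on $\cG^*$ to swapping factors and on $GM$ to the identity $\hat{\phi}'(x)(-\xi)=-\hat{\phi}'(x)\xi$, which is the linearity of $\hat{\phi}'(x)$ (equivalently, the fact that inversion in $GM(x)$ is the symmetry $\xi\mapsto -\xi$, as recalled in Remark~\ref{rmk:tangent.dilations-group-automorphisms}).

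Putting these together, the pair $(\Phi,\Phi^0)$ is a morphism of abstract groupoids, and Lemma~\ref{lem:groupoid.smoothness-Phi} upgrades it to a morphism of Lie groupoids. I do not expect any serious obstacle: the only place where the Carnot structure enters nontrivially is in the multiplication compatibility, and that has been carefully arranged precisely so that $\hat{\phi}'$ is a group map on each fiber. The rest is bookkeeping between the two strata of $\cG$ and $\cG'$.
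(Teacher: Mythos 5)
Your proposal is correct and follows essentially the same route as the paper: the paper asserts the abstract groupoid-morphism property of $(\Phi,\Phi^{0})$ without spelling out the stratum-by-stratum checks (the only nontrivial one being the multiplication compatibility over $GM^{(2)}$, which is exactly Proposition~\ref{prop:Carnot-map.group-map}), and then invokes Lemma~\ref{lem:groupoid.smoothness-Phi} for smoothness. You have simply filled in the routine verifications the paper leaves implicit, with the correct key input identified.
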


\begin{corollary}\label{cor:Groupoid.functoriality}
    The assignment $(M,H)\rightarrow \cG_{H}M$ is a functor from the category of Carnot manifolds of step $r$ to 
    the category of Lie groupoids. 
\end{corollary}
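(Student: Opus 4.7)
The plan is to observe that the functoriality statement packages together all the technical work that has been carried out in the preceding subsection. What remains is essentially bookkeeping: specify the action of the assignment on objects and morphisms, then check the two defining axioms of a functor (preservation of identities and of composition).

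First, I would specify the action on objects. Given a step $r$ Carnot manifold $(M,H)$, assign the Lie groupoid $\cG_{H}M$ constructed in the previous subsection. Theorem~\ref{thm:Groupoid.main} guarantees that this is indeed a Lie groupoid, so the object assignment is well defined. Next, I would specify the action on morphisms. Given a Carnot manifold map $\phi:(M,H)\rightarrow (M',H')$, assign the pair $(\Phi,\Phi^{0})$ defined by formulas~(\ref{eq:Groupoid.Phi1})--(\ref{eq:Groupoid.Phi3}). That $(\Phi,\Phi^{0})$ is a morphism of Lie groupoids is exactly the content of the Proposition immediately preceding the corollary (combining the groupoid-morphism property from Remark~\ref{rmk:Groupoid.morphism} with the smoothness result of Lemma~\ref{lem:groupoid.smoothness-Phi}).

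It then remains to check the two functorial axioms. Preservation of composition is Lemma~\ref{lem:groupoid.composition-Phi}: if $\psi:(M',H')\rightarrow (M'',H'')$ is another Carnot manifold map and $(\Psi,\Psi^{0})$ is the corresponding pair, then the pair associated with $\psi\circ \phi$ is $(\Psi\circ \Phi, \Psi^{0}\circ \Phi^{0})$. For preservation of identities, I would argue as follows. If $\phi=\op{id}_{M}$, then $\phi'(x)$ is the identity of $T_{x}M$ for every $x\in M$. Consequently, for each weight $w$ and each $x\in M$, the induced graded linear map $\hat{\phi}'_{[w]}(x):\fg_{w}M(x)\rightarrow \fg_{w}M(x)$ is the identity; taking direct sums, $\hat{\phi}'(x)$ is the identity of $GM(x)=\fg M(x)$ for every $x\in M$. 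Substituting this into~(\ref{eq:Groupoid.Phi1})--(\ref{eq:Groupoid.Phi3}) shows $\Phi=\op{id}_{\cG_{H}M}$ and $\Phi^{0}=\op{id}_{\cG_{H}M^{(0)}}$.

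There is no genuine obstacle here: the hard work has already been done. The Carnot-differential constructions of Section~\ref{sec:Carnot-Differential} (in particular Proposition~\ref{prop:Carnot-map.group-map} and Proposition~\ref{prop:Carnot.product-tangent-map}) ensure that Carnot differentials are group maps and satisfy the chain rule, which is precisely what is needed for $\Phi$ to respect the multiplication near $GM$ and for composition to be preserved. The smoothness of $\Phi$ near the hypersurface $GM\subset \cG_{H}M$ — the only point where a genuine argument beyond set-theoretic chasing is required — has already been secured in Lemma~\ref{lem:groupoid.smoothness-Phi} via Corollary~\ref{cor:Carnot-map.approx-vareps-Carnot-coord} and Lemma~\ref{lem:anisotropic.Theta-parameter}. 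Assembling these ingredients yields the corollary immediately.
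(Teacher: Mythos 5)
Your proposal is correct and follows essentially the same route as the paper, which states the corollary as an immediate consequence of Lemma~\ref{lem:groupoid.smoothness-Phi}, Lemma~\ref{lem:groupoid.composition-Phi}, and the proposition that $(\Phi,\Phi^{0})$ is a morphism of Lie groupoids, without writing out a separate proof. The only addition you make is the explicit (and easily verified) check that identities are preserved, which the paper leaves tacit.
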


\begin{remark}
    Much like we can associate a Lie algebra with any Lie group, with any differentiable groupoid $\cG$ is associated 
    a Lie algebroid (see, e.g., \cite{Ma:LMS05}). In particular, this is important in the context of the pseudodifferential 
    calculus on groupoids (see, e.g., \cite{ALN:AnnMath07, DS:AdvM14, LMN:DocMath00, MP:CRAS97, NWX:Pacific99, Va:JFA06}). Recall that a Lie algebroid structure on a vector bundle $A$ over a 
    manifold $M$ is given by 
    \begin{enumerate}
        \item[(i)] A Lie bracket on the space of sections $C^{\infty}(M,A)$.   
    
        \item[(ii)]  An anchor map $\rho:A\rightarrow TM$, i.e., a vector bundle map satisfying Leibniz's rule.
    \end{enumerate}
    For a differentiable groupoid $\cG$ the associated Lie algebroid $A\cG$ is the vertical bundle $\ker (r')$ of the range map restricted to the unit space $M=\cG^{(0)}$, and 
    the anchor map is given by the differential of the source map. In the case of the tangent groupoid $\cG_{H}M$ of a 
    Carnot manifold $(M,H)$ we thus obtain a vector bundle $A \cG_{H}M\rightarrow M\times 
    \R$, where
    \begin{equation*}
        A \cG_{H}M=\fg M \bigsqcup \pi^{*}TM.
    \end{equation*}Here $\fg M$ is the tangent Lie algebra bundle of $(M,H)$ and $\pi^{*}TM$ is the pullback of $TM$ by the first factor projection $M\times \R^*\rightarrow M$. 
\end{remark}

\begin{remark}
   It would be very interesting to extend the construction of the tangent groupoid of a Carnot manifold to the setup of 
   non-equiregular Carnot-Carath\'eodory manifolds. In this case the tangent group at a singular point $a$ is the quotient of 
   the Carnot manifold tangent group $GM(a)$ by a finite group (see, e.g., \cite{Be:Tangent}). Therefore, we should  expect the tangent groupoid to be 
   a manifold with orbifold-type singularities.  
\end{remark}

\end{document}